\newtheorem{theorem}{Theorem}
\newtheorem{lemma}[theorem]{Lemma}
\newtheorem{corollary}[theorem]{Corollary}
\newtheorem{proposition}[theorem]{Proposition}
\theoremstyle{definition}
\newtheorem{remark}[theorem]{Remark}
\newenvironment{customthm}[1]
 {\innercustomthm}
 {\endinnercustomthm}
\numberwithin{equation}{section}\numberwithin{theorem}{section}
\newcounter{stepctr}
{\end{list}}
\def\XXint#1#2#3{{\setbox0=\hbox{$#1{#2#3}{\int}$}
 \vcenter{\hbox{$#2#3$}}\kern-.5\wd0}}
\newcommand{\circo}{\accentset{\circ}}
\newcommand{\ra}{\rangle}
\newcommand{\la}{\langle}
\newcommand{\e}{\varepsilon}
\DeclareMathOperator{\inj}{inj}
\DeclareMathOperator{\tr}{tr}
\providecommand{\titlemacro}{{Singularity Models for High Codimension MCF in Riemannian Manifolds}}
\title{\titlemacro}
\author{Huy T. Nguyen and Artemis A. Vogiatzi}
\date{}
\begin{document}
\title{Singularity Models for High Codimension Mean Curvature Flow in Riemannian Manifolds}
\maketitle
\begin{abstract}
We study the mean curvature flow of smooth $n$-dimensional compact submanifolds with quadratic pinching in a Riemannian manifold $\mathcal{N}^{n+m}$. Our main focus is on the case of high codimension, $m\geq 2$. We establish a codimension estimate that shows in regions of high curvature, the submanifold becomes approximately codimension one in a quantifiable way. This estimate enables us to prove at a singular time of the flow, there exists a rescaling that converges to a smooth codimension-one limiting flow in Euclidean space. Under a cylindrical type pinching, this limiting flow is weakly convex and moves by translation. Our approach relies on the preservation of the quadratic pinching condition along the flow and a gradient estimate that controls the mean curvature in regions of high curvature. These estimates allow us to analyse the behaviour of the flow near singularities and establish the existence of the limiting flow.
\end{abstract}
\section{Introduction}\label{sec_introduction}
Let $F_0\colon\mathcal{M}^n \rightarrow \mathcal{N}^{n+m}$ be a smooth immersion of a compact manifold $\mathcal{M}^n$. The mean curvature flow starting from $F_0$ is the following family of submanifolds
	\begin{align*}
	F\colon \mathcal{M}^n \times[0, T) \rightarrow \mathcal{N}^{n+m}
	\end{align*}
such that
	\begin{align}\label{mean curvature flow}
	\begin{split}
\left\{
	\begin{array}{rl}
		\partial_t F(p, t) &=H(p, t), \ \ \text{for} \ \ p \in \mathcal{M}, t \in[0, T)   \\
		F(p, 0) &=F_0(p)
	\end{array}
	\right.
\end{split}
	\end{align}
where $H(p, t)$ denotes the mean curvature vector of $\mathcal{M}_t=F_t(p)=F(p, t)$ at $p$. It is well known this is a system of quasilinear weakly parabolic partial differential equations for $F$. Geometrically, the mean curvature flow is the steepest descent flow for the area functional of a submanifold and hence it is a natural curvature flow.\\
In the case of codimension one, a crucial step in the study of singularity formation in the mean convex mean curvature flow is the convexity estimate. This states that in regions of large mean curvature, the second fundamental form is almost positive definite.\\
In the paper Huisken \cite{Hu84} proved that closed convex hypersurfaces under the mean curvature flow evolve into spherical singularities, using Stampacchia iteration, the Michael--Simons--Sobolev inequality together with recursion formulae for symmetric polynomials. In \cite{Hu86}, Huisken then generalises this theorem to Riemannian background curvature spaces with strict convexity depending on the background curvature. \\
In contrast, White \cite{Wh03,white2005local} uses compactness theorems of geometric measure theory together with the rigidity of strong maximum principle for the second fundamental form. Haslhofer-Kleiner \cite{HK2} developed an alternative approach to White's results based on Andrews's non-collapsing \cite{An12} result for the mean curvature flow.\\
The case of mean curvature flow of mean convex hypersurfaces in Euclidean space has been investigated by White \cite{Wh03} and Huisken-Sinestrari \cite{HuSi09}, who have developed a deep and far reaching analysis of the formation of singularities. Recently there has been a number of works generalising these results to high codimension mean curvature flow \cite{Nguyen2020},\cite{LyNgConvexity}, \cite{Naff}. The purpose of this paper is to obtain a suitable generalisation of these results for high codimension mean curvature flow in Riemannian submanifolds.\\
Most of the work done on mean curvature flow in higher codimension uses assumptions on the image of the Gauss map. They have either considered graphical submanifolds, \cite{Chen2002},\cite{Li2003}, \cite{Wang2002},\cite{Wang2004}, submanifolds with additional symplectic or Lagrangian structure \cite{Smoczyk2002},\cite{Chen2001},\cite{Wang2001},\cite{Smoczyk2004}, \cite{Neves2007} or using convex subsets of the Grassmannian are preserved by the mean curvature flow, \cite{Tsui2004},\cite{Wang2003},\cite{Wang2005}.
Therefore, we will focus on conditions on the norm of the second fundamental form. In high codimension, the mean curvature flow is more complex than in the hypersurface case, where there is only one normal direction. In the hypersurface setting, the second fundamental form is a symmetric real-valued two-tensor, and the mean curvature is a real-valued function, which simplifies the analysis of the flow. However, the presence of normal curvature complicates reaction terms in the evolution equations for the second fundamental form, making the analysis of high codimension mean curvature flow more challenging.\\
An alternative condition was introduced by Andrews--Baker in \cite{AnBa10}. On a compact submanifold, if $|H|>0$, there exists a $c>0$, such that
	\begin{equation}\label{this}
	|A|^2\le c|H|^2,
	\end{equation}
which is preserved by codimension one mean curvature flow. Also, this condition makes sense for all codimensions. In fact, Andrews--Baker showed that for $c\le\frac{4}{3n}(<\frac{1}{n-1}$  for $n<4$), then \eqref{this} is preserved along the mean curvature flow. For $c = \min\{\frac{4}{3n},\frac{1}{n-1}\}$, remarkably they were able to prove convergence to a round sphere. We note the condition $|A|^2 - \frac{1}{n-1}|H|^2 < 0, H >0$ implies convexity in codimension one. This lead Andrews--Baker to consider the pinching condition:
	\begin{equation}\label{pinchingcondition}
	|A|^2-c_n|H|^2+d_n\le 0,
	\end{equation}
which, is preserved by mean curvature flow, for $c_n\le\frac{4}{3n}$ and $d_n>0$. In the paper \cite{Nguyen2020}, a surgery construction was developed allowing high codimension mean curvature flow with cylindrical pinching to pass through singularities. This generalised the codimension one result of \cite{HuSi09} (see also \cite{HK2}) to high codimension. A key aspect of this surgery procedure is the codimension estimate presented in \cite{Naff}, which shows that near regions of high curvature, singularities become approximately codimension one. Another crucial component is the  cylindrical estimate, which shows that nears regions of high curvature, the submanifold becomes approximately cylindrical of the form $ \mathbb S^{n-1}\times \mathbb R$.  These estimates are essential for the surgery to work and allow us to control the geometry of the submanifold in regions of high curvature.\\
In this paper, we study singularity formation in high codimension mean curvature flow in Riemannian manifolds and will consider the following curvature pinching condition of the length of the second fundamental form 
	\begin{align*}
	|A|^2-c_n|H|^2 \leq -d_n(K_1,K_2,L)
	\end{align*}
for some positive constant $d_n$ depending on the background curvature, where
	\begin{align*}
	\begin{split} 
	-K_1\leq K_{\mathcal N} \leq K_2, \quad |\bar{\nabla} \bar{R} | \leq L,\quad \inj(\mathcal N) \geq \imath_{\mathcal N}.
	\end{split}
	\end{align*}
and 
	\begin{align*}
	c_n:=\min \left\{\frac{4}{3 n}, \frac{1}{n-2}\right\}, \quad \text { if } n\geq 5.
	\end{align*}
This was shown to be preserved in the paper of \cite{Liu} and represents a natural generalisation of Huisken's condition in \cite{Hu86} to high codimension background Riemannian manifolds. We will show in regions of high curvature where the mean curvature is large, the submanifold becomes approximately codimension one in a quantifiable sense. In particular, we will prove a theorem that extends of the main theorem of \cite{Naff} to Riemannian background spaces.
\begin{customthm}{5.1}
Let $F: \mathcal{M}^n\times[0, T) \rightarrow \mathcal{N}^{n+m}$ be a smooth solution to mean curvature flow \eqref{mean curvature flow} so that
$F_0(p)=F(p, 0)$ is compact and quadratically pinched.
Then $\forall \e>0, \exists H_0 >0$, such that if $f \geq H_0$, then
	\begin{align*}
	\left|A^-\right|^2 \leq \e f+C_{\e}
	\end{align*}
$\forall t \in[0, T)$ where $C_\e=C_{\e}(n, m)$.
\end{customthm}
Assuming the quadratic pinching condition, we prove singularity models for the pinched flow must always have codimension one, regardless of the original flow's codimension.\\
The outline of the paper is as follows. In section 2, we give all the technical tools needed for our work and set up our notation. In section 3, we give the proof for the preservation of the quadratic pinching condition along the mean curvature flow. In section 4, we prove the gradient estimate. The importance of the gradient estimate is that it allows us to control the mean curvature and hence the full second fundamental form on a neighbourhood of fixed size. In section 5, we prove the codimension estimate, which is the main theorem of this paper. This means that in regions of high curvature, the submanifold becomes codimension one quantitatively. In section 6, we show how the codimension estimate in Riemannian manifolds actually falls into the Euclidean case. Finally, in section 7, we prove the codimension estimate in the case of constant negative curvature. 

\textbf{Acknowledgements.}  
The first author would like to acknowledge the support of the EPSRC through the grant EP/S012907/1.

\section{Preliminaries}\label{sec_Preliminaries}
This chapter presents the necessary preliminary results and establishes our notation. We derive evolution equations for the length and squared length of the second fundamental form, as well as for the mean curvature vectors, in an arbitrary Riemannian background space of any codimension. Additionally, we provide a proof for a Kato-type inequality we will utilise throughout this paper. Let $F\colon \mathcal M^n\times [0,T)\to\mathcal N^{n+m}$ be an $n$-dimensional smooth, closed and connected submanifold in an $(n+m)$-dimensional smooth complete Riemannian manifold. We adopt the following convention for indices:
	\begin{align*}
	1\le i,j,k,\dots\le n, \ 1\le a,b,c,\dots\le n+m \ \ \text{ and} \ \ 1\le \alpha,\beta,\gamma,\dots \le m.
	\end{align*}
We denote by $A$ to be the normal vector valued second fundamental form tensor and denote by $H$ the mean curvature vector which is the trace of the second fundamental form given by $H^\alpha=\sum_i A^\alpha_{ii}$. The tracefree second fundamental form $\mathring{A}$  is defined by $\mathring{A}=A-\frac{1}{n}Hg$, whose components are given by $\mathring{A}^\alpha_{ij}=A^\alpha_{ij}-\frac{1}{n}H^\alpha g_{ij}$. Obviously, we have $\sum_i \mathring{A}^\alpha_{ii}=0$. \\
We define the principal normal direction to be given by $\nu_1=\frac{H}{|H|}$. This is well defined since in our setting $|H| \neq 0$. We denote by $A^-$ the second fundamental form tensor orthogonal to the principal direction and $h$ to be the tensor valued second fundamental form in the principal direction, that is $h_{ij}=\frac{\langle A_{ij},H\rangle}{|H|}$. Therefore, we have $A=A^-+h\nu_1$. Also, $A^+_{ij}=\langle A_{ij},\nu_1\rangle\nu_1$. From the definition of $A^-$, it is natural to define the connection $\hat{\nabla}^\bot$ acting on $A^-$, by
	\begin{align*}
	\hat{\nabla}^\bot_i A^-_{jk}:=\nabla^\bot_i A^-_{jk}-\langle \nabla^\bot_i A^-_{jk},\nu_1\rangle\nu_1.
	\end{align*}
We denote $\mathring{h}$ to be the traceless part of the second fundamental form in the principal direction.  For the choice of $\nu_1$, we have for $\alpha\ge 2$ and $H^\alpha=\tr A^\alpha=0$. The traceless second fundamental form can be rewritten as $\mathring{A}=\sum_{\alpha} \mathring{A}^\alpha \nu_{\alpha}$, where
	\begin{align*}
	\mathring{h}=h-\frac{|H|}{n} Id, \ \ \text{ for} \ \ \alpha=1
	\end{align*}
and
	\begin{align*}
	\mathring{A}^-=A^\alpha, \ \ \text{ for} \ \ \alpha\ge 2.
	\end{align*}
We set
	\begin{align*}
	|A|^2=|h|^2+|A^-|^2 \ \ \text{ and} \ \ |\mathring{A}|^2=|\mathring{h}|^2+|\mathring{A}^-|^2.
	\end{align*}
Let
	\begin{align*}
	R_{ijkl}=g\big(R(e_i,e_j)e_k,e_l\big),  \ \ \bar{R}_{abcd}=\langle\bar{R}(e_a,e_b)e_c,e_d\rangle \ \ \text{and} \ \ R_{ij\alpha\beta}^\bot =\langle R^\bot(e_i,e_j)e_\alpha,e_\beta\rangle.
	\end{align*}
\begin{proposition}[\cite{AnBa10}, Section 3]
With the summation convention, the evolution equations of $A_{ij}$ and $H$ are
	\begin{align}\label{eqn_A}
	\Big(\partial_t-\Delta\Big)A_{ij}&=\sum_{p,q}\langle A_{ij},A_{pq}\rangle A_{pq}+\sum_{p,q}\langle A_{iq},A_{pq}\rangle A_{pj}+\sum_{p,q}\langle A_{jq},A_{pq}\rangle A_{pi}-2\sum_{p,q}\langle A_{ip},A_{jq}\rangle A_{pq}\nonumber\\
	&+2\sum_{p,q}\bar{R}_{ipjq}A_{pq}-\sum_{k,p}\bar{R}_{kjkp} A_{pi}-\sum_{k,p}\bar{R}_{kikp}A_{pj}+\sum_{k,\alpha,\beta}A^\alpha_{ij}\bar{R}_{k\alpha k\beta} \nu_\beta\nonumber\\
	&-2\sum_{p,\alpha,\beta}A^\alpha_{jp}\bar{R}_{ip\alpha\beta}\nu_\beta-2\sum_{p,\alpha,\beta}A^\alpha_{ip}\bar{R}_{jp\alpha\beta}\nu_\beta\nonumber\\
	&+\sum_{k,\beta}\bar{\nabla}_k \bar{R}_{kij\beta}\nu_\beta-\sum_{k,\beta}\bar{\nabla}_i \bar{R}_{jkk\beta}\nu_\beta,
	\end{align}
	\begin{align}\label{eqn_H}
	\Big(\partial_t-\Delta\Big)H=\sum_{p,q}\langle H,A_{pq}\rangle A_{pq}+\sum_{k,\alpha,\beta}H^\alpha\bar{R}_{k\alpha k\beta}\nu_\beta.
	\end{align}
\end{proposition}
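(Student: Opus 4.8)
The plan is to obtain \eqref{eqn_A} and \eqref{eqn_H} by a direct moving-frame computation, as in Andrews--Baker \cite[Section 3]{AnBa10}. Fix $p\in\mathcal M$ and a time $t$, work in a frame $\{e_i\}$ for $\mathcal M_t$ that is geodesic at $p$ (so $\nabla_{e_i}e_j=0$ there), and complete it to an adapted orthonormal frame $\{e_a\}=\{e_i,\nu_\alpha\}$ of $\mathcal N$ along $F$; since every identity produced is tensorial, it then holds in any frame. Recall $A_{ij}=\bigl(\bar\nabla_{e_i}dF(e_j)\bigr)^{\bot}$, $H=g^{ij}A_{ij}$, and the Gauss--Weingarten relations $\bar\nabla_{e_i}dF(e_j)=dF(\nabla_{e_i}e_j)+A_{ij}$ and $\bar\nabla_{e_i}\nu_\alpha=-dF\bigl(A^\alpha_{ik}g^{kl}e_l\bigr)+\nabla^{\bot}_{e_i}\nu_\alpha$. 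As a preliminary one computes, from $\partial_tF=H$ and the commutativity of coordinate fields, $\partial_tg_{ij}=\langle\bar\nabla_{e_i}H,dF(e_j)\rangle+\langle dF(e_i),\bar\nabla_{e_j}H\rangle=-2\langle H,A_{ij}\rangle$, hence $\partial_tg^{ij}=2\langle H,A^{ij}\rangle$; these will be needed in the last step.

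\textbf{Variation of $A$.} Differentiating $A_{ij}$ in time and commuting $\nabla_{\partial_t}$ past $\bar\nabla_{e_i}$, using $\bar\nabla_{\partial_t}dF(e_j)=\bar\nabla_{e_j}H$, produces the ambient curvature term $\bar R\bigl(H,dF(e_i)\bigr)dF(e_j)$ together with a Hessian term $\bar\nabla_{e_i}\bar\nabla_{e_j}H$. Projecting onto the normal bundle, discarding Christoffel corrections that vanish at $p$, and using the Weingarten relation to split off the tangential part of $\nabla^{\bot}H$, one arrives at an identity of the schematic shape $\partial_tA_{ij}=\nabla^{\bot}_i\nabla^{\bot}_jH+(\text{quadratic terms in }A\text{ contracted with }H)+(\text{ambient }\bar R*A\text{ terms})$, where the quadratic and ambient terms are reorganised using the symmetry of $A$ and the first Bianchi identity.

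\textbf{The curved Simons identity.} The crux is to replace $\nabla^{\bot}_i\nabla^{\bot}_jH$ by the rough Laplacian $\Delta A_{ij}=g^{kl}\nabla^{\bot}_k\nabla^{\bot}_lA_{ij}$. Starting from the Codazzi equation in the curved background, $\nabla^{\bot}_iA_{jk}-\nabla^{\bot}_jA_{ik}=\bigl(\bar R(e_i,e_j)e_k\bigr)^{\bot}$, one differentiates once more, traces over the appropriate pair of indices, and commutes covariant derivatives: on $\mathcal M_t$ the Ricci identity for $\nabla^{\bot}$ injects the intrinsic curvature $R_{ijkl}$ --- rewritten by Gauss's equation as $A*A$ plus $\bar R$ --- and the normal curvature $R^{\bot}_{ij\alpha\beta}$, rewritten similarly via the Ricci equation; on $\mathcal N$ the second Bianchi identity produces the first covariant derivatives $\bar\nabla_k\bar R$ appearing upon differentiating the Codazzi right-hand side. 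Assembling everything yields the four cubic terms $\langle A_{ij},A_{pq}\rangle A_{pq}$, $\langle A_{iq},A_{pq}\rangle A_{pj}$, $\langle A_{jq},A_{pq}\rangle A_{pi}$, $-2\langle A_{ip},A_{jq}\rangle A_{pq}$, the $\bar R*A$ contractions on the second and third lines of \eqref{eqn_A}, and the $\bar\nabla\bar R$ terms on the last line.

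\textbf{Equation for $H$, and the main obstacle.} Tracing \eqref{eqn_A} through $\partial_tH=g^{ij}\partial_tA_{ij}+(\partial_tg^{ij})A_{ij}$: the traced rough Laplacian equals $\Delta H$ up to commutator terms that cancel against part of the traced cubic terms, the extra contribution $2\langle H,A^{ij}\rangle A_{ij}$ coming from $\partial_tg^{ij}$ combines with the remaining cubic terms, and all of these cancel by the symmetry of $A$ except $\langle H,A_{pq}\rangle A_{pq}$; the traced ambient-curvature terms collapse to $H^\alpha\bar R_{k\alpha k\beta}\nu_\beta$, which gives \eqref{eqn_H}. The one genuinely delicate point is the curved Simons identity of the previous step: one must simultaneously track three distinct curvature sources --- the ambient Ricci identity ($\bar R*A$), the Gauss equation (turning intrinsic $R*A$ into $A*A*A$ and $\bar R*A$), and the second Bianchi identity on $\mathcal N$ ($\bar\nabla\bar R$) --- while consistently projecting onto the normal bundle and distinguishing $\nabla^{\bot}$ from $\hat\nabla^{\bot}$, and the sign conventions for $\bar R$ and $A$ must be fixed at the outset, since a single sign slip propagates into every reaction term. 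Since the statement is quoted verbatim from \cite[Section 3]{AnBa10}, one may simply cite that reference, the computation above being its content.
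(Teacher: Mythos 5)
The paper gives no proof of this proposition at all — it is quoted with a bare citation to \cite{AnBa10} (the Riemannian-background version being carried out in \cite{BakerThesis}, Section 5.1) — so there is nothing to compare your computation against except the reference itself. Your outline correctly describes the standard derivation (time variation of $A$ via the Gauss--Weingarten relations, the curved Simons identity combining the Codazzi, Gauss, Ricci and second Bianchi identities, and tracing with $\partial_t g^{ij}=2\langle H,A^{ij}\rangle$ to obtain \eqref{eqn_H}), and your closing observation that one may simply cite the reference is exactly the paper's treatment.
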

\begin{lemma}[\cite{BakerThesis}, Section 5.1]\label{evoleqs}
Let us consider a family of immersions $F\colon \mathcal{M}^n\times [0,T) \to \mathcal{N}^{n+m}$ moving by mean curvature flow. Then, we have the following evolution equations
	\begin{align}
	\partial_t d\mu_t=-|H|^2d\mu_t,
	\end{align}
	\begin{align}\label{eqn_|A|^2}
	\partial_t |A|^2&=\Delta |A|^2-2|\nabla A|^2+2\sum_{\alpha,\beta}\big(\sum_{i,j} A^\alpha_{ij} A_{ij}^\beta \big)^2+2\sum_{i,j,\alpha,\beta}\Big(\sum_p\big(A^\alpha_{ip}A_{jp}^\beta -A^\alpha_{jp}A_{ip}^\beta \big)\Big)^2\nonumber\\
	&+4\sum_{i,j,p,q}\bar{R}_{ipjq}\big(\sum_{\alpha} A^\alpha_{pq}A^\alpha_{ij}\big)-4\sum_{j,k,p}\bar{R}_{kjkp}\big(\sum_{i,\alpha} A^\alpha_{pi}A^\alpha_{ij}\big)+2\sum_{k,\alpha,\beta}\bar{R}_{k\alpha k\beta}\big(\sum_{i,j} A^\alpha_{ij}A_{ij}^\beta \big)\nonumber\\
	&-8\sum_{j,p,\alpha,\beta}\bar{R}_{jp\alpha\beta}\big(\sum_iA^\alpha_{ip}A_{ij}^\beta \big)+2\sum_{i,j,k,\beta}\bar{\nabla}_k\bar{R}_{kij\beta}A_{ij}^\beta -2\sum_{i,j,k,\beta}\bar{\nabla}_i\bar{R}_{jkk\beta}A_{ij}^\beta,
	\end{align}
	\begin{align}\label{eqn_|H|^2}
	\partial_t |H|^2=\Delta |H|^2-2|\nabla H|^2+2\sum_{i,j}\big(\sum_{\alpha} H^\alpha A^\alpha_{ij}\big)^2+2\sum_{k,\alpha,\beta}\bar{R}_{k\alpha k\beta} H^\alpha H^\beta.
	\end{align}
\end{lemma}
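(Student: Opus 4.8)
The plan is to derive the three identities from the flow equation $\partial_tF=H$ together with the evolution equations \eqref{eqn_A} and \eqref{eqn_H} of the preceding Proposition. The starting point is the variation of the induced metric: differentiating $g_{ij}=\langle\partial_iF,\partial_jF\rangle$ along the flow, interchanging $\partial_t$ with the tangential derivatives, and using that $H$ is normal together with the Weingarten relation $\langle\bar\nabla_iH,\partial_jF\rangle=-\langle H,A_{ij}\rangle$, one gets $\partial_tg_{ij}=-2\langle A_{ij},H\rangle$, and therefore $\partial_tg^{ij}=2\langle A^{ij},H\rangle$. Jacobi's formula for the determinant then yields $\partial_t(d\mu_t)=\tfrac12 g^{ij}(\partial_tg_{ij})\,d\mu_t=-\langle g^{ij}A_{ij},H\rangle\,d\mu_t=-|H|^2\,d\mu_t$, which is the first equation.

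For $|H|^2=\langle H,H\rangle$ I would argue directly from \eqref{eqn_H}, which is already written for the vector $H=g^{ij}A_{ij}$ and hence has the metric variation absorbed into it. Since the normal bundle metric is parallel, $\partial_t|H|^2=2\langle\nabla_{\partial_t}H,H\rangle$; substituting \eqref{eqn_H} and using the Bochner-type identity $\tfrac12\Delta|H|^2=\langle\Delta H,H\rangle+|\nabla H|^2$ turns $2\langle\Delta H,H\rangle$ into $\Delta|H|^2-2|\nabla H|^2$, while the reaction term pairs with $H$ to give $2\sum_{i,j}\big(\sum_\alpha H^\alpha A^\alpha_{ij}\big)^2$ and the background term to give $2\sum_{k,\alpha,\beta}\bar R_{k\alpha k\beta}H^\alpha H^\beta$. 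This is \eqref{eqn_|H|^2}.

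For $|A|^2=g^{ik}g^{jl}\langle A_{ij},A_{kl}\rangle$ I would differentiate with the product rule, using $\partial_tg^{ij}=2\langle A^{ij},H\rangle$ for the terms where $\partial_t$ meets an inverse metric, substituting \eqref{eqn_A} for $\nabla_{\partial_t}A_{ij}$, and using $\tfrac12\Delta|A|^2=\langle\Delta A,A\rangle+|\nabla A|^2$ to replace $2\langle\Delta A,A\rangle$ by $\Delta|A|^2-2|\nabla A|^2$. The outcome splits into the Laplacian part, a part quartic in $A$ (coming jointly from the reaction terms $\langle A_{ij},A_{pq}\rangle A_{pq}+\langle A_{iq},A_{pq}\rangle A_{pj}+\langle A_{jq},A_{pq}\rangle A_{pi}-2\langle A_{ip},A_{jq}\rangle A_{pq}$ of \eqref{eqn_A} and from the variation of the metric), and a part linear in $\bar R$ and $\bar\nabla\bar R$. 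Working in a local orthonormal tangent frame and separating the normal-index contractions into their symmetric and antisymmetric parts, the quartic part must then be reassembled into $2\sum_{\alpha,\beta}\big(\sum_{i,j}A^\alpha_{ij}A^\beta_{ij}\big)^2+2\sum_{i,j,\alpha,\beta}\big(\sum_p(A^\alpha_{ip}A^\beta_{jp}-A^\alpha_{jp}A^\beta_{ip})\big)^2$, the high-codimension analogue of Huisken's hypersurface identity, while the curvature part assembles, using the symmetries of $\bar R$ to merge equal contributions, into the displayed $\bar R$ and $\bar\nabla\bar R$ terms of \eqref{eqn_|A|^2}.

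I expect the main obstacle to be exactly that algebraic reassembly in the $|A|^2$ computation: recognising the sum of the metric-variation quartics and the reaction quartics of \eqref{eqn_A} as the two squared expressions above calls for a careful expansion of contractions such as $\langle A_{iq},A_{pq}\rangle\langle A_{pj},A_{ij}\rangle$ and $\langle A_{ip},A_{jq}\rangle\langle A_{pq},A_{ij}\rangle$ in a normal frame, together with an exact accounting of signs and combinatorial factors. By comparison, the $d\mu_t$ and $|H|^2$ identities are short, and the background-curvature contributions in all three are purely mechanical to track.
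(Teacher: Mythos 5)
The paper does not actually prove this lemma; it is quoted from Baker's thesis, and your outline is essentially the standard derivation given there and in Andrews--Baker: the volume identity from $\partial_t g_{ij}=-2\langle A_{ij},H\rangle$ plus Jacobi's formula, and the other two by pairing \eqref{eqn_A} and \eqref{eqn_H} with $A$ and $H$ and converting $2\langle\Delta T,T\rangle$ into $\Delta|T|^2-2|\nabla T|^2$. So the approach is right. One concrete caveat on the $|A|^2$ computation: in the convention of \eqref{eqn_A} the time derivative is the metric-compatible one (Uhlenbeck's trick, which the paper itself invokes in Lemma \ref{B}), so the variation of $g^{ij}$ is already absorbed and the two quartic terms of \eqref{eqn_|A|^2} arise \emph{entirely} from contracting the four reaction terms of \eqref{eqn_A} with $2A_{ij}$ in an orthonormal frame: the first contraction gives $2\langle A_{ij},A_{pq}\rangle\langle A_{pq},A_{ij}\rangle=2\sum_{\alpha,\beta}\big(\sum_{i,j}A^\alpha_{ij}A^\beta_{ij}\big)^2$, while the remaining three, namely $4\langle A_{iq},A_{pq}\rangle\langle A_{pj},A_{ij}\rangle-4\langle A_{ip},A_{jq}\rangle\langle A_{pq},A_{ij}\rangle$, reassemble (after relabelling indices and using the symmetry of $A$) into $2\sum_{i,j,\alpha,\beta}\big(\sum_p(A^\alpha_{ip}A^\beta_{jp}-A^\alpha_{jp}A^\beta_{ip})\big)^2$ with nothing left over. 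Your plan of \emph{additionally} inserting $\partial_t g^{ij}=2\langle A^{ij},H\rangle$ contributions and letting the quartics come ``jointly'' from the reaction terms and the metric variation would therefore double count; work either covariantly throughout or with the naive component derivative throughout, not a mixture.
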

By Berger's inequality,
	\begin{align}\label{Berger}
	&|\bar{R}_{acbc}|\le\frac{1}{2}(K_1+K_2), \ \ \text{ for} \ a\neq b\nonumber\\
	&\\
	&|\bar{R}_{abcd}|\le\frac{2}{3}(K_1+K_2), \ \ \text{for all distinct indices} \ a,b,c,d.\nonumber
	\end{align}
\begin{lemma}[\cite{Liu}, Lemma 3.1]\label{katoinequality} For any $\eta>0$ we have the following inequalities
	\begin{align}\label{part3.1}
	|\nabla^\perp A|^2 \geq\left(\frac{3}{n+2}-\eta\right)&|\nabla^\perp H|^2-\frac{2}{n+2}\left(\frac{2}{n+2} \eta^{-1}-\frac{n}{n-1}\right)|w|^2
	\end{align}
and
	\begin{align}\label{part3.2}
	|\nabla^\perp A|^2-\frac{1}{n}|\nabla^\perp H|^2&\geq \frac{n-1}{2 n+1}|\nabla^\perp A|^2-\frac{2 n}{(n-1)(2 n+1)}|w|^2 \nonumber\\
	&\geq \frac{n-1}{2 n+1}|\nabla^\perp A|^2-C(n, d)\left(K_1+K_2\right)^2.
	\end{align}
Here $w=\sum_{i, j, \alpha} \bar{R}_{\alpha j i j} e_i \otimes \omega_\alpha$ and $C(n, d)=\frac{n^4 d}{2(n-1)(2 n+1)}$.
	\end{lemma}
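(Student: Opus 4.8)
This is the Kato-type inequality of \cite{Liu}, so the argument is purely tensorial and algebraic, resting on the Codazzi equation for a submanifold of a Riemannian ambient space. The plan is as follows. In a Riemannian ambient space the Codazzi equation reads $\nabla^\perp_i A_{jk}-\nabla^\perp_j A_{ik}=(\bar R(e_i,e_j)e_k)^\perp$, so $\nabla^\perp A$ fails to be totally symmetric exactly by the normal part of the ambient curvature. Contracting $i$ with $k$ and using the symmetries of $\bar R$ gives the traced Codazzi identity $\sum_i\nabla^\perp_i A_{ij}=\nabla^\perp_j H+w_j$, which is precisely how the tensor $w$ of the statement enters; I would record this first, since every curvature term below is reduced to $w$ through it.

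Following a Huisken-type computation, introduce the normal-bundle-valued $3$-tensor $E_{ijk}:=\nabla^\perp_i A_{jk}-\frac{1}{n+2}\big(g_{ij}\nabla^\perp_k H+g_{ik}\nabla^\perp_j H+g_{jk}\nabla^\perp_i H\big)$. Expanding $|E|^2$ and evaluating the three cross terms with the traced Codazzi identity (two of them each produce a copy of $\langle w,\nabla^\perp H\rangle$, the third none) yields the exact identity $|E|^2=|\nabla^\perp A|^2-\frac{3}{n+2}|\nabla^\perp H|^2-\frac{4}{n+2}\langle w,\nabla^\perp H\rangle$; moreover $E$ is symmetric and trace-free in its last two indices, and its $(1,2)$-trace $\tau_j:=\sum_i E_{iij}$ equals $w_j$. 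The key algebraic point is the sharp inequality: for a $3$-tensor $E$ with values in an inner-product space that is symmetric and trace-free in the last two indices, $|E|^2\ge\frac{2n}{(n+2)(n-1)}|\tau|^2$. I would prove this by peeling off the explicit optimal piece $E^0_{ijk}=u_ig_{jk}-\frac n2\big(g_{ij}u_k+g_{ik}u_j\big)$ with $u=-\frac{2}{(n+2)(n-1)}\tau$ — the unique tensor of this shape that is trace-free in the last two indices and has the correct $(1,2)$-trace — checking that $E-E^0$ is $L^2$-orthogonal to $E^0$ and that $|E^0|^2=-n\langle u,\tau\rangle=\frac{2n}{(n+2)(n-1)}|\tau|^2$. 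Combined with the identity for $|E|^2$ and $|E|^2\ge|E^0|^2$ this gives $|\nabla^\perp A|^2\ge\frac{3}{n+2}|\nabla^\perp H|^2+\frac{4}{n+2}\langle w,\nabla^\perp H\rangle+\frac{2n}{(n+2)(n-1)}|w|^2$, and Young's inequality in the form $\frac{4}{n+2}\langle w,\nabla^\perp H\rangle\ge-\eta|\nabla^\perp H|^2-\frac{4}{(n+2)^2\eta}|w|^2$ produces \eqref{part3.1} with exactly the stated constants.

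For \eqref{part3.2} I would combine the pre-Young bound with the elementary estimate $|\nabla^\perp H|^2\le n|\nabla^\perp A|^2$ (immediate from $\nabla^\perp_iH=\sum_k\nabla^\perp_iA_{kk}$ and Cauchy--Schwarz): scaling the former by $\frac{n+2}{2n+1}$ and using $\frac{3}{2n+1}-\frac1n=\frac{n-1}{n(2n+1)}\ge0$, what remains is a quadratic form in $\big(|\nabla^\perp H|,|w|\big)$ whose minimum over $|\nabla^\perp H|\ge0$ is $-\frac{2n}{(n-1)(2n+1)}|w|^2$; this is the first inequality of \eqref{part3.2}. The second follows from Berger's inequality \eqref{Berger}: each of the $nd$ components $w^\alpha_i=\sum_j\bar R_{\alpha jij}$ is a sum of at most $n$ terms of type $\bar R_{acbc}$ with $a\ne b$, hence $|w^\alpha_i|\le\frac n2(K_1+K_2)$ and $|w|^2\le\frac{n^3d}{4}(K_1+K_2)^2$, so that $\frac{2n}{(n-1)(2n+1)}|w|^2\le C(n,d)(K_1+K_2)^2$ with $C(n,d)=\frac{n^4d}{2(n-1)(2n+1)}$.

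The only delicate part is pinning down the sharp algebraic constant $\frac{2n}{(n+2)(n-1)}$ together with the trace bookkeeping: one must keep careful account of which contractions of $\nabla^\perp A$ and of $E$ are $\nabla^\perp H$, which are $w$, and which vanish, since the Riemannian Codazzi error feeds in only through those traces. Everything else is a routine, if slightly lengthy, computation.
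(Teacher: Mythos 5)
Your argument is correct and is essentially the paper's proof: your tensor $S+E^0$ (the trace part built from $\nabla^\perp H$ and $w$) is exactly the tensor the paper calls $E$, so your orthogonal-projection inequality $|\nabla^\perp A|^2\ge |E|^2$ and the subsequent Young's inequality reproduce \eqref{part3.1} with the same constants, and your completion of the square for \eqref{part3.2} is computationally identical to the paper's choice $\eta=\tfrac{n-1}{n(n+2)}$ in \eqref{part3.1}. The only difference is that you verify the $L^2$-orthogonality explicitly via the traced Codazzi identity, where the paper simply asserts $\langle E,F\rangle=0$.
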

	\begin{proof}
 Inequality \eqref{part3.2} follows from \eqref{part3.1} with $\eta=\frac{n-1}{n(n+2)}$. To prove \eqref{part3.1}, we set
	\begin{align}\label{E_ijk}
	E_{i j k}&=\frac{1}{n+2}\left(\nabla_i ^\perp H g_{j k}+\nabla_j ^\perp H  g_{i k}+\nabla_k ^\perp H g_{i j}\right)\nonumber\\
	&-\frac{2}{(n+2)(n-1)} w_i g_{j k}+\frac{n}{(n+2)(n-1)}\left(w_j g_{i k}+w_k g_{i j}\right).
	\end{align}
Let $F_{i j k}=\nabla_i ^\perp h_{j k}-E_{i j k}$. By the Codazzi equation we have $\left\langle E_{i j k}, F_{i j k}\right\rangle=0$. Hence, $|\nabla^\perp A|^2 \geq|E|^2$. By a direct computation, we have
	\begin{align*}
	|E|^2=\frac{3}{n+2}|\nabla^\perp H|^2+\frac{2 n}{(n+2)(n-1)}|w|^2+\frac{4}{n+2}\langle\nabla^\perp H, w\rangle.
	\end{align*}
But from Cauchy-Schwarz inequality and Young's inequality for products, we have
	\begin{align*}
	\frac{4}{n+2}\langle\nabla^\perp H,w\rangle\ge-\eta|\nabla^\perp H|^2-\frac{4}{(n+2)^2}\eta^{-1}|w|^2.
	\end{align*}
Plugging the above inequality into \eqref{E_ijk}, we get \eqref{part3.1}.
\end{proof}
\begin{proposition}[\cite{Naff}, Proposition 2.2]
	\begin{align}\label{2.22naff}
|\nabla^{\perp} A|^2 & =\sum_{i,j,k}|\hat{\nabla}_i^{\perp} A^-_{j k}+h_{j k} \nabla_i^{\perp} \nu_1|^2+\sum_{i,j,k}|\langle\nabla_i^{\perp} A^-_{j k}, \nu_1\rangle+\nabla_i h_{j k}|^2. \\
|\nabla^{\perp} H|^2 & =|H|^2|\nabla^{\perp} \nu_1|^2+|\nabla| H||^2. \nonumber\\
|\nabla^{\perp} A^-|^2 & =|\hat{\nabla}^{\perp} A^-|^2+|\langle\nabla^{\perp} A^-, \nu_1\rangle|^2\nonumber.
	\end{align}
\end{proposition}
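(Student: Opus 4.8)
The plan is to prove all three identities by one elementary device: decompose the normal-bundle-valued tensors appearing on the left into their component along the principal direction $\nu_1=H/\abs H$ and their component orthogonal to $\nu_1$, note that the cross terms vanish, and apply the Pythagorean identity. The one fact that makes every cross term vanish is that, since $\abs{\nu_1}^2\equiv 1$ and $\nabla^\perp$ is compatible with the bundle metric, $\langle\nabla^\perp_i\nu_1,\nu_1\rangle=0$ for every $i$; hence $\nabla^\perp_i\nu_1$ lies in the subbundle orthogonal to $\nu_1$, which is precisely the range of the operator $\hat\nabla^\perp$ and of the projection $X\mapsto X-\langle X,\nu_1\rangle\nu_1$ introduced in Section~2. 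I would record this observation first.

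The third identity is then immediate: by the definition of $\hat\nabla^\perp$ we have, for each $i,j,k$, the orthogonal splitting $\nabla^\perp_iA^-_{jk}=\hat\nabla^\perp_iA^-_{jk}+\langle\nabla^\perp_iA^-_{jk},\nu_1\rangle\nu_1$, and since the two summands are orthogonal, $\abs{\nabla^\perp_iA^-_{jk}}^2=\abs{\hat\nabla^\perp_iA^-_{jk}}^2+\abs{\langle\nabla^\perp_iA^-_{jk},\nu_1\rangle}^2$; summing over $i,j,k$ gives the claim. The second identity follows by differentiating $H=\abs H\,\nu_1$ with the Leibniz rule for $\nabla^\perp$: $\nabla^\perp_iH=(\nabla_i\abs H)\nu_1+\abs H\,\nabla^\perp_i\nu_1$, where $\nabla_i\abs H$ is a scalar. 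By the preliminary observation the two terms on the right are orthogonal, so $\abs{\nabla^\perp_iH}^2=(\nabla_i\abs H)^2+\abs H^2\abs{\nabla^\perp_i\nu_1}^2$, and summing in $i$ yields $\abs{\nabla^\perp H}^2=\abs{\nabla\abs H}^2+\abs H^2\abs{\nabla^\perp\nu_1}^2$.

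For the first identity I would start from $A_{jk}=A^-_{jk}+h_{jk}\nu_1$, with $h_{jk}=\langle A_{jk},\nu_1\rangle$ scalar-valued, and apply $\nabla^\perp_i$, using the Leibniz rule on the product $h_{jk}\nu_1$:
\[
\nabla^\perp_iA_{jk}=\nabla^\perp_iA^-_{jk}+(\nabla_ih_{jk})\nu_1+h_{jk}\nabla^\perp_i\nu_1.
\]
Substituting $\nabla^\perp_iA^-_{jk}=\hat\nabla^\perp_iA^-_{jk}+\langle\nabla^\perp_iA^-_{jk},\nu_1\rangle\nu_1$ and regrouping, the terms orthogonal to $\nu_1$ are $\hat\nabla^\perp_iA^-_{jk}+h_{jk}\nabla^\perp_i\nu_1$ — orthogonal to $\nu_1$ because both $\hat\nabla^\perp_iA^-_{jk}$ and $\nabla^\perp_i\nu_1$ are — while the component along $\nu_1$ is $\bigl(\langle\nabla^\perp_iA^-_{jk},\nu_1\rangle+\nabla_ih_{jk}\bigr)\nu_1$. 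Pythagoras applied pointwise and summation over $i,j,k$ then give the stated formula for $\abs{\nabla^\perp A}^2$.

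I do not expect a genuine obstacle here: everything reduces to the orthogonal splitting induced by $\nu_1$ together with the identity $\langle\nabla^\perp\nu_1,\nu_1\rangle=0$. The only point requiring mild care is the bookkeeping in the Leibniz rule for $\nabla^\perp$ applied to the mixed objects $h_{jk}\nu_1$ and $\abs H\,\nu_1$ — keeping straight which factors are scalar-valued and which are normal-bundle-valued — and the verification that, after regrouping, every cross term really is a pairing of something in $\nu_1^\perp$ with a multiple of $\nu_1$ and hence vanishes.
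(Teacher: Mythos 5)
Your proposal is correct and is the standard derivation: the paper states this proposition by citation to Naff without reproducing a proof, and the argument there is exactly the orthogonal splitting with respect to $\nu_1$ that you use, with all cross terms killed by $\langle\nabla^\perp\nu_1,\nu_1\rangle=0$ and by the definition of $\hat\nabla^\perp$. The bookkeeping in your Leibniz-rule computations for $h_{jk}\nu_1$ and $\abs{H}\nu_1$ is right, so nothing further is needed.
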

We will use these identities in Sections 5 and 8. It is very useful to consider the implications of the Codazzi equation for the decomposition of $\nabla_i^{\perp} A_{j k}$ above. Projecting the Codazzi equation onto $E_1$ and $\hat{E}$ implies the both of the tensors
	\begin{align*}
\nabla_i h_{j k}+\langle\nabla_i^{\perp} A^-_{j k}, \nu_1\rangle \quad \text { and } \quad \hat{\nabla}_i^{\perp} A^-_{j k}+h_{j k} \nabla_i^{\perp} \nu_1
	\end{align*}
are symmetric in $i, j, k$. Consequently, it is equivalent to trace over $j, k$ or trace over $i, k$, and this implies
	\begin{align}\label{2.25naff}
& \sum_{k=1}^n (\nabla_k h_{i k}+\langle\nabla_k^{\perp} A^-_{i k}, \nu_1\rangle)=\nabla_i|H|, 
	\end{align}
	\begin{align}\label{2.26naff}
& \sum_{k=1}^n( \hat{\nabla}_k^{\perp} A^-_{i k}+h_{i k} \nabla_k^{\perp} \nu_1)=|H| \nabla_i^{\perp} \nu_1.
	\end{align}
\section{Preservation of the Quadratic Pinching}
This section demonstrates the quadratic pinching condition \eqref{pinching_condition} is preserved throughout the mean curvature flow, for a suitable positive constant $d_n$ that depends on the background curvature. The proof, presented in \cite{Liu}, generalises Huisken's pinching condition \cite{Hu86} to high codimension. As we require a slight refinement of this pinching, we provide the proof for completeness.
\begin{theorem}[\cite{Liu}, Section 3]\label{thm_pinching}
Let $ F\colon\mathcal M^n \rightarrow \mathcal N^{n+m}$ be an $n$-dimensional, smooth, closed and connected submanifold in an $n+m$-dimensional smooth complete Riemannian manifold, such that
	\begin{align}\label{eqn_1}
	\begin{split} 
	 -K_1\leq K_{\mathcal N} \leq K_2, \quad |\bar{\nabla} \bar{R} | \leq L,\quad \inj(\mathcal N) \geq \imath_{\mathcal N}.
	\end{split}
	\end{align}
Then, there is a constant $d_n =d_n(K_1,K_2,L)$ depending only on the dimension $n$, the bounds for the sectional curvature $K_1,K_2$ and the bound for the derivative of the curvature $L$, such that for $ c_n \leq \frac{4}{3n}$
\begin{align} \label{pinching_condition}
|A|^2\leq c_n|H|^2 -d_n
\end{align}
is preserved by the mean curvature flow.
\end{theorem}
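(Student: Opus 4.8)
The plan is to run a preserved-cone argument via the parabolic maximum principle for the quantity $Q := |A|^2 - c_n|H|^2 + d_n$, showing that if $Q \le 0$ initially then it remains so, provided $d_n$ is chosen large enough (in terms of $K_1,K_2,L$ and $n$) to dominate the background curvature contributions. First I would combine the evolution equations \eqref{eqn_|A|^2} and \eqref{eqn_|H|^2} from Lemma \ref{evoleqs} to compute
\begin{align*}
\bigl(\partial_t - \Delta\bigr)\bigl(|A|^2 - c_n|H|^2\bigr) = -2\bigl(|\nabla A|^2 - c_n|\nabla H|^2\bigr) + 2\bigl(R_1 - c_n R_2\bigr) + \mathcal{B},
\end{align*}
where $R_1 = \sum_{\alpha,\beta}(\sum_{i,j}A^\alpha_{ij}A^\beta_{ij})^2 + \sum_{i,j,\alpha,\beta}(\sum_p(A^\alpha_{ip}A^\beta_{jp} - A^\alpha_{jp}A^\beta_{ip}))^2$ is the reaction term, $R_2 = \sum_{i,j}(\sum_\alpha H^\alpha A^\alpha_{ij})^2$, and $\mathcal{B}$ collects every term involving $\bar R$ or $\bar\nabla\bar R$.

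The heart of the argument is the algebraic reaction estimate: one shows that on the boundary of the cone, i.e.\ where $|A|^2 = c_n|H|^2 - d_n$, the "good" gradient term together with $R_1 - c_n R_2$ has a sign that keeps $Q$ from increasing. This is exactly where the constraint $c_n \le \frac{4}{3n}$ enters — it is the sharp threshold (due to Andrews--Baker, refined by Liu in high codimension) making the purely Euclidean reaction term $R_1 - c_n R_2 \le 0$ up to terms absorbed by $-2(|\nabla A|^2 - c_n|\nabla H|^2)$ via the Kato-type inequality of Lemma \ref{katoinequality}. I would invoke the decomposition $A = h\nu_1 + A^-$ and the splittings $|A|^2 = |h|^2 + |A^-|^2$, $|\mathring A|^2 = |\mathring h|^2 + |\mathring A^-|^2$, together with the identity $\mathring A^\alpha = A^\alpha$ for $\alpha \ge 2$, to reduce $R_1 - c_n R_2$ to a manageable combination of $|\mathring h|^2$, $|A^-|^2$ and $|H|^2$; the normal-curvature (commutator) term in $R_1$ is the genuinely higher-codimension complication and is controlled exactly as in \cite{AnBa10,Liu}.

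For the background terms $\mathcal{B}$, I would estimate each $\bar R$-coefficient by Berger's inequality \eqref{Berger}, so $|\mathcal{B}| \le C(n)\bigl((K_1+K_2)|A|^2 + L|A|\bigr) \le C(n)\bigl((K_1+K_2)|A|^2 + L(1+|A|^2)\bigr)$, using $|A| \le 1 + |A|^2$. Since on the cone boundary $Q = 0$ we have $|A|^2 = c_n|H|^2 - d_n \le c_n n|A|^2 - d_n$ — wait, more carefully $|H|^2 \le n|A|^2$, so this only gives an upper bound on $|A|^2$ in terms of itself when $c_n n < 1$, which need not hold; instead I use that along the flow the maximum principle is applied to $Q$ directly and the bad terms are bounded by $C(n)(K_1+K_2+L)(1+|A|^2)$, while the Euclidean reaction is strictly negative and quadratic — specifically $R_1 - c_n R_2 \le -\delta(n)\,|\mathring A|^2\,|A|^2$-type control is \emph{not} available, so the correct mechanism is: choose $d_n$ large enough that on $\{Q = 0\}$ the quantity $|A|^2$ is bounded below away from zero is false too. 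The actual resolution, following \cite{Liu}, is that the extra $+d_n$ (equivalently allowing $d_n > 0$) is what creates room: on $\{Q=0\}$ one has $c_n|H|^2 = |A|^2 + d_n \ge d_n$, and the Euclidean reaction controls $-2(|\nabla A|^2 - c_n|\nabla H|^2) + 2(R_1 - c_nR_2) \le -\theta(n)|H|^2|\mathring A|^2 + \ldots$; I would then show the $\bar R$-terms are bounded by $C(n)(K_1+K_2)(|\mathring A|^2 + \tfrac1n|H|^2) + C(n)L$, and absorb the $|H|^2$-piece using $|H|^2 \ge d_n/c_n$ is large when...

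Let me restart this paragraph cleanly. The correct structure is: write $(\partial_t - \Delta)Q \le -2(|\nabla A|^2 - c_n|\nabla H|^2) + 2(R_1 - c_nR_2) + \mathcal{B}$, use Lemma \ref{katoinequality} and the sharp Euclidean inequality $c_n \le \frac{4}{3n}$ to get $-2(|\nabla A|^2 - c_n|\nabla H|^2) + 2(R_1 - c_nR_2) \le C_1(n)|A|^4$ with the crucial feature that this expression is $\le 0$ whenever $|\mathring A|^2 \le \varepsilon_n |H|^2$, which holds on $\{Q = 0\}$ once $d_n$ forces the pinching; meanwhile $\mathcal{B} \le C_2(n)\big((K_1+K_2)|A|^2 + L(1+|A|^2)\big)$. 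On $\{Q = 0\}$ we have $|A|^2 \le c_n|H|^2$ and $|H|^2 \le |A|^2 \cdot$ (nothing useful) — so finally one uses the \emph{rescaled} pinching: set $\tilde Q = |A|^2/(|H|^2) - c_n$ where $|H|^2 > 0$ is guaranteed, and observe $|H|^2$ cannot be too small on the pinched set because... For the writeup I will follow \cite{Liu} verbatim: the bad terms are dominated provided $d_n \ge C(n)(K_1 + K_2 + L + 1)$, and the maximum principle (applied after noting $|H|^2$ is bounded below on the set $Q = 0$ by the pinching $c_n|H|^2 \ge d_n$, hence $|H|^2 \ge d_n/c_n$, so $(K_1+K_2)|A|^2 + L \le (K_1+K_2)c_n|H|^2 + L \le$ a fixed multiple of $|H|^2$ absorbed by the strictly negative $-\theta(n)|H|^2|\mathring A|^2$ — no, one needs $|\mathring A|^2$ bounded below too).

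**Honest summary of the plan.** I would: (1) derive $(\partial_t - \Delta)Q$ via Lemma \ref{evoleqs}; (2) use the decomposition $A = A^- + h\nu_1$ and the Kato inequality Lemma \ref{katoinequality} to bound the Euclidean gradient-plus-reaction part by a term that is nonpositive on the pinched cone boundary exactly when $c_n \le \frac{4}{3n}$ (this is the Andrews--Baker/Liu computation); (3) bound $\mathcal B$ using Berger \eqref{Berger} by $C(n)\big((K_1+K_2)|A|^2 + L(1+|A|^2)\big)$; (4) on $\{Q = 0\}$, rewrite using $|A|^2 = c_n|H|^2 - d_n$ and the strict negativity margin from step (2) — which is of order $|H|^2 \cdot (\text{pinching deficit})$, itself of order $d_n$ — to absorb the $\mathcal B$-terms provided $d_n = d_n(K_1,K_2,L)$ is chosen large enough; (5) conclude by the parabolic maximum principle that $Q \le 0$ is preserved. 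The main obstacle is step (2)/(4): making the \emph{same} constant $c_n \le \frac{4}{3n}$ work while generating enough strictly-negative margin (of the right homogeneity, quadratic in $|H|$) to absorb the linear-in-$L$ and quadratic-in-$|A|$ background terms — this is precisely the delicate refinement over \cite{Liu} that the paper flags, and it is handled by the choice of $d_n$ together with the observation that on the pinched set $|H|^2$ is comparable to $|A|^2$ and bounded below by $d_n/c_n$.
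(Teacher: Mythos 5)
Your overall strategy coincides with the paper's: apply the maximum principle to $g=|A|^2-c_n|H|^2+d_n$, control the gradient terms with the Kato-type inequality of Lemma \ref{katoinequality} (which is where $c_n\le\frac{4}{3n}<\frac{3}{n+2}$ enters), bound the Euclidean reaction terms by the Andrews--Baker computation, estimate the background terms via Berger's inequality \eqref{Berger}, and choose $d_n$ large. Steps (1), (2), (3) and (5) of your summary are essentially the paper's proof. However, step (4) --- the absorption of the background terms --- is a genuine gap in your write-up: you try several mechanisms (a lower bound on $|A|^2$, a lower bound on $|H|^2\ge d_n/c_n$, a strictly negative term of the form $-\theta(n)|H|^2|\mathring A|^2$) and correctly discard each of them yourself, but never land on the one that works. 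A lower bound $|H|^2\ge d_n/c_n$ alone cannot absorb terms like $(K_1+K_2)|A|^2$, since those are themselves comparable to $|H|^2$; and there is no available negative term requiring $|\mathring A|^2$ to be bounded below.

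The missing idea is the substitution at the zero set. At a point where $g=0$ one has the identity $|H|^2=\frac{|\mathring A|^2+d_n}{c_n-\frac1n}$, and feeding this into the Andrews--Baker reaction estimate yields
\begin{align*}
2R_1-2c_nR_2\le\Big(6-\tfrac{2}{n(c_n-\frac1n)}\Big)|\mathring h|^2|\mathring A^-|^2+\Big(3-\tfrac{2}{n(c_n-\frac1n)}\Big)|\mathring A^-|^4-\tfrac{2c_nd_n}{c_n-\frac1n}|\mathring h|^2-\tfrac{4d_n}{n(c_n-\frac1n)}|\mathring A^-|^2-\tfrac{2d_n^2}{n(c_n-\frac1n)}.
\end{align*}
For $c_n\le\frac{4}{3n}$ the two quartic coefficients are nonpositive, so the useful output is the last three terms: they are \emph{linear} in $|\mathring h|^2$ and $|\mathring A^-|^2$ with coefficients proportional to $d_n$, together with a free constant $-\,c\,d_n^2$ that is \emph{quadratic} in $d_n$. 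The same substitution applied to the Berger bounds shows the background terms are at most $C_1|\mathring h|^2+C_2|\mathring A^-|^2+C_3d_n+C_4$ with $C_i=C_i(n,m,K_1,K_2,L)$ (in particular only \emph{linear} in $d_n$ and in the traceless quantities). Choosing $d_n$ so that $\frac{2c_nd_n}{c_n-1/n}>C_1$, $\frac{4d_n}{n(c_n-1/n)}>C_2$, and $\frac{2d_n^2}{n(c_n-1/n)}>C_3d_n+C_4$ then makes $(\partial_t-\Delta)g<0$ on $\{g=0\}$, and the maximum principle closes the argument. Without this substitution your step (4) does not go through, because the strictly negative margin you need is not of the form ``$|H|^2$ times pinching deficit'' but rather ``$d_n$ times $|\mathring A|^2$ plus $d_n^2$''.
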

\begin{proof}
Set $g=|A|^2-c_n|H|^2+d_n$, where $c_n\le\frac{4}{3n}, d_n>d$ where $d$ is a positive constant to be determined. We compute the evolution equation for $g$ along the mean curvature flow and show if $g=0$ at a point in the space-time, then $(\partial_t-\Delta)g$ is negative at this point. By the maximum principle, the theorem follows. More precisely, by Lemma \ref{evoleqs}, we have
	\begin{align}\label{gafterkato}
	\left(\partial_t-\Delta\right) g&=-2(|\nabla A|^2-c_n|\nabla H|^2)+2R_1-2c_n R_2+P_\alpha,
	\end{align}
 where
	\begin{align*}
	R_1=\sum_{\alpha,\beta}\big(\sum_{i,j} A^\alpha_{ij} A_{ij}^\beta \big)^2+\sum_{i,j,\alpha,\beta}\Big(\sum_p\big(A^\alpha_{ip}A_{jp}^\beta -A^\alpha_{jp}A_{ip}^\beta \big)\Big)^2,
	\end{align*}
	\begin{align*}
	R_2=\sum_{i,j}\big(\sum_{\alpha} H^\alpha A^\alpha_{ij}\big)^2
	\end{align*}
and
	\begin{align}\label{eqn_Pa}
	P_\alpha=I+II+III+IV,
	\end{align}
with
	\begin{align*}
	I=4\sum_{i,j,p,q}\bar{R}_{ipjq}\big(\sum_{\alpha} A^\alpha_{pq}A^\alpha_{ij}\big)-4\sum_{j,k,p}\bar{R}_{kjkp}\big(\sum_{i,\alpha} A^\alpha_{pi}A^\alpha_{ij}\big),
	\end{align*}
	\begin{align*}
	II=2\sum_{k,\alpha,\beta}\bar{R}_{k\alpha k\beta}\big(\sum_{i,j} A^\alpha_{ij}A_{ij}^\beta \big)-2c_n\sum_{k,\alpha,\beta}\bar{R}_{k\alpha k\beta} H^\alpha H^\beta,
	\end{align*}
	\begin{align*}
	III=-8\sum_{j,p,\alpha,\beta}\bar{R}_{jp\alpha\beta}\big(\sum_iA^\alpha_{ip}A_{ij}^\beta \big),
	\end{align*}
	\begin{align*}
	IV=2\sum_{i,j,k,\beta}\bar{\nabla}_k\bar{R}_{kij\beta}A_{ij}^\beta -2\sum_{i,j,k,\beta}\bar{\nabla}_i\bar{R}_{jkk\beta}A_{ij}^\beta.
	\end{align*}
To estimate the reaction terms, it is convenient to work with the traceless part of the second fundamental form $\mathring{A}=A-\frac{1}{n}H Id$. The lengths of $A$ and $\mathring{A}$ are related by
	\begin{align*}
	|\mathring{A}|^2=|A|^2-\frac{1}{n}|H|^2.
	\end{align*}
At the point where $g=0$, that is $|A|^2=c_n|H|^2-d_n$, the mean curvature vector is not zero. We choose a local orthonormal frame $\{\nu_\alpha, 1\le\alpha\le m\}$ for the normal bundle, such that $\nu_1=\frac{H}{|H|}$, the principal normal direction.  For the choice of $\nu_1$, we have $H^+=\tr A^+=|H|$ and for $\alpha\ge 2$ and $H^\alpha=\tr A^\alpha=0$. The traceless second fundamental form can be rewritten as $\mathring{A}=\sum_{\alpha} \mathring{A}^\alpha \nu_{\alpha}$, where
	\begin{align*}
	\mathring{h}=h-\frac{|H|}{n} Id, \ \ \text{ for} \ \ \alpha=1
	\end{align*}
and
	\begin{align*}
	\mathring{A}^-=A^\alpha, \ \ \text{ for} \ \ \alpha\ge 2.
	\end{align*}
We set
	\begin{align*}
	|A|^2=|h|^2+|A^-|^2 \ \ \text{ and} \ \ |\mathring{A}|^2=|\mathring{h}|^2+|\mathring{A}^-|^2.
	\end{align*}
Since $|A|^2=c_n|H|^2-d_n$ at this point, we have $|H|^2=\frac{|\mathring{A}|^2+d_n}{c_n -\frac{1}{n}}$ and from \cite{AnBa10} we see that
	\begin{align}
	\begin{split}
2R_1-2c_n R_2&\le\left(6-\frac{2}{n(c_n-\frac{1}{n})}\right)|\mathring{h}|^2|\mathring{A}^-|^2+\left(3-\frac{2}{n(c_n-\frac{1}{n})}\right)|\mathring{A}^-|^4\\
	&-\frac{2c_n d_n}{c_n-\frac{1}{n}}|\mathring{h}|^2-\frac{4d_n}{n(c_n-\frac{1}{n})}|\mathring{A}^-|^2-\frac{2d_n^2}{n(c_n-\frac{1}{n})}.
\end{split}
	\end{align}
To estimate $I$, for a fixed $\alpha$ we choose a basis for the tangent space $e_i$'s, such that $h$ is diagonal. Denote by $\lambda_i$ and $\mathring{\lambda}_i$ the diagonal entries of $h$ and $\mathring{h}$, respectively. Therefore, $A^\alpha_{ij}=\lambda^\alpha_i\delta_{ij}$.
	\begin{align*}
	I&=4\sum_{i,j,p,q}\bar{R}_{ipjq} A^\alpha_{pq}A^\alpha_{ij}-4\sum_{j,k,p}\bar{R}_{kjkp}\big(\sum_{i,\alpha} A^\alpha_{pi}A^\alpha_{ij}\big)\\
	&=4\sum_{i,p}\bar{R}_{ipip}\big(\lambda^\alpha_i\lambda^\alpha_p-(\lambda^\alpha_i)^2\big)\\
	&=-2\sum_{i,p}\bar{R}_{ipip}\big(\lambda^\alpha_i-\lambda^\alpha_p\big)^2\\
	&\le4nK_1|\mathring{A}^\alpha|^2.
	\end{align*}
Hence, we get
	\begin{align}\label{eqn_I}
	I\le4nK_1(|\mathring{h}|^2+|\mathring{A}^-|^2).
	\end{align}
By the choice of $\nu_1$, we have $II=II_1+II_2+II_3,$ where
	\begin{align*}
	II_1=2\sum_{i,j,k}\bar{R}_{k1 k1}(A_{ij}^+)^2-2c_n\sum_k\bar{R}_{k1 k1}(H^+)^2,
	\end{align*}
	\begin{align*}
	II_2=4\sum_{k,\alpha\ge 2} \bar{R}_{k\alpha k1}\big(\sum_{i,j} A^\alpha_{ij} A_{ij}^+\big)-4c_n\sum_{k,\alpha\ge 2} \bar{R}_{k\alpha k1}H^+H^\alpha,
	\end{align*}
	\begin{align*}
	II_3=2\sum_{k,\alpha,\beta\ge 2}\bar{R}_{k\alpha k\beta}\big(\sum_{i,j}A^\alpha_{ij} A_{ij}^\beta \big)-2c_n\sum_{k,\alpha,\beta\ge 2}\bar{R}_{k\alpha k\beta}H^\alpha H^\beta.
	\end{align*}
Since, $|H|^2=\frac{|\mathring{A}|^2+d_n}{c_n -\frac{1}{n}}$ at that point, we have
	\begin{align}\label{eqn_II1}
	II_1&\le2nK_2|h|^2+2nc_n K_1|H|^2\nonumber\\
	&=2nK_2 |\mathring{h}|^2+2(nc_n K_1+K_2)\frac{|\mathring{A}|^2+d_n}{c_n-\frac{1}{n}}\nonumber\\
	&=\left(2nK_2+\frac{2(nc_n K_1+K_2)}{c_n-\frac{1}{n}}\right)|\mathring{h}|^2+\frac{2(nc_n K_1+K_2)}{c_n-\frac{1}{n}}|\mathring{A}^-|^2+\frac{2(nc_n K_1+K_2)}{c_n-\frac{1}{n}}d_n.
	\end{align}
Since $H^\alpha=0$, for $\alpha\ge 2$, we have the following estimates for $II_2,II_3$.
	\begin{align}\label{eqn_II2}
	II_2&=4\sum_{k,\alpha\ge2} \bar{R}_{k\alpha k1}\big(\sum_{i,j} A^\alpha_{ij} A_{ij}^+\big)\nonumber\\
	&=4\sum_{k,\alpha\ge2}\bar{R}_{k\alpha k1}\big(\sum_{i,j} \mathring{A}^\alpha_{ij}\mathring{A}_{ij}^+\big)\nonumber\\
	&\le(K_1+K_2)\sum_{k,\alpha\ge2}\big(\frac{1}{\rho}\sum_{i,j} (\mathring{A}^\alpha_{ij})^2+\rho\sum_{i,j}(\mathring{A}_{ij}^+)^2\big)\nonumber\\
	&=\rho n(m-1)(K_1+K_2)|\mathring{h}|^2+\frac{n}{\rho}(K_1+K_2)|\mathring{A}^-|^2,
	\end{align}
for any positive constant $\rho$.
	\begin{align}\label{eqn_II3}
	II_3&=2\sum_{k,\alpha,\beta\ge2}\bar{R}_{k\alpha k\beta}\big(\sum_{i,j}A^\alpha_{ij}A_{ij}^\beta \big)\nonumber\\
	&=2\sum_{k,\alpha\ge2}\bar{R}_{k\alpha k\alpha}\big(\sum_{i,j}(A^\alpha_{ij})^2\big)+2\sum_{k,\alpha,\beta\ge2,\alpha\neq\beta}\bar{R}_{k\alpha k\beta}\big(\sum_{i,j} A^\alpha_{ij} A_{ij}^\beta \big)\nonumber\\
	&\le2nK_2|\mathring{A}^-|^2+2\sum_{k,\alpha,\beta\ge2,\alpha\neq\beta}\bar{R}_{k\alpha k\beta}\big(\sum_{i,j}A^\alpha_{ij} A_{ij}^\beta \big)\nonumber\\
	&\le2nK_2|\mathring{A}^-|^2+\sum_{k,\alpha,\beta\ge2,\alpha\neq\beta}|\bar{R}_{k\alpha k\beta}|\sum_{i,j} \big((A^\alpha_{ij})^2+(A_{ij}^\beta)^2\big)\nonumber\\
	&\le2nK_2|\mathring{A}^-|^2+(K_1+K_2)\sum_{i,j,k,\alpha,\beta\ge2,\alpha\neq\beta} \big((A^\alpha_{ij})^2+(A^\beta_{ij})^2\big)\nonumber\\
	&=2nK_2|\mathring{A}^-|^2+n(m-2)(K_1+K_2)|\mathring{h}|^2.
	\end{align}
From \eqref{eqn_II1},\eqref{eqn_II2} and \eqref{eqn_II3}, we get the following estimate for $II$ :
	\begin{align}\label{eqn_II}
	\begin{split}
II&\le\left(2nK_2+\frac{2(nc_n K_1+K_2)}{c_n-\frac{1}{n}}+\big(\rho n(m-1)+n(m-2)\big)(K_1+K_2)\right)|\mathring{h}|^2\\
	&+\left(\frac{2(nc_nK_1+K_2)}{c_n-\frac{1}{n}}+\frac{n}{\rho}(K_1+K_2)+2nK_2\right)|\mathring{A}^-|^2+\frac{2(nc_nK_1+K_2)}{c_n-\frac{1}{n}}d_n.
\end{split}
	\end{align}
For $III$ we have $III=III_1+III_2$, where
	\begin{align*}
	III_1=-16\sum_{j,p,\alpha\ge2}\bar{R}_{jp\alpha 1}\big(\sum_i A^\alpha_{ip}A_{ij}^+\big),
	\end{align*}
	\begin{align*}
	III_2=-8\sum_{j,p,\alpha,\beta\ge2,\alpha\neq\beta}\bar{R}_{jp\alpha\beta}\big(\sum_i A^\alpha_{ip} A_{ij}^\beta \big).
	\end{align*}
We have the following estimates for arbitrary positive constant $\rho$ :
	\begin{align}\label{eqn_III1}
	III_1&=-16\sum_{j,p,\alpha\ge2}\bar{R}_{jp\alpha 1}\sum_i \mathring{A}^\alpha_{ip}\big(\mathring{A}_{ij}^++\frac{|H|}{n}\delta_{ij}\big)\nonumber\\
	&=-16\sum_{j\neq p,\alpha\ge2}\bar{R}_{jp\alpha 1}\sum_i\mathring{A}^\alpha_{ip}\mathring{A}_{ij}^+\nonumber\\
	&\le\frac{16}{3}(K_1+K_2)\sum_{i,j\neq p,\alpha\ge2}\big(\frac{1}{\rho}(\mathring{A}^\alpha_{ip})^2+\rho(\mathring{A}_{ij}^+)^2\big)\nonumber\\
	&=\frac{16}{3}\rho(n-1)(m-1)(K_1+K_2)|\mathring{h}|^2+\frac{16}{3\rho}(n-1)(K_1+K_2)|\mathring{A}^-|^2.
	\end{align}
For the second inequality, we use $\sum_{j,p}\bar{R}_{jp\alpha1}\mathring{A}^\alpha_{jp}=0$, since $\bar{R}_{jp\alpha 1}$ is anti-symmetric for $j,p$ and $\mathring{A}^\alpha_{jp}$ is symmetric for $j,p$.
For any fixed $\beta\ge 2$, we choose $\nu_i$'s, such that $\mathring{A}_{ij}^\beta =\mathring{\lambda}_i^\beta \delta_{ij}$. Then,
	\begin{align}\label{eqn_III2}
	III_2&=-8\sum_{j\neq p,\beta\ge2}\sum_{\alpha\ge2,\alpha\neq\beta} \bar{R}_{jp\alpha\beta}\mathring{A}^\alpha_{jp}\mathring{\lambda}_j^\beta \nonumber\\
	&\le\frac{8}{3}(K_1+K_2)\sum_{\beta\ge2}\Big((n-1)^{\frac{1}{2}}\sum_{j\neq p,\alpha \ge2,\alpha\neq\beta} (\mathring{A}^\alpha_{jp})^2+\frac{1}{(n-1)^{\frac{1}{2}}}\sum_{j\neq p,\alpha\ge2,\alpha\neq\beta}(\mathring{\lambda}_j^\beta)^2\Big)\nonumber\\
	&\le\frac{8}{3}(K_1+K_2)\Big((n-1)^{\frac{1}{2}}(m-2)|\mathring{A}^-|^2+\sum_{\beta\ge2}(n-1)^{\frac{1}{2}}(m-2)|\mathring{A}^\beta|^2\Big)\nonumber\\
	&=\frac{8}{3}(n-1)^{\frac{1}{2}}(m-2)(K_1+K_2)|\mathring{A}^-|^2.
	\end{align}
From \eqref{eqn_III1} and \eqref{eqn_III2}, we have
	\begin{align}\label{eqn_III}
	III&\le\frac{16}{3}\rho(n-1)(m-1)(K_1+K_2)|\mathring{h}|^2\nonumber\\
	&+\left(\frac{16}{3\rho}(n-1)+\frac{8}{3}(n-1)^{\frac{1}{2}}(m-2)\right)(K_1+K_2)|\mathring{A}^-|^2.
	\end{align}
For $IV$, we choose $\nu_i$'s, such that $A_{ij}^+ =\lambda_i \delta_{ij}$. If $K_1+K_2\neq 0$, we have
	\begin{align}\label{eqn_IV}
	IV&=2\sum_{i,k}\bar{\nabla}_k\bar{R}_{kii1}(\lambda_i-\lambda_k)-2\sum_{i,j,k,\beta\ge2}(\bar{\nabla}_k\bar{R}_{kij\beta}-\bar{\nabla}_i\bar{R}_{jkk\beta})\mathring{A}_{ij}^\beta\nonumber\\
	&\le\sum_{i,k}\left(\frac{1}{\theta}(\bar{\nabla}_k\bar{R}_{kii1})^2+\theta(\lambda_i-\lambda_k)^2\right)+\sum_{i,j,k,\beta\ge2}\left(\frac{2}{\vartheta}\big((\bar{\nabla}_k \bar{R}_{kij\beta})^2+(\bar{\nabla}_i\bar{R}_{jkk\beta})^2\big)+\vartheta(\mathring{A}_{ij}^\beta)^2\right)\nonumber\\
	&\le\frac{L^2}{\theta}+\theta|\mathring{h}|^2+\frac{4L^2}{\vartheta}+n\vartheta|\mathring{A}^-|^2,
	\end{align}
for positive constants $\theta,\vartheta$. If $K_1+K_2=0$, then $L=0$ and we may choose $\theta,\vartheta=0$. Combining \eqref{eqn_I},\eqref{eqn_II},\eqref{eqn_III} and \eqref{eqn_IV}, we have
	\begin{align}\label{3.9}
	\left(\partial_t-\Delta\right)g&\le-2(|\nabla A|^2-c_n|\nabla H|^2)+\left(6-\frac{2}{n(c_n-\frac{1}{n})}\right)|\mathring{h}|^2|\mathring{A}^-|^2+\left(3-\frac{2}{n(c_n-\frac{1}{n})}\right)|\mathring{A}^-|^4\nonumber\\
	&-\frac{2c_n d_n}{c_n-\frac{1}{n}}|\mathring{h}|^2-\frac{4d_n}{n(c_n-\frac{1}{n})}|\mathring{A}^-|^2-\frac{2d_n^2}{n(c_n-\frac{1}{n})}\nonumber\\
	&+C_1|\mathring{h}|^2+C_2|\mathring{A}^-|^2+C_3d_n+C_4.
	\end{align}
Here,
	\begin{align*}
	C_1&=4nK_1+2nK_2+\frac{2(nc_nK_1+K_2)}{c_n-\frac{1}{n}}\\
	&+\Big(\rho n(m-1)+n(m-2)+\frac{16}{3}\rho(n-1)(m-1)\Big)(K_1+K_2)+\theta,
	\end{align*}
	\begin{align*}
	C_2&=4nK_1+2nK_2+\frac{2(nc_nK_1+K_2)}{c_n-\frac{1}{n}}\\
	&+\Big(\frac{n}{\rho}+\frac{16}{3\rho}(n-1)+\frac{8}{3}(n-1)^{\frac{1}{2}}(m-2)\Big)(K_1+K_2)+n\vartheta,
	\end{align*}
	\begin{align*}
	C_3=\frac{2(nc_nK_1+K_2)}{c_n-\frac{1}{n}},
	\end{align*}
	\begin{align*}
	C_4=\frac{L^2}{\theta}+\frac{4L^2}{\vartheta}, \ \ \text{ for} \ \ K_1+K_2\neq0 \ \ \text{ and} \ \ C_4=0, \ \ \text{ for} \ \ K_1+K_2=0,
	\end{align*}
   From the Kato type inequality in \eqref{katoinequality}, we have that
	\begin{align}\label{kato_negative}
	-2(|\nabla ^\bot A|^2-c_n|\nabla^\bot H|^2)&\le -2\Big(\frac{3}{n+2}-\eta-c_n \Big)|\nabla ^\bot H|^2\nonumber\\
	&+\Big(\frac{8}{\eta(n+2)^2}-\frac{4n}{(n+2)(n-1)} \Big)|w|^2\nonumber\\
	&\le 0,
	\end{align}
for a suitable positive constant $\eta$. If $K_1+K_2\neq 0$, set
	\begin{align*}
 	d=\max\left\{\frac{C_1}{2c_n}(c_n-\frac{1}{n}),\frac{C_2}{4}n(c_n-\frac{1}{n}),\frac{1}{4}n(c_n-\frac{1}{n})\left(C_3+\sqrt{C_3^2+\frac{8C_4}{n(c_n-\frac{1}{n})}}\right)\right\},
	\end{align*}
with $\rho=\rho=\theta=\vartheta=1$. If $K_1+K_2=0$, set $d=0$. So, if $d_n>d$, we have
	\begin{align*}
	\left(\partial_t-\Delta\right)g<0.
	\end{align*}
Then, by the maximum principle, $|A|^2\le c_n|H|^2-d_n$ is preserved along the mean curvature flow.
\end{proof}

\begin{remark} 
We see that as $K_1,K_2,L \rightarrow 0$ that $d_n \rightarrow 0$. In particular, since any sufficiently small region of a smooth Riemannian manifold is locally Euclidean we see that perturbations of manifolds satisfying $
|A|^2 - c_n|H|^2<0$ in an exponential neighbourhood of any point satisfy this inequality hence there are many submanifolds to which this inequality applies. 
\end{remark}
\section{Gradient Estimate}
This section presents a proof of the gradient estimate for the mean curvature flow. We establish this estimate directly from the quadratic curvature bound $|A|^2 < c_n |H|^2 - d_n$, where $c \leq \frac{4}{3n}$, without relying on the asymptotic cylindrical estimates. In fact, we demonstrate the cylindrical estimates follow as a consequence of the gradient estimates we derive here. These estimates are pointwise gradient estimates that rely solely on the mean curvature (or, equivalently, the second fundamental form) at a point and not on the maximum of curvature, as is the case with more general parabolic-type derivative estimates. Specifically, we obtain
\begin{align*}
\frac{3}{n+2}-c>0.
\end{align*}
This inequality enables us to combine the derivative terms in the evolution equation of $|A|^2-c_n |H|^2+d_n$ with the Kato-type inequality from Lemma \ref{katoinequality}.

\begin{theorem}[cf.\cite{HuSi09}, Section 6]\label{thm_gradient}
Let $ \mathcal{M}_t , t \in [0,T)$ be a closed $n$-dimensional quadratically bounded solution to the mean curvature flow in the Riemannian manifold $\mathcal{N}^{n+m}$, with $n \geq 8$, that is
	\begin{align*}
	|A|^2 - c|H|^2 +d<0, |H| >0
	\end{align*}
with $ c=\frac{1}{n-2}$.
 Then, there exists a constant $ \gamma_1= \gamma_1 (n, \mathcal M_0)$ and a constant $ \gamma_2 = \gamma_2 ( n , \mathcal{M}_0)$, such that the flow satisfies the uniform estimate
	\begin{align*}
	|\nabla A|^2 \leq \gamma_1 |A|^4+\gamma_2,
	\end{align*}
 for every $t\in [0, T)$.
\end{theorem}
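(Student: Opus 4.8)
The plan is a parabolic maximum principle argument for a test function coupling $|\nabla^{\perp}A|^{2}$ to the pinching quantity; I do not distinguish $\nabla A$ from the normal-connection derivative $\nabla^{\perp}A$ of the second fundamental form. Put $P:=c|H|^{2}-|A|^{2}$, which by Theorem~\ref{thm_pinching} satisfies $P\ge\bar d>0$ on $\mathcal M\times[0,T)$ for a fixed $\bar d$ (shrinking $d$ slightly if necessary); in particular $|H|^{2}\ge\bar d/c$, so $|A|$ and $|H|$ are bounded below, $|A|^{2},|H|^{2},P$ are comparable up to dimensional constants, $|H|^{2}>(n-2)|A|^{2}$, and $P\le\tfrac{2}{n(n-2)}|H|^{2}$. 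I would study
\[
g:=|\nabla^{\perp}A|^{2}-\gamma_{1}|H|^{2}P-\gamma_{2}
\]
and show $g\le0$ on $[0,T)$ for suitably large $\gamma_{1}=\gamma_{1}(n,m)$ and $\gamma_{2}=\gamma_{2}(n,m,K_{1},K_{2},L,\mathcal M_{0})$. Since $|H|^{2}P\le c|H|^{4}\le cn^{2}|A|^{4}$, this gives $|\nabla^{\perp}A|^{2}\le cn^{2}\gamma_{1}|A|^{4}+\gamma_{2}$, the assertion after relabelling $\gamma_1$.

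For the evolution, differentiating and tracing \eqref{eqn_A} in the standard Simons fashion — using the Ricci equation $|R^{\perp}|\le|A|^{2}$ to absorb the normal-curvature corrections and the lower bound on $|H|$ for the lower-order ambient terms — gives
\[
(\partial_{t}-\Delta)|\nabla^{\perp}A|^{2}\le-2|\nabla^{\perp2}A|^{2}+c_{A}|A|^{2}|\nabla^{\perp}A|^{2}+C\big(|\nabla^{\perp}A|^{2}+1\big),
\]
with $c_{A}=c_{A}(n,m)$ and $C$ depending also on the ambient geometry (up to $\bar\nabla^{2}\bar R$). From Lemma~\ref{evoleqs}, $(\partial_{t}-\Delta)|A|^{2}=-2|\nabla^{\perp}A|^{2}+2R_{1}+(\mathrm{bg})$ and $(\partial_{t}-\Delta)|H|^{2}=-2|\nabla^{\perp}H|^{2}+2R_{2}+(\mathrm{bg})$, hence
\begin{align*}
(\partial_{t}-\Delta)(|H|^{2}P)&=-2P|\nabla^{\perp}H|^{2}+2|H|^{2}\big(|\nabla^{\perp}A|^{2}-c|\nabla^{\perp}H|^{2}\big)+2PR_{2}\\
&\quad-2|H|^{2}(R_{1}-cR_{2})-2\langle\nabla|H|^{2},\nabla P\rangle+(\mathrm{bg}).
\end{align*}
The hypothesis enters through the middle term: because $c=\tfrac1{n-2}<\tfrac3{n+2}$ (valid for $n\ge5$; $n\ge8$ additionally forces $c\le\tfrac4{3n}$, so Theorem~\ref{thm_pinching} is available), Lemma~\ref{katoinequality}, in the sharp form obtained from the decomposition \eqref{2.22naff}, yields $|\nabla^{\perp}A|^{2}-c|\nabla^{\perp}H|^{2}\ge\delta_{n}|\nabla^{\perp}A|^{2}-C(K_{1}+K_{2})^{2}$ with $\delta_{n}=\delta_{n}(n)>0$. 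Substituting this, bounding $-2P|\nabla^{\perp}H|^{2}$ below by $-\tfrac4{n-2}|H|^{2}|\nabla^{\perp}A|^{2}$ (via $P\le\tfrac2{n(n-2)}|H|^{2}$ and $|\nabla^{\perp}H|^{2}\le n|\nabla^{\perp}A|^{2}$), keeping $2PR_{2}\ge0$, estimating $R_{1}-cR_{2}$ by nonpositive terms plus background and a multiple of $|\mathring A|^{2}|A|^{2}$ (small because $|\mathring A|^{2}\le\tfrac2{n(n-2)}|H|^{2}$, exactly as in the proof of Theorem~\ref{thm_pinching}), absorbing $-2\langle\nabla|H|^{2},\nabla P\rangle$ by Young's inequality, and using $|H|^{2}>(n-2)|A|^{2}$, one obtains $(\partial_{t}-\Delta)(|H|^{2}P)\ge\mu_{n}|A|^{2}|\nabla^{\perp}A|^{2}-C(|A|^{4}+1)$ with $\mu_{n}=\mu_{n}(n)>0$; the positivity of $\mu_{n}$ is precisely where the value $c=\tfrac1{n-2}$ and the hypothesis $n\ge8$ are used.

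Combining, at an interior space-time maximum of $g$ with $g>0$,
\[
0\le(\partial_{t}-\Delta)g\le-2|\nabla^{\perp2}A|^{2}+(c_{A}-\gamma_{1}\mu_{n})|A|^{2}|\nabla^{\perp}A|^{2}+C\big(|\nabla^{\perp}A|^{2}+1\big)+\gamma_{1}C\big(|A|^{4}+1\big).
\]
Choosing $\gamma_{1}>c_{A}/\mu_{n}$ makes the coefficient of $|A|^{2}|\nabla^{\perp}A|^{2}$ negative, and since $|A|^{2}$ is bounded below this negative term also absorbs $C|\nabla^{\perp}A|^{2}$; at the maximum $|\nabla^{\perp}A|^{2}=\gamma_{1}|H|^{2}P+\gamma_{2}$, so $\gamma_{1}C(|A|^{4}+1)\le\gamma_{1}C'(|H|^{2}P+1)$ is absorbed after a further enlargement of $\gamma_{1},\gamma_{2}$. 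Thus $(\partial_{t}-\Delta)g<0$ at such a maximum, which is impossible; choosing finally $\gamma_{2}$ so large that $g\le0$ on $\mathcal M_{0}$ (by compactness) and that the remaining purely background constants are absorbed, the maximum principle gives $g\le0$ on $[0,T)$, which is the claim.

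The main obstacle is the quantitative bookkeeping behind ``$\mu_{n}>0$'': one must check that the single good term from the Kato-improved combination $|\nabla^{\perp}A|^{2}-c|\nabla^{\perp}H|^{2}$ genuinely beats \emph{every} quadratic-gradient term $|A|^{2}|\nabla^{\perp}A|^{2}$ — those from the Simons reaction, from $-2P|\nabla^{\perp}H|^{2}$ and from $-2\langle\nabla|H|^{2},\nabla P\rangle$, together with the normal-curvature corrections — and that $R_{1}-cR_{2}$ is nonpositive up to background and $O(1/n)$ terms. This is why the sharp Kato inequality (through the decomposition \eqref{2.22naff}, not merely $|\nabla^{\perp}H|^{2}\le n|\nabla^{\perp}A|^{2}$) is needed and why $c=\tfrac1{n-2}$ with $n\ge8$ is the right regime: it makes $|\mathring A|^{2}=O(|H|^{2}/n)$, so several reaction terms carry a factor $\tfrac1{n-2}$, and it makes $\delta_{n}(n-2)$ large. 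A secondary, routine difficulty is that $(\partial_{t}-\Delta)|\nabla^{\perp}A|^{2}$ also involves $\bar\nabla^{2}\bar R$, so one works with $\mathcal N$ of bounded geometry and folds those contributions into $\gamma_{2}$ via the lower bound on $|H|$.
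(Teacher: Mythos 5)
Your strategy is built on the same engine as the paper's proof --- the Kato-type inequality of Lemma \ref{katoinequality} applied to $|\nabla^{\perp}A|^{2}-c|\nabla^{\perp}H|^{2}$ with $c=\tfrac{1}{n-2}<\tfrac{3}{n+2}$ --- but the way you deploy it, through the \emph{additive} test function $g=|\nabla^{\perp}A|^{2}-\gamma_{1}|H|^{2}P-\gamma_{2}$, breaks down precisely at the step you flag, namely the positivity of $\mu_{n}$. The product rule for $|H|^{2}P$ produces the cross term $-2\langle\nabla|H|^{2},\nabla P\rangle=-8c|H|^{2}|\nabla|H||^{2}+2\langle\nabla|H|^{2},\nabla|A|^{2}\rangle$, and this is of the \emph{same order} $|A|^{2}|\nabla^{\perp}A|^{2}$ as your good term but with a strictly larger constant. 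Indeed, in the worst case $|\nabla|H||^{2}\approx|\nabla^{\perp}H|^{2}\approx\tfrac{n+2}{3}|\nabla^{\perp}A|^{2}$, so the first piece alone contributes about $-\tfrac{8(n+2)}{3(n-2)}|H|^{2}|\nabla^{\perp}A|^{2}$ (roughly $-4.4\,|H|^{2}|\nabla^{\perp}A|^{2}$ at $n=8$, tending to $-\tfrac{8}{3}|H|^{2}|\nabla^{\perp}A|^{2}$ as $n\to\infty$), whereas the Kato term supplies only $+2\delta_{n}|H|^{2}|\nabla^{\perp}A|^{2}$ with $2\delta_{n}=2-\tfrac{2(n+2)}{3(n-2)}<\tfrac{4}{3}$; the second piece is indefinite and of comparable size. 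Young's inequality cannot rescue this, since any split of $|H||A||\nabla^{\perp}H||\nabla^{\perp}A|$ returns terms of the same order, and enlarging $\gamma_{1}$ does not help either, because the cross term sits inside $(\partial_{t}-\Delta)(|H|^{2}P)$ and therefore scales with $\gamma_{1}$ exactly as the good term does. The net coefficient of $|A|^{2}|\nabla^{\perp}A|^{2}$ in your lower bound is negative, the claimed inequality $(\partial_{t}-\Delta)(|H|^{2}P)\geq\mu_{n}|A|^{2}|\nabla^{\perp}A|^{2}-C(|A|^{4}+1)$ with $\mu_{n}>0$ fails, and the contradiction at the interior maximum never materialises.

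The paper sidesteps this by using the iterated quotient $|\nabla A|^{2}/g^{2}$ with $g=c|H|^{2}-|A|^{2}-d$ a \emph{single} function (so no cross term between two different factors arises), and, more importantly, by letting the Kato gradient term in $(\partial_{t}-\Delta)g$ enter through the quotient-rule contribution $-\tfrac{w}{z^{2}}Z$: this produces the quartic term $-\tfrac{2\kappa_{n}(n+2)}{3}\,|\nabla A|^{4}/g^{3}$, which dominates $c|A|^{2}|\nabla A|^{2}/g^{2}$ as soon as $|\nabla A|^{2}/g$ exceeds a fixed multiple of $g\sim|A|^{2}$ --- no comparison of constants in front of $|A|^{2}|\nabla A|^{2}$ is required --- while all remaining first-order terms are packaged into $\tfrac{2}{z}\langle\nabla\tfrac{w}{z},\nabla z\rangle$, which is harmless for the maximum principle. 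To salvage an argument along your lines you would need to pass to such a quotient (or otherwise arrange for the Kato term to appear quadratically in $|\nabla^{\perp}A|^{2}/g$); note that adding further couplings like $P^{2}$ or $|H|^{4}$ only introduces additional terms of the form $-2|\nabla P|^{2}$ or $-2|\nabla|H|^{2}|^{2}$ with the wrong sign for the lower bound you need. A secondary point: your evolution inequality for $|\nabla^{\perp}A|^{2}$ invokes bounds on $\bar{\nabla}^{2}\bar{R}$, which are not among the paper's hypotheses; the paper's own statement of that inequality quietly has the same issue, so this is not the decisive defect, but it should be flagged rather than assumed.
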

\begin{proof}
We choose here $ \kappa_n = \left( \frac{3}{n+2}-c\right)>0$. Since $n\ge 8$, $\kappa_n$ is strictly positive. We will consider here the evolution equation for
	\begin{align*}
	\frac{|\nabla A|^2}{g^2},
	\end{align*}
where $ g = c|H|^2-|A|^2-d>0$. Since $ |A|^2-c|H|^2 < 0, |H|>0$ and $\mathcal{M}_0$ is compact, there exists an $ \eta(\mathcal{M}_0) >0, C_\eta(\mathcal{M}_0)>0$, so that
	\begin{align}\label{eqn_eta}
	\left( c-\eta \right)|H|^2-|A|^2 \geq C_\eta>0.
	\end{align}
Hence, we set
 	\begin{align*}
	g=c|H|^2-|A|^2\ge\eta|H|^2>\frac{\eta}{c}|A|^2>\e_1|A|^2+\e_2,
	\end{align*}
where $ \e_1 = \frac{\eta}{c}$ and $\e_2>0$. From \eqref{gafterkato} and \eqref{katoinequality} in Theorem \ref{thm_pinching} and a suitable constant $d$, we get
	\begin{align*}
	\partial_t g&= \Delta g-2 \left( c|\nabla H|^2-|\nabla A|^2 \right)+2 \left( c R_2-R_1 \right)+P_{\alpha}\\
	&\geq \Delta g-2 \left(\Big(\frac{3}{n+2}-\eta\Big)^{-1}c-1\right) |\nabla A|^2\\
	 	&\ge\Delta g-2\Big(\frac{n+2}{3}c-1\Big)|\nabla A|^2\\
	&= \Delta g+2\kappa_n \frac{n+2}{3}| \nabla A|^2,
	\end{align*}
for a suitable positive constant $\eta$. The evolution equation for $ |\nabla A|^2 $ is given by  	\begin{align*}
	\Big(\partial_t-\Delta\Big) |\nabla A|^2&\leq-2 |\nabla^2 A|^2+c |A|^2 |\nabla A|^2+d|\nabla A|^2.
	\end{align*}
Let $w,z$ satisfy the evolution equations
	\begin{align*}
	\partial_tw = \Delta w+W , \quad \partial_tz = \Delta z+Z
	\end{align*}
 then, we find
 	\begin{align*}
	\Big(\partial_t-\Delta\Big)\frac{w}{z}&=\frac{2}{z}\left\la \nabla \left( \frac{w}{z}\right) , \nabla z \right\ra+\frac{W}{z}-\frac{w}{z^2} Z\\
	&=2\frac{\la \nabla w , \nabla z \ra}{z^2}-2 \frac{w|\nabla z |^2}{z^3}+\frac{W}{z}-\frac{w}{z^2} Z.
	\end{align*}
Furthermore, for any function $g$, we have by Kato's inequality
	\begin{align*}
	\la \nabla g , \nabla |\nabla A|^2 \ra&\leq 2 |\nabla g| |\nabla^2 A| |\nabla A| \leq \frac{1}{g}|\nabla g |^2 | \nabla A|^2+g |\nabla^2 A|^2.
	\end{align*}
We then get
	\begin{align*}
	-\frac{2}{g}| \nabla^2 A|^2+\frac{2}{g}\left\la \nabla g ,\nabla \left( \frac{|\nabla A|^2}{g}\right) \right\ra \leq-\frac{2}{g}| \nabla^2 A|^2-\frac{2}{g^3}|\nabla g|^2 |\nabla A|^2+\frac{2}{g^2}\la \nabla g ,\nabla |\nabla A|^2 \ra \leq 0.
	\end{align*}
Then, if we let $ w = |\nabla A|^2 $ and $ z = g$, with $W \leq-2 |\nabla^2 A|^2+c |A|^2 |\nabla A|^2+d|\nabla A|^2$ and $Z\geq 2\kappa_n \frac{n+2}{3}| \nabla A|^2 $, we get
	\begin{align*}
	\Big(\partial_t-\Delta\Big) \frac{|\nabla A|^2}{g}&\leq \frac{2}{g}\left\la \nabla g ,\nabla \left( \frac{|\nabla A|^2}{g}\right) \right\ra+\frac{1}{g}(-2 |\nabla^2 A|^2+c |A|^2 |\nabla A|^2 \\
	&+d|\nabla A|^2)-2 \kappa_n \frac{n+2}{3}\frac{|\nabla A|^4}{g^2} \\
	&\leq c |A|^2 \frac{|\nabla A|^2}{g}+d\frac{|\nabla A|^2}{g}-2 \kappa_n \frac{n+2}{3}\frac{|\nabla A|^4}{g^2}.
	\end{align*}
We repeat the above computation with $w = \frac{|\nabla A|^2}{g}, z = g,$
	\begin{align*}
	W\leq c |A|^2 \frac{|\nabla A|^2}{g}+d\frac{|\nabla A|^2}{g}-2 \kappa_n \frac{n+2}{3}\frac{|\nabla A|^4}{g^2}\end{align*}
and $ Z\geq 0$, to get
	\begin{align*}
	\Big(\partial_t-\Delta\Big)\frac{|\nabla A|^2}{g^2}&\leq \frac{2}{g}\left\la \nabla g ,\nabla \left( \frac{|\nabla A|^2}{g^2}\right) \right\ra \\
	&+\frac{1}{g}\left( c |A|^2 \frac{|\nabla A|^2}{g}+d\frac{|\nabla A|^2}{g}-2 \kappa_n \frac{n+2}{3}\frac{|\nabla A|^4}{g^2}\right).
 	\end{align*}
The nonlinearity then is
	\begin{align*}
	\frac{|\nabla A|^2}{g^2} \left( c|A|^2+d-\frac{2 \kappa_n(n+2)}{3}\frac{|\nabla A|^2}{g} \right).
	\end{align*}
Since
	\begin{align*}
	g>\e_1|A|^2+\e_2,
	\end{align*}
there exists a constant $N$, such that
	\begin{align*}
	Ng\ge c|A|^2+d.
	\end{align*}
Hence, by the maximum principle, there exists a constant (with $\eta,\e_1,\e_2$ chosen sufficiently small so that N is sufficiently large, this estimate holds at the initial time), such that
	\begin{align*}
	\frac{|\nabla A|^2}{g^2}\leq \frac{3N}{2 \kappa_n (n+2)}.
	\end{align*}
Therefore, we see there exists a constant $\mathcal{C} = \frac{3N}{2 \kappa_n (n+2)}= \mathcal{C}(n, \mathcal{M}_0) $, such that
	\begin{align*}
	\frac{|\nabla A|^2}{g^2}\leq \mathcal{C}
	\end{align*}
and from the definition of $g$, we get the result of the lemma.
\end{proof}
\begin{theorem} Let $\mathcal{M}_t,t\in[0,T)$ be a solution of the mean curvature flow with surgery and normalised initial data. Then there exists constants $\gamma_3, \gamma_4$ depending only on the dimension, so that
	\begin{align}\label{eqn_HigherOrderGradEstimateA}
	|\nabla^2 A|^2 \leq \gamma_3|A|^6+\gamma_4,
	\end{align}
for any $t\in[0,T)$.
\end{theorem}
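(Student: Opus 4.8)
The plan is to bootstrap the argument of Theorem~\ref{thm_gradient} one derivative higher, treating $|\nabla^2 A|^2$ exactly as $|\nabla A|^2$ was treated there. Keep $g=c|H|^2-|A|^2-d>0$ with $c=\tfrac1{n-2}$; the pinching gives $g\le cn|A|^2$ and $g\ge\e_1|A|^2+\e_2$, and Theorem~\ref{thm_gradient} already gives $|\nabla A|^2\le\mathcal{C}g^2$ and hence $|\nabla g|\le C_0\,g^{3/2}$. I would run the maximum principle on
\[
f=\frac{|\nabla^2 A|^2+N_1\,g\,|\nabla A|^2}{g^3}=\frac{|\nabla^2 A|^2}{g^3}+N_1\frac{|\nabla A|^2}{g^2},
\]
with $N_1=N_1(n)$ large; since the second summand is already $\le N_1\mathcal{C}$, a bound $\sup_{\mathcal{M}_t}f\le\Gamma(n)$ yields $|\nabla^2 A|^2\le\Gamma g^3\le\Gamma(cn)^3|A|^6$, which is the assertion, with $\gamma_4$ absorbing the value of $f$ at $t=0$ and at the surgery times (controlled by the normalisation). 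In the surgery setting one may, and should, replace $g$ by $g+1$ in the denominators to accommodate the low-curvature region; I suppress this below.

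First I would record the evolution inequality for $|\nabla^2 A|^2$: commuting two covariant derivatives through \eqref{eqn_A} and through the heat operator and absorbing the commutator curvature terms gives, schematically,
\[
(\partial_t-\Delta)|\nabla^2 A|^2\le-2|\nabla^3 A|^2+c_1|A|^2|\nabla^2 A|^2+c_2|A|\,|\nabla A|^2\,|\nabla^2 A|+E,
\]
where $c_1,c_2$ depend only on $n$ and $E$ collects the background terms $\bar R*\nabla^2 A*\nabla^2 A$, $\bar\nabla\bar R*\nabla A*\nabla^2 A$, $\bar\nabla^2\bar R*A*\nabla^2 A$, $\bar\nabla^3\bar R*\nabla^2 A$, bounded in terms of $K_1,K_2,L$ and the higher covariant derivatives of $\bar R$. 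The cubic reaction term is where the first-order estimate is essential: Young's inequality with $|A|^2\le cn\,g$ and $|\nabla A|^2\le\mathcal{C}g^2$ gives $c_2|A|\,|\nabla A|^2\,|\nabla^2 A|\le\delta\,g\,|\nabla^2 A|^2+C_\delta\,g^4$, and likewise every term of $E$ is, after dividing by $g^3$, bounded by $\delta f$ plus a constant; the point of the weight $g^{-3}$ and of the first-order estimate is precisely that all error terms then reduce to constant multiples of $g$, of $f$, or of $1$, as in Theorem~\ref{thm_gradient}.

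The decisive point is that $f$ carries a good term of the right size. In the evolution of $N_1\,g\,|\nabla A|^2$ one finds the summand $N_1\,g\,(\partial_t-\Delta)|\nabla A|^2$, which retains the $-2|\nabla^2 A|^2$ that was simply discarded in the proof of Theorem~\ref{thm_gradient}, producing $-2N_1\,g\,|\nabla^2 A|^2$; the remaining summands $N_1|\nabla A|^2(\partial_t-\Delta)g$ and $-2N_1\la\nabla|\nabla A|^2,\nabla g\ra$ are, after Young's inequality and $|\nabla g|\le C_0 g^{3/2}$, controlled by $\tfrac{N_1}{2}g|\nabla^2 A|^2$ plus lower order. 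Hence the numerator $w=|\nabla^2 A|^2+N_1 g|\nabla A|^2$ obeys an inequality of the form $(\partial_t-\Delta)w\le-2|\nabla^3 A|^2+(c_1cn-\tfrac32 N_1)\,g\,|\nabla^2 A|^2+(\text{polynomial in }g)+(\text{const})$, and choosing $N_1$ large makes the $g|\nabla^2 A|^2$-coefficient negative. Dividing by $g^3$ with the quotient identity $(\partial_t-\Delta)\tfrac wz=\tfrac2z\la\nabla(\tfrac wz),\nabla z\ra+\tfrac Wz-\tfrac w{z^2}Z$ from Theorem~\ref{thm_gradient}, the Kato-type inequality $\la\nabla g,\nabla|\nabla^2 A|^2\ra\le2|\nabla g|\,|\nabla^3 A|\,|\nabla^2 A|\le\tfrac1g|\nabla g|^2|\nabla^2 A|^2+g|\nabla^3 A|^2$ absorbs the gradient cross-term into $-2|\nabla^3 A|^2$, while $-\tfrac w{g^6}Z$ (with $Z\ge-6C_0^2g^4$, using $|\nabla g|\le C_0g^{3/2}$) contributes at worst a multiple of $g\,f$; enlarging $N_1$ once more, the surviving reaction of $f$ is $\le g(-c_\ast f+C_\ast)$ with $c_\ast>0$ and $C_\ast$ a constant, so at an interior maximum with $f$ large $\partial_t f<0$. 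The maximum principle then gives $\sup f\le\max\{\Gamma_0,\,C_\ast/c_\ast\}$ for all $t$, and unwinding with $g\le cn|A|^2$ finishes.

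The step I expect to be the main obstacle is, as in Theorem~\ref{thm_gradient}, the bookkeeping of the reaction terms: the argument only closes because the dimension restriction forces $\kappa_n=\tfrac3{n+2}-c>0$ (so the evolution of $g$ has a sign-definite term and Theorem~\ref{thm_gradient} is available), and one must juggle the pinching, the first-order estimate and the Kato inequality simultaneously so that a single good term $-N_1 g|\nabla^2 A|^2$ absorbs several competing reaction terms. The genuinely new feature compared with the Euclidean / codimension-one case is the non-scale-invariant part of $E$ — the constant-coefficient contributions from $\bar\nabla^3\bar R$ and from parabolic scaling, which are not controlled by powers of $g$ when $g$ is small; this is handled by the passage to $g+1$ and by the separate, elementary treatment of the region where $|A|$ is bounded (interior parabolic regularity plus the normalisation), and is the ultimate source of the additive constant $\gamma_4$.
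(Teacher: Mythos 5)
Your argument is correct in outline and rests on the same mechanism as the paper's proof: an auxiliary multiple of $|\nabla A|^2$, suitably weighted, is added to $|\nabla^2 A|^2$ so that its evolution donates the good term $-2|\nabla^2 A|^2$ needed to absorb the reaction term $k_1|A|^2|\nabla^2 A|^2$, and the maximum principle is applied to the resulting quotient. The difference lies in the weights. The paper takes
\begin{equation*}
f=\frac{|\nabla^2 A|^2}{|H|^5}+N\frac{|\nabla A|^2}{|H|^3}+M\frac{|\nabla A|^2}{|H|^7}-\kappa\sqrt{c|H|-|A|^2},
\end{equation*}
whose summands are not scale-consistent; as a result it only reaches $(\partial_t-\Delta)f\le C_5$, and must integrate this in time, using the upper bound $T\le n/(2\max_{\mathcal M_0}|H|^2)$ on the maximal existence time together with the extra $-\kappa\sqrt{\,\cdot\,}$ term (which supplies a good $-\kappa\varepsilon_0|H|^3$) to conclude $f\le C(n)$ and hence $|\nabla^2 A|^2\le c(n)|H|^6+C(n)|H|^5$. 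Your scale-invariant choice $\frac{|\nabla^2 A|^2}{g^3}+N_1\frac{|\nabla A|^2}{g^2}$ closes the maximum principle pointwise: once $N_1$ is large the term $-N_1 g|\nabla^2 A|^2$ in the numerator dominates both the reaction and the $-\frac{w}{g^6}(\partial_t-\Delta)g^3$ contribution (controlled via $|\nabla g|\le C_0 g^{3/2}$, which follows from Theorem \ref{thm_gradient}), so no appeal to the finiteness of $T$ is required. This is a genuine simplification of the time-dependence of the argument, at the cost of invoking the first-order estimate twice (once for $|\nabla A|^2\le\mathcal C g^2$ and once for $|\nabla g|$). Both routes inherit the dependence of the constants on $\mathcal M_0$ from Theorem \ref{thm_gradient}, which the normalisation of the initial data converts into dimensional constants, and both leave the background-curvature error terms and the surgery regions to routine bookkeeping, which you at least acknowledge explicitly.
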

\begin{proof} We have the following evolution equation
		\begin{align*}
\Big(\partial_t-\Delta\Big)|\nabla^2 A|^2 & \leq-2|\nabla^3 A|^2+k_1|A|^2|\nabla^2 A|^2+k_2|A||\nabla A|^2|\nabla^2 A| \\
	& \leq-2|\nabla^3 A|^2+\left(k_1+\frac{k_2}{2}\right)|A|^2|\nabla^2 A|^2+\frac{k_2}{2}|\nabla A|^4 .
	\end{align*}
We now consider the evolution equation of the term $\frac{|\nabla^2 A|^2}{|H|^5}$. Firstly we see
	\begin{align*}
	\partial_t|H|^\alpha &=\Delta|H|^\alpha+\alpha|H|^{\alpha-1}(\partial_t-\Delta)|H| -\alpha(\alpha-1)|H|^{\alpha-2}|\nabla H|^2\\
	&\ge \Delta|H|^\alpha-\alpha(\alpha-1)|H|^{\alpha-2}|\nabla H|^2,
	\end{align*}
since $|\nabla|H||^2\ge|\nabla H|^2$ and $|H|>0$. Therefore, we get
	\begin{align*}
	\Big(\partial_t -\Delta\Big) \frac{|\nabla^2 A|^2}{|H|^5} & \leq \frac{1}{|H|^5}\left(-2|\nabla^3 A|^2+\left(k_1+\frac{k_2}{2}\right)|A|^2|\nabla^2 A|^2+\frac{k_2}{2}|\nabla A|^4\right) \\
	& +\frac{20|H|^3|\nabla^2 A|^2|\nabla H|^2}{|H|^{10}}+\frac{2}{|H|^{10}}\left\langle\nabla|H|^5, \nabla|\nabla^2 A|^2\right\rangle\\
	&-\frac{2|\nabla^2 A|^2}{|H|^{15}}|\nabla| H|^5|^2 .
	\end{align*}
We have the terms
	\begin{align*}
	\frac{20|H|^3|\nabla^2 A|^2|\nabla H|^2}{|H|^{10}}-\frac{2|\nabla^2 A|^2}{|H|^{15}}|\nabla| H|^5|^2 & \leq \frac{20|\nabla^2 A|^2|\nabla H|^2}{|H|^7}-\frac{50|\nabla^2 A|^2|\nabla| H||^2}{|H|^7} \\
	& \leq \frac{20|\nabla^2 A|^2|\nabla H|^2}{|H|^7} .
	\end{align*}
and
	\begin{align*}
	\frac{2}{|H|^{10}}\left\langle\nabla|H|^5, \nabla|\nabla^2 A|^2\right\rangle & =\frac{10\left\langle\nabla|H|, \nabla|\nabla^2 A|^2\right\rangle}{|H|^6} \\
	&\le\frac{20\langle|\nabla H|,|\nabla^2 A||\nabla^3 A|\rangle}{|H|^6}\\
	& \leq \frac{1}{|H|^5}|\nabla^3 A|^2+\frac{100|\nabla H|^2|\nabla^2 A|^2}{|H|^7}.
	\end{align*}
Together with the gradient estimate, Theorem \ref{thm_gradient} this gives the following evolution equation
	\begin{align*}
	\Big(\partial_t-\Delta\Big) \frac{|\nabla^2 A|^2}{|H|^5}& \leq-\frac{|\nabla^3 A|^2}{|H|^5}+k_3 \frac{|\nabla^2 A|^2}{|H|^3}+\frac{120|\nabla H|^2|\nabla^2 A|^2}{|H|^7}+\frac{k_2}{2}\frac{|\nabla A|^4}{|H|^5} \\
	& \leq-\frac{|\nabla^3 A|^2}{|H|^5}+k_4 \frac{|\nabla^2 A|^2}{|H|^3}+C_1 \frac{|\nabla^2 A|^2}{|H|^7}+\frac{k_5|H|^8+C_2}{|H|^5}.
	\end{align*}
Similar computations give us
	\begin{align*}
	& \Big(\partial_t-\Delta\Big) \frac{|\nabla A|^2}{|H|^3} \leq-\frac{|\nabla^2 A|^2}{|H|^3}+\frac{k_6|H|^8+C_3 }{|H|^5}, \\
	& \Big(\partial_t-\Delta\Big) \frac{|\nabla A|^2}{|H|^7} \leq-\frac{|\nabla^2 A|^2}{|H|^7}+\frac{k_7|H|^8+C_4 }{|H|^9} .
	\end{align*}
We now set
	\begin{align*}
	f=\frac{|\nabla^2 A|^2}{|H|^5}+N \frac{|\nabla A|^2}{|H|^3}+M \frac{|\nabla A|^2}{|H|^7}-\kappa \sqrt{c|H|-|A|^2},
	\end{align*}
and so we have
	\begin{align*}
	\Big(\partial_t -\Delta\Big) f & \leq k_4 \frac{|\nabla^2 A|^2}{|H|^3}+k_5|H|^3+C_1 \frac{|\nabla^2 A|^2}{|H|^7}+\frac{C_2}{|H|^5} \\
	& -N \frac{|\nabla^2 A|^2}{|H|^3}+N k_6|H|^3+\frac{N C_3}{|H|^5} \\
	& -\frac{M|\nabla^2 A|^2}{|H|^7}+\frac{k_7 M}{|H|}+\frac{C_4 M}{|H|}-\kappa \varepsilon_0|H|^3 .
	\end{align*}
Therefore, we choose
	\begin{align*}
	N>k_4, \quad \varepsilon_0 \kappa>N k_6+k_5, \quad M>C_1 .
	\end{align*}
Since $H>0$, there exists a constant $\alpha_1$, such that $|H| \geq \alpha_1$ and we find
	\begin{align*}
	\Big(\partial_t-\Delta\Big) f \leq C_5
	\end{align*}
which implies
	\begin{align*}
	\max _{\mathcal{M}_t} f \leq \max _{\mathcal{M}_{t_0}} f+C_5\left(t-t_0\right) .
	\end{align*}
Given the bound on the maximal time of existence, we have
	\begin{align*}
	f \leq C(n),
	\end{align*}
which implies
	\begin{align*}
	|\nabla^2 A|^2 \leq c(n)|H|^6+C(n)|H|^5.
	\end{align*}
Applying the quadratic pinching, we get \eqref{eqn_HigherOrderGradEstimateA}.	
\end{proof}
Higher order estimates on $\left|\nabla^m A\right|$ for all $m$ follow by an analogous method. Furthermore, we derive estimates on the time derivative of the second fundamental form since we have the evolution equation
	\begin{align*}
	\left|\partial_t A\right|=|\Delta A+A * A * A| \leq C|\nabla^2 A|^2+C|A|^3 \leq c_1|A|^3+c_2.
	\end{align*}
\section{Codimension Estimates}
In this section, we want to show in regions of high curvature, the submanifold becomes approximately codimension one in a quantifiable sense. Our goal is to separate the second fundamental form in the principal direction and the second fundamental form in the other directions and compute their evolution equations separately. Later, we find estimates for the reaction and gradient terms as well as for the lower order terms, which appear due to the Riemannian ambient space. Then, we start by computing the evolution equation of the quantity $\frac{|A^-|^2}{f}$, which since in the limit the background space is Euclidean, the result will follow from the maximum principle. The theorem we will prove is the following.
\begin{theorem}\label{Th1}
Let $F: \mathcal{M}^n\times[0, T) \rightarrow \mathcal{N}^{n+m}$ be a smooth solution to mean curvature flow so that
$F_0(p)=F(p, 0)$ is compact and quadratically pinched.
Then $\forall \e>0, \exists H_0 >0$, such that if $f \geq H_0$, then
	\begin{align*}
	\left|A^-\right|^2 \leq \e f+C_{\e}
	\end{align*}
$\forall t \in[0, T)$ where $C_\e=C_{\e}(n, m)$.
\end{theorem}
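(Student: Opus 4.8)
The plan is to run a maximum-principle argument on the quotient $u = \frac{|A^-|^2}{f}$, where $f$ is a suitable power or smooth function of $|H|$ playing the role of the pinching denominator $g = c_n|H|^2 - |A|^2 + d_n$ (comparable to $|H|^2$ by the quadratic pinching), exactly as in the gradient-estimate proof of Theorem~\ref{thm_gradient}. The key technical input is a favourable evolution inequality for $|A^-|^2$: using the decomposition $A = A^- + h\nu_1$ together with the evolution equations \eqref{eqn_A}, \eqref{eqn_|A|^2}, one isolates the reaction terms that act on $A^-$ and shows that, modulo gradient terms controlled by the refined Kato inequality \eqref{2.22naff}--\eqref{2.26naff} and Lemma~\ref{katoinequality}, and modulo lower-order Riemannian terms bounded by $C(K_1+K_2+L)(|A|^2+1)$, one has schematically $(\partial_t - \Delta)|A^-|^2 \le -2|\nabla^\perp A^-|^2 + c\,|A|^2|A^-|^2 + (\text{good negative terms}) + C(|A|^2+1)$. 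First I would record the evolution equation of $f$ (a positive power of $|H|$, or of $g$), which by Lemma~\ref{evoleqs} and the pinching satisfies $(\partial_t - \Delta)f \ge -C|\nabla H|^2/|H| + (\text{reaction giving a definite-sign lower bound})$, the point being that $f \ge \e_1|A|^2 + \e_2$ for constants depending on $\mathcal{M}_0$.

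Next I would compute $(\partial_t - \Delta)\frac{|A^-|^2}{f}$ using the quotient identity from the proof of Theorem~\ref{thm_gradient}, namely
\begin{align*}
\Big(\partial_t - \Delta\Big)\frac{w}{z} = \frac{2}{z}\Big\langle \nabla z, \nabla\Big(\frac{w}{z}\Big)\Big\rangle + \frac{W}{z} - \frac{w}{z^2}Z,
\end{align*}
with $w = |A^-|^2$, $z = f$. The gradient term $-\frac{w}{z^2}|\nabla z|^2$ produced when combining $Z$ with the $|\nabla H|^2$ term, together with the $-2|\nabla^\perp A^-|^2$ term from $W$ and an absorption via Kato's inequality $\langle \nabla f, \nabla |A^-|^2\rangle \le \frac{1}{f}|\nabla f|^2|A^-|^2 + f|\nabla^\perp A^-|^2$, should kill all the gradient terms, exactly as in Theorem~\ref{thm_gradient}. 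What survives is a nonlinearity of the form $\frac{|A^-|^2}{f}\big(c|A|^2 + d - (\text{good factor})\cdot\tfrac{|A^-|^2}{f}\big) + \frac{C(|A|^2+1)}{f}$. The crucial structural feature I would exploit is that the strong reaction term $c|A|^2 \cdot \frac{|A^-|^2}{f}$ is the same size as $\frac{|A^-|^2}{f}$ times $Nf$ (since $f \gtrsim |A|^2$), so it does not force blow-up; this is precisely where the gradient estimate's mechanism is reused. The lower-order term $\frac{C(|A|^2+1)}{f}$ is bounded by a constant since $f \gtrsim |A|^2 + 1$.

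Because the background is only asymptotically Euclidean at small scales (Remark after Theorem~\ref{thm_pinching}, and the program of Sections~6--7), the clean maximum-principle bound $u \le C$ will not hold globally; instead I expect the argument to yield, for each $\e > 0$, that $u = \frac{|A^-|^2}{f} \le \e$ wherever $f \ge H_0(\e)$, by treating the region $\{f < H_0\}$ as a region where $|A^-|^2 \le c_n H_0^2$ is already bounded, and showing that on $\{f \ge H_0\}$ the nonlinearity in $(\partial_t-\Delta)u$ becomes strictly negative once $u$ exceeds $\e$ (the coefficient $c|A|^2 + d - \kappa f \cdot u$ turns negative because $\kappa f u \ge \kappa H_0 \e$ dominates $c|A|^2 + d$ when $H_0$ is large relative to $\e^{-1}$ — here again $|A|^2 \le c_n^{-1}(f + d_n)$). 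Converting $f \ge H_0$ to the cleaner statement $|A^-|^2 \le \e f + C_\e$ is then formal.

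The main obstacle I anticipate is the bookkeeping of the reaction terms acting on $A^-$: in high codimension the term $\sum_{p,q}\langle A_{ip}, A_{jq}\rangle A_{pq}$ and the normal-curvature term $\sum (A_{ip}\times A_{jp} - A_{jp}\times A_{ip})^2$ mix the principal and orthogonal parts, so one must verify that the contribution to $(\partial_t-\Delta)|A^-|^2$ of the "bad" sign is genuinely controlled by $c|A|^2|A^-|^2$ with the right constant $c \le \tfrac{4}{3n}$ (or $\tfrac{1}{n-2}$), using the pinching $|A|^2 \le c_n|H|^2 - d_n$ to bound $|h|^2$ and the splitting $|A|^2 = |h|^2 + |A^-|^2$. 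This is the analogue of the Andrews--Baker reaction-term estimate reproduced in the proof of Theorem~\ref{thm_pinching}, now applied to $|A^-|^2$ rather than to $g$, and getting the algebra to close with a coefficient compatible with $\tfrac{3}{n+2} - c > 0$ is the delicate point. The Riemannian lower-order terms, by contrast, are routine: every one of them is bounded by $C(K_1+K_2+L)(|A|^2+1)$ by Berger's inequality \eqref{Berger} and the curvature-derivative bound, and after division by $f \gtrsim |A|^2+1$ they contribute only to $C_\e$.
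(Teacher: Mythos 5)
There is a genuine gap in the closing step. Your quotient computation and the general shape of the differential inequality for $u=\frac{|A^-|^2}{f}$ match the paper's, but the mechanism you propose for extracting the conclusion cannot produce an arbitrarily small $\e$. In your nonlinearity $c|A|^2+d-\kappa f\,u$, the term $c|A|^2$ is \emph{comparable to} $f$: the strict pinching gives $\e_1|H|^2\le f\le c_n|H|^2$ and $|A|^2\le c_n|H|^2$, hence $c|A|^2$ can be as large as $c'f$ for a fixed $c'>0$. Requiring $\kappa f u> c|A|^2+d$ therefore forces $u>\frac{c'}{\kappa}+\frac{d}{\kappa f}$; enlarging $H_0$ only removes the second summand, never the first, because both $\kappa f u$ and $c|A|^2$ scale linearly in $f$. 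This is exactly the ``trivial'' estimate the paper records at the start of its final argument: the direct maximum principle yields $|A^-|^2\le \frac{c_n}{\delta}f$, i.e.\ the theorem only for $\e$ above a fixed threshold. The same limitation is built into Theorem~\ref{thm_gradient}, whose conclusion is a bound by a fixed constant $\mathcal C(n,\mathcal M_0)$, so transplanting its mechanism verbatim cannot give ``for all $\e>0$''.

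What is missing is the paper's actual engine: a contradiction and rescaling argument. One sets $\e_0=\inf\{\e:\text{the estimate holds}\}$, assumes $\e_0>0$, takes points $(p_k,t_k)$ with $f_k\to\infty$ realizing $\e_0$ in the limit, parabolically rescales so that $f=1$ at the base point (under which the ambient curvature bounds $K_1,K_2,L$ and the constant $d_n$ scale to zero), and uses the gradient estimates together with Langer--Breuning compactness to extract a smooth limit flow in \emph{Euclidean} space on which $\frac{|\widetilde A^-|^2}{\widetilde f}$ attains an interior space-time maximum $\e_0$. On the limit the refined inequality reduces to $(\partial_t-\Delta)\frac{|\widetilde A^-|^2}{\widetilde f}\le 2\big\langle\nabla\frac{|\widetilde A^-|^2}{\widetilde f},\nabla\log\widetilde f\big\rangle-\delta\frac{|\widetilde A^-|^2}{\widetilde f^2}(\partial_t-\Delta)\widetilde f$, all Riemannian terms having scaled away; the \emph{strong} maximum principle forces the ratio to be constant, and the surviving nonpositive term $-\delta\frac{|\widetilde A^-|^2}{\widetilde f^2}(\partial_t-\Delta)\widetilde f$ forces that constant to vanish, contradicting $\e_0>0$. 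Without this rigidity step --- which is also where the extra term $-\delta\frac{|A^-|^2}{f^2}(\partial_t-\Delta)f$ that you did not isolate plays its role, and where the delicate gradient-term comparisons (requiring $n\ge 8$, or the stronger pinching constant for $n=5,6,7$) are needed to keep $\delta>0$ --- the proof does not close.
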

\subsection{The Evolution Equation of $|A^-|^2$}
We start by computing the evolution equation of $|A^-|^2$. We define the tensor $A^-$ by
	\begin{align*}
	A^-(X,Y)=A(X,Y)-\frac{\langle A(X,Y),H\rangle}{|H|^2}H,
	\end{align*}
for vector fields $X,Y$ tangent to $\mathcal{M}_t$. The tensor $A^-$ is well defined, since $|H|>0$. Therefore, we will need to compute the evolution equations of $|A|^2$ and $\frac{|\langle A,H\rangle|^2}{|H|^2}.$ Using \eqref{eqn_|H|^2} and the quotient rule, we have
	\begin{align*}
	\Big(\partial_t&-\Delta\Big)\frac{\sum_{i,j}|\langle A_{ij},H\rangle|^2}{|H|^2}=|H|^{-2}\Big(\partial_t-\Delta\Big)\sum_{i,j}|\langle A_{ij},H\rangle|^2\\
	&+2|H|^{-2}\sum_{k}\Big\langle \nabla_k |H|^2,\nabla_k \frac{\sum_{i,j}|\langle A_{ij},H\rangle|^2}{|H|^2}\Big\rangle\\
	&-|H|^{-4}\sum_{i,j}|\langle A_{ij},H\rangle|^2\Big(-2|\nabla^\bot H|^2+2\sum_{i,j}|\langle A_{ij},H\rangle|^2+2\sum_{k,\alpha,\beta}\bar{R}_{k\alpha k\beta} H^\alpha H^\beta\Big).
	\end{align*}
Before computing the evolution equation of $\sum_{i,j}|\langle A_{ij},H\rangle|^2$, we simplify the other terms. In particular, using $\sum_{i,j}|\langle A_{ij},H\rangle|^2=|H|^2|h|^2$ and
	\begin{align*}
	|\nabla^\bot H|^2=|H|^2|\nabla^\bot \nu_1|^2+|\nabla |H||^2,
	\end{align*}
we write
	\begin{align*}
	2|H|^{-4}\sum_{i,j}|\langle A_{ij},H\rangle|^2|\nabla^\bot H|^2=2|h|^2|\nabla^\bot \nu_1|^2+2|H|^{-2}|h|^2|\nabla |H||^2,
	\end{align*}
	\begin{align*}
	-2|H|^{-4}\sum_{i,j}|\langle A_{ij},H\rangle|^4=-2|h|^4,
	\end{align*}
	\begin{align*}
	2|H|^{-4}\sum_{i,j}|\langle A_{ij},H\rangle|^2\sum_{k,\alpha,\beta}\bar{R}_{k\alpha k\beta} H^\alpha H^\beta=2|h|^2|H|^{-2}\sum_{k,\alpha,\beta}\bar{R}_{k\alpha k\beta}H^\alpha H^\beta.
	\end{align*}
As for the remaining gradient terms, we have
	\begin{align*}
	\nabla_k |H|^2=2\langle \nabla_k^\bot H,H\rangle
	\end{align*}
and
	\begin{align*}
	\nabla_k (|H|^{-2}\sum_{i,j}|\langle A_{ij},H\rangle|^2)=\nabla_k |h|^2=2\sum_{i,j}h_{ij} \nabla_k h_{ij}.
	\end{align*}
Therefore, since $H=|H|\nu_1$ and $\langle\nabla^\bot_k \nu_1,\nu_1\rangle=0$, we have
	\begin{align*}
	2|H|^{-2}\sum_k\Big\langle\nabla_k |H|^2,\nabla_k\frac{\sum_{i,j}|\langle A_{ij},H\rangle|^2}{|H|^2}\Big\rangle&=8|H|^{-2}\sum_{i,j,k}\langle\nabla^\bot_k H,H\rangle h_{ij}\nabla_k h_{ij}\\
	&=8|H|^{-1}\sum_{i,j,k} \nabla_k |H|h_{ij}\nabla_k h_{ij}.
	\end{align*}
To summarise, we have shown so far that
	\begin{align*}
	\Big(\partial_t-\Delta\Big)\frac{\sum_{i,j}|\langle A_{ij},H\rangle|^2}{|H|^2}&=|H|^{-2}\Big(\partial_t-\Delta\Big)\sum_{i,j}|\langle A_{ij},H\rangle|^2-2|h|^4+2|h|^2|\nabla^\bot_k \nu_1|^2\\
	&+2|H|^{-2}|h|^2|\nabla|H||^2+8|H|^{-1}\sum_{i,j,k} \nabla_k |H|h_{ij}\nabla_k h_{ij}\\
	&-2|h|^2|H|^{-2}\sum_{k,\alpha,\beta}\bar{R}_{k\alpha k\beta} H^\alpha H^\beta.
	\end{align*}
For the evolution equation of $\langle A_{ij},H\rangle$, we have the following lemma.
\begin{lemma}\label{B}
The evolution equation of $|\langle A_{ij},H\rangle|^2$ is
	\begin{align*}
	|H|^{-2}\Big(\partial_t-\Delta\Big)|\langle A_{ij},H\rangle|^2&=4|\mathring{h}_{ij}A_{ij}^- |^2+2|R_{ij}^\bot (\nu_1)|^2+4|h|^4-4|H|^{-1}\mathring{h}_{ij}\nabla_k |H|\langle\nabla^\bot_k A_{ij}^- ,\nu_1\rangle\\
	&-4\mathring{h}_{ij}\langle\nabla^\bot_k A_{ij}^- ,\nabla^\bot_k\nu_1\rangle-4|h|^2 |\nabla^\bot_k\nu_1|^2-2|H|^{-2}|h|^2|\nabla|H||^2\\
	&-8|H|^{-1} \nabla_k |H|h_{ij}\nabla_k h_{ij}-2|\nabla h|^2+2B'\\
	&-2|\bar{R}_{ij}(\nu_1)|^2-4\langle\bar{R}_{ij}(\nu_1),\mathring{h}_{ip}A^-_{jp}-\mathring{h}_{jp}A^-_{ip}\rangle,
	\end{align*}
where
	\begin{align*}
	B'&:=2|H|^{-2}\bar{R}_{ipjq}\langle A_{pq},H\rangle\langle A_{ij},H\rangle-2|H|^{-2}\bar{R}_{kjkp} \langle A_{pi},H\rangle\langle A_{ij},H\rangle\\
	&+|H|^{-2}A^\alpha_{ij}\bar{R}_{k\alpha k\beta}H^\beta\langle A_{ij},H\rangle+|H|^{-2}H^\alpha\bar{R}_{k\alpha k\beta} A_{ij}^\beta \langle A_{ij},H\rangle\\
	&-2|H|^{-2}A^\alpha_{jp} \bar{R}_{ip\alpha\beta} H^\beta\langle A_{ij},H\rangle -2|H|^{-2}A^\alpha_{ip}\bar{R}_{jp\alpha\beta} H^\beta \langle A_{ij},H\rangle\\
	&+|H|^{-2}\bar{\nabla}_k \bar{R}_{kij\beta} H^\beta\langle A_{ij},H\rangle-|H|^{-2}\bar{\nabla}_i \bar{R}_{jkk\beta} H^\beta\langle A_{ij},H\rangle.
	\end{align*}
\end{lemma}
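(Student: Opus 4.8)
The plan is to differentiate $\langle A_{ij},H\rangle$ using the Simons-type evolution equations \eqref{eqn_A} and \eqref{eqn_H}, then square and sum over $i,j$. The product rule for the heat operator on a section of the normal bundle gives, for each fixed pair $i,j$,
\[
(\partial_t-\Delta)\langle A_{ij},H\rangle=\langle(\partial_t-\Delta)A_{ij},H\rangle+\langle A_{ij},(\partial_t-\Delta)H\rangle-2\sum_k\langle\nabla^\perp_kA_{ij},\nabla^\perp_kH\rangle,
\]
and then $(\partial_t-\Delta)|\langle A_{ij},H\rangle|^2=2\langle A_{ij},H\rangle(\partial_t-\Delta)\langle A_{ij},H\rangle-2\sum_k|\nabla_k\langle A_{ij},H\rangle|^2$. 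Summing over $i,j$, dividing by $|H|^2$ and inserting \eqref{eqn_A}--\eqref{eqn_H} reduces the task to sorting the resulting terms into three groups: the first-order (gradient) terms, the quartic-in-$A$ reaction terms, and the terms carrying a factor of $\bar R$ or $\bar\nabla\bar R$. The last group is by construction exactly $2B'$: each of the seven ambient-curvature lines of \eqref{eqn_A} is contracted with $H$ and paired with $\langle A_{ij},H\rangle=|H|h_{ij}$, together with the single ambient term of \eqref{eqn_H} contracted against $A_{ij}$ and paired with $h_{ij}$, all normalised by $|H|^{-2}$.

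For the gradient terms I would repeatedly use $H=|H|\nu_1$, $\langle\nu_1,\nabla^\perp_k\nu_1\rangle=0$ and $\langle A^-_{ij},\nu_1\rangle=0$, which give $\nabla_k\langle A_{ij},H\rangle=\nabla_k|H|\,h_{ij}+|H|\nabla_kh_{ij}$, $\nabla^\perp_kH=\nabla_k|H|\,\nu_1+|H|\nabla^\perp_k\nu_1$, $\langle\nabla^\perp_kA_{ij},\nu_1\rangle=\nabla_kh_{ij}-\langle A^-_{ij},\nabla^\perp_k\nu_1\rangle$ and $\langle A^-_{ij},\nabla^\perp_k\nu_1\rangle=-\langle\nabla^\perp_kA^-_{ij},\nu_1\rangle$. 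Expanding $-2\sum_k|\nabla_k\langle A_{ij},H\rangle|^2$ and the cross term $-4\sum_k\langle A_{ij},H\rangle\langle\nabla^\perp_kA_{ij},\nabla^\perp_kH\rangle$, and using $\sum_iA^-_{ii}=0$ to replace $h$ by $\mathring{h}$ wherever it is contracted against an $A^-$, the first-order part collapses to exactly $-2|\nabla h|^2$, $-2|H|^{-2}|h|^2|\nabla|H||^2$, $-8|H|^{-1}\nabla_k|H|\,h_{ij}\nabla_kh_{ij}$, $-4|h|^2|\nabla^\perp_k\nu_1|^2$, $-4\mathring{h}_{ij}\langle\nabla^\perp_kA^-_{ij},\nabla^\perp_k\nu_1\rangle$ and $-4|H|^{-1}\mathring{h}_{ij}\nabla_k|H|\langle\nabla^\perp_kA^-_{ij},\nu_1\rangle$.

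For the reaction terms, the term $\langle H,A_{pq}\rangle A_{pq}$ of \eqref{eqn_H} contracted with $A_{ij}$ and the term $\langle A_{ij},A_{pq}\rangle A_{pq}$ of \eqref{eqn_A} contracted with $H$, each paired with $h_{ij}$, both yield $|\sum_{i,j}h_{ij}A_{ij}|^2=|\sum_{i,j}\mathring{h}_{ij}A^-_{ij}|^2+|h|^4$, which accounts for the $4|\mathring{h}_{ij}A^-_{ij}|^2+4|h|^4$. The three cubic contractions $\langle A_{iq},A_{pq}\rangle A_{pj}$, $\langle A_{jq},A_{pq}\rangle A_{pi}$ and $-2\langle A_{ip},A_{jq}\rangle A_{pq}$, contracted with $H$ and paired with $h_{ij}$, reduce—after index relabelling, the symmetry of $h$, and the splitting $A=A^-+h\nu_1$ in which the pure-$h$ quartic pieces cancel against one another—to $2\big|\sum_p(\mathring{h}_{ip}A^-_{jp}-\mathring{h}_{jp}A^-_{ip})\big|^2$; finally the Ricci equation, written as $R^\perp_{ij}(\nu_1)=\sum_p(\mathring{h}_{ip}A^-_{jp}-\mathring{h}_{jp}A^-_{ip})+\bar R_{ij}(\nu_1)$, turns this into $2|R^\perp_{ij}(\nu_1)|^2-4\langle\bar R_{ij}(\nu_1),\mathring{h}_{ip}A^-_{jp}-\mathring{h}_{jp}A^-_{ip}\rangle-2|\bar R_{ij}(\nu_1)|^2$. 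Collecting the three groups gives the stated identity.

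The main obstacle will be the cubic reaction bookkeeping: verifying that $\langle A_{iq},A_{pq}\rangle A_{pj}+\langle A_{jq},A_{pq}\rangle A_{pi}-2\langle A_{ip},A_{jq}\rangle A_{pq}$, once contracted with $H$ and with $h_{ij}$, has no residual $|h|^4$ or $|\mathring{h}_{ij}A^-_{ij}|^2$ contribution and assembles into a single perfect square of the antisymmetric tensor $\mathring{h}_{ip}A^-_{jp}-\mathring{h}_{jp}A^-_{ip}$ — essentially the codimension-one Simons identity with the $A^-$ directions carried along. Getting the cancellation of the quartic-$h$ terms and the correct recombination with the ambient curvature (so that precisely $-2|\bar R_{ij}(\nu_1)|^2$, not $+2|\bar R_{ij}(\nu_1)|^2$, survives) is where sign and coefficient errors are easiest, and it is the structural heart of the lemma; the gradient expansion and the collection of the $\bar R$-terms into $B'$ are routine.
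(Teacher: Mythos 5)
Your proposal is correct and follows essentially the same route as the paper: apply the product rule for the heat operator to $\langle A_{ij},H\rangle$, insert \eqref{eqn_A}--\eqref{eqn_H}, collect all ambient-curvature contractions into $B'$, identify the first reaction term with $4|h|^4+4|\mathring{h}_{ij}A^-_{ij}|^2$, convert the remaining cubic contractions into $2|\mathring{h}_{ip}A^-_{jp}-\mathring{h}_{jp}A^-_{ip}|^2$ and then into $2|R^\perp_{ij}(\nu_1)|^2-2|\bar R_{ij}(\nu_1)|^2-4\langle\bar R_{ij}(\nu_1),\cdot\rangle$ via the Ricci equation, and expand the gradient terms using $A=A^-+h\nu_1$ and $\nabla^\perp_kH=\nabla_k|H|\nu_1+|H|\nabla^\perp_k\nu_1$. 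The only point the paper makes explicit that you omit is the Uhlenbeck-trick normalization discarding the $\partial_tg^{ij}$ contribution, which is a minor technicality.
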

\begin{proof}
Whenever $h$ is traced with $A^-$ or its derivative, we may replace $h$ with $\mathring{h}$, because $A^-$ is traceless. Also, for simplicity, we avoid the summation notation. To begin with, using \eqref{eqn_A}, we substitute formulas
	\begin{align*}
	\Big\langle\Big(\partial_t-\Delta\Big)^\bot A_{ij},H\Big\rangle&=\langle A_{ij},A_{pq}\rangle \langle A_{pq},H\rangle+\langle A_{iq},A_{pq}\rangle \langle A_{pj},H\rangle+\langle A_{jq},A_{pq}\rangle\langle A_{pi},H\rangle\\
	&-2\langle A_{ip},A_{jq}\rangle \langle A_{pq}, H\rangle+2\bar{R}_{ipjq}\langle A_{pq},H\rangle-\bar{R}_{kjkp} \langle A_{pi},H\rangle-\bar{R}_{kikp}\langle A_{pj},H\rangle\\
	&+A^\alpha_{ij}\bar{R}_{k\alpha k\beta} \langle\nu_\beta, H\rangle-2A^\alpha_{jp}\bar{R}_{ip\alpha\beta}\langle \nu_\beta, H\rangle-2A^\alpha_{ip}\bar{R}_{jp\alpha\beta}\langle\nu_\beta, H\rangle\\
	&+\bar{\nabla}_k \bar{R}_{kij\beta}\langle\nu_\beta, H\rangle-\bar{\nabla}_i \bar{R}_{jkk\beta}\langle\nu_\beta, H\rangle,\\
	\Big\langle A_{ij} ,\Big(\partial_t-\Delta\Big)^\bot H\Big\rangle&=\langle A_{pq}, H\rangle\langle A_{pq},A_{ij}\rangle+H^\alpha\bar{R}_{k\alpha k\beta}\langle \nu_\beta, A_{ij}\rangle.
	\end{align*}
Tracing each of the equations with a copy of $\langle A_{ij},H\rangle$, we get
	\begin{align*}
	\Big\langle\Big(\partial_t-\Delta\Big)^\bot A_{ij},H\Big\rangle\langle A_{ij},H\rangle&=\langle A_{ij},A_{pq}\rangle \langle A_{pq},H\rangle\langle A_{ij},H\rangle+2\langle A_{iq},A_{pq}\rangle \langle A_{pj},H\rangle\langle A_{ij},H\rangle\\
	&-2\langle A_{ip},A_{jq}\rangle \langle A_{pq},H\rangle\langle A_{ij},H\rangle+2\bar{R}_{ipjq}\langle A_{pq},H\rangle\langle A_{ij},H\rangle\\
	&-2\bar{R}_{kjkp} \langle A_{pi},H\rangle\langle A_{ij},H\rangle+A^\alpha_{ij}\bar{R}_{k\alpha k\beta}\langle \nu_\beta, H\rangle\langle A_{ij},H\rangle\\
	&-2A^\alpha_{jp} \bar{R}_{ip\alpha\beta}\langle \nu_\beta, H\rangle\langle A_{ij},H\rangle -2A^\alpha_{ip}\bar{R}_{jp\alpha\beta}\langle \nu_\beta, H\rangle \langle A_{ij},H\rangle\\
	&+\bar{\nabla}_k \bar{R}_{kij\beta}\langle\nu_\beta, H\rangle\langle A_{ij},H\rangle-\bar{\nabla}_i \bar{R}_{jkk\beta}\langle\nu_\beta, H\rangle\langle A_{ij},H\rangle,\\
	\Big\langle A_{ij} ,\Big(\partial_t-\Delta\Big)^\bot H\Big\rangle\langle A_{ij},H\rangle&=\langle A_{pq}, H\rangle\langle A_{pq},A_{ij}\rangle\langle A_{ij},H\rangle+H^\alpha\bar{R}_{k\alpha k\beta}\langle\nu_\beta, A_{ij}\rangle\langle A_{ij},H\rangle.
	\end{align*}
Putting the above equations together and keeping in mind that $\langle \nu_\beta,H\rangle=H^\beta$ we have,
	\begin{align*}
	\Big(\Big(\partial_t-\Delta\Big)\langle A_{ij},H\rangle\Big)\langle A_{ij},H\rangle&=2\langle A_{ij},A_{pq}\rangle \langle A_{pq},H\rangle\langle A_{ij},H\rangle+2\langle A_{iq},A_{pq}\rangle \langle A_{pj},H\rangle\langle A_{ij},H\rangle\\
	&-2\langle A_{ip},A_{jq}\rangle \langle A_{pq},H\rangle\langle A_{ij},H\rangle +2\bar{R}_{ipjq}\langle A_{pq},H\rangle\langle A_{ij},H\rangle\\
	&-2\bar{R}_{kjkp} \langle A_{pi},H\rangle\langle A_{ij},H\rangle+A^\alpha_{ij}\bar{R}_{k\alpha k\beta} H^\beta\langle A_{ij},H\rangle\\
	&-2A^\alpha_{jp} \bar{R}_{ip\alpha\beta} H^\beta\langle A_{ij},H\rangle -2A^\alpha_{ip}\bar{R}_{jp\alpha\beta} H^\beta \langle A_{ij},H\rangle\\
	&+\bar{\nabla}_k \bar{R}_{kij\beta} H^\beta\langle A_{ij},H\rangle-\bar{\nabla}_i \bar{R}_{jkk\beta}H^\beta\langle A_{ij},H\rangle\\
	&+H^\alpha\bar{R}_{k\alpha k\beta} A_{ij}^\beta \langle A_{ij},H\rangle-2\langle \nabla^\bot_k A_{ij},\nabla^\bot_k H\rangle\langle A_{ij},H\rangle.
	\end{align*}
Define
	\begin{align*}
	B&:=2\bar{R}_{ipjq}\langle A_{pq},H\rangle\langle A_{ij},H\rangle-2\bar{R}_{kjkp} \langle A_{pi},H\rangle\langle A_{ij},H\rangle\\
	&+A^\alpha_{ij}\bar{R}_{k\alpha k\beta} H^\beta\langle A_{ij},H\rangle+H^\alpha\bar{R}_{k\alpha k\beta} A_{ij}^\beta \langle A_{ij},H\rangle\\
	&-2A^\alpha_{jp} \bar{R}_{ip\alpha\beta} H^\beta\langle A_{ij},H\rangle -2A^\alpha_{ip}\bar{R}_{jp\alpha\beta} H^\beta \langle A_{ij},H\rangle\\
	&+\bar{\nabla}_k \bar{R}_{kij\beta} H^\beta \langle A_{ij},H\rangle-\bar{\nabla}_i \bar{R}_{jkk\beta}H^\beta\langle A_{ij},H\rangle.
	\end{align*}
We use the Uhlenbeck's trick to suppose that we are in an orthogonal frame. That is, suppose $g^{ij}=\delta_{ij}$ remains orthogonal along the flow. More precisely, for any $e_i,e_j$ orthonormal, we have
	\begin{align*}
	\partial_t g^{ij}=\partial_t \langle e_i,e_j\rangle=0.
	\end{align*}
Therefore, excluding the time derivative of the inverse of the metric, which is the term
	\begin{align*}
	2\big(\partial_tg^{ij}\big)g^{pq}\langle A_{ip},H\rangle\langle A_{jq},H\rangle,
	\end{align*}
we have
	\begin{align}\label{eqn_AH}
	\Big(\partial_t-\Delta\Big)|\langle A_{ij},H\rangle|^2&=2\Big(\Big(\partial_t-\Delta\Big)\langle A_{ij},H\rangle\Big)\langle A_{ij},H\rangle-2|\nabla\langle A_{ij},H\rangle|^2\nonumber\\
	&=4\langle A_{ij},A_{pq}\rangle \langle A_{pq},H\rangle\langle A_{ij},H\rangle+4\langle A_{iq},A_{pq}\rangle \langle A_{pj},H\rangle\langle A_{ij},H\rangle\nonumber\\
	&-4\langle A_{ip},A_{jq}\rangle \langle A_{pq},H\rangle\langle A_{ij},H\rangle-4\langle\nabla^\bot_k A_{ij},\nabla_k^\bot H\rangle\langle A_{ij},H\rangle\nonumber\\
	&-2|\nabla\langle A_{ij},H\rangle|^2+2B.
	\end{align}
To finish the proof, we multiply $|H|^{-2}$ and then rewrite each of the remaining terms using $A=A^-+h\nu_1$. For the first term on the first line of \eqref{eqn_AH}, we have
	\begin{align}\label{eqn2}
	4|H|^{-2}\langle A_{ij},A_{pq}\rangle \langle A_{pq},H\rangle\langle A_{ij},H\rangle&=4|H|^{-2}|H|^2h_{ij} h_{pq}\langle A_{ij},A_{pq}\rangle\nonumber\\
	&=4|h|^4+4h_{ij} h_{pq}\langle A_{ij}^- ,A_{pq}^- \rangle\nonumber\\
	&=4|h|^4+4\mathring{h}_{ij}\mathring{h}_{pq}\langle A_{ij}^- ,A_{pq}^- \rangle\nonumber\\
	&=4|h|^4+4|\mathring{h}_{ij}A_{ij}^- |^2.
	\end{align}
Also, B can be rewritten as
	\begin{align*}
	B'&:=2|H|^{-2}\bar{R}_{ipjq}\langle A_{pq},H\rangle\langle A_{ij},H\rangle-2|H|^{-2}\bar{R}_{kjkp} \langle A_{pi},H\rangle\langle A_{ij},H\rangle\\
	&+|H|^{-2}A^\alpha_{ij}\bar{R}_{k\alpha k\beta}H^\beta\langle A_{ij},H\rangle+|H|^{-2}H^\alpha\bar{R}_{k\alpha k\beta} A_{ij}^\beta \langle A_{ij},H\rangle\\
	&-2|H|^{-2}A^\alpha_{jp} \bar{R}_{ip\alpha\beta} H^\beta\langle A_{ij},H\rangle -2|H|^{-2}A^\alpha_{ip}\bar{R}_{jp\alpha\beta} H^\beta \langle A_{ij},H\rangle\\
	&+|H|^{-2}\bar{\nabla}_k \bar{R}_{kij\beta} H^\beta\langle A_{ij},H\rangle-|H|^{-2}\bar{\nabla}_i \bar{R}_{jkk\beta} H^\beta\langle A_{ij},H\rangle.
	\end{align*}
In higher codimension, the fundamental Gauss, Codazzi and Ricci equations on Riemannian manifold in local frame take the form
	\begin{align*}
	R_{ijpq}=\bar{R}_{ijpq}+A^\alpha_{ip}A^\alpha_{jq}-A^\alpha_{jp}A^\alpha_{iq},
	\end{align*}
	\begin{align*}
	(\nabla^\bot_i A)^\alpha_{jp}-(\nabla^\bot_j A)^\alpha_{ip}=-\bar{R}_{ijp\alpha},
	\end{align*}
and
	\begin{align*}
	R^\bot_{ij\alpha\beta}=\bar{R}_{ij\alpha\beta}+A^\alpha_{ip}A^\beta_{jp}-A^\beta_{ip}A^\alpha_{jp}.
	\end{align*}
Define a vector-valued version of the normal curvature by
	\begin{align}
	R^\bot_{ij}(\nu_\alpha)=R^\bot_{ij\alpha\beta}\nu_\beta= \bar{R}_{ij\alpha\beta}\nu_\beta+A^\alpha_{ip}A^\beta_{jp}-A^\beta_{ip}A^\alpha_{jp}\nu_\beta.
	\end{align}
In particular, we note that $R^\bot_{ij}(\nu_1)=\bar{R}_{ij}(\nu_1)+h_{ip}A^\beta_{jp}-{h}_{jp}A^\beta_{ip}$, which in view of
	\begin{align*}
	A_{ij}=A^-_{ij}+h_{ij}\nu_1=A^-_{ij}+\mathring{h}_{ij}\nu_1+\frac{1}{n}|H|g_{ij}\nu_1,
	\end{align*}
gives
	\begin{align}\label{eqn_Rbot}
	R^\bot_{ij}(\nu_1)=\bar{R}_{ij}(\nu_1)+\mathring{h}_{ip}A^-_{jp}-\mathring{h}_{jp}A^-_{ip}.
	\end{align}
For the difference of second and third term of \eqref{eqn_AH}, we notice the resemblance to $|R_{ij}^\bot (\nu_1)|^2$ in \eqref{eqn_Rbot}. We compute
	\begin{align}\label{eqn3}
	|\mathring{h}_{ip}A_{jp}^- -\mathring{h}_{jp}A_{ip}^- |^2&=|h_{ip}A_{jp} -h_{jp}A_{ip}|^2=\langle h_{ip}A_{jp} -h_{jp}A_{ip} ,h_{iq}A_{jq} -h_{jq}A_{iq}\rangle\nonumber\\
	&=2h_{ip}h_{iq}\langle A_{jp},A_{jq}\rangle-2h_{ip}h_{jq}\langle A_{jp},A_{iq}\rangle\nonumber\\
	&=2|H|^{-2}\big(\langle A_{jp},A_{jq}\rangle\langle A_{ip},H\rangle\langle A_{iq},H\rangle-\langle A_{jp},A_{iq}\rangle\langle A_{ip},H\rangle\langle A_{jq},H\rangle\big).
	\end{align}
Therefore,
	\begin{align*}
	|R_{ij}^\bot (\nu_1)|^2&= |\bar{R}_{ij}(\nu_1)|^2+2|H|^{-2}\big(\langle A_{jp},A_{jq}\rangle\langle A_{ip},H\rangle\langle A_{iq},H\rangle-\langle A_{jp},A_{iq}\rangle\langle A_{ip},H\rangle\langle A_{jq},H\rangle\big)\\
	&+2\langle \bar{R}_{ij}(\nu_1),\mathring{h}_{ip}A^-_{jp}-\mathring{h}_{jp}A^-_{ip}\rangle.
	\end{align*}
After reindexing (e.g. $j \to p\to q\to i\to j$ on the second term and $j\to i\to q\to j, p\to p$ on the third term), this gives
	\begin{align*}
	2|R_{ij}^\bot (\nu_1)|^2&=2|\bar{R}_{ij}(\nu_1)|^2+4|H|^{-2}\big(\langle A_{ip},A_{pq}\rangle\langle A_{jq},H\rangle\langle A_{ij},H\rangle-\langle A_{ip},A_{jq}\rangle\langle A_{pq},H\rangle\langle A_{ij},H\rangle\big)\\
	&+4\langle\bar{R}_{ij}(\nu_1),\mathring{h}_{ip}A^-_{jp}-\mathring{h}_{jp}A^-_{ip}\rangle.
	\end{align*}
Thus, we have shown the reaction terms of our lemma statement are correct. For the gradient terms, it follows from the identities
	\begin{align*}
	\langle\nabla^\bot_k A_{ij},\nu_1\rangle=\langle\nabla^\bot_k A_{ij}^- ,\nu_1\rangle+\nabla_k h_{ij},
	\end{align*}
	\begin{align}\label{eqn4}
	\langle\nabla^\bot_k A_{ij},\nabla^\bot_k \nu_1\rangle=\langle\nabla^\bot_k A_{ij}^- ,\nabla^\bot_k \nu_1\rangle+h_{ij}|\nabla_k^\bot \nu_1|^2,
	\end{align}
	\begin{align*}
	\nabla^\bot_k H=\nabla_k |H|\nu_1+|H|\nabla^\bot_k \nu_1.
	\end{align*}
Therefore, we have
	\begin{align}\label{eqn5}
	-4|H|^{-2}\langle\nabla^\bot_k A_{ij},\nabla^\bot_k H\rangle\langle A_{ij},H\rangle&=-4|H|^{-1}h_{ij} \nabla_k |H|\langle\nabla^\bot_k A_{ij},\nu_1\rangle-4|H|^{-1}h_{ij}\langle\nabla^\bot_k A_{ij},\nabla^\bot_k \nu_1\rangle\nonumber\\
	&=-4|H|^{-1}\mathring{h}_{ij}\nabla_k |H|\langle\nabla^\bot_k A_{ij}^- ,\nu_1\rangle-4|H|^{-1}h_{ij} \nabla_k |H|\nabla_k h_{ij}\nonumber\\
	&-4\mathring{h}_{ij}\langle\nabla^\bot_k A_{ij}^- ,\nabla^\bot_k \nu_1\rangle-4|h|^2|\nabla_k^\bot\nu_1|^2,\nonumber\\
	&\\
	-2|H|^{-2}|\nabla\langle A_{ij},H\rangle|^2&=-2|H|^{-2}|\nabla(|H|h_{ij})|^2\nonumber\\
	&=-2|H|^{-2}|h|^2|\nabla|H||^2-2|\nabla h|^2-4|H|^{-1}h_{ij} \nabla_k |H|\nabla_k h_{ij}\nonumber,
	\end{align}
since $A^-_{ii}=0$, meaning that it's trace free. Combining \eqref{eqn2}-\eqref{eqn5}, we get the desired result.
\end{proof}
Substituting the result of the above lemma into our equation for the evolution of $|H|^{-2}\sum_{i,j}|\langle A_{ij},H\rangle|^2$ and combining like terms, we have
	\begin{align*}
	\Big(\partial_t-\Delta\Big)\frac{\sum_{i,j}|\langle A_{ij},H\rangle|^2}{|H|^2}&=4\sum_{i,j}|\mathring{h}_{ij}A_{ij}^- |^2+2\sum_{i,j}|R_{ij}^\bot (\nu_1)|^2+2|h|^4\\
	&-4|H|^{-1}\sum_{i,j,k}\mathring{h}_{ij}\nabla_k |H|\langle\nabla^\bot_k A_{ij}^- ,\nu_1\rangle-4\sum_{i,j,k}\mathring{h}_{ij}\langle\nabla^\bot_k A_{ij}^- ,\nabla^\bot_k\nu_1\rangle\\
	&-2|h|^2\sum_k  |\nabla^\bot_k\nu_1|^2-2|\nabla h|^2+2B'-2|h|^2|H|^{-2}\sum_{k,\alpha,\beta}\bar{R}_{k\alpha k\beta} H^\alpha H^\beta\\
	&-2\sum_{i,j}|\bar{R}_{ij}(\nu_1)|^2-4\sum_{i,j,p}\langle \bar{R}_{ij}(\nu_1),\mathring{h}_{ip}A^-_{jp}-\mathring{h}_{jp}A^-_{ip}\rangle.
	\end{align*}
We negate the expression above, add in the evolution equation of $|A|^2$ and use \eqref{eqn_Pa} to get
	\begin{align*}
	\Big(\partial_t-\Delta\Big)|A^-|^2&=-2|\nabla^\bot A|^2+2\sum_{i,j,p,q}|\langle A_{ij},A_{pq}\rangle|^2+2\sum_{i,j}|R_{ij}^\bot |^2+\Big(P_\alpha-2B'\Big)\\
	&-4\sum_{i,j}|\mathring{h}_{ij}A_{ij}^- |^2-2\sum_{i,j}|R_{ij}^\bot (\nu_1)|^2-2|h|^4+4|H|^{-1}\sum_{i,j,k}\mathring{h}_{ij}\nabla_k |H|\langle\nabla^\bot_k A_{ij}^- ,\nu_1\rangle\\
	&+4\sum_{i,j,k}\mathring{h}_{ij}\langle\nabla^\bot_k A_{ij}^- ,\nabla^\bot_k \nu_1\rangle+2|\nabla h|^2+2|h|^2\sum_k |\nabla^\bot_k\nu_1|^2\\
	&+2|h|^2|H|^{-2}\sum_{k,\alpha,\beta}\bar{R}_{k\alpha k\beta} H^\alpha H^\beta+2\sum_{i,j}|\bar{R}_{ij}(\nu_1)|^2\\
	&+4\sum_{i,j,p}\langle\bar{R}_{ij}(\nu_1),\mathring{h}_{ip}A^-_{jp}-\mathring{h}_{jp}A^-_{ip}\rangle.
	\end{align*}
Taking the term $2|H|^{-2}H^\alpha\sum_{k,\alpha,\beta}\bar{R}_{k\alpha k\beta} A_{ij}^\beta \langle A_{ij},H\rangle$ out of $2B'$ and the last term of the evolution equation of $\frac{\sum_{i,j}|\langle A_{ij},H\rangle|^2}{|H|^2}$, we have
	\begin{align*}
	&2|H|^{-2}\sum_{i,j,k,\alpha,\beta}\bar{R}_{k\alpha k\beta}H^\alpha A_{ij}^\beta \langle A_{ij},H\rangle-2|h|^2|H|^{-2}\sum_{k,\alpha,\beta} \bar{R}_{k\alpha k\beta} H^\alpha H^\beta\\
	&=2|H|^{-2}\sum_{i,j,k,\alpha,\beta}\bar{R}_{k\alpha k\beta} H^\alpha\big(A^{-,\beta}_{ij}+\frac{|\langle A_{ij},H\rangle|}{|H|^2}H^\beta\big) \langle A_{ij},H\rangle-2|h|^2|H|^{-2}\sum_{k,\alpha,\beta} \bar{R}_{k\alpha k\beta} H^\alpha H^\beta\\
	&=2|H|^{-2}\sum_{i,j,k,\alpha,\beta\ge 2}\bar{R}_{k\alpha k\beta} H^\alpha A^{\beta}_{ij} \langle A_{ij},H\rangle.
	\end{align*}
The reaction terms satisfy
	\begin{align*}
	2\sum_{i,j,p,q}|\langle A_{ij},A_{pq}\rangle|^2-4\sum_{i,j}|\mathring{h}_{ij}A_{ij}^- |^2-2|h|^4=2\sum_{i,j,p,q}|\langle A_{ij}^- ,A_{pq}^- \rangle|^2,
	\end{align*}
	\begin{align}\label{R-R}
	2\sum_{i,j}|R_{ij}^\bot |^2-2\sum_{i,j}|R_{ij}^\bot (\nu_1)|^2&=2|\hat{R}^\bot|^2+2\sum_{i,j}|R_{ij}^\bot (\nu_1)|^2.
	\end{align}
where
	\begin{align}\label{hatR}
	|\hat{R}^\bot|^2=\sum_{i,j,\alpha,\beta\ge 2}\Big(\sum_{p}|A_{ip}^\alpha A_{jp}^\beta -A_{jp}^\alpha A_{ip}^\beta |^2+|\bar{R}_{ij\alpha\beta}|^2+2\sum_{p}\langle \bar{R}_{ij\alpha\beta},A_{ip}^{\alpha}A_{jp}^{\beta}-A_{jp}^{\alpha}A_{ip}^{\beta}\rangle\Big)
	\end{align}
As for the gradient terms, taking the form of $\nabla^\bot_i A_{jp}=\nabla^\bot_i A_{jp}^- +\nabla_i h_{jp}\nu_1+h_{jp}\nabla^\bot_i \nu_1$, we see
	\begin{align*}
	|\nabla^\bot A|^2=|\nabla^\bot A^-|^2+|\nabla h|^2+|h|^2|\nabla^\bot \nu_1|^2+2\sum_{i,j,k}\mathring{h}_{ij}\langle\nabla^\bot A_{ij}^- ,\nabla^\bot_k \nu_1\rangle+2\sum_{i,j,k}\nabla_k \mathring{h}_{ij}\langle\nabla^\bot_k A_{ij}^- ,\nu_1\rangle.
	\end{align*}
Thus,
	\begin{align*}
	-2|\nabla^\bot A|^2+2|\nabla h|^2+2|h|^2|\nabla^\bot \nu_1|^2+4\sum_{i,j,k}\mathring{h}_{ij}\langle\nabla^\bot_k A_{ij}^- ,\nabla^\bot_k \nu_1\rangle&=-2|\nabla^\bot A^-|^2\\
	&-4\sum_{i,j,k}\nabla_k \mathring{h}_{ij}\langle\nabla^\bot_k A_{ij}^- ,\nu_1\rangle.
	\end{align*}
Putting this all together gives
	\begin{align*}
	\Big(\partial_t-\Delta\Big)|A^-|^2&=2\sum_{i,j,p,q}|\langle A_{ij}^- ,A_{pq}^- \rangle|^2+2|\hat{R}^\bot|^2+2\sum_{i,j}|R_{ij}^\bot (\nu_1)|^2\\
	&-2|\nabla^\bot A^-|^2+4|H|^{-1}\sum_{i,j,k}\mathring{h}_{ij}\nabla_k |H|\langle\nabla^\bot_k A_{ij}^- ,\nu_1\rangle-4\sum_{i,j,k}\nabla_k\mathring{h}_{ij}\langle\nabla^\bot_k A_{ij}^- ,\nu_1\rangle\\
	&+2|H|^{-2}\sum_{i,j,k,\alpha,\beta\ge 2}\bar{R}_{k\alpha k\beta} H^\alpha A^{\beta}_{ij}\langle A_{ij},H\rangle+\Big(P_\alpha-2B''\Big)\\
	&+2\sum_{i,j}|\bar{R}_{ij}(\nu_1)|^2+4\sum_{i,j,p}\langle\bar{R}_{ij}(\nu_1),\mathring{h}_{ip}A^-_{jp}-\mathring{h}_{jp}A^-_{ip}\rangle,
	\end{align*}
where
	\begin{align*}
	B''&:=2|H|^{-2}\sum_{i,j,p,q}\bar{R}_{ipjq}\langle A_{pq},H\rangle\langle A_{ij},H\rangle-2|H|^{-2}\sum_{i,j,p,k}\bar{R}_{kjkp} \langle A_{pi},H\rangle\langle A_{ij},H\rangle\\
	&+|H|^{-2}\sum_{i,j,k,\alpha,\beta}A^\alpha_{ij}\bar{R}_{k\alpha k\beta}H^\beta\langle A_{ij},H\rangle-4|H|^{-2}\sum_{i,j,p,\alpha,\beta}A^\alpha_{ip}\bar{R}_{jp\alpha\beta} H^\beta \langle A_{ij},H\rangle\\
	&+|H|^{-2}\sum_{i,j,k,\beta}\bar{\nabla}_k \bar{R}_{kij\beta} H^\beta\langle A_{ij},H\rangle-|H|^{-2}\sum_{i,j,k,\beta}\bar{\nabla}_i \bar{R}_{jkk\beta} H^\beta\langle A_{ij},H\rangle.
	\end{align*}
and we let
	\begin{align*}
	P_\alpha&= 4\sum_{i,j,p,q}\bar{R}_{ipjq}\big(\sum_{\alpha} A^\alpha_{pq}A^\alpha_{ij}\big)-4\sum_{j,k,p}\bar{R}_{kjkp}\big(\sum_{i,\alpha} A^\alpha_{pi}A^\alpha_{ij}\big)+2\sum_{k,\alpha,\beta}\bar{R}_{k\alpha k\beta}\big(\sum_{i,j} A^\alpha_{ij}A_{ij}^\beta \big)\nonumber\\
	&-8\sum_{j,p,\alpha,\beta}\bar{R}_{jp\alpha\beta}\big(\sum_iA^\alpha_{ip}A_{ij}^\beta \big)+2\sum_{i,j,k,\beta}\bar{\nabla}_k\bar{R}_{kij\beta}A_{ij}^\beta -2\sum_{i,j,k,\beta}\bar{\nabla}_i\bar{R}_{jkk\beta}A_{ij}^\beta,
	\end{align*}
to be the lower order terms appearing in \eqref{eqn_|A|^2}.
Because $\langle A_{ij}^-, \nu_1 \rangle = 0 $, differentiating with respect to $\nabla_k$ gives
	\begin{align*}
	\langle\nabla^\bot_k A_{ij}^- ,\nu_1\rangle&=-\langle A_{ij}^- ,\nabla^\bot_k \nu_1\rangle=-\langle\mathring{A}_{ij},\nabla^\bot_k \nu_1\rangle.
	\end{align*}
Since $\mathring{h}_{ij} = \langle \mathring{A}_{ij}, \nu_1 \rangle$, from the equation above, we get
	\begin{align*}
	\nabla_k\mathring{h}_{ij}&= \langle\nabla^\bot_k \mathring{A}_{ij},\nu_1\rangle + \langle\mathring{A}_{ij},\nabla^\bot_k \nu_1\rangle =\langle\nabla^\bot_k \mathring{A}_{ij},\nu_1\rangle-\langle\nabla^\bot_kA_{ij}^- ,\nu_1\rangle.
	\end{align*}
To simplify our final expression, let us define the tensor
	\begin{align*}
	Q_{ijk}:=\langle\nabla^\bot_k\mathring{A}_{ij},\nu_1\rangle-\langle\nabla^\bot_k A_{ij}^- ,\nu_1\rangle-|H|^{-1}\mathring{h}_{ij}\nabla_k |H|.
	\end{align*}
Here we have the lower order terms in the evolution equation for the evolution of $|A^-|^2$. We match them to the evolution of the pinching quantity $f>0$. For the term $P_\alpha-2B''$, we have
	\begin{align*}
	P_\alpha-2B''&=4\sum_{i,j,p,q}\bar{R}_{ipjq}\big(\sum_{\alpha\ge 2} A^\alpha_{pq}A^\alpha_{ij}\big)-4\sum_{j,k,p}\bar{R}_{kjkp}\big(\sum_{i,\alpha\ge 2} A^\alpha_{pi}A^\alpha_{ij}\big)\\
	&+2\sum_{k,\alpha,\beta\ge 2}\bar{R}_{k\alpha k\beta}\big(\sum_{i,j} A^\alpha_{ij}A_{ij}^\beta \big)-8\sum_{j,p,\alpha,\beta\ge 2}\bar{R}_{jp\alpha\beta}\big(\sum_iA^\alpha_{ip}A_{ij}^\beta \big)\\
	&+2\sum_{i,j,k,\beta\ge 2}\bar{\nabla}_k\bar{R}_{kij\beta}A_{ij}^\beta -2\sum_{i,j,k,\beta\ge 2}\bar{\nabla}_i\bar{R}_{jkk\beta}A_{ij}^\beta.
	\end{align*}
In conclusion, according to Theorem \ref{B} and \eqref{eqn_Pa}, we get the following proposition.
\begin{proposition}\label{eqnof|A^-|^2}
The evolution equation of $|A^-|^2$ is
	\begin{align*}
	\Big(\partial_t-\Delta\Big)|A^-|^2&=2\sum_{i,j,p,q}|\langle A_{ij}^- ,A_{pq}^- \rangle|^2+2|\hat{R}^\bot|^2+2\sum_{i,j}|R_{ij}^\bot (\nu_1)|^2\\
	&-2|\nabla^\bot A^-|^2+4\sum_{i,j,k}Q_{ijk}\langle A_{ij}^- ,\nabla^\bot_k \nu_1\rangle\\
	&+2|H|^{-2}\sum_{i,j,k,\alpha,\beta\ge 2}\bar{R}_{k\alpha k\beta} H^\alpha A^{\beta}_{ij}\langle A_{ij},H\rangle+\Big(P_\alpha-2B''\Big)\\
	&+2\sum_{i,j}|\bar{R}_{ij}(\nu_1)|^2+4\sum_{i,j,p}\langle\bar{R}_{ij}(\nu_1),\mathring{h}_{ip}A^-_{jp}-\mathring{h}_{jp}A^-_{ip}\rangle,
	\end{align*}
where
	\begin{align*}
	Q_{ijk}:=\langle\nabla^\bot_k\mathring{A}_{ij},\nu_1\rangle-\langle\nabla^\bot_k A_{ij}^- ,\nu_1\rangle-|H|^{-1}\mathring{h}_{ij}\nabla_k |H|.
	\end{align*}
\end{proposition}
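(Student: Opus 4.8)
The plan is to obtain the evolution of $|A^-|^2$ as a difference of two evolution equations, exploiting the identity $|A^-|^2 = |A|^2 - |h|^2 = |A|^2 - |H|^{-2}\sum_{i,j}|\langle A_{ij},H\rangle|^2$, which is immediate from $A^-_{ij} = A_{ij} - |H|^{-2}\langle A_{ij},H\rangle H$ together with orthogonality. The first piece, $(\partial_t - \Delta)|A|^2$, is given verbatim by \eqref{eqn_|A|^2}. For the second piece I would apply the parabolic quotient rule $(\partial_t - \Delta)(w/z) = z^{-1}(\partial_t - \Delta)w - wz^{-2}(\partial_t - \Delta)z + 2z^{-1}\langle \nabla z, \nabla(w/z)\rangle$ with $w = \sum_{i,j}|\langle A_{ij},H\rangle|^2$ and $z = |H|^2$, feeding in \eqref{eqn_|H|^2} for $(\partial_t - \Delta)|H|^2$ and the evolution equation for $\langle A_{ij},H\rangle$ derived from \eqref{eqn_A} and \eqref{eqn_H}, i.e.\ Lemma~\ref{B}. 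The gradient term in the quotient rule is handled using $\nabla_k|H|^2 = 2\langle \nabla^\perp_k H, H\rangle$, $H = |H|\nu_1$, $\langle \nabla^\perp_k\nu_1,\nu_1\rangle = 0$, and $\nabla_k(|H|^{-2}\sum|\langle A_{ij},H\rangle|^2) = \nabla_k|h|^2$.

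The bulk of the argument is then bookkeeping: rewrite every term through the principal decomposition $A = A^- + h\nu_1$ and the trace-free split $h = \mathring h + \tfrac{|H|}{n}\mathrm{Id}$, using that $A^-$ is trace-free to replace $h$ by $\mathring h$ in any contraction against $A^-$ or $\nabla^\perp A^-$. For the reaction terms I expect the quartic-in-$A$ terms from \eqref{eqn_|A|^2} minus those produced by Lemma~\ref{B} to collapse into $2\sum|\langle A^-_{ij},A^-_{pq}\rangle|^2 + 2|\hat R^\perp|^2 + 2\sum_{i,j}|R^\perp_{ij}(\nu_1)|^2$, the appearance of the normal-curvature terms being forced by the Ricci equation $R^\perp_{ij\alpha\beta} = \bar R_{ij\alpha\beta} + A^\alpha_{ip}A^\beta_{jp} - A^\beta_{ip}A^\alpha_{jp}$ and by the identity \eqref{eqn3} relating $|\mathring h_{ip}A^-_{jp} - \mathring h_{jp}A^-_{ip}|^2$ to contractions of $\langle A,H\rangle$. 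For the gradient terms I would expand $|\nabla^\perp A|^2$ via $\nabla^\perp_i A_{jp} = \nabla^\perp_i A^-_{jp} + \nabla_i h_{jp}\,\nu_1 + h_{jp}\nabla^\perp_i\nu_1$ and use Kato's inequality $|\nabla|H|| \le |\nabla^\perp H|$, together with $\nabla^\perp_k H = \nabla_k|H|\,\nu_1 + |H|\nabla^\perp_k\nu_1$; after cancelling the cross terms coming from the quotient rule and from $(\partial_t - \Delta)|\langle A_{ij},H\rangle|^2$, only $-2|\nabla^\perp A^-|^2$ should survive, together with a residual cross term that I would package into the tensor $Q_{ijk} := \langle \nabla^\perp_k\mathring A_{ij},\nu_1\rangle - \langle \nabla^\perp_k A^-_{ij},\nu_1\rangle - |H|^{-1}\mathring h_{ij}\nabla_k|H|$, whose very definition comes from differentiating $\langle A^-_{ij},\nu_1\rangle = 0$.

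Finally I would gather all terms carrying an ambient factor $\bar R$, $\bar\nabla\bar R$ or $\bar R(\nu_1)$: the $P_\alpha$ inherited from \eqref{eqn_|A|^2}, the $2B''$ produced by Lemma~\ref{B} — here one must notice that the extra piece $2|H|^{-2}\bar R_{k\alpha k\beta}H^\alpha A^\beta_{ij}\langle A_{ij},H\rangle$ combines with $-2|h|^2|H|^{-2}\bar R_{k\alpha k\beta}H^\alpha H^\beta$ to leave only the $\alpha,\beta\ge2$ contribution — the pure term $2\sum_{i,j}|\bar R_{ij}(\nu_1)|^2$, and the genuinely mixed term $4\sum_{i,j,p}\langle \bar R_{ij}(\nu_1),\mathring h_{ip}A^-_{jp} - \mathring h_{jp}A^-_{ip}\rangle$; assembling these reproduces the stated formula. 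The main obstacle is not a single hard estimate but the sheer volume of cross terms: one has to be scrupulous that the gradient cross terms cancel exactly, that the quartic reaction terms reorganise into perfect squares, and that the ambient-curvature remainder sorts correctly into $P_\alpha - 2B''$ plus the normal-curvature pieces — a sign slip anywhere would corrupt the maximum-principle argument that Theorem~\ref{Th1} rests on.
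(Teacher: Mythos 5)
Your proposal follows essentially the same route as the paper: evolve $|A^-|^2=|A|^2-|H|^{-2}\sum_{i,j}|\langle A_{ij},H\rangle|^2$ via the parabolic quotient rule and Lemma \ref{B}, collapse the quartic reaction terms into the three squared quantities using the Ricci equation, package the surviving gradient cross terms into $Q_{ijk}$ by differentiating $\langle A^-_{ij},\nu_1\rangle=0$, and sort the ambient terms into $P_\alpha-2B''$ plus the $\bar R(\nu_1)$ pieces, including the correct cancellation producing the $\alpha,\beta\ge 2$ restriction. The only stray remark is the appeal to Kato's inequality in the gradient bookkeeping --- the proposition is an exact identity, so only the exact decompositions you already list are used there --- but this does not affect the argument.
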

We consider the function $f=-d_n+c_n |H|^2-|A|^2$. The assumption of the theorem is $f>0$ (and consequently $|H|>0$) everywhere on $\mathcal{M}_0$. As $\mathcal{M}_0$ is compact, there exist constants $\e_0,\e_1>0$ depending on $\mathcal{M}_0$, such that $f\ge\e_1|H|^2+\e_0$, on $\mathcal{M}_0$. By Theorem 2 in \cite{AnBa10}, $f\ge\e_1|H|^2+\e_0$, on $\mathcal{M}_t$, for every $t\in[0,T)$ and consequently $|H|>0$ is preserved as well. Recall
	\begin{align*}
	c_n=\frac{4}{3n}, \ \ \text{ if} \ \ n\ge 8 \ \ \ \text{ and} \ \ \ c_n=\frac{3(n+1)}{2n(n+2)}, \ \ \text{ if} \ \ n=5,6 \ \text{ or} \ 7.
	\end{align*}
We will require additional pinching for our estimates when $n=5,6$ or $7$. Since $|A|^2+\e_0\le(c_n-\e_1)|H|^2$, for every $t\in[0,T)$, without loss of generality, we may replace $c_n$ by $c_n-\e_1$ and assume throughout the proof that
	\begin{align*}
	c_n\le\frac{4}{3n}, \ \ \text{ if} \ \ n\ge 8 \ \ \ \text{ and} \ \ \ c_n\leq\frac{3(n+1)}{2n(n+2)}, \ \ \text{ if} \ \ n=5,6 \ \text{ or} \ 7.
	\end{align*}
The strictness of the latter inequality depends on initial data through $\e_1$. We still have $f\ge\e_0>0$ and $|H|>0$, for every $t$.
Let $\delta>0$ be a small constant to be determined later in the proof. By previous work, the evolution equation for $f$ is
	\begin{align}\label{evoloff}
	\begin{split}
	\Big(\partial_t-\Delta\Big)f&=2(|\nabla^\bot A|^2-c_n|\nabla^\bot H|^2)+2\Big(c_n\sum_{i,j}|\langle A_{ij},H\rangle|^2-\sum_{i,j,p,q}|\langle A_{ij},A_{pq}\rangle|^2-\sum_{i,j}|R_{ij}^\bot |^2\Big)\\
	&+2c_n\sum_{k,\alpha,\beta} \bar{R}_{k\alpha k\beta} H^\alpha H^\beta-P_{\alpha}.
	\end{split}
	\end{align}
The pinching condition implies both terms on the right hand side of the equation for $f$ are non negative at each point in space-time. The first step of the proof and the main effort is to analyse the evolution equation $\frac{|A^-|^2}{f}$. We will show this ratio satisfies a favourable evolution equation with a right hand side has a nonpositive term. Specifically, we will show that
	\begin{align}\label{initialclaim}
	\begin{split}
	\Big(\partial_t-\Delta\Big)\frac{|A^-|^2}{f}&\le2\Big\langle\nabla\frac{|A^-|^2}{f},\nabla\log f\Big\rangle-\delta\frac{|A^-|^2}{f^2}\Big(\partial_t-\Delta\Big)f+C'\frac{|A^-|^2}{f}+C''\frac{|A^-|}{\sqrt{f}}\\
	&+\frac{1}{f}\Big(2\sum_{i,j}|\bar{R}_{ij}(\nu_1)|^2+4\sum_{i,j,p}\langle\bar{R}_{ij}(\nu_1),\mathring{h}_{ip}A^-_{jp}-\mathring{h}_{jp}A^-_{ip}\rangle \Big),
	\end{split}
	\end{align}
for $C',C''$ constants, that depend on $n,K_1,K_2$ and $d_n$. Then, since at the limit the background space is Euclidean, the result will follow from the maximum principle. By what we have shown this far, the evolution equation of $\frac{|A^-|^2}{f}$ is
	\begin{align*}
	\Big(\partial_t&-\Delta\Big)\frac{|A^-|^2}{f}=\frac{1}{f}\Big(\partial_t-\Delta\Big)|A^-|^2-|A^-|^2\frac{1}{f^2}\Big(\partial_t-\Delta\Big)f+2\Big\langle\nabla\frac{|A^-|^2}{f},\nabla \log f\Big\rangle\\
	&=\frac{1}{f}\Big(2\sum_{i,j,p,q}|\langle A_{ij}^- ,A_{pq}^- \rangle|^2+2|\hat{R}^\bot|^2+2\sum_{i,j}|R_{ij}^\bot (\nu_1)|^2\Big)\\
	&+\frac{1}{f}\Big(-2|\nabla^\bot A^-|^2+4\sum_{i,j,k}Q_{ijk} \langle A_{ij}^- ,\nabla^\bot_k \nu_1\rangle+2|H|^{-2}\sum_{i,j,k,\alpha,\beta\ge 2}\bar{R}_{k\alpha k\beta} H^\alpha A^{\beta}_{ij}\langle A_{ij},H\rangle\Big)\\
	&+\frac{1}{f}\Big(2\sum_{i,j}|\bar{R}_{ij}(\nu_1)|^2+4\sum_{i,j,p}\langle\bar{R}_{ij}(\nu_1),\mathring{h}_{ip}A^-_{jp}-\mathring{h}_{jp}A^-_{ip}\rangle \Big)\\
	&-|A^-|^2\frac{1}{f^2}\Big(2(|\nabla^\bot A|^2-c_n|\nabla^\bot H|^2)\Big)\\
	&-|A^-|^2\frac{1}{f^2}\Big(2\Big(c_n\sum_{i,j}|\langle A_{ij},H\rangle|^2-\sum_{i,j,p,q}|\langle A_{ij},A_{pq}\rangle|^2-\sum_{i,j}|R_{ij}^\bot |^2\Big)\Big)\\
	&+2\Big\langle\nabla\frac{|A^-|^2}{f},\nabla\log f\Big\rangle\\
	&+\frac{1}{f}\Big(P_\alpha-2B''\Big)-|A^-|^2\frac{1}{f^2}\Big(2c_n\sum_{k,\alpha,\beta}\bar{R}_{k\alpha k\beta} H^\alpha H^\beta-P_\alpha\Big).
	\end{align*}
Rearranging these terms, we have
	\begin{align*}
	&\Big(\partial_t-\Delta\Big)\frac{|A^-|^2}{f}=\frac{1}{f}\Big(2\sum_{i,j,p,q}|\langle A_{ij}^- ,A_{pq}^- \rangle|^2+2|\hat{R}^\bot|^2+2\sum_{i,j}|R_{ij}^\bot (\nu_1)|^2\Big)\\
	&+\frac{1}{f}\Big(-2\frac{|A^-|^2}{f}\Big(c_n\sum_{i,j}|\langle A_{ij},H\rangle|^2-\sum_{i,j,p,q}|\langle A_{ij},A_{pq}\rangle|^2-\sum_{i,j}|R_{ij}^\bot |^2\Big)\Big)\\
	&+\frac{1}{f}\Big(2\sum_{i,j}|\bar{R}_{ij}(\nu_1)|^2+4\sum_{i,j,p}\langle\bar{R}_{ij}(\nu_1),\mathring{h}_{ip}A^-_{jp}-\mathring{h}_{jp}A^-_{ip}\rangle \Big)\\
	&+\frac{1}{f}\Big(4\sum_{i,j,p,q}\bar{R}_{ipjq}\big(\sum_{\alpha\ge 2} A^\alpha_{pq}A^\alpha_{ij}\big)-4\sum_{j,k,p}\bar{R}_{kjkp}\big(\sum_{i,\alpha\ge 2} A^\alpha_{pi}A^\alpha_{ij}\big)\Big)\\
	&+\frac{1}{f}\Big(2\sum_{k,\alpha,\beta\ge 2}\bar{R}_{k\alpha k\beta}\big(\sum_{i,j} A^\alpha_{ij}A_{ij}^\beta \big)-8\sum_{j,p,\alpha,\beta\ge 2}\bar{R}_{jp\alpha \beta}\big(\sum_iA^\alpha_{ip}A_{ij}^\beta \big)\Big)\\
	&+\frac{1}{f}\Big(2|H|^{-2}\sum_{i,j,k,\alpha,\beta\ge 2}\bar{R}_{k\alpha k\beta} H^\alpha A^{\beta}_{ij}\langle A_{ij},H\rangle+2\sum_{i,j,k,\beta\ge 2}\bar{\nabla}_k\bar{R}_{kij\beta}A_{ij}^\beta-2\sum_{i,j,k,\beta\ge 2}\bar{\nabla}_i\bar{R}_{jkk\beta}A_{ij}^\beta\Big)\\
	&+\frac{1}{f}\Big(4\sum_{i,j,k}Q_{ijk}\langle A_{ij}^- ,\nabla^\bot_k \nu_1\rangle-2|\nabla^\bot A^-|^2-2\frac{|A^-|^2}{f}(|\nabla^\bot A|^2-c_n|\nabla^\bot H|^2)\Big)\\
	&+\frac{1}{f}\Big(\frac{|A^-|^2}{f}\big(4\sum_{i,j,p,q}\bar{R}_{ipjq}\big(\sum_{\alpha} A^\alpha_{pq}A^\alpha_{ij}\big)-4\sum_{j,k,p}\bar{R}_{kjkp}\big(\sum_{i,\alpha} A^\alpha_{pi}A^\alpha_{ij}\big)\big)-2\sum_k \bar{R}_{k1k1}\sum_{i,j} (A^1_{ij})^2\Big)\\
	&+\frac{1}{f}\Big(\frac{|A^-|^2}{f}\big(2\sum_{k,\alpha,\beta}\bar{R}_{k\alpha k\beta}\big(\sum_{i,j} A^\alpha_{ij}A_{ij}^\beta \big)+4c_n\sum_{k,\alpha,\beta}\bar{R}_{k\alpha k\beta} H^\alpha H^\beta -8\sum_{j,p,\alpha,\beta}\bar{R}_{jp\alpha\beta}\big(\sum_iA^\alpha_{ip}A_{ij}^\beta \big)\big)\Big)\\
	&+\frac{1}{f}\Big(\frac{|A^-|^2}{f}\big(2\sum_{i,j,k,\beta}\bar{\nabla}_k\bar{R}_{kij\beta}A_{ij}^\beta -2\sum_{i,j,k,\beta}\bar{\nabla}_i\bar{R}_{jkk\beta}A_{ij}^\beta \big)\Big)\\
	&+2\Big\langle \nabla\frac{|A^-|^2}{f},\nabla\log f\Big\rangle.
	\end{align*}
Let us provide a brief explanation of the above evolution equation. The first two lines on the right hand side are the higher order terms and the terms in the third line are Euclidean terms.  The terms from the third line to the sixth line are lower order terms, that are orthogonal to the principal direction. The terms on the seventh and eleventh line are gradient terms and the terms from the eighth to the tenth line are lower order terms, both in the principal direction and orthogonal to the principal direction. \\
We begin by estimating the reaction terms. We will make use of two estimates. The first estimate is proven on page 372 in \cite{AnBa10}, Section 3. The second estimate is a matrix inequality, which is Lemma 3.3 in \cite{Li1992}.
\begin{lemma}\label{4.1}
	\begin{align}\label{eq4.5}
	\sum_{i, j}\big|\mathring{h}_{ij} A^-_{ij}\big|^2+\sum_{i,j}|R_{ij}^{\perp}(\nu_1)|^2 \leq 2|\mathring{h}|^2|{A^-}|^2+\sum_{i,j}|\bar{R}_{ij}(\nu_1)|^2+4|\bar{R}_{ij}(\nu_1)||\mathring{h}||A^-|,
	\end{align}
	\begin{align}\label{eq4.6}
	\sum_{i,j,p,q}|\langle A^-_{ij}, A^-_{pq}\rangle|^2+|\hat{R}^\perp|^2 \leq\frac{3}{2}|A^-|^4+\sum_{\alpha, \beta\ge 2}\Big(\sum_{i,j}|\bar{R}_{ij\alpha\beta}|^2+4|\bar{R}_{ij\alpha\beta}||A^-|^2\Big).
	\end{align}
\end{lemma}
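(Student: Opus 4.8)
The plan is to reduce each of \eqref{eq4.5} and \eqref{eq4.6} to a purely Euclidean algebraic statement --- the ones already recorded in \cite{AnBa10} and \cite{Li1992} --- plus a routine Cauchy--Schwarz estimate for the bilinear terms that couple the second fundamental form to the ambient curvature. In both cases the starting point is the Ricci equation in the vector-valued form \eqref{eqn_Rbot}, \eqref{hatR}. Setting $T_{ij}:=\mathring h_{ip}A^-_{jp}-\mathring h_{jp}A^-_{ip}$, we have $R^\perp_{ij}(\nu_1)=\bar R_{ij}(\nu_1)+T_{ij}$, while for $\alpha,\beta\ge 2$ --- where $A^\alpha=A^{-,\alpha}$ is a symmetric trace-free matrix --- we have $R^\perp_{ij\alpha\beta}=\bar R_{ij\alpha\beta}+[A^\alpha,A^\beta]_{ij}$, with $[A^\alpha,A^\beta]_{ij}=\sum_p(A^\alpha_{ip}A^\beta_{pj}-A^\beta_{ip}A^\alpha_{pj})$. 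Expanding the norms and summing, each left-hand side splits into a Euclidean block quadratic in the components of $\mathring h$ and $A^-$, a pure-curvature block which is exactly the $\bar R$-square term on the right, and a bilinear cross block.

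For \eqref{eq4.5}, the Euclidean block is $\big|\mathring h_{ij}A^-_{ij}\big|^2+\sum_{ij}|T_{ij}|^2$, which after passing to components is $\sum_{\alpha\ge 2}\big(\langle\mathring h,A^\alpha\rangle^2+\|[\mathring h,A^\alpha]\|^2\big)$; the algebraic inequality used by Andrews--Baker (\cite{AnBa10}, p.\ 372) bounds each summand by $2|\mathring h|^2|A^\alpha|^2$ --- which one can also see directly by diagonalising $\mathring h$ and using $(\lambda_i-\lambda_j)^2\le 2(\lambda_i^2+\lambda_j^2)$ --- so after summing over $\alpha$ the block is $\le 2|\mathring h|^2|A^-|^2$. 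For the cross block I would apply Cauchy--Schwarz over the index pair $(i,j)$ together with the same bound $\sum_{ij}|T_{ij}|^2=\sum_{\alpha\ge 2}\|[\mathring h,A^\alpha]\|^2\le 2|\mathring h|^2|A^-|^2$, so that $2\sum_{ij}\langle\bar R_{ij}(\nu_1),T_{ij}\rangle\le 2\big(\sum_{ij}|\bar R_{ij}(\nu_1)|^2\big)^{1/2}\big(\sum_{ij}|T_{ij}|^2\big)^{1/2}\le 4\,|\bar R(\nu_1)|\,|\mathring h|\,|A^-|$, where $|\bar R(\nu_1)|$ denotes the $\ell^2$-norm of $\bar R_{ij}(\nu_1)$ in $i,j$ and we used $2\sqrt 2<4$. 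Adding the three blocks gives \eqref{eq4.5}.

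For \eqref{eq4.6}, the Euclidean block is $\sum_{ijpq}|\langle A^-_{ij},A^-_{pq}\rangle|^2+\sum_{ij\alpha\beta\ge 2}\big([A^\alpha,A^\beta]_{ij}\big)^2$, which equals $\sum_{\alpha\beta\ge 2}\big(\langle A^\alpha,A^\beta\rangle^2+\|[A^\alpha,A^\beta]\|^2\big)$, and Lemma 3.3 of \cite{Li1992}, applied to the finite family of symmetric matrices $\{A^\alpha\}_{\alpha\ge 2}$, bounds this by $\tfrac32\big(\sum_{\alpha\ge 2}|A^\alpha|^2\big)^2=\tfrac32|A^-|^4$. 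The cross block is handled as above: for a fixed pair $\alpha,\beta\ge 2$, Cauchy--Schwarz over $(i,j)$ and $\|[A^\alpha,A^\beta]\|^2\le 2|A^\alpha|^2|A^\beta|^2$ give $2\sum_{ij}\bar R_{ij\alpha\beta}[A^\alpha,A^\beta]_{ij}\le 2\sqrt 2\,\big(\sum_{ij}|\bar R_{ij\alpha\beta}|^2\big)^{1/2}|A^\alpha|\,|A^\beta|\le 4\,\big(\sum_{ij}|\bar R_{ij\alpha\beta}|^2\big)^{1/2}|A^-|^2$; summing over $\alpha,\beta\ge 2$ and adding the pure-curvature block $\sum_{ij\alpha\beta\ge 2}(\bar R_{ij\alpha\beta})^2$ gives \eqref{eq4.6}.

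The substantive content sits entirely in the two cited Euclidean matrix inequalities, so the part that needs care is the bookkeeping: keeping the index ranges straight, so that every second-fundamental-form factor that survives is built from the trace-free tensor $A^-$ (components $A^\alpha$ with $\alpha\ge 2$) --- this is exactly what makes the sphere/Euclidean inequalities of \cite{AnBa10} and \cite{Li1992} applicable verbatim --- and tracking the numerical constants ($2$, $\sqrt 2$, $\tfrac32$, $4$) so that they land on precisely the coefficients stated. That matching of constants, rather than any isolated hard step, is where the main effort lies.
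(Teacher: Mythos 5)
Your proposal is correct and follows essentially the same route as the paper: decompose $R^\perp_{ij}(\nu_1)$ and $R^\perp_{ij\alpha\beta}$ via the Ricci equation into a Euclidean block, a pure $\bar R$ block, and a cross block; bound the Euclidean blocks by the Andrews--Baker eigenvalue computation ($(\lambda_i-\lambda_j)^2\le 2|\mathring h|^2$ after diagonalising $\mathring h$) and by Lemma 3.3 of Li--Li applied to $\{A^\alpha\}_{\alpha\ge 2}$; and control the cross terms by Cauchy--Schwarz. Your explicit tracking of the constant $2\sqrt 2<4$ in the cross terms is, if anything, slightly more careful than the paper's, which leaves that step implicit.
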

\begin{proof} The arguments given in \cite{AnBa10} to prove inequality \eqref{eq4.5} are simple and short, so we will repeat them in our notation here. We will express inequality \eqref{eq4.6} so that it is an immediate consequence of Lemma 3.3 in \cite{Li1992}.
Fix any point $p \in \mathcal{M}$ and time $t \in[0, T)$. Let $e_1, \ldots, e_n$ be an orthonormal basis which identifies $T_p \mathcal{M} \cong \mathbb{R}^n$ at time $t$ and then choose $\nu_2, \ldots, \nu_{m}$ to be a basis of the orthogonal complement of principal normal $\nu_1$ in $N_p \mathcal{M}$ at time $t$. For each $\beta \in\{2, \ldots, m\}$, define a matrix $A_{\beta}=\left\langle A, \nu_{\beta}\right\rangle$ whose components are given by $(A_{\beta})_{ij}=A^\beta_{ij}$.

Then $A^-=\sum_{\beta\ge 2} A_{\beta} \nu_{\beta}$. We also have $\mathring{h}=\langle\mathring{A}, \nu_1\rangle$.
To prove \eqref{eq4.5}, let $\lambda_1, \ldots, \lambda_n$ denote the eigenvalues of $\mathring{h}$. Assume the orthonormal basis is an eigenbasis of $\mathring{h}$. Now
	\begin{align*}
	\sum_{i, j}| \mathring{h}_{i j} A^-_{i j}|^2=\sum_{\beta\ge 2} \sum_{i, j,p,q} \mathring{h}_{i j} \mathring{h}_{pq} A^\beta_{i j } A^\beta_{pq} =\sum_{\beta\ge 2}\big(\sum_{i, j} \mathring{h}_{i j} A^\beta_{i j }\big)^2 =\sum_{\beta\ge 2}\big(\sum_{i} \lambda_i A^\beta_{i i }\big)^2.
	\end{align*}
By Cauchy-Schwarz,
	\begin{align}\label{eq4.7}
	\sum_{i, j}|\mathring{h}_{i j} A^-_{i j}|^2 \leq \sum_{\beta\ge 2}\big(\sum_{i} \lambda_j^2\big)\big(\sum_{i} (A^\beta_{i i})^2\big)=|\mathring{h}|^2 \sum_{\beta\ge 2} \sum_{i}(A^\beta_{i i})^2.
	\end{align}
Now, using
	\begin{align}\label{eq2.40}
	\sum_{i,j}|R^\bot_{ij}(\nu_1)|^2=\sum_{i,j}|\bar{R}_{ij}(\nu_1)|^2+\sum_{i,j,k}|\mathring{h}_{ik}A^-_{jk}-\mathring{h}_{jk}A^-_{ik}|^2+2\sum_{i,j,p}\langle \bar{R}_{ij}(\nu_1),\mathring{h}_{ip}A^-_{jp}-\mathring{h}_{jp}A^-_{ip}\rangle,
	\end{align}
and \eqref{Berger} we have
	\begin{align*}
	\sum_{i,j}|R_{i j}^{\perp}(\nu_1)|^2&=\sum_{\beta\ge 2} \sum_{i, j,k}\big(\mathring{h}_{i k} A^\beta_{j k }-\mathring{h}_{j k} A^\beta_{ik}\big)^2+\sum_{i,j}|\bar{R}_{ij}(\nu_1)|^2+2\sum_{i,j,p}\langle \bar{R}_{ij}(\nu_1),\mathring{h}_{ip}A^-_{jp}-\mathring{h}_{jp}A^-_{ip}\rangle \\
	&=\sum_{\beta\ge 2} \sum_{i, j}\big(\lambda_i-\lambda_j\big)^2 (A^\beta_{i j})^2 +\sum_{i,j}|\bar{R}_{ij}(\nu_1)|^2+2\sum_{i,j,p}\langle \bar{R}_{ij}(\nu_1),\mathring{h}_{ip}A^-_{jp}-\mathring{h}_{jp}A^-_{ip}\rangle \\
	&=\sum_{\beta\ge 2} \sum_{i \neq j}\big(\lambda_i-\lambda_j\big)^2 (A^\beta_{i j })^2+\sum_{i,j}|\bar{R}_{ij}(\nu_1)|^2+2\sum_{i,j,p}\langle \bar{R}_{ij}(\nu_1),\mathring{h}_{ip}A^-_{jp}-\mathring{h}_{jp}A^-_{ip}\rangle.
	\end{align*}
Since $\left(\lambda_i-\lambda_j\right)^2 \leq 2\left(\lambda_i^2+\lambda_j^2\right) \leq 2|\mathring{h}|^2$, we have
	\begin{align}\label{eq4.8}
	\sum_{i,j}|R_{i j}^{\perp}(\nu_1)|^2 \leq 2|\mathring{h}|^2 \sum_{\beta\ge 2} \sum_{i \neq j} (A^\beta_{i j})^2+\sum_{i,j}|\bar{R}_{ij}(\nu_1)|^2+4|\bar{R}_{ij}(\nu_1)||\mathring{h}||A^-|.
	\end{align}
Summing \eqref{eq4.7} and \eqref{eq4.8}, we obtain
	\begin{align*}
	\sum_{i, j} |\mathring{h}_{i j} A^-_{i j}|^2+\sum_{i,j}|R_{i j}^{\perp}(\nu_1)|^2&\leq|\mathring{h}|^2 \sum_{\beta\ge 2} \sum_{i}(A^\beta_{i i})^2+2|\mathring{h}|^2 \sum_{\beta\ge 2} \sum_{i \neq j} (A^\beta_{i j})^2+\sum_{i,j}|\bar{R}_{ij}(\nu_1)|^2\\
	&+4|\bar{R}_{ij}(\nu_1)||\mathring{h}||A^-|\\
	&\leq 2|\mathring{h}|^2|A^-|^2+\sum_{i,j}|\bar{R}_{ij}(\nu_1)|^2+4|\bar{R}_{ij}(\nu_1)||\mathring{h}||A^-|,
	\end{align*}
which is \eqref{eq4.5}.
To establish \eqref{4.6}, for $\alpha, \beta \in\{2, \ldots, m\}$ define
	\begin{align*}
	S_{\alpha \beta}:=\operatorname{tr}\left(A_{\alpha} A_{\beta}\right)=\sum_{i, j,\alpha} A^\alpha_{i j } A^\beta_{i j} \quad \text{ and } \quad S_{\alpha}:=\left|A_{\alpha}\right|^2=\sum_{i, j,\alpha} A^\alpha_{i j} A^\alpha_{i j}
	\end{align*}
Let $S:=S_2+\cdots+S_m=|A^-|^2$. Now
	\begin{align*}
	\sum_{i,j,p,q}|\langle A^-_{i j}, A^-_{pq}\rangle|^2&=\sum_{i, j, p,q} \sum_{\alpha, \beta\ge 2}A^\alpha_{i j} A^\alpha_{pq} A^\beta_{i j } A^\beta_{pq} \\
	&=\sum_{\alpha, \beta\ge 2}\big(\sum_{i, j} A^\alpha_{i j} A^\beta_{i j}\big)(\sum_{p,q} A^\alpha_{pq } A^\beta_{pq}) \\
	&=\sum_{\alpha, \beta\ge 2} S_{\alpha \beta}^2.
	\end{align*}
In addition, recalling \eqref{hatR}, we may write
	\begin{align*}
	|\hat{R}^\perp|^2=\sum_{\alpha, \beta\ge 2}\Big(|A_{\alpha} A_{\beta}-A_{\beta} A_{\alpha}|^2+\sum_{i,j}|\bar{R}_{ij\alpha\beta}|^2+2\sum_{i,j,p}\langle \bar{R}_{ij\alpha\beta},A_{ip}^{\alpha}A_{jp}^{\beta}-A_{jp}^{\alpha}A_{ip}^{\beta}\rangle\Big)
	\end{align*}
where $\left(A_{\alpha} A_{\beta}\right)_{i j}=\left(A_{\alpha}\right)_{i k}\left(A_{\beta}\right)_{k j}=\left(A_{\alpha}\right)_{i k}\left(A_{\beta}\right)_{j k}$ denotes standard matrix multiplication and $|\cdot|$ is the usual square norm of the matrix. We see that inequality \eqref{eq4.6} is equivalent to
	\begin{align}\label{eq4.9}
	\sum_{\alpha, \beta\ge 2}|A_{\alpha} A_{\beta}-A_{\beta} A_{\alpha}|^2+\sum_{\alpha, \beta\ge 2} S_{\alpha \beta}^2 \leq \frac{3}{2} S^2.
	\end{align}
Therefore, we have
	\begin{align*}
	\sum_{i,j,p,q}|\langle A^-_{ij},A^-_{pq}\rangle|^2+|\hat{R}^\bot|^2&\le\frac{3}{2}|A^-|^4+\sum_{i,j,\alpha, \beta\ge 2}\Big(|\bar{R}_{ij\alpha\beta}|^2+2\sum_{p}\langle \bar{R}_{ij\alpha\beta},A_{ip}^{\alpha}A_{jp}^{\beta}-A_{jp}^{\alpha}A_{ip}^{\beta}\rangle\Big)\\
	&\le\frac{3}{2}|A^-|^4+\sum_{\alpha, \beta\ge 2}\Big(\sum_{i,j}|\bar{R}_{ij\alpha\beta}|^2+4|\bar{R}_{ij\alpha\beta}||A^-|^2\Big).
	\end{align*}
Now if $m=2$, inequality \eqref{eq4.6} is trivial since $|\hat{R}^{\perp}|^2=0$ and $\sum_{i,j,p,q}|\langle A^-_{i j}, A^-_{pq}\rangle|^2=|A^-|^4$. Otherwise, if $m \geq 3$, inequality \eqref{eq4.9} follows Lemma 3.3 in \cite{Li1992}. This completes the proof.
\end{proof}
As an immediate consequence of the previous lemma, we have the following estimate for the reaction terms coming from the evolution of $|A^-|^2$.
\begin{lemma} [Upper bound for the reaction terms of $(\partial_t-\Delta)|A^-|^2$]\label{4.2}
	\begin{align}\label{eq4.10}
	\sum_{i,j,p,q}|\langle A^-_{i j}, A^-_{pq}\rangle|^2+|\hat{R}^{\perp}|^2+\sum_{i,j,}|R_{i j}^{\perp}(\nu_1)|^2&\leq\frac{3}{2}|A^-|^4+\sum_{\alpha, \beta\ge 2}\Big(\sum_{i,j}|\bar{R}_{ij\alpha\beta}|^2+4|\bar{R}_{ij\alpha\beta}||A^-|^2\Big)\nonumber\\
	&+2|\mathring{h}|^2|A^-|^2+\sum_{i,j}|\bar{R}_{ij}(\nu_1)|^2+4|\bar{R}_{ij}(\nu_1)||\mathring{h}||A^-|.
	\end{align}
\end{lemma}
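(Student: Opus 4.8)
The plan is to deduce Lemma~\ref{4.2} directly from Lemma~\ref{4.1}, by adding the two inequalities \eqref{eq4.5} and \eqref{eq4.6} and then dropping a manifestly nonnegative term from the left-hand side.

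Concretely, I would add \eqref{eq4.5} and \eqref{eq4.6}. The sum of their left-hand sides is
\[
\sum_{i,j}\big|\mathring{h}_{ij}A^-_{ij}\big|^2+\sum_{i,j}|R^\perp_{ij}(\nu_1)|^2+\sum_{i,j,p,q}|\langle A^-_{ij},A^-_{pq}\rangle|^2+|\hat R^\perp|^2,
\]
while the sum of their right-hand sides is exactly the right-hand side appearing in \eqref{eq4.10}. Since $\sum_{i,j}|\mathring{h}_{ij}A^-_{ij}|^2\ge 0$, the left-hand side of \eqref{eq4.10} is bounded above by the displayed quantity, and hence by the right-hand side of \eqref{eq4.10}. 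This proves the lemma.

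The only point that requires care is bookkeeping: one should check that the curvature error terms $\sum_{i,j}|\bar R_{ij}(\nu_1)|^2$, $4|\bar R_{ij}(\nu_1)||\mathring{h}||A^-|$, $\sum_{\alpha,\beta\ge 2}\sum_{i,j}|\bar R_{ij\alpha\beta}|^2$ and $\sum_{\alpha,\beta\ge 2}4|\bar R_{ij\alpha\beta}||A^-|^2$ enter \eqref{eq4.5}--\eqref{eq4.6} with precisely the coefficients stated in \eqref{eq4.10}, so that nothing is lost when the two estimates are combined. There is no genuine obstacle at this stage: all the substantive work, namely the Cauchy--Schwarz argument of Andrews--Baker behind \eqref{eq4.5} and the matrix inequality of Li (Lemma~3.3 in \cite{Li1992}) behind \eqref{eq4.6}, is already packaged into Lemma~\ref{4.1}, so Lemma~\ref{4.2} is genuinely an immediate corollary.
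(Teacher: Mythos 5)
Your proposal is correct and is exactly the paper's argument: the paper's proof of Lemma \ref{4.2} consists of the single sentence that it follows from Lemma \ref{4.1}, i.e.\ one adds \eqref{eq4.5} and \eqref{eq4.6} and discards the nonnegative term $\sum_{i,j}|\mathring{h}_{ij}A^-_{ij}|^2$ from the left-hand side, precisely as you describe. The bookkeeping of the curvature error terms checks out, so nothing further is needed.
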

\begin{proof} The proof follows from Lemma \ref{4.1}.
\end{proof}
Next we express the reaction term in the evolution of $f$ in terms of $A^-, \mathring{h}$, and $|H|$. In view of the definition of $f$, observe that
	\begin{align}\label{eq4.11}
	\frac{n c_n-1}{n}|H|^2=|A^-|^2+|\mathring{h}|^2+f+d_n.
	\end{align}
In the following lemma, we get a lower bound for the reaction terms in the evolution of $f$.
\begin{lemma} [Lower bound for the reaction terms of $\left(\partial_t-\Delta\right) f$]\label{lemma4.3} \ \newline
If $\frac{1}{n}<c_n \leq \frac{4}{3 n}$, then
	\begin{align}\label{eq4.12}
	\frac{|A^-|^2}{f}&\Big(c_n\sum_{i,j}\left|\left\langle A_{i j}, H\right\rangle\right|^2-\sum_{i,j,p,q}\left|\left\langle A_{i j}, A_{pq}\right\rangle\right|^2-\sum_{i,j}|R_{i j}^{\perp}|^2\Big)\geq \frac{2}{n c_n-1}|A^-|^4+\frac{n c_n}{n c_n-1}|\mathring{h}|^2|A^-|^2\nonumber\\
	&-\frac{|A^-|^2}{f}\Big(\sum_{\alpha,\beta\ge 2}\Big(\sum_{i,j}|\bar{R}_{ij\alpha\beta}|^2+4|\bar{R}_{ij\alpha\beta}||A^-|^2\Big)+2\sum_{i,j}|\bar{R}_{ij}(\nu_1)|^2+8|\bar{R}_{ij}(\nu_1)||\mathring{h}||A^-|\Big).
	\end{align}
\end{lemma}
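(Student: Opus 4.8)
The plan is to rewrite the reaction quantity
\[
\mathcal{R}:=c_n\sum_{i,j}|\langle A_{ij},H\rangle|^2-\sum_{i,j,p,q}|\langle A_{ij},A_{pq}\rangle|^2-\sum_{i,j}|R_{ij}^{\perp}|^2
\]
entirely in terms of $A^-$, $\mathring{h}$ and $|H|$, to bound it below using Lemma~\ref{4.1}, to eliminate $|H|^2$ by means of the pointwise identity \eqref{eq4.11}, and then to reduce \eqref{eq4.12} to an elementary polynomial inequality that holds precisely because $c_n\le\tfrac{4}{3n}$.

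First I would split off the principal direction. Since $\langle A_{ij},H\rangle=|H|\,h_{ij}$, the first summand of $\mathcal{R}$ equals $c_n|H|^2|h|^2$. For the other two, I would reuse the decomposition identities already obtained in the computation preceding Proposition~\ref{eqnof|A^-|^2}, namely $\sum|\langle A_{ij},A_{pq}\rangle|^2=\sum|\langle A^-_{ij},A^-_{pq}\rangle|^2+2\sum|\mathring{h}_{ij}A^-_{ij}|^2+|h|^4$ together with $\sum|R_{ij}^{\perp}|^2=|\hat{R}^{\perp}|^2+2\sum|R_{ij}^{\perp}(\nu_1)|^2$ from \eqref{R-R}. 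This gives $\mathcal{R}=c_n|H|^2|h|^2-|h|^4-\Sigma$, where $\Sigma:=\sum|\langle A^-_{ij},A^-_{pq}\rangle|^2+|\hat{R}^{\perp}|^2+2\big(\sum|\mathring{h}_{ij}A^-_{ij}|^2+\sum|R_{ij}^{\perp}(\nu_1)|^2\big)$. Writing $\Sigma$ as the left-hand side of \eqref{eq4.6} plus twice that of \eqref{eq4.5} and invoking Lemma~\ref{4.1} yields $\Sigma\le\tfrac32|A^-|^4+4|\mathring{h}|^2|A^-|^2+\mathcal{B}$, where $\mathcal{B}$ is exactly the background expression subtracted on the right of \eqref{eq4.12}. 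Hence
\[
\mathcal{R}\;\ge\;c_n|H|^2|h|^2-|h|^4-\tfrac32|A^-|^4-4|\mathring{h}|^2|A^-|^2-\mathcal{B}.
\]

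Next I would eliminate $|H|^2$. From $|h|^2=|\mathring{h}|^2+\tfrac{|H|^2}{n}$ and \eqref{eq4.11} one has $c_n|H|^2-|h|^2=|A^-|^2+f+d_n$ and $\tfrac{|H|^2}{n}=\tfrac{1}{nc_n-1}\big(|A^-|^2+|\mathring{h}|^2+f+d_n\big)$, so that $c_n|H|^2|h|^2-|h|^4=|h|^2\big(c_n|H|^2-|h|^2\big)=\big(|\mathring{h}|^2+\tfrac{1}{nc_n-1}(|A^-|^2+|\mathring{h}|^2+f+d_n)\big)\big(|A^-|^2+f+d_n\big)$. Multiplying the last displayed inequality by $\tfrac{|A^-|^2}{f}$, the claim \eqref{eq4.12} reduces to the following: with $a=|A^-|^2$, $b=|\mathring{h}|^2$, $\phi=f+d_n$ and $N=nc_n-1$,
\[
\big(b+\tfrac1N(a+b+\phi)\big)(a+\phi)-\tfrac32 a^2-4ab\;\ge\;\tfrac2N af+\tfrac{nc_n}{N}bf.
\]
Expanding the left-hand side and using $1+\tfrac1N-\tfrac{nc_n}{N}=0$, which is just the identity $N=nc_n-1$, the $bf$-terms cancel and the difference of the two sides equals $\big(\tfrac1N-\tfrac32\big)a^2+\big(\tfrac1N-3\big)ab+\tfrac2N a d_n+\big(1+\tfrac1N\big)b d_n+\tfrac1N(f+d_n)^2$. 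This is non-negative as soon as $\tfrac1N\ge3$, i.e. as soon as $c_n\le\tfrac{4}{3n}$; the hypothesis $c_n>\tfrac1n$ guarantees $N>0$, and $d_n\ge0$ makes the remaining terms non-negative.

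The main obstacle is the algebra of this final step. One has to notice that the precise relation $N=nc_n-1$ is exactly what makes the $|\mathring{h}|^2 f$ contributions cancel, and that the pinching threshold $c_n\le\tfrac{4}{3n}$ is exactly what forces $\tfrac{1}{nc_n-1}\ge3$, rendering the coefficients of $|A^-|^4$ and $|\mathring{h}|^2|A^-|^2$ non-negative. Everything else is bookkeeping: reusing the $A=A^-+h\nu_1$ decomposition identities already in hand, and checking that the background remainder produced by Lemma~\ref{4.1} matches the one displayed in \eqref{eq4.12}.
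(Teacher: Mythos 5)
Your argument is correct and follows essentially the same route as the paper's proof: the same splitting of the reaction terms along $\nu_1$ and its orthogonal complement, the same two estimates from Lemma \ref{4.1}, and the same elimination of $|H|^2$ via \eqref{eq4.11}, with the pinching threshold $c_n\le\frac{4}{3n}$ entering precisely through $\frac{1}{nc_n-1}\ge 3$ as in the paper. The only difference is organizational: you package the final bookkeeping as one polynomial inequality in $a,b,\phi$, whereas the paper expands everything explicitly and discards the non-negative leftover terms term by term.
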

\begin{proof} We do a computation that is similar to a computation in \cite{AnBa10}, except we do not throw away the pinching term $f$. By the following equations
	\begin{align*}
	|h|^2=|\mathring{h}|^2+\frac{1}{n}|H|^2,
	\end{align*}
	\begin{align*}
	\sum_{i,j}|\langle A_{ij},H\rangle|^2=|H|^2|h|^2,
	\end{align*}
	\begin{align*}
	\sum_{i,j,p,q}|\langle A_{ij},A_{pq}\rangle|^2=|h|^4+2\sum_{i,j}\big| \mathring{h}_{ij}A^-_{ij}\big|^2+\sum_{i,j,p,q}|\langle A^-_{ij},A^-_{pq}\rangle|^2
	\end{align*}
and
	\begin{align}
	2\sum_{i,j}|R^\bot_{ij}|^2-2\sum_{i,j}|R^\bot_{ij} (\nu_1)|^2=|\hat{R}^\bot|^2+2\sum_{i,j}|R^\bot_{ij}(\nu_1)|^2=|R^\bot|^2,
	\end{align}
we have
	\begin{align*}
	c_n\sum_{i,j}\left|\left\langle A_{i j}, H\right\rangle\right|^2&-\sum_{i,j,p,q}\left|\left\langle A_{i j}, A_{pq}\right\rangle\right|^2-\sum_{i,j}|R_{i j}^{\perp}|^2=\frac{1}{n} c_n|H|^4+c_n|\mathring{h}|^2|H|^2-|\mathring{h}|^4\\
	&-\frac{2}{n}|\mathring{h}|^2|H|^2-\frac{1}{n^2}|H|^4-2\sum_{i,j}|\mathring{h}_{i j} A^-_{i j}|^2-\sum_{i,j,p,q}|\langle A^-_{i j}, A^-_{pq}\rangle|^2-|\hat{R}^{\perp}|^2\\
	&-2\sum_{i,j}|R_{i j}^{\perp}(\nu_1)|^2\\
	&=\frac{1}{n}\left(c_n-\frac{1}{n}\right)|H|^4+\left(c_n-\frac{1}{n}\right)|\mathring{h}|^2|H|^2-\frac{1}{n}|\mathring{h}|^2|H|^2-|\mathring{h}|^4\\
	&-2\sum_{i,j}|\mathring{h}_{i j} A^-_{i j}|^2-2\sum_{i,j}|R_{i j}^{\perp}(\nu_1)|^2-\sum_{i,j,p,q}|\langle A^-_{i j}, A^-_{pq}\rangle|^2-|\hat{R}^{\perp}|^2.
	\end{align*}
Use \eqref{eq4.11} and cancel terms to get
	\begin{align*}
	c_n&\sum_{i,j}\left|\left\langle A_{i j}, H\right\rangle\right|^2-\sum_{i,j,p,q}\left|\left\langle A_{i j}, A_{pq}\right\rangle\right|^2-\sum_{i,j}|R_{i j}^{\perp}|^2\\
	&=\frac{1}{n}\big(|A^-|^2+|\mathring{h}|^2+f+d_n \big)|H|^2+|\mathring{h}|^2\big( |A^-|^2+|\mathring{h}|^2+f+d_n\big)\\
	&-\frac{1}{n}|\mathring{h}|^2|H|^2-|\mathring{h}|^4-2\sum_{i,j}|\mathring{h}_{i j} A^-_{i j}|^2-2\sum_{i,j}|R_{i j}^{\perp}\left(\nu_1\right)|^2-\sum_{i,j,p,q}|\langle A^-_{i j}, A^-_{pq}\rangle|^2-|\hat{R}^{\perp}|^2\\
	&=\frac{1}{n}\left(f+|A^-|^2+d_n\right)|H|^2+\left(f+|A^-|^2+d_n\right) |\mathring{h}|^2\\
	&-2\sum_{i,j}|\mathring{h}_{i j} A^-_{i j}|^2-2\sum_{i,j}|R_{i j}^{\perp}(\nu_1)|^2-\sum_{i,j,p,q}|\langle A^-_{i j}, A^-_{pq}\rangle|^2-|\hat{R}^{\perp}|^2.
	\end{align*}
Using \eqref{eq4.11} once more for the remaining factor of $|H|^2$ gives
	\begin{align*}
	&c_n\sum_{i,j}\left|\left\langle A_{i j}, H\right\rangle\right|^2-\sum_{i,j,p,q}\left|\left\langle A_{i j}, A_{pq}\right\rangle\right|^2-\sum_{i,j}|R_{i j}^{\perp}|^2 \\
	&= \frac{1}{n}\left(f+|A^-|^2+d_n\right)\left(c_n-\frac{1}{n}\right)^{-1}(f+|A^-|^2+|\mathring{h}|^2+d_n)+\left(f+|A^-|^2+d_n\right)|\mathring{h}|^2 \\
	&-2\sum_{i,j}|\mathring{h}_{i j} A^-_{i j}|^2-2\sum_{i,j}|R_{i j}^{\perp}(\nu_1)|^2-\sum_{i,j,p,q}|\langle A^-_{i j}, A^-_{pq}\rangle|^2-|\hat{R}^{\perp}|^2 \\
	&= \frac{1}{n c_n-1} f(f+2|A^-|^2+|\mathring{h}|^2+2d_n)+f|\mathring{h}|^2+\frac{1}{n c_n-1}| A^-|^4+\frac{n c_n}{n c_n-1}|A^-|^2|\mathring{h}|^2 \\
	&+\frac{nc_n}{nc_n-1}d_n |\mathring{h}|^2+\frac{1}{nc_n-1}d_n|A^-|^2-2\sum_{i,j}|\mathring{h}_{i j} A^-_{i j}|^2-2\sum_{i,j}|R_{i j}^{\perp}(\nu_1)|^2-\sum_{i,j,p,q}|\langle A^-_{i j}, A^-_{pq}\rangle|^2\\
	&-|\hat{R}^{\perp}|^2.
	\end{align*}
Now by the two estimates in Lemma \ref{4.1}
	\begin{align*}
	2\sum_{i,j}|\mathring{h}_{ij}A^-_{ij}|^2&+2\sum_{i,j}|R^\bot_{ij}(\nu_1)|^2+\sum_{i,j,p,q}|\langle A^-_{ij},A^-_{pq}\rangle|^2+|\hat{R}^\bot|^2\le4|\mathring{h}|^2|A^-|^2+2\sum_{i,j}|\bar{R}_{ij}(\nu_1)|^2\\
	&+8|\bar{R}_{ij}(\nu_1)||\mathring{h}||A^-|+\frac{3}{2}|A^-|^4+\sum_{\alpha, \beta\ge 2}\Big(\sum_{i,j}|\bar{R}_{ij\alpha\beta}|^2+4|\bar{R}_{ij\alpha\beta}||A^-|^2\Big).
	\end{align*}
Therefore,
	\begin{align*}
	\frac{1}{n c_n-1}&|A^-|^4+\frac{n c_n}{n c_n-1}|A^-|^2|\mathring{h}|^2-2\sum_{i,j}|\mathring{h}_{i j} A^-_{i j}|^2-2\sum_{i,j}|R_{i j}^{\perp}(\nu_1)|^2\\
	&-\sum_{i,j,p,q}|\langle A^-_{i j}, A^-_{pq}\rangle|^2-|\hat{R}^{\perp}|^2\\
	&\geq\left(\frac{1}{n c_n-1}-\frac{3}{2}\right)|A^-|^4+\left(\frac{n c_n}{n c_n-1}-4\right)|\mathring{h}|^2|A^-|^2-2\sum_{i,j}|\bar{R}_{ij}(\nu_1)|^2\\
	&-8|\bar{R}_{ij}(\nu_1)||\mathring{h}||A^-|-\sum_{\alpha, \beta\ge 2}\Big(\sum_{i,j}|\bar{R}_{ij\alpha\beta}|^2+4|\bar{R}_{ij\alpha\beta}||A^-|^2\Big).
	\end{align*}
Since $c_n \leq \frac{4}{3 n}$, we have
	\begin{align*}
	\frac{1}{n c_n-1}-\frac{3}{2} \geq \frac{3}{2}, \quad \frac{n c_n}{n c_n-1}-4 \geq 0.
	\end{align*}
Consequently, we have
	\begin{align}\label{eq4.13}
	c_n\sum_{i,j}\left|\left\langle A_{i j}, H\right\rangle\right|^2-&\sum_{i,j,p,q}\left|\left\langle A_{i j}, A_{pq}\right\rangle\right|^2-\sum_{i,j}|R_{i j}^{\perp}|^2 \geq \frac{2}{n c_n-1} f|A^-|^2+\frac{n c_n}{n c_n-1} f|\mathring{h}|^2\nonumber\\
	&+\frac{1}{n c_n-1} f^2 +\frac{1}{nc_n-1}d_n\big(|A^-|^2+2f\big)+\frac{nc_n}{nc_n-1}|\mathring{h}|^2d_n-2\sum_{i,j}|\bar{R}_{ij}(\nu_1)|^2\nonumber\\
	&-8|\bar{R}_{ij}(\nu_1)||\mathring{h}||A^-|-\sum_{\alpha, \beta\ge 2}\Big(\sum_{i,j}|\bar{R}_{ij\alpha\beta}|^2+4|\bar{R}_{ij\alpha\beta}||A^-|^2\Big)\nonumber\\
	&\geq \frac{2}{n c_n-1} f|A^-|^2+\frac{n c_n}{n c_n-1} f|\mathring{h}|^2-8|\bar{R}_{ij}(\nu_1)||\mathring{h}||A^-|\nonumber\\
	&-\sum_{\alpha, \beta\ge 2}\Big(\sum_{i,j}|\bar{R}_{ij\alpha\beta}|^2+4|\bar{R}_{ij\alpha\beta}||A^-|^2\Big)-2\sum_{i,j}|\bar{R}_{ij}(\nu_1)|^2.
	\end{align}
Multiplying both sides by $\frac{|A^-|^2}{f}$ completes the proof of the lemma.
\end{proof}
Putting Lemmas \ref{4.2} and \ref{lemma4.3} together, we have
\begin{lemma}[Reaction term estimate]\label{lemma4.4} \ \newline
If $0<\delta \leq \frac{1}{2}$ and $\frac{1}{n}<c_n \leq \frac{4}{3 n}$, then
	\begin{align}\label{eq4.14}
	\sum_{i,j,p,q}|\langle A^-_{i j}, A^-_{pq}\rangle|^2+|\hat{R}^{\perp}|^2&+\sum_{i,j}|R_{i j}^{\perp}(\nu_1)|^2\leq(1-\delta) \frac{|A^-|^2}{f}\Big(c_n\sum_{i,j}\left|\left\langle A_{i j}, H\right\rangle\right|^2\nonumber\\
	&-\sum_{i,j,p,q}\left|\left\langle A_{i j}, A_{pq}\right\rangle\right|^2-\sum_{i,j}|R_{i j}^{\perp}|^2\Big)\nonumber\\
	&+(1-\delta)\frac{|A^-|^2}{f}\Big(\sum_{\alpha,\beta\ge 2}\Big(\sum_{i,j}|\bar{R}_{ij\alpha\beta}|^2+4|\bar{R}_{ij\alpha\beta}||A^-|^2\Big)\nonumber\\
	&+2\sum_{i,j}|\bar{R}_{ij}(\nu_1)|^2+8|\bar{R}_{ij}(\nu_1)||\mathring{h}||A^-|\Big).
	\end{align}
\end{lemma}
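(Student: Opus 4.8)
The plan is to obtain \eqref{eq4.14} by feeding Lemma~\ref{4.2} and Lemma~\ref{lemma4.3} into a short numerical comparison. Lemma~\ref{4.2} already controls the left-hand side of \eqref{eq4.14} by the polynomial part $\tfrac32|A^-|^4+2|\mathring h|^2|A^-|^2$ plus a curvature remainder $\mathcal E_1$ built from the quantities $|\bar R_{ij\alpha\beta}|$ ($\alpha,\beta\ge2$) and $|\bar R_{ij}(\nu_1)|$. So it suffices to show that the right-hand side of \eqref{eq4.14} dominates $\tfrac32|A^-|^4+2|\mathring h|^2|A^-|^2$, and then to dispose of $\mathcal E_1$. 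For the first part I would read \eqref{eq4.12} as a lower bound: since $c_n>\tfrac1n$ the factor $nc_n-1$ is strictly positive, and \eqref{eq4.12} says that $\tfrac{|A^-|^2}{f}$ times the reaction term of $f$ is at least $\tfrac{2}{nc_n-1}|A^-|^4+\tfrac{nc_n}{nc_n-1}|\mathring h|^2|A^-|^2$ minus $\tfrac{|A^-|^2}{f}\,\mathcal E_2$, where $\mathcal E_2$ is \emph{exactly} the curvature bracket appearing (with prefactor $\tfrac{|A^-|^2}{f}$) on the second line of \eqref{eq4.14}. Adding $\tfrac{|A^-|^2}{f}\,\mathcal E_2$ to both sides and multiplying by $(1-\delta)$ then shows that the right-hand side of \eqref{eq4.14} is bounded below by $(1-\delta)\big(\tfrac{2}{nc_n-1}|A^-|^4+\tfrac{nc_n}{nc_n-1}|\mathring h|^2|A^-|^2\big)$; the two curvature brackets are matched by design and cancel in this step.

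It then remains to verify the coefficientwise inequality
\[
\tfrac32|A^-|^4+2|\mathring h|^2|A^-|^2\ \le\ (1-\delta)\Big(\tfrac{2}{nc_n-1}|A^-|^4+\tfrac{nc_n}{nc_n-1}|\mathring h|^2|A^-|^2\Big).
\]
From $c_n\le\tfrac4{3n}$ we get $nc_n\le\tfrac43$, hence $0<nc_n-1\le\tfrac13$, so $\tfrac{2}{nc_n-1}\ge 6$ and $\tfrac{nc_n}{nc_n-1}=1+\tfrac1{nc_n-1}\ge 4$. Combined with $1-\delta\ge\tfrac12$ this gives $(1-\delta)\tfrac{2}{nc_n-1}\ge 3\ge\tfrac32$ and $(1-\delta)\tfrac{nc_n}{nc_n-1}\ge 2$, which is precisely the required comparison of coefficients. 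Both hypotheses enter here, and the bound $\delta\le\tfrac12$ is sharp for the $|\mathring h|^2|A^-|^2$ term.

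The delicate point — the one I would spend the most care on — is the curvature remainder $\mathcal E_1$ coming from Lemma~\ref{4.2}: unlike $\mathcal E_2$ it carries no factor $\tfrac{|A^-|^2}{f}$, so it is not cancelled by the matched brackets and cannot in general be absorbed into the slack of the numerical inequality above (it does not vanish with $A^-$). The natural fix is to estimate it by the Berger bounds \eqref{Berger}, $|\bar R_{ij\alpha\beta}|,\,|\bar R_{ij}(\nu_1)|\le C(K_1+K_2)$, together with Young's inequality, giving $\mathcal E_1\le C(n,m)\big((K_1+K_2)^2+(K_1+K_2)(|A^-|^2+|\mathring h|^2)\big)$, and then to carry the resulting lower-order pieces forward rather than into \eqref{eq4.14} itself: they have exactly the shape of the error terms $C'\tfrac{|A^-|^2}{f}$, $C''\tfrac{|A^-|}{\sqrt f}$ and $\tfrac1f\big(2\sum_{i,j}|\bar R_{ij}(\nu_1)|^2+\cdots\big)$ already budgeted in the target estimate \eqref{initialclaim}. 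So the main obstacle is bookkeeping: keeping straight which curvature contributions are genuinely absorbed by the numerical slack and which must be collected into the constants $C',C''$ and the displayed $\bar R_{ij}(\nu_1)$ term downstream, the rest being direct substitution and the elementary estimates above.
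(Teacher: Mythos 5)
Your proposal follows the paper's proof of this lemma exactly in its main line: substitute the upper bound of Lemma \ref{4.2} and the lower bound of Lemma \ref{lemma4.3}, note that the curvature bracket of \eqref{eq4.12} is by construction the same bracket that appears with the prefactor $(1-\delta)\tfrac{|A^-|^2}{f}$ in \eqref{eq4.14}, and reduce to the coefficient comparison $\tfrac32-\tfrac{2(1-\delta)}{nc_n-1}\le 0$ and $2-\tfrac{nc_n(1-\delta)}{nc_n-1}\le 0$, which is precisely where $nc_n-1\le\tfrac13$ and $\delta\le\tfrac12$ enter (the latter sharply for the $|\mathring{h}|^2|A^-|^2$ term). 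That part of your argument is correct and identical to the paper's.

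The ``delicate point'' you flag is genuine, and here your treatment is actually more careful than the paper's. The displayed chain in the paper's proof simply omits the remainder $\mathcal{E}_1=\sum_{\alpha,\beta\ge2}\bigl(\sum_{i,j}|\bar{R}_{ij\alpha\beta}|^2+4|\bar{R}_{ij\alpha\beta}||A^-|^2\bigr)+\sum_{i,j}|\bar{R}_{ij}(\nu_1)|^2+4|\bar{R}_{ij}(\nu_1)||\mathring{h}||A^-|$ coming from \eqref{eq4.10}: it passes from $\tfrac32|A^-|^4+2|\mathring{h}|^2|A^-|^2+\mathcal{E}_1-(1-\delta)(\cdots)$ directly to the expression without $\mathcal{E}_1$. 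As you observe, $\mathcal{E}_1$ carries no factor of $|A^-|^2/f$ and contains pure background-curvature pieces that do not vanish with $A^-$ (indeed the left-hand side of \eqref{eq4.14} contains $\sum_{i,j}|\bar{R}_{ij\alpha\beta}|^2$ and $\sum_{i,j}|\bar{R}_{ij}(\nu_1)|^2$ additively through $|\hat{R}^\perp|^2$ and $\sum_{i,j}|R^\perp_{ij}(\nu_1)|^2$, while every term on the right vanishes when $A^-=0$), so the inequality as literally stated fails at points where $A^-=0$ but the normal curvature does not, and $\mathcal{E}_1$ cannot be absorbed into the numerical slack. Your repair --- bound $\mathcal{E}_1$ via \eqref{Berger} and Young's inequality and carry the result forward into the terms $C'\tfrac{|A^-|^2}{f}$, $C''\tfrac{|A^-|}{\sqrt f}$ and $\tfrac1f\bigl(2\sum_{i,j}|\bar{R}_{ij}(\nu_1)|^2+\cdots\bigr)$ already budgeted in \eqref{initialclaim} --- is the right one and is consistent with how the paper handles the analogous leftover curvature terms in the final assembly of Theorem \ref{Th1}; equivalently, one should state the lemma with $\mathcal{E}_1$ added to the right-hand side. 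So: same method as the paper, plus a correct diagnosis and fix of a real gap in the paper's own write-up.
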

\begin{proof}
In view of \eqref{eq4.10} and \eqref{eq4.12}, we have
	\begin{align*}
	&\sum_{i,j,p,q}|\langle A^-_{i j}, A^-_{pq}\rangle|^2+|\hat{R}^{\perp}|^2+\sum_{i,j}|R_{i j}^{\perp}(\nu_1)|^2-(1-\delta) \frac{|A^-|^2}{f}\Big(c_n\sum_{i,j}\left|\left\langle A_{i j}, H\right\rangle\right|^2\\
	&-\sum_{i,j,p,q}\left|\left\langle A_{i j}, A_{pq}\right\rangle\right|^2-\sum_{i,j}|R_{i j}^{\perp}|^2\Big) \\
	&\leq \frac{3}{2}|A^-|^4+2|\mathring{h}|^2|A^-|^2-\frac{2(1-\delta)}{n c_n-1}|A^-|^4-\frac{n c_n(1-\delta)}{n c_n-1}|\mathring{h}|^2|A^-|^2 \\
	&+(1-\delta)\frac{|A^-|^2}{f}\Big(\sum_{\alpha,\beta\ge 2}\Big(\sum_{i,j}|\bar{R}_{ij\alpha\beta}|^2+4|\bar{R}_{ij\alpha\beta}||A^-|^2\Big)\\
	&+2\sum_{i,j}|\bar{R}_{ij}(\nu_1)|^2+8|\bar{R}_{ij}(\nu_1)||\mathring{h}||A^-|\Big)\\
	&=\left(\frac{3}{2}-\frac{2(1-\delta)}{n c_n-1}\right)|A^-|^4+\left(2-\frac{n c_n(1-\delta)}{n c_n-1}\right)|\mathring{h}|^2|A^-|^2\\
	&+(1-\delta)\frac{|A^-|^2}{f}\Big(\sum_{\alpha,\beta\ge2}\Big(\sum_{i,j}|\bar{R}_{ij\alpha\beta}|^2+4|\bar{R}_{ij\alpha\beta}||A^-|^2\Big)+2\sum_{i,j}|\bar{R}_{ij}(\nu_1)|^2+8|\bar{R}_{ij}(\nu_1)||\mathring{h}||A^-|\Big).
	\end{align*}
If $c_n \leq \frac{4}{3 n}$, then
	\begin{align*}
	\frac{1}{n c_n-1} \geq 3 \quad \text{ and } \quad \frac{n c_n}{n c_n-1} \geq 4.
	\end{align*}
Therefore, if $\delta\le\frac{1}{2}$
	\begin{align*}
	\frac{3}{2}-\frac{2(1-\delta)}{n c_n-1}\le \frac{3}{2}-6(1-\delta)\le 0,
	\end{align*}
	\begin{align*}
	2-\frac{n c_n(1-\delta)}{n c_n-1}\le 2-4(1-\delta)\le0,
	\end{align*}
which gives \eqref{eq4.14}.
\end{proof}
We are following the arguments of Naff \cite{Naff}, we turn our attention to the gradient terms. For this, we will use \eqref{2.22naff}. Recalling that $A^-_{j k}$ is traceless, it is straightforward to verify that
	\begin{align}\label{eq4.15}
	\sum_{i,j,k}|\nabla_i h_{j k}+\langle\nabla_i^{\perp} A^-_{j k}, \nu_1\rangle|^2 =\sum_{i,j,k}|\nabla_i \mathring{h}_{j k}+\langle\nabla_i^{\perp} A^-_{j k}, \nu_1\rangle|^2+\frac{1}{n}|\nabla| H \|^2
	\end{align}
	\begin{align}\label{eq4.16}
	\sum_{i,j,k}|\hat{\nabla}_i^{\perp} A^-_{j k}+h_{j k} \nabla_i^{\perp} \nu_1|^2 =\sum_{i,j,k}|\hat{\nabla}_i^{\perp} A^-_{j k}+\mathring{h}_{j k} \nabla_i^{\perp} \nu_1|^2+\frac{1}{n}|H|^2|\nabla^{\perp} \nu_1|^2
	\end{align}
Observe that the first term in \eqref{eq4.15} is just
	\begin{align}\label{eq4.17}
	\sum_{i,j,k}|\langle\nabla_i^{\perp} \mathring{A}_{j k}, \nu_1\rangle|^2=\sum_{i,j,k}|\nabla_i \mathring{h}_{j k}+\langle\nabla_i^{\perp} A^-_{j k}, \nu_1\rangle|^2,
	\end{align}
which will be useful later on. 
Now as observed in \cite{Hu84}, using Lemma \ref{katoinequality}, it follows from the Codazzi identity for the second fundamental form that the tensor
	\begin{align*}
	E_{i j k}=& \frac{1}{n+2}\left(\nabla_i ^\perp  H g_{j k}+\nabla_j ^\perp   H g_{i k}+\nabla_k ^\perp   Hg_{i j}\right)\\
&-\frac{2}{(n+2)(n-1)} w_i g_{j k}+\frac{n}{(n+2)(n-1)}\left(w_j g_{i k}+w_k g_{i j}\right)
	\end{align*}
is an irreducible component of $\nabla_i^{\perp} A_{j k}$ consisting of its various traces. In other words, $\langle E_{i j k}, \nabla_i^{\perp} A_{j k}\rangle=|E|^2$. This allows one to get an improved estimate over the trivial one. Namely,
	\begin{align*}
	|E|^2=\frac{3}{n+2}|\nabla^{\perp} H|^2 \leq|\nabla^{\perp} A|^2.
	\end{align*}
The projection of the Codazzi identity onto $\nu_1$ and its orthogonal complement implies the tensors $\nabla_i h_{j k}+\langle\nabla_i^{\perp} A^-_{j k}, \nu_1\rangle$ and $\hat{\nabla}_i^{\perp} A^-_{j k}+h_{j k} \nabla_i^{\perp} \nu_1$ are symmetric in $i, j, k$. Recalling \eqref{2.25naff} and \eqref{2.26naff}, it follows that an irreducible component of each tensor is given by
	\begin{align*}
	& E_{i j k}^{(1)}:=\frac{1}{n+2}(g_{i j} \nabla_k|H|+g_{j k} \nabla_i|H|+g_{k i} \nabla_j|H|), \\
	& E_{i j k}^{(\perp)}:=\frac{1}{n+2}(g_{i j}|H| \nabla_k^{\perp} \nu_1+g_{j k}|H| \nabla_i^{\perp} \nu_1+g_{k i}|H| \nabla_j^{\perp} \nu_1) .
	\end{align*}
You can readily confirm that $E_{i j k}^{(1)}(\nabla_i h_{j k}+\langle\nabla_i^{\perp} A^-_{j k}, \nu_1\rangle)=|E^{(1)}|^2$ and $\sum_{i,j,k}\langle E_{i j k}^{(\perp)}, \hat{\nabla}_i^{\perp} A^-_{j k}+h_{j k} \nabla_i^{\perp} \nu_1\rangle=|E^{(\perp)}|^2$. As in Lemma \ref{katoinequality}, we obtain that
	\begin{align*}
\frac{3}{n+2}|\nabla| H||^2 & \leq\sum_{i,j,k}|\nabla_i h_{j k}+\langle\nabla_i^{\perp} A^-_{j k}, \nu_1\rangle|^2,
	\end{align*}
	\begin{align}\label{4.20naff}
\frac{3}{n+2}|H|^2|\nabla^\bot \nu_1|^2\le\sum_{i,j,k}|\hat{\nabla}^\bot_i A^-_{jk}+h_{jk}\nabla^\bot_i \nu_1|^2.
	\end{align}

            	 	 	 	                 	 	 	       From Theorem \ref{thm_pinching} and \eqref{Berger}, we have that
	\begin{align*}
	 	&\frac{|A^-|^2}{f^2}\Big(4\sum_{i,j,p,q}\bar{R}_{ipjq}\big(\sum_{\alpha} A^\alpha_{pq}A^\alpha_{ij}\big)-4\sum_{j,k,p}\bar{R}_{kjkp}\big(\sum_{i,\alpha} A^\alpha_{pi}A^\alpha_{ij}\big)+2\sum_{k,\alpha,\beta}\bar{R}_{k\alpha k\beta}\big(\sum_{i,j} A^\alpha_{ij}A_{ij}^\beta \big)\Big)\\
	&+\frac{|A^-|^2}{f^2}\Big(-8\sum_{j,p,\alpha,\beta}\bar{R}_{jp\alpha\beta}\big(\sum_i A^\alpha_{ip}A_{ij}^\beta \big)+2\sum_{i,j,k,\beta}\bar{\nabla}_k\bar{R}_{kij\beta}A_{ij}^\beta -2\sum_{i,j,k,\beta}\bar{\nabla}_i\bar{R}_{jkk\beta}A_{ij}^\beta \Big)\\
	&<C_1\frac{|A^-|^2}{f^2}(|A|+|A|^2)\\
	&\le C_2\frac{|A^-|^2}{f},
	\end{align*}
where we used the fact that the quantities in the parenthesis divided by $f$ are bounded and $C_1$ and $C_2$ are constants, which depend on $n,K_1,K_2$ and $d_n$.
  	 	 	 	 	 	 	       Also, from \eqref{Berger}, we have
	\begin{align*}
	 	&\frac{1}{f}\Big(4\sum_{i,j,p,q}\bar{R}_{ipjq}\big(\sum_{\alpha\ge 2} A^\alpha_{pq}A^\alpha_{ij}\big)-4\sum_{j,k,p}\bar{R}_{kjkp}\big(\sum_{i,\alpha\ge 2} A^\alpha_{pi}A^\alpha_{ij}\big)\Big)\\
	&+\frac{1}{f}\Big(2\sum_{k,\alpha,\beta\ge 2}\bar{R}_{k\alpha k\beta}\big(\sum_{i,j} A^\alpha_{ij}A_{ij}^\beta \big)-8\sum_{j,p,\alpha,\beta\ge 2}\bar{R}_{jp\alpha \beta}\big(\sum_iA^\alpha_{ip}A_{ij}^\beta \big)\Big)\\
	&+\frac{1}{f}\Big(2|H|^{-2}\sum_{i,j,k,\alpha,\beta\ge 2}\bar{R}_{k\alpha k\beta} H^\alpha A^{\beta}_{ij}\langle A_{ij},H\rangle\Big)\\
	&+\frac{1}{f}\Big(2\sum_{i,j,k,\beta\ge 2}\bar{\nabla}_k\bar{R}_{kij\beta}A_{ij}^\beta-2\sum_{i,j,k,\beta\ge 2}\bar{\nabla}_i\bar{R}_{jkk\beta}A_{ij}^\beta\Big)\\
	&\le C_3\frac{1}{f}\Big(|A^-|^2 +|A^-||A|+|A^-||h|+|A^-|\Big)\\
	&\le C_4\frac{|A^-|^2}{f}+ C''\frac{|A^-|}{\sqrt{f}},
	\end{align*}
where we used the fact that the quantities in the parenthesis divided by $\sqrt{f}$ are bounded and $C_3,C_4$ and $C''$ are constants, which depend on $n,K_1,K_2$ and $d_n$. By previous calculations we have upper bounds for most of the terms. We will show that he rest of the gradient terms satisfy the following:
	\begin{align*}
	4 \sum_{i,j,k}Q_{i j k}\left\langle A^-_{ij}, \nabla_k^{\perp} \nu_1\right\rangle \leq 2|\nabla^{\perp} {A^-}|^2+2(1-\delta) \frac{|{A^-}|^2}{f}\left(|\nabla^{\perp} A|^2-c_n|\nabla^{\perp} H|^2\right).
	\end{align*}
\begin{lemma}[{Lower bound for Bochner term of $\left(\partial_t-\Delta\right)|A^-|^2$}]\label{4.6} \ \newline
\begin{enumerate}
\item If $\frac{1}{n}<c_n \leq \frac{4}{3 n}$, then
	\begin{align*}
	2|\hat{\nabla}^{\perp} A^-|^2&\geq\frac{4 n-10}{n+2}|\mathring{h}|^2|\nabla^{\perp} \nu_1|^2+\frac{6(n-1)}{n+2}(|A^-|^2+f+d_n)|\nabla^{\perp} \nu_1|^2.
	\end{align*}
\item If $\frac{1}{n}<c_n \leq \frac{3(n+1)}{2 n(n+2)}$, then
	\begin{align*}
	2|\hat{\nabla}^{\perp} A^-|^2 \geq 2|\mathring{h}|^2|\nabla^{\perp} \nu_1|^2+4(|A^-|^2+f+d_n)|\nabla^{\perp} \nu_1|^2.
	\end{align*}
\end{enumerate}
\end{lemma}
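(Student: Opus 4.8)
\emph{Proof strategy.} The plan is to extract both inequalities from the refined Kato estimate \eqref{4.20naff}, combined with the trace identity \eqref{eq4.16} and the algebraic relation \eqref{eq4.11}. First I would rewrite \eqref{eq4.16} in the form
\[
\sum_{i,j,k}|\hat{\nabla}_i^{\perp} A^-_{jk}+\mathring{h}_{jk}\nabla_i^{\perp}\nu_1|^2=\sum_{i,j,k}|\hat{\nabla}_i^{\perp} A^-_{jk}+h_{jk}\nabla_i^{\perp}\nu_1|^2-\tfrac1n|H|^2|\nabla^{\perp}\nu_1|^2,
\]
and feed \eqref{4.20naff} into the first term on the right, which gives
\[
\sum_{i,j,k}|\hat{\nabla}_i^{\perp} A^-_{jk}+\mathring{h}_{jk}\nabla_i^{\perp}\nu_1|^2\ \ge\ \Bigl(\tfrac{3}{n+2}-\tfrac1n\Bigr)|H|^2|\nabla^{\perp}\nu_1|^2=\tfrac{2(n-1)}{n(n+2)}\,|H|^2|\nabla^{\perp}\nu_1|^2.
\]

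Next I would expand the square on the left as $|\hat{\nabla}^{\perp}A^-|^2+2\sum_{i,j,k}\mathring{h}_{jk}\langle\hat{\nabla}_i^{\perp}A^-_{jk},\nabla_i^{\perp}\nu_1\rangle+|\mathring{h}|^2|\nabla^{\perp}\nu_1|^2$ (legitimate since $\hat{\nabla}^{\perp}A^-$ and $\nabla^{\perp}\nu_1$ are both orthogonal to $\nu_1$), and estimate the cross term by Young's inequality taken with weight exactly $1$, namely $2\langle\hat{\nabla}^{\perp}A^-,\mathring{h}\,\nabla^{\perp}\nu_1\rangle\le|\hat{\nabla}^{\perp}A^-|^2+|\mathring{h}|^2|\nabla^{\perp}\nu_1|^2$. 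This yields $2|\hat{\nabla}^{\perp}A^-|^2+2|\mathring{h}|^2|\nabla^{\perp}\nu_1|^2\ge\tfrac{2(n-1)}{n(n+2)}|H|^2|\nabla^{\perp}\nu_1|^2$, i.e.
\[
2|\hat{\nabla}^{\perp}A^-|^2\ \ge\ \tfrac{2(n-1)}{n(n+2)}\,|H|^2|\nabla^{\perp}\nu_1|^2-2|\mathring{h}|^2|\nabla^{\perp}\nu_1|^2.
\]

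To finish, I would substitute $|H|^2=\tfrac{n}{nc_n-1}(|A^-|^2+|\mathring{h}|^2+f+d_n)$ from \eqref{eq4.11}, so the leading coefficient becomes $\tfrac{2(n-1)}{(n+2)(nc_n-1)}$. For part (1), $c_n\le\tfrac{4}{3n}$ gives $nc_n-1\le\tfrac13$, hence this coefficient is at least $\tfrac{6(n-1)}{n+2}$; pulling out the $|\mathring{h}|^2|\nabla^{\perp}\nu_1|^2$ contribution and using $\tfrac{6(n-1)}{n+2}-2=\tfrac{4n-10}{n+2}$ reproduces the stated inequality. For part (2), the stronger pinching $c_n\le\tfrac{3(n+1)}{2n(n+2)}$ gives $nc_n-1\le\tfrac{n-1}{2(n+2)}$, so the coefficient is at least $4$, and $4-2=2$ yields the second claim.

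The only genuine analytic ingredient — the improved Kato inequality with factor $\tfrac{3}{n+2}$ coming from the irreducible component $E^{(\perp)}_{ijk}$ of $\hat{\nabla}_i^{\perp}A^-_{jk}+h_{jk}\nabla_i^{\perp}\nu_1$ — is already available as \eqref{4.20naff}, so everything after that is purely algebraic. I expect the only point requiring care to be the sign bookkeeping in the Young step: one must take weight precisely $1$ so that the subtracted $2|\mathring{h}|^2|\nabla^{\perp}\nu_1|^2$ combines exactly with the $\tfrac{6(n-1)}{n+2}|\mathring{h}|^2|\nabla^{\perp}\nu_1|^2$ (resp.\ $4|\mathring{h}|^2|\nabla^{\perp}\nu_1|^2$) term produced by expanding $|H|^2$, and one must check that the two admissible ranges of $c_n$ give exactly the constants $\tfrac{4n-10}{n+2}$ and $2$. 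No other obstacle is anticipated.
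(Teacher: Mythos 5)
Your proposal is correct and follows essentially the same route as the paper: the paper likewise expands $\sum_{i,j,k}|\hat{\nabla}_i^{\perp}A^-_{jk}+\mathring{h}_{jk}\nabla_i^{\perp}\nu_1|^2$, bounds it above by $2|\hat{\nabla}^{\perp}A^-|^2+2|\mathring{h}|^2|\nabla^{\perp}\nu_1|^2$ via Young with weight $1$, bounds it below by $\tfrac{2(n-1)}{n(n+2)}|H|^2|\nabla^{\perp}\nu_1|^2$ (the estimate \eqref{4.22}, which you correctly re-derive from \eqref{4.20naff} and \eqref{eq4.16}), and then substitutes \eqref{eq4.11} and specialises the two ranges of $c_n$ exactly as you do. All your constants check out.
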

\begin{proof}
We begin by applying Young's inequality
	\begin{align*}
	\sum_{i,j,k}|\hat{\nabla}_i^{\perp} A^-_{j k}+\mathring{h}_{j k} \nabla_i^{\perp} \nu_1|^2&=|\hat{\nabla}^{\perp} A^-|^2+2\sum_{i,j,k}\langle\hat{\nabla}_i^{\perp} A^-_{j k}, \mathring{h}_{j k} \nabla_i^{\perp} \nu_1\rangle+|\mathring{h}|^2|\nabla^{\perp} \nu_1|^2 \\
	&\leq 2|\hat{\nabla}^{\perp} A^-|^2+2|\mathring{h}|^2|\nabla^{\perp} \nu_1|^2.
	\end{align*}
Multiplying both sides of \eqref{eq4.11} by $\frac{2(n-1)}{(n+2)(n c_n-1)}$ gives
	\begin{align*}
	\frac{2(n-1)}{n(n+2)}|H|^2=\frac{2(n-1)}{(n+2)\left(n c_n-1\right)}\left(f+|A^-|^2+|\mathring{h}|^2+d_n\right).
	\end{align*}
Since
	\begin{align}\label{4.22}
	\frac{2(n-1)}{n(n+2)}|H|^2|\nabla^\bot \nu_1|^2\le\sum_{i,j,k}|\hat{\nabla}^\bot_i A^-_{jk}+\mathring{h}_{jk}\nabla^\bot_i \nu_1|^2,
	\end{align}
our observations give us that
	\begin{align*}
	\frac{2(n-1)}{(n+2)\left(n c_n-1\right)}\left(f+|A^-|^2+|\mathring{h}|^2+d_n\right)|\nabla^{\perp} \nu_1|^2 \leq 2|\hat{\nabla}^{\perp} A^-|^2+2|\mathring{h}|^2|\nabla^{\perp} \nu_1|^2.
	\end{align*}
Subtracting the $|\mathring{h}|^2|\nabla^{\perp} \nu_1|^2$ term on the right-hand side gives
	\begin{align}\label{eqn_nablaA-}
	\frac{2(n-1)}{(n+2)\left(n c_n-1\right)}\left(f+|A^-|^2+d_n\right)|\nabla^{\perp} \nu_1|^2&+\left(\frac{2(n-1)}{(n+2)\left(n c_n-1\right)}-2\right)|\mathring{h}|^2|\nabla^{\perp} \nu_1|^2 \nonumber\\
	&\leq 2|\hat{\nabla}^{\perp}A^-|^2.
	\end{align}
If $c_n \leq \frac{4}{3 n}$, then $n c_n-1 \leq \frac{1}{3}$ and
	\begin{align*}
	\frac{2(n-1)}{(n+2)\left(n c_n-1\right)} \geq \frac{6(n-1)}{n+2}.
	\end{align*}
Plugging this into \eqref{eqn_nablaA-}, gives the first estimate of the lemma.
If $c_n \leq \frac{3(n+1)}{2 n(n+2)}$, then $n c_n-1 \leq \frac{n-1}{2(n+2)}$ and
	\begin{align*}
	\frac{2(n-1)}{(n+2)\left(n c_n-1\right)} \geq 4.
	\end{align*}
Plugging this into \eqref{eqn_nablaA-}, establishes the second estimate in the lemma.
\end{proof}
\begin{lemma} [{Lower bound for Bochner term of $\left(\partial_t-\Delta\right) f$}]\label{4.7} \ \newline
\begin{enumerate}
\item If $\frac{1}{n}<c_n \leq \frac{4}{3 n}$, then
	\begin{align*}
	2 \frac{|A^-|^2}{f}\left(|\nabla^{\perp} A|^2-c_n|\nabla^{\perp} H|^2\right) \geq \frac{5 n-8}{3(n-1)} \frac{|A^-|^2}{f}|\langle\nabla^{\perp} \mathring{A}, \nu_1\rangle|^2+\frac{10 n-16}{n+2}|A^-|^2|\nabla^{\perp} \nu_1|^2.
	\end{align*}
\item If $\frac{1}{n}<c_n \leq \frac{3(n+1)}{2 n(n+2)}$, then
	\begin{align*}
	2 \frac{|A^-|^2}{f}\left(|\nabla^{\perp} A|^2-c_n|\nabla^{\perp} H|^2\right) \geq \frac{3}{2} \frac{|A^-|^2}{f}|\langle\nabla^{\perp} \mathring{A}, \nu_1\rangle|^2+6|A^-|^2|\nabla^{\perp} \nu_1|^2.
	\end{align*}
\end{enumerate}
\end{lemma}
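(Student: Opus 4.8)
The plan is to mimic the proof of Lemma~\ref{4.6} almost verbatim, but applied now to the gradient term $|\nabla^\perp A|^2-c_n|\nabla^\perp H|^2$ that appears in the evolution equation \eqref{evoloff} of $f$, and then to multiply through by $\tfrac{|A^-|^2}{f}$. First I would use the identities \eqref{2.22naff} together with \eqref{eq4.15}, \eqref{eq4.16} and \eqref{eq4.17} to record the orthogonal decomposition
\begin{align*}
|\nabla^\perp A|^2-c_n|\nabla^\perp H|^2&=\sum_{i,j,k}|\langle\nabla_i^\perp\mathring{A}_{jk},\nu_1\rangle|^2+\sum_{i,j,k}|\hat\nabla_i^\perp A^-_{jk}+\mathring{h}_{jk}\nabla_i^\perp\nu_1|^2\\
&\quad+\Big(\tfrac1n-c_n\Big)\big(|\nabla|H||^2+|H|^2|\nabla^\perp\nu_1|^2\big).
\end{align*}
Since $c_n>\tfrac1n$, the coefficient $\tfrac1n-c_n$ is negative, so the last bracket is a deficit that must be absorbed into the first two sums rather than discarded; making this absorption work is exactly the role of the refined Kato estimates.

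Next I would handle the two non-negative sums separately. Combining \eqref{eq4.15} and \eqref{eq4.17} with the Kato-type inequality $\tfrac{3}{n+2}|\nabla|H||^2\le\sum_{i,j,k}|\nabla_i h_{jk}+\langle\nabla_i^\perp A^-_{jk},\nu_1\rangle|^2$ yields the refined bound $\tfrac{2(n-1)}{n(n+2)}|\nabla|H||^2\le\sum_{i,j,k}|\langle\nabla_i^\perp\mathring{A}_{jk},\nu_1\rangle|^2$; feeding this into the negative coefficient gives $\sum|\langle\nabla_i^\perp\mathring{A}_{jk},\nu_1\rangle|^2+(\tfrac1n-c_n)|\nabla|H||^2\ge\tfrac{n(3-(n+2)c_n)}{2(n-1)}\sum|\langle\nabla_i^\perp\mathring{A}_{jk},\nu_1\rangle|^2$. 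For the orthogonal sum I would use \eqref{4.22} twice: first to absorb the deficit $(\tfrac1n-c_n)|H|^2|\nabla^\perp\nu_1|^2$, which leaves $\tfrac{3-(n+2)c_n}{n+2}|H|^2|\nabla^\perp\nu_1|^2$, and then \eqref{eq4.11} in the crude form $|H|^2\ge\tfrac{n}{nc_n-1}f$ to turn $\tfrac{|A^-|^2}{f}|H|^2$ into $\tfrac{n}{nc_n-1}|A^-|^2$. Multiplying the whole decomposition by $2\tfrac{|A^-|^2}{f}$ and inserting $3-(n+2)c_n\ge\tfrac{5n-8}{3n}$ and $nc_n-1\le\tfrac13$ (both from $c_n\le\tfrac4{3n}$) collapses the two coefficients to $\tfrac{5n-8}{3(n-1)}$ and $\tfrac{10n-16}{n+2}$, proving (1); substituting instead $3-(n+2)c_n\ge\tfrac{3(n-1)}{2n}$ and $nc_n-1\le\tfrac{n-1}{2(n+2)}$ (from $c_n\le\tfrac{3(n+1)}{2n(n+2)}$) gives the constants $\tfrac32$ and $6$ of part (2).

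I expect the main difficulty to be purely one of bookkeeping: each Kato-type estimate must be applied in the correct direction — it is genuinely a lower bound for $\sum|\langle\nabla_i^\perp\mathring{A}_{jk},\nu_1\rangle|^2$ and for $\sum|\hat\nabla_i^\perp A^-_{jk}+\mathring{h}_{jk}\nabla_i^\perp\nu_1|^2$, but because the coefficient $\tfrac1n-c_n$ is negative it effectively gets used as an upper bound on the deficit terms, and then once more in the forward direction to re-express the surviving good terms in the required form. One also has to check that the residual coefficient $1+(\tfrac1n-c_n)\tfrac{n(n+2)}{2(n-1)}=\tfrac{n(3-(n+2)c_n)}{2(n-1)}$ is strictly positive, which is exactly guaranteed by $c_n\le\tfrac4{3n}<\tfrac{3}{n+2}$ for $n\ge 8$ (and by the stronger pinching hypothesis when $n=5,6,7$), so no positivity is lost in the absorption.
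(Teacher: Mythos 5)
Your proposal is correct and follows essentially the same route as the paper: the same orthogonal decomposition of $|\nabla^{\perp}A|^2-c_n|\nabla^{\perp}H|^2$ via \eqref{2.22naff}, the same refined Kato bounds $\tfrac{2(n-1)}{n(n+2)}|\nabla|H||^2\le\sum_{i,j,k}|\langle\nabla_i^{\perp}\mathring{A}_{jk},\nu_1\rangle|^2$ and \eqref{4.22}, the same use of \eqref{eq4.11} to convert $|H|^2$ into $\tfrac{n}{nc_n-1}f$, and the same final arithmetic for the two ranges of $c_n$ (your reorganization of the deficit as $(\tfrac1n-c_n)(|\nabla|H||^2+|H|^2|\nabla^{\perp}\nu_1|^2)$ is only a cosmetic rearrangement of the paper's computation).
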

\begin{proof}
Using
	\begin{align*}
	|\nabla^\bot A|^2=\sum_{i,j,k}|\hat{\nabla}^\bot_i A^-_{jk} + h_{jk}\nabla^\bot_i \nu_1|^2+\sum_{i,j,k}|\langle\nabla^\bot_i A^-_{jk},\nu_1\rangle+\nabla_i h_{jk}|^2
	\end{align*}
and
	\begin{align}\label{2.23}
	|\nabla^\bot H|^2=|H|^2|\nabla^\bot \nu_1|^2+|\nabla |H||^2,
	\end{align}
we have
	\begin{align*}
	|\nabla^{\perp} A|^2-c_n|\nabla^{\perp} H|^2&=\sum_{i,j,k}|\langle\nabla_i^{\perp} A^-_{j k}, \nu_1\rangle+\nabla_i h_{j k}|^2-c_n|\nabla| H \|^2 \\
	&+\sum_{i,j,k}|\hat{\nabla}_i^{\perp} A^-_{j k}+h_{j k} \nabla_i^{\perp} \nu_1|^2-c_n|H|^2|\nabla^{\perp} \nu_1|^2.
	\end{align*}
Note that
	\begin{align*}
	\sum_{i,j,k}|\nabla_i h_{jk}+\langle\nabla^\bot_i A^-_{jk},\nu_1\rangle|^2=\sum_{i,j,k}|\nabla_i \mathring{h}_{jk}+\langle \nabla^\bot_i A^-_{jk},\nu_1\rangle|^2+\frac{1}{n}|\nabla |H||^2
	\end{align*}
and
	\begin{align*}
	\sum_{i,j,k}|\langle\nabla^\bot_i \mathring{A}_{jk},\nu_1\rangle|^2=\sum_{i,j,k}|\nabla_i\mathring{h}_{jk}+\langle\nabla^\bot_i A^-_{jk},\nu_1\rangle|^2.
	\end{align*}
Also, since
	\begin{align*}
	\frac{2(n-1)}{n(n+2)}|\nabla |H||^2\le \sum_{i,j,k}|\langle \nabla^\bot_i \mathring{A}_{jk},\nu_1\rangle|^2,
	\end{align*}
we have
	\begin{align*}
	\sum_{i,j,k}|\langle\nabla_i^{\perp} A^-_{j k}, \nu_1\rangle+\nabla_i h_{j k}|^2-c_n|\nabla| H||^2&=\sum_{i,j,k}|\langle\nabla_i^{\perp} \mathring{A}_{j k}, \nu_1)|^2-\frac{n c_n-1}{n}|\nabla| H||^2\\
	&\geq\left(1-\frac{(n+2)\left(n c_n-1\right)}{2(n-1)}\right)\sum_{i,j,k}|\langle\nabla_i^{\perp} \mathring{A}_{j k}, \nu_1\rangle|^2.
	\end{align*}
In view of \eqref{eq4.11} and \eqref{4.20naff}, we have
	\begin{align*}
	\sum_{i,j,k}|\hat{\nabla}_i^\bot A^-_{jk}+h_{jk} \nabla_i^\bot\nu_1|^2&-c_n|H|^2|\nabla^\bot \nu_1|^2\geq\left(\frac{3}{n+2}-c_n\right)|H|^2|\nabla^\bot\nu_1|^2 \\
	&=\frac{n}{n c_n-1}\left(\frac{3}{n+2}-c_n\right)(f+|A^-|^2+|\mathring{h}|^2+d_n)|\nabla^\bot\nu_1|^2 \\
	&\geq \frac{n}{n c_n-1}\left(\frac{3}{n+2}-c_n\right) f|\nabla^\bot \nu_1|^2.
	\end{align*}
Thus, by the three previous computations, we have
	\begin{align*}
	2 \frac{|A^-|^2}{f}\left(|\nabla^{\perp} A|^2-c_n|\nabla^{\perp} H|^2\right)&\geq\left(2-\frac{(n+2)\left(n c_n-1\right)}{n-1}\right) \frac{|A^-|^2}{f}\sum_{i,j,k}|\langle\nabla_i^{\perp} \mathring{A}_{j k}, \nu_1\rangle|^2 \\
	&+\frac{2 n}{n c_n-1}\left(\frac{3}{n+2}-c_n\right)|A^-|^2|\nabla^{\perp} \nu_1|^2.
	\end{align*}
If $c_n \leq \frac{4}{3 n}$, then $n c_n-1 \leq \frac{1}{3}$ and
	\begin{align*}
	2-\frac{(n+2)\left(n c_n-1\right)}{n-1}&\geq 2-\frac{n+2}{3(n-1)}=\frac{5 n-8}{3(n-1)}, \\
	\frac{2 n}{n c_n-1}\left(\frac{3}{n+2}-c_n\right)&\geq 6 n\left(\frac{9 n-4(n+2)}{3 n(n+2) }\right)=\frac{10 n-16}{n+2}.
	\end{align*}
This establishes the first inequality of the lemma.
 If $c_n \leq \frac{3(n+1)}{2 n(n+2)}$, then $n c_n-1 \leq \frac{n-1}{2(n+2)}$ and
	\begin{align*}
	&2-\frac{(n+2)\left(n c_n-1\right)}{n-1} \geq 2-\frac{1}{2}=\frac{3}{2}, \\
	&\frac{2 n}{n c_n-1}\left(\frac{3}{n+2}-c_n\right) \geq \frac{4 n(n+2)}{n-1}\left(\frac{6 n-3(n+1)}{2 n(n+2)}\right)=6.
	\end{align*}
This establishes the second inequality of the lemma.
\end{proof}
\begin{lemma}[Upper bound for gradient term of $\left(\partial_t-\Delta\right)|A^-|^2$ ]\label{4.8} \ \newline
\begin{enumerate}
\item If $\frac{1}{n}<c_n \leq \frac{4}{3 n}$, then
	\begin{align*}
	4 \sum_{i,j,k}Q_{i j k}\langle A^-_{i j}, \nabla_k^{\perp} \nu_1\rangle&\leq 2|\langle\nabla^{\perp} A^-, \nu_1\rangle|^2+\frac{5 n-9}{3(n-1)} \frac{|A^-|^2}{f}|\langle\nabla^{\perp} \mathring{A}, \nu_1\rangle|^2 \\
	&+2|A^-|^2|\nabla^{\perp} \nu_1|^2+\frac{3(n-1)}{n-3} f|\nabla^{\perp} \nu_1|^2+\frac{2(n+2)}{n+3}|\mathring{h}|^2|\nabla^{\perp} \nu_1|^2.
	\end{align*}
\item If $\frac{1}{n}<c_n \leq \frac{3(n+1)}{2 n(n+2)}-\e_0$ and $\e=\frac{2 n(n+2)}{3(n-1)} \e_0$, then
	\begin{align*}
	4 \sum_{i,j,k}Q_{i j k}\langle A^-_{i j}, \nabla_k^{\perp} \nu_1\rangle&\leq 2|\langle\nabla^{\perp} A^-, \nu_1\rangle|^2+(1-\e) \frac{3}{2} \frac{|A^-|^2}{f}|\langle\nabla^{\perp}\mathring{A}, \nu_1\rangle|^2 \\
	&+2|A^-|^2|\nabla^{\perp} \nu_1|^2+4 f|\nabla^{\perp} \nu_1|^2+2|\mathring{h}|^2|\nabla^{\perp} \nu_1|^2.
	\end{align*}
\end{enumerate}
\end{lemma}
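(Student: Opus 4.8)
The plan is to split the tensor $Q_{ijk}$ into its three defining summands and to estimate each of the three contractions with $\langle A^-_{ij},\nabla^\perp_k\nu_1\rangle$ separately, using Cauchy--Schwarz and Young's inequality with weights that are fixed only at the very end. Four elementary facts will be fed in: the pointwise bound $\sum_{i,j,k}|\langle A^-_{ij},\nabla^\perp_k\nu_1\rangle|^2\le|A^-|^2|\nabla^\perp\nu_1|^2$; the orthogonality relation $\langle\nabla^\perp_k A^-_{ij},\nu_1\rangle=-\langle A^-_{ij},\nabla^\perp_k\nu_1\rangle$, which in particular gives $\sum_{i,j,k}|\langle\nabla^\perp_k A^-_{ij},\nu_1\rangle|^2=|\langle\nabla^\perp A^-,\nu_1\rangle|^2$; the pinching identity \eqref{eq4.11}, used in the form $|H|^{-2}|A^-|^2\le\tfrac{nc_n-1}{n}\tfrac{|A^-|^2}{f}$ (since $\tfrac{nc_n-1}{n}|H|^2\ge f$); and the Kato-type bound $\tfrac{2(n-1)}{n(n+2)}|\nabla|H||^2\le|\langle\nabla^\perp\mathring A,\nu_1\rangle|^2$ obtained above from Lemma \ref{katoinequality}.

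First I will bound $4\sum_{i,j,k}\langle\nabla^\perp_k\mathring A_{ij},\nu_1\rangle\langle A^-_{ij},\nabla^\perp_k\nu_1\rangle$ by Young's inequality with weight $\lambda\tfrac{|A^-|^2}{f}$, which after the first elementary fact yields $2\lambda\tfrac{|A^-|^2}{f}|\langle\nabla^\perp\mathring A,\nu_1\rangle|^2+\tfrac{2}{\lambda}f|\nabla^\perp\nu_1|^2$. Next, $-4\sum_{i,j,k}\langle\nabla^\perp_k A^-_{ij},\nu_1\rangle\langle A^-_{ij},\nabla^\perp_k\nu_1\rangle$ equals $4\sum_{i,j,k}|\langle A^-_{ij},\nabla^\perp_k\nu_1\rangle|^2$ by the orthogonality relation, and splitting this sum in half gives exactly $2|\langle\nabla^\perp A^-,\nu_1\rangle|^2+2|A^-|^2|\nabla^\perp\nu_1|^2$. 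Finally, for $-4|H|^{-1}\sum_{i,j,k}\mathring h_{ij}\nabla_k|H|\langle A^-_{ij},\nabla^\perp_k\nu_1\rangle$ I apply Cauchy--Schwarz to reach $4|H|^{-1}|\mathring h||\nabla|H||\,|A^-||\nabla^\perp\nu_1|$, then Young with weight $\sigma$ to get $2\sigma|H|^{-2}|\nabla|H||^2|A^-|^2+\tfrac{2}{\sigma}|\mathring h|^2|\nabla^\perp\nu_1|^2$, and then substitute \eqref{eq4.11} and the Kato bound to replace $|H|^{-2}|\nabla|H||^2|A^-|^2$ by $\tfrac{(nc_n-1)(n+2)}{2(n-1)}\tfrac{|A^-|^2}{f}|\langle\nabla^\perp\mathring A,\nu_1\rangle|^2$. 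This last substitution is the only place the hypothesis on $c_n$ enters, and it is exactly what ties the estimate to the Bochner lower bound of Lemma \ref{4.7}: in case (1) one uses $nc_n-1\le\tfrac13$, and in case (2) one uses $nc_n-1\le\tfrac{n-1}{2(n+2)}-n\e_0$.

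What remains is the bookkeeping of constants, which is the only real work and the part I expect to be delicate. In case (1), choosing $\sigma=\tfrac{n+3}{n+2}$ makes the third summand contribute $\tfrac{2(n+2)}{n+3}|\mathring h|^2|\nabla^\perp\nu_1|^2$ and $\tfrac{n+3}{3(n-1)}\tfrac{|A^-|^2}{f}|\langle\nabla^\perp\mathring A,\nu_1\rangle|^2$, and then choosing $\lambda=\tfrac{2(n-3)}{3(n-1)}$ makes the two $\tfrac{|A^-|^2}{f}|\langle\nabla^\perp\mathring A,\nu_1\rangle|^2$ contributions add to $\tfrac{5n-9}{3(n-1)}$ while producing $\tfrac{2}{\lambda}=\tfrac{3(n-1)}{n-3}$ as the coefficient of $f|\nabla^\perp\nu_1|^2$; combined with the $2|\langle\nabla^\perp A^-,\nu_1\rangle|^2+2|A^-|^2|\nabla^\perp\nu_1|^2$ from the middle summand this is precisely the first assertion. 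In case (2), taking $\sigma=1$ and $\lambda=\tfrac12$ gives $\tfrac{2}{\sigma}=2$ and $\tfrac{2}{\lambda}=4$ for the $|\mathring h|^2|\nabla^\perp\nu_1|^2$ and $f|\nabla^\perp\nu_1|^2$ coefficients, and the relation $\e=\tfrac{2n(n+2)}{3(n-1)}\e_0$ is chosen exactly so that $\tfrac{(n+2)(nc_n-1)}{n-1}\le\tfrac12-\tfrac32\e$, whence the total $\tfrac{|A^-|^2}{f}|\langle\nabla^\perp\mathring A,\nu_1\rangle|^2$ coefficient is $2\lambda+\tfrac12-\tfrac32\e=(1-\e)\tfrac32$, which is the second assertion. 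The point throughout is that all five coefficients on the right-hand side must come out simultaneously, so the weights $\lambda,\sigma$ and (in case (2)) the relation between $\e$ and $\e_0$ are forced; the analytic ingredients themselves are routine.
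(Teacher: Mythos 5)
Your proposal is correct and follows essentially the same route as the paper: bound the three pieces of $Q_{ijk}$ via Cauchy--Schwarz and weighted Young's inequalities, convert $|H|^{-2}$ into $\tfrac{nc_n-1}{n}f^{-1}$ via \eqref{eq4.11} and the Kato-type bound, and then pick the weights ($\lambda=a_2$, $\sigma=a_3$ in the paper's notation) to produce the stated coefficients. The only cosmetic difference is that you evaluate the middle term exactly as $4\sum|\langle A^-_{ij},\nabla^\perp_k\nu_1\rangle|^2$ and split it, whereas the paper applies Young with $a_1=1$; both give the identical bound $2|\langle\nabla^\perp A^-,\nu_1\rangle|^2+2|A^-|^2|\nabla^\perp\nu_1|^2$.
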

\begin{proof}
Using the definition of $Q_{ijk}$, we get
	\begin{align}\label{4.30}
	|Q| \leq|\langle\nabla^{\perp} \mathring{A}, \nu_1\rangle|+|\langle\nabla^{\perp}A^-, \nu_1\rangle|+|H|^{-1}|\mathring{h}||\nabla| H||
	\end{align}
We will first treat the case $\frac{1}{n}<c_n \leq \frac{4}{3 n}$. It easily follows from the definition of $f$ that
	\begin{align*}
	f \leq\left(c_n-\frac{1}{n}\right)|H|^2 \leq \frac{1}{3 n}|H|^2.
	\end{align*}
Consequently, using the estimate
	\begin{align}\label{4.21}
	\frac{2(n-1)}{n(n+2)}|\nabla |H||^2\le\sum_{i,j,k}|\langle\nabla^\bot_i \mathring{A}_{jk},\nu_1\rangle|^2
	\end{align}
we obtain
	\begin{align}\label{4.31}
	\frac{|A^-|^2}{|H|^2}|\nabla| H||^2\leq \frac{n(n+2)}{2(n-1)} \frac{1}{3 n} \frac{|A^-|^2}{f}|\langle\nabla^{\perp} \mathring{A}, \nu_1\rangle|^2=\frac{n+2}{6(n-1)} \frac{|A^-|^2}{f}|\langle\nabla^{\perp} \mathring{A}, \nu_1\rangle|^2.
	\end{align}
Then
	\begin{align*}
	|\langle A^-,\nabla^\bot \nu_1\rangle|^2=\sum_{i,j} \langle A^-_{ij},\nabla^\bot_i \nu_1\rangle^2 \le\sum_{i,j,k}\sum_{\beta\ge 2} (A^{\beta}_{ij})^2 \langle \nabla^\bot_k \nu_1,\nu_\beta\rangle^2
	\end{align*}
and \eqref{4.30} give
	\begin{align*}
	4\sum_{i,j,k} Q_{i j k}\langle A^-_{i j}, \nabla_k^{\perp} \nu_1\rangle&\leq 4|Q||\langle A^-, \nabla^{\perp} \nu_1\rangle| \\
	&\leq 4\left(|\langle\nabla^{\perp}\mathring{A}, \nu_1\rangle|+|\langle\nabla^{\perp} A^-, \nu_1\rangle|+|H|^{-1}|\mathring{h} | \nabla| H||\right)|A^-||\nabla^{\perp} \nu_1|.
	\end{align*}
Now to each of these three summed terms above we apply Young's inequality with constants $a_1, a_2, a_3>0$. Specifically, we have
	\begin{align*}
	4|\langle\nabla^{\perp} A^-, \nu_1\rangle||A^-||\nabla^{\perp} \nu_1|&\leq 2 a_1|\langle\nabla^{\perp} A^-, \nu_1\rangle|^2+\frac{2}{a_1}|A^-|^2|\nabla^{\perp} \nu_1|^2, \\
	4|\langle\nabla^{\perp} \mathring{A}, \nu_1\rangle||A^-||\nabla^{\perp} \nu_1|&=4|\langle\nabla^{\perp} \mathring{A}, \nu_1\rangle| \frac{|A^-|}{\sqrt{f}} f^{\frac{1}{2}}|\nabla^{\perp} \nu_1| \\
	&\leq 2 a_2 \frac{|A^-|^2}{f}|\langle\nabla^{\perp} \mathring{A}, \nu_1\rangle|^2+\frac{2}{a_2} f|\nabla^{\perp} \nu_1|^2, \\
	4|H|^{-1}|\mathring{h}||\nabla| H|||A^-||\nabla^{\perp} \nu_1|&\leq 2 a_3 \frac{|A^-|^2}{|H|^2}\left|\nabla| H|\right|^2+\frac{2}{a_3}|\mathring{h}|^2 |\nabla^{\perp} \nu_1|^2 \\
	&\leq 2 a_3 \frac{n+2}{6(n-1)} \frac{|A^-|^2}{f}|\langle\nabla^{\perp} \mathring{A}, \nu_1\rangle|^2+\frac{2}{a_3}|\mathring{h}|^2|\nabla^{\perp} \nu_1|^2.
	\end{align*}
Note we used \eqref{4.31} in the last inequality. Hence
	\begin{align}\label{4.32}
	4 \sum_{i,j,k}Q_{i j k}\langle A^-_{i j}, \nabla_k^{\perp} \nu_1\rangle&\leq 2 a_1|\langle\nabla^{\perp} A^-, \nu_1\rangle|^2+\left(2 a_2+2 a_3 \frac{n+2}{6(n-1)}\right) \frac{|A^-|^2}{f}|\langle\nabla^{\perp} \mathring{A}, \nu_1\rangle|^2 \nonumber\\
	&+\frac{2}{a_1}|A^-|^2|\nabla^{\perp} \nu_1|^2+\frac{2}{a_2} f|\nabla^{\perp} \nu_1|^2+\frac{2}{a_3}|\mathring{h}|^2|\nabla^{\perp} \nu_1|^2.
	\end{align}
Now set
	\begin{align*}
	a_1=1, \ \ \ a_2=\frac{2(n-3)}{3(n-1)}, \ \ \ a_3=\frac{n+3}{n+2}.
	\end{align*}
In this case,
	\begin{align*}
	2 a_2+2 a_3 \frac{n+2}{6(n-1)}&=\frac{4(n-3)}{3(n-1)}+\frac{n+3}{n+2} \frac{n+2}{3(n-1)}=\frac{5 n-9}{3(n-1)}, \\
	\frac{2}{a_2}&=\frac{3(n-1)}{n-3}, \\
	\frac{2}{a_3}&=\frac{2(n+2)}{n+3}.
	\end{align*}
Plugging these into \eqref{4.32}, we have the first inequality as claimed.
Now if $\frac{1}{n}<c_n \leq \frac{3(n+1)}{2 n(n+2)}-\e_0$, then $c_n-\frac{1}{n} \leq \frac{n-1}{2 n(n+2)}-\e_0$. Therefore, if we take $\e=\frac{2 n(n+2)}{3(n-1)} \e_0$, then
	\begin{align*}
	c_n-\frac{1}{n} \leq(1-3 \e) \frac{n-1}{2 n(n+2)}.
	\end{align*}
In this case,
	\begin{align*}
	f \leq\left(c_n-\frac{1}{n}\right)|H|^2 \leq(1-3 \e) \frac{n-1}{2 n(n+2)}|H|^2.
	\end{align*}
Again using the definition of $f$, it follows that
	\begin{align*}
	\frac{|A^-|^2}{|H|^2}|\nabla| H||^2&\leq(1-3 \e) \frac{n(n+2)}{2(n-1)} \frac{n-1}{2 n(n+2)} \frac{|A^-|^2}{f}|\langle\nabla^{\perp} \mathring{A}, \nu_1\rangle|^2 \\
	&=\frac{1}{4}(1-3 \e) \frac{|A^-|^2}{f}|\langle\nabla^{\perp} \mathring{A}, \nu_1\rangle|^2.
	\end{align*}
Proceeding as we did before, we obtain the inequality
	\begin{align}\label{4.33}
	4 \sum_{i,j,k}Q_{i j k}\langle A^-_{i j}, \nabla_k^{\perp} \nu_1\rangle&\leq 2 a_1|\langle\nabla^{\perp} A^-, \nu_1\rangle|^2+\left(2 a_2+\frac{1}{2} a_3(1-3 \e)\right) \frac{|A^-|^2}{f}|\langle\nabla^{\perp} \mathring{A}, \nu_1\rangle|^2\nonumber \\
	&+\frac{2}{a_1}|A^-|^2|\nabla^{\perp} \nu_1|^2+\frac{2}{a_2} f|\nabla^{\perp} \nu_1|^2+\frac{2}{a_3}|\mathring{h}|^2|\nabla^{\perp} \nu_1|^2.
	\end{align}
Set
	\begin{align*}
	a_1=1, \quad a_2=\frac{1}{2}, \quad a_3=1.
	\end{align*}
In this case,
	\begin{align*}
	2 a_2+\frac{1}{2} a_3(1-3\e)&=\frac{3}{2}(1-\e), \\
	\frac{2}{a_2}&=4, \\
	\frac{2}{a_3}&=2.
	\end{align*}
Plugging these into \eqref{4.33}, we get the second inequality as claimed.
\end{proof}
Finally, putting the conclusions of Lemma \ref{4.6}, \ref{4.7} and \ref{4.8} together, we get the following result.
\begin{lemma}[Gradient term estimate]\label{4.9} \ \newline
Suppose either $n \geq 8, \frac{1}{n}<c_n \leq \frac{4}{3 n}$ and $0<\delta \leq \frac{1}{5 n-8}$ or $\frac{1}{n}<c_n \leq \frac{3(n+1)}{2 n(n+2)}-\e_0$, and $0<\delta \leq \min \left\{\frac{1}{2}, \frac{2 n(n+2)}{3(n-1)} \e_0\right\}$. Then in either case,
	\begin{align*}
	4\sum_{i,j,k} Q_{i j k}\langle A^-_{i j}, \nabla_k^{\perp} \nu_1\rangle \leq 2|\nabla^{\perp} A^-|^2+2(1-\delta) \frac{|A^-|^2}{f}\left(|\nabla^{\perp} A|^2-c_n|\nabla^{\perp} H|^2\right).
	\end{align*}
\end{lemma}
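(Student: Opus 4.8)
The plan is to chain together Lemmas~\ref{4.6}, \ref{4.7} and \ref{4.8} and reduce the statement to a short list of elementary inequalities in $n$, $c_n$ and $\delta$ (respectively $\e_0$ and $\e$). I would start from the orthogonal decomposition $|\nabla^\perp A^-|^2=|\hat\nabla^\perp A^-|^2+|\langle\nabla^\perp A^-,\nu_1\rangle|^2$ of \eqref{2.22naff}, so that the asserted inequality is equivalent to
\[
4\sum_{i,j,k}Q_{ijk}\langle A^-_{ij},\nabla^\perp_k\nu_1\rangle-2|\langle\nabla^\perp A^-,\nu_1\rangle|^2\le 2|\hat\nabla^\perp A^-|^2+2(1-\delta)\frac{|A^-|^2}{f}\big(|\nabla^\perp A|^2-c_n|\nabla^\perp H|^2\big).
\]
Applying Lemma~\ref{4.8} to the left-hand side, the term $2|\langle\nabla^\perp A^-,\nu_1\rangle|^2$ cancels and what survives is a sum of four nonnegative terms carrying the factors $\tfrac{|A^-|^2}{f}|\langle\nabla^\perp\mathring A,\nu_1\rangle|^2$, $|A^-|^2|\nabla^\perp\nu_1|^2$, $f|\nabla^\perp\nu_1|^2$ and $|\mathring h|^2|\nabla^\perp\nu_1|^2$.

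I would then lower-bound the two terms on the right: $2|\hat\nabla^\perp A^-|^2$ using Lemma~\ref{4.6} and $2(1-\delta)\tfrac{|A^-|^2}{f}(|\nabla^\perp A|^2-c_n|\nabla^\perp H|^2)$ using $(1-\delta)$ times Lemma~\ref{4.7}, which produces a matching collection of nonnegative terms in the same four quantities, so it suffices to compare coefficients pairwise. In the regime $n\ge 8$, $\tfrac1n<c_n\le\tfrac4{3n}$: the $\tfrac{|A^-|^2}{f}|\langle\nabla^\perp\mathring A,\nu_1\rangle|^2$ comparison is $\tfrac{5n-9}{3(n-1)}\le(1-\delta)\tfrac{5n-8}{3(n-1)}$, i.e.\ exactly $\delta\le\tfrac1{5n-8}$; the $|\mathring h|^2|\nabla^\perp\nu_1|^2$ comparison is $\tfrac{2(n+2)}{n+3}\le\tfrac{4n-10}{n+2}$, which rearranges to $n^2-3n-19\ge0$ and holds for $n\ge7$; and since Lemma~\ref{4.6} contributes the single block $\tfrac{6(n-1)}{n+2}(|A^-|^2+f+d_n)|\nabla^\perp\nu_1|^2$, the $|A^-|^2|\nabla^\perp\nu_1|^2$ and $f|\nabla^\perp\nu_1|^2$ terms of Lemma~\ref{4.8} are both absorbed once $2\le\tfrac{6(n-1)}{n+2}$ and $\tfrac{3(n-1)}{n-3}\le\tfrac{6(n-1)}{n+2}$, the latter being precisely the condition $n\ge8$ (and the $d_n$-term is positive, hence discarded). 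The regime $\tfrac1n<c_n\le\tfrac{3(n+1)}{2n(n+2)}-\e_0$ is handled the same way: with $\e=\tfrac{2n(n+2)}{3(n-1)}\e_0$ the four comparisons become $(1-\e)\tfrac32\le(1-\delta)\tfrac32$ (i.e.\ $\delta\le\e$), $2\le2$, $4\le4$ and $2\le4$, all valid under $0<\delta\le\min\{\tfrac12,\e\}$, the bound $\delta\le\tfrac12$ keeping $2(1-\delta)$ positive and consistent with the range in Lemma~\ref{lemma4.4}.

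The only real difficulty I anticipate is the bookkeeping: one must track which portion of each lower bound is allocated to which term of the Lemma~\ref{4.8} estimate, and in particular notice that the single coefficient $\tfrac{6(n-1)}{n+2}$ from Lemma~\ref{4.6} has to dominate simultaneously both the $|A^-|^2|\nabla^\perp\nu_1|^2$ slack and the $f|\nabla^\perp\nu_1|^2$ slack. Once this is organised, each of the four comparisons is a one-line algebraic check, and the hypotheses have been arranged so that the $\delta$-constraint is exactly what balances the $|\langle\nabla^\perp\mathring A,\nu_1\rangle|^2$ coefficients while the dimension constraint $n\ge8$ is exactly what makes $\tfrac{3(n-1)}{n-3}\le\tfrac{6(n-1)}{n+2}$. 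Summing the four validated comparisons and adding back $2|\langle\nabla^\perp A^-,\nu_1\rangle|^2$ via \eqref{2.22naff} yields the stated inequality.
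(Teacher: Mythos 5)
Your proposal is correct and follows essentially the same route as the paper: decompose $|\nabla^{\perp}A^-|^2$ via \eqref{2.22naff}, bound the two sides using Lemma \ref{4.6} and $(1-\delta)$ times Lemma \ref{4.7} against Lemma \ref{4.8}, and compare coefficients term by term, with $\delta\le\tfrac{1}{5n-8}$ balancing the $|\langle\nabla^{\perp}\mathring{A},\nu_1\rangle|^2$ coefficients and $n\ge 8$ coming from $\tfrac{3(n-1)}{n-3}\le\tfrac{6(n-1)}{n+2}$. The only cosmetic difference is that the paper pools the $|A^-|^2|\nabla^{\perp}\nu_1|^2$ contributions from Lemmas \ref{4.6} and \ref{4.7} into a single coefficient $\tfrac{11n-14}{n+2}$ (respectively $7$), whereas you absorb that term using the Lemma \ref{4.6} block alone and discard the Lemma \ref{4.7} contribution, which is equally valid.
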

\begin{proof}
First suppose $n \geq 8, \frac{1}{n}<c_n \leq \frac{4}{3 n}$ and $0<\delta \leq \frac{1}{5 n-8}$. Expanding $|\nabla^{\perp} A^-|^2$ using
	\begin{align}\label{2.24}
	|\nabla^\bot A^-|^2=|\hat{\nabla}^\bot A^-|^2+|\langle\nabla^\bot A^-,\nu_1\rangle|^2
	\end{align}
and using the inequality \eqref{2.23} in Lemma \ref{4.6} gives us
	\begin{align*}
	2|\nabla^{\perp} A^-|^2&=2|\hat{\nabla}^{\perp} A^-|^2+2|\langle\nabla^{\perp}A^-, \nu_1\rangle|^2 \\
	&\geq 2|\langle\nabla^{\perp} A^-, \nu_1\rangle|^2+\frac{4 n-10}{n+2}|\mathring{h}|^2|\nabla^{\perp} \nu_1|^2+\frac{6(n-1)}{n+2}(|A^-|^2+f+d_n)|\nabla^{\perp} \nu_1|^2.
	\end{align*}
Multiplying the first result in Lemma \ref{4.7} by $(1-\delta)$ and using that $1-\delta \geq \frac{1}{2}$ on the coefficient of $|A^-|^2|\nabla^{\perp} \nu_1|^2$ gives
	\begin{align*}
	2(1-\delta) \frac{|A^-|^2}{f}(\left|\nabla^{\perp} A|^2-c_n|\nabla^{\perp} H|^2\right) &\geq(1-\delta) \frac{5 n-8}{3(n-1)} \frac{|A^-|^2}{f}|\langle\nabla^{\perp} \mathring{A}, \nu_1\rangle|^2\\
	&+\frac{5 n-8}{n+2}|A^-|^2|\nabla^{\perp} \nu_1|^2.
	\end{align*}
Putting these together, we get
	\begin{align*}
	2|\nabla^{\perp} A^-|^2&+2(1-\delta) \frac{|A^-|^2}{f}\left(|\nabla^{\perp} A|^2-c_n|\nabla^{\perp} H|^2\right) \geq 2|\langle\nabla^{\perp} A^-, \nu_1\rangle|^2\\
	&+(1-\delta) \frac{5 n-8}{3(n-1)} \frac{|A^-|^2}{f}|\langle\nabla^{\perp} \mathring{A}, \nu_1\rangle|^2 +\frac{11 n-14}{n+2}|A^-|^2|\nabla^{\perp} \nu_1|^2\\
	&+\frac{6(n-1)}{n+2} (f+d_n)|\nabla^{\perp} \nu_1|^2+\frac{4 n-10}{n+2}|\mathring{h}|^2|\nabla^{\perp} \nu_1|^2.
	\end{align*}
On the other hand, the first result of Lemma \ref{4.8} gives us that
	\begin{align*}
	4 \sum_{i,j,k}Q_{i j k}\langle A^-_{i j}, \nabla_k^{\perp} \nu_1\rangle &\leq 2|\langle\nabla^{\perp} A^-, \nu_1\rangle|^2+\frac{5 n-9}{3(n-1)} \frac{|A^-|^2}{f}|\langle\nabla^{\perp} \mathring{A}, \nu_1\rangle|^2 \\
	&+2|A^-|^2|\nabla^{\perp} \nu_1|^2+\frac{3(n-1)}{n-3} f|\nabla^{\perp} \nu_1|^2+\frac{2(n+2)}{n+3}|\mathring{h}|^2|\nabla^{\perp} \nu_1|^2.
	\end{align*}
Therefore, it only remains to compare the coefficients of like terms in the two inequalities above. For the coefficients of $\frac{|A^-|^2}{f}|\langle\nabla^{\perp} \mathring{A}, \nu_1\rangle|^2$, we need
	\begin{align*}
	\frac{5 n-9}{3(n-1)} \leq(1-\delta) \frac{5 n-8}{3(n-1)} \Longleftrightarrow \delta \leq \frac{1}{5 n-8}.
	\end{align*}
Comparing the coefficients of the remaining terms implies we need
	\begin{align*}
	2 n+4 \leq 11 n-14 \Longleftrightarrow 2 \leq n,
	\end{align*}
	\begin{align*}
	n+2 \leq 2(n-3) \Longleftrightarrow 8 \leq n
	\end{align*}
and
	\begin{align*}
	2(n+2)^2 \leq(4 n-10)(n+3) \Longleftrightarrow 19 \leq n(n-3).
	\end{align*}
Each of these inequalities is true if $n \geq 8$ completing the proof for the first case.
Now suppose $\frac{1}{n}<c_n<\frac{3(n+1)}{2 n(n+2)}$ and $0<\delta<\min \left\{\frac{1}{2}, \frac{2 n(n+2)}{3(n-1)} \e_0\right\}$. Arguing as before, this time using the second result in Lemma \ref{4.6} and the second result in Lemma \ref{4.7} yields
	\begin{align*}
	2\left|\nabla^{\perp}A^-\right|^2&+2(1-\delta) \frac{|A^-|^2}{f}\left(|\nabla^{\perp} A|^2-c_n|\nabla^{\perp} H|^2\right) \geq 2|\langle\nabla^{\perp} A^-, \nu_1\rangle|^2\\
	&+(1-\delta) \frac{3}{2} \frac{|A^-|^2}{f}|\langle\nabla^{\perp} \mathring{A}, \nu_1\rangle|^2+7|A^-|^2|\nabla^{\perp} \nu_1|^2+4 (f+d_n)|\nabla^{\perp} \nu_1|^2\\
	&+2|\mathring{h}|^2|\nabla^{\perp} \nu_1|^2.
	\end{align*}
Note we again used $\delta \leq \frac{1}{2}$ to simplify the coefficient of $|A^-|^2|\nabla^{\perp} \nu_1|^2$. On the other hand, by the second result in Lemma \ref{4.8}, we have
	\begin{align*}
	4 \sum_{i,j,k}Q_{i j k}\langle A^-_{i j}, \nabla_k^{\perp} \nu_1\rangle&\leq 2|\langle\nabla^{\perp} A^-,\nu_1\rangle|^2+(1-\delta) \frac{3}{2}\frac{|A^-|^2}{f}|\langle\nabla^{\perp}\mathring{A}, \nu_1\rangle|^2+f|A^-|^2|\nabla^\bot \nu_1|^2\\
	&+4 (f+d_n)|\nabla^{\perp} \nu_1|^2+2|\mathring{h}|^2|\nabla^{\perp} \nu_1|^2\\
	&\le 2|\nabla^\bot A^-|^2+2(1-\delta)\frac{|A^-|^2}{f}\big(|\nabla^\bot A|^2-c_n|\nabla^\bot H|^2\big),
	\end{align*}
where recall $\e=\frac{2 n(n+2)}{3(n-1)} \e_0$. Using the assumption that $\delta \leq \e$, this completes the proof of the lemma for the second case.
\end{proof}
   	       	 	                                         	           	 	 	 	 	     	 	 	 	     	 	 	 	 	         	 	         	         	      	 	 	                 	                            	    	     	                         	              	                   	                	 	     	     	 	     	                    	 	     	    We complete the proof of Theorem \ref{Th1}. Let $\delta$ be sufficiently small so that each of our above calculations hold. We begin by splitting off the desired nonpositive term in the evolution equation.
	\begin{align*}
	\Big(\partial_t-\Delta\Big)\frac{|A^-|^2}{f}&=\frac{1}{f}\Big(\partial_t-\Delta\Big)|A^-|^2-|A^-|^2\frac{1}{f^2}\Big(\partial_t-\Delta\Big)f+2\Big\langle\nabla \frac{|A^-|^2}{f},\nabla\log f\Big\rangle\\
	&=2\Big\langle\nabla\frac{|A^-|^2}{f},\nabla\log f\Big\rangle-\delta\frac{|A^-|^2}{f^2}\Big(\partial_t-\Delta\Big)f\\
	&+\frac{1}{f}\Big(\partial_t-\Delta\Big)|A^-|^2-(1-\delta)\frac{|A^-|^2}{f^2}\Big(\partial_t-\Delta\Big)f.
	\end{align*}
Using the previous calculations, the sum of the terms at the second line are non positive:
	\begin{align*}
	\frac{1}{f}\Big(\partial_t&-\Delta\Big)|A^-|^2-(1-\delta)\frac{|A^-|^2}{f^2}\Big(\partial_t-\Delta\Big)f\\
	&=\frac{1}{f}\Big(2\sum_{i,j,p,q}|\langle A_{ij}^- ,A_{pq}^- \rangle|^2+2|\hat{R}^\bot|^2+2\sum_{i,j}|R_{ij}^\bot (\nu_1)|^2\Big)\\
	&+\frac{1}{f}\Big(4\sum_{i,j,p,q}\bar{R}_{ipjq}\big(\sum_{\alpha\ge 2} A^\alpha_{pq}A^\alpha_{ij}\big)-4\sum_{j,k,p}\bar{R}_{kjkp}\big(\sum_{i,\alpha\ge 2} A^\alpha_{pi}A^\alpha_{ij}\big)\Big)\\
	&+\frac{1}{f}\Big(2\sum_{k,\alpha,\beta\ge 2}\bar{R}_{k\alpha k\beta}\big(\sum_{i,j} A^\alpha_{ij}A_{ij}^\beta \big)-8\sum_{j,p,\alpha,\beta\ge 2}\bar{R}_{jp\alpha\beta}\big(\sum_iA^\alpha_{ip}A_{ij}^\beta \big)\Big)\\
	&+\frac{1}{f}\Big(2|H|^{-2}\sum_{i,j,k,\alpha,\beta\ge 2}\bar{R}_{k\alpha k\beta} H^\alpha A^{\beta}_{ij}\langle A_{ij},H\rangle+2\sum_{i,j,k,\beta\ge 2}\bar{\nabla}_k\bar{R}_{kij\beta}A_{ij}^\beta-2\sum_{i,j,k,\beta\ge 2}\bar{\nabla}_i\bar{R}_{jkk\beta}A_{ij}^\beta\Big)\\
	&+\frac{1}{f}\Big(2\sum_{i,j}|\bar{R}_{ij}(\nu_1)|^2+4\sum_{i,j,p}\langle\bar{R}_{ij}(\nu_1),\mathring{h}_{ip}A^-_{jp}-\mathring{h}_{jp}A^-_{ip}\rangle \Big)\\
	&+\frac{1}{f}\Big(4\sum_{i,j,k}Q_{ijk}\langle A_{ij}^- ,\nabla^\bot_k \nu_1\rangle-2|\nabla^\bot A^-|^2 -2(1-\delta)\frac{|A^-|^2}{f}\big( |\nabla^\bot A|^2-c_n|\nabla^\bot H|^2 \big) \Big)\\
	&-(1-\delta)\Big(-2\frac{|A^-|^2}{f^2}\Big(c_n\sum_{i,j}|\langle A_{ij},H\rangle|^2-\sum_{i,j,p,q}|\langle A_{ij},A_{pq}\rangle|^2-\sum_{i,j}|R_{ij}^\bot |^2\Big)\\
	&+\frac{|A^-|^2}{f^2}\big(4\sum_{i,j,p,q}\bar{R}_{ipjq}\big(\sum_{\alpha} A^\alpha_{pq}A^\alpha_{ij}\big)-4\sum_{j,k,p}\bar{R}_{kjkp}\big(\sum_{i,\alpha} A^\alpha_{pi}A^\alpha_{ij}\big)-2\sum_k \bar{R}_{k1k1}\sum_{i,j} (A^1_{ij})^2\big)\\
	&+\frac{|A^-|^2}{f^2}\big(2\sum_{k,\alpha,\beta}\bar{R}_{k\alpha k\beta}\big(\sum_{i,j} A^\alpha_{ij}A_{ij}^\beta \big)+4c_n\sum_{k,\alpha,\beta}\bar{R}_{k\alpha k\beta} H^\alpha H^\beta -8\sum_{j,p,\alpha,\beta}\bar{R}_{jp\alpha\beta}\big(\sum_iA^\alpha_{ip}A_{ij}^\beta \big)\big)\\
	&+\frac{|A^-|^2}{f^2}\big(2\sum_{i,j,k,\beta}\bar{\nabla}_k\bar{R}_{kij\beta}A_{ij}^\beta -2\sum_{i,j,k,\beta}\bar{\nabla}_i\bar{R}_{jkk\beta}A_{ij}^\beta \big)\Big)\\
	&\le(1-\delta)\frac{|A^-|^2}{f^2}\Big(\sum_{\alpha,\beta\ge 2}\Big(\sum_{i,j}|\bar{R}_{ij\alpha\beta}|^2+4|\bar{R}_{ij\alpha\beta}||A^-|^2\Big)+2\sum_{i,j}|\bar{R}_{ij}(\nu_1)|^2+8|\bar{R}_{ij}(\nu_1)||\mathring{h}||A^-|\Big)\\
	&+C\frac{|A^-|^2}{f}+C''\frac{|A^-|}{\sqrt{f}}+\frac{1}{f}\Big(2\sum_{i,j}|\bar{R}_{ij}(\nu_1)|^2+4\sum_{i,j,p}\langle\bar{R}_{ij}(\nu_1),\mathring{h}_{ip}A^-_{jp}-\mathring{h}_{jp}A^-_{ip}\rangle \Big),
	\end{align*}
for $C,C''$ constants depending on $n,K_1,K_2$ and $d_n$. Repeating the same estimate as before, we can see
	\begin{align*}
	&(1-\delta)\frac{|A^-|^2}{f^2}\Big(\sum_{\alpha,\beta\ge 2}\Big(\sum_{i,j}|\bar{R}_{ij\alpha\beta}|^2+4|\bar{R}_{ij\alpha\beta}||A^-|^2\Big)+2\sum_{i,j}|\bar{R}_{ij}(\nu_1)|^2+8|\bar{R}_{ij}(\nu_1)||\mathring{h}||A^-|\Big)\\
	&+C\frac{|A^-|^2}{f}+C''\frac{|A^-|}{\sqrt{f}}+\frac{1}{f}\Big(2\sum_{i,j}|\bar{R}_{ij}(\nu_1)|^2+4\sum_{i,j,p}\langle\bar{R}_{ij}(\nu_1),\mathring{h}_{ip}A^-_{jp}-\mathring{h}_{jp}A^-_{ip}\rangle \Big)\\
	&=(1-\delta)\frac{|A^-|^2}{f}\Big(\sum_{\alpha,\beta\ge 2}\Big( \frac{\sum_{i,j}|\bar{R}_{ij\alpha\beta}|^2}{f}+4|\bar{R}_{ij\alpha\beta}|\frac{|A^-|^2}{f}\Big)+ 2\frac{\sum_{i,j}|\bar{R}_{ij}(\nu_1)|^2}{f}+8|\bar{R}_{ij}(\nu_1)|\frac{|\mathring{h}|}{\sqrt{f}}\frac{|A^-|}{\sqrt{f}}\Big)
\\
	&+C\frac{|A^-|^2}{f}+C''\frac{|A^-|}{\sqrt{f}}+\frac{1}{f}\Big(2\sum_{i,j}|\bar{R}_{ij}(\nu_1)|^2+4\sum_{i,j,p}\langle\bar{R}_{ij}(\nu_1),\mathring{h}_{ip}A^-_{jp}-\mathring{h}_{jp}A^-_{ip}\rangle \Big)\\
	&\le C'\frac{|A^-|^2}{f}+C''\frac{|A^-|}{\sqrt{f}}+\frac{1}{f}\Big(2\sum_{i,j}|\bar{R}_{ij}(\nu_1)|^2+4\sum_{i,j,p}\langle\bar{R}_{ij}(\nu_1),\mathring{h}_{ip}A^-_{jp}-\mathring{h}_{jp}A^-_{ip}\rangle \Big),
	\end{align*}
were the last term on the last row is bounded from above. Thus, according to our previous calculations we get \eqref{initialclaim}, which was our initial claim:
	\begin{align*}
	\Big(\partial_t-\Delta\Big)\frac{|A^-|^2}{f}&\le2\Big\langle\nabla\frac{|A^-|^2}{f},\nabla\log f\Big\rangle-\delta\frac{|A^-|^2}{f^2}\Big(\partial_t-\Delta\Big)f+C'\frac{|A^-|^2}{f}+C''\frac{|A^-|}{\sqrt{f}}\\
	&+\frac{1}{f}\Big(2\sum_{i,j}|\bar{R}_{ij}(\nu_1)|^2+4\sum_{i,j,p}\langle\bar{R}_{ij}(\nu_1),\mathring{h}_{ip}A^-_{jp}-\mathring{h}_{jp}A^-_{ip}\rangle \Big).
	\end{align*}
Recall $\Big(\partial_t-\Delta\Big)f$ is non negative at each point in space time.
 
Now,
	\begin{align*}
	\frac{1}{f}\Big(\partial_t-\Delta\Big)|A^-|^2-\frac{|A^-|^2}{f^2}\Big(\partial_t-\Delta\Big)f&= \Big(\partial_t-\Delta\Big)\frac{|A^-|^2}{f}-2\Big\langle\nabla\frac{|A^-|^2}{f},\nabla\log f\Big\rangle\\
	&\le-\delta\frac{|A^-|^2}{f^2}\Big(\partial_t-\Delta\Big)f+C'\frac{|A^-|^2}{f}+C''\frac{|A^-|}{\sqrt{f}}\\
	&+\frac{1}{f}\Big(2\sum_{i,j}|\bar{R}_{ij}(\nu_1)|^2+4\sum_{i,j,p}\langle\bar{R}_{ij}(\nu_1),\mathring{h}_{ip}A^-_{jp}-\mathring{h}_{jp}A^-_{ip}\rangle \Big).
	\end{align*}
                                                                                                                                                                                                     
We are now ready to prove the main theorem of this paper.
\begin{theorem}
Let $F: \mathcal{M}^n\times[0, T) \rightarrow \mathcal{N}^{n+m}$ be a smooth solution to mean curvature flow so that
$F_0(p)=F(p, 0)$ is compact and quadratically pinched.
Then $\forall \e>0, \exists H_0 >0$, such that if $f \geq H_0$, then
	\begin{align*}
	\left|A^-\right|^2 \leq \e f+C_{\e}
	\end{align*}
$\forall t \in[0, T)$ where $C_\e=C_{\e}(n, m)$.
\end{theorem}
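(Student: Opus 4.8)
The plan is to prove the stronger estimate $|A^-|^2\le C_1\,f^{1-\sigma}$ on $\mathcal M\times[0,T)$ for a suitable exponent $\sigma\in(0,\delta)$ (with $\delta$ the constant from the derivation of \eqref{initialclaim}) and a constant $C_1=C_1(n,m,\mathcal M_0)$; Young's inequality then finishes the theorem. Indeed, if $f\ge H_0:=(C_1/\e)^{1/\sigma}$ one already has $|A^-|^2\le C_1 f\cdot f^{-\sigma}\le\e f$, while on $\{f<H_0\}$ the pinching gives $|A^-|^2\le|A|^2\le c_n|H|^2\le (c_n/\e_1)f\le (c_n/\e_1)H_0=:C_\e$, using $f\ge\e_1|H|^2$ from Andrews--Baker. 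So it suffices to bound $f_\sigma:=|A^-|^2 f^{\sigma-1}$ uniformly, and I record the a priori bound $f_\sigma=\tfrac{|A^-|^2}{f}\,f^\sigma\le (c_n/\e_1)\,f^\sigma=:c_0 f^\sigma$.

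First I would differentiate $f_\sigma=(|A^-|^2/f)\,f^\sigma$ and insert \eqref{initialclaim}, using $(\partial_t-\Delta)f^\sigma=\sigma f^{\sigma-1}(\partial_t-\Delta)f+\sigma(1-\sigma)f^{\sigma-2}|\nabla f|^2\ge 0$ and the identity $f^\sigma\nabla(|A^-|^2/f)=\nabla f_\sigma-\sigma f_\sigma f^{-1}\nabla f$. Since $\sigma<1$ and $\sigma\le\delta$, the Euclidean gradient/reaction structure of \eqref{initialclaim} collapses: the transport terms combine into $2(1-\sigma)\langle\nabla f_\sigma,\nabla\log f\rangle$, the $|\nabla f|^2$ terms combine into $\sigma(\sigma-1)\,f_\sigma f^{-2}|\nabla f|^2\le 0$, and the $(\partial_t-\Delta)f$ terms combine into $(\sigma-\delta)\,f_\sigma f^{-1}(\partial_t-\Delta)f\le 0$ (here $(\partial_t-\Delta)f\ge 0$, established just before \eqref{initialclaim}, is used). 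What remains is the contribution $E$ of the Riemannian lower order terms of \eqref{initialclaim} — the $|\bar R_{ij}(\nu_1)|^2$ and $\langle\bar R_{ij}(\nu_1),\mathring h_{ip}A^-_{jp}-\mathring h_{jp}A^-_{ip}\rangle$ terms together with $C'\frac{|A^-|^2}{f}$ and $C''\frac{|A^-|}{\sqrt f}$ — each multiplied by $f^\sigma$. Using Berger's inequality \eqref{Berger}, the bound $|\mathring h|^2\le|A|^2\le (c_n/\e_1)f$ and $f\ge\e_0$, one gets $E\le C(f_\sigma+f^\sigma+1)$ with $C=C(n,m,K_1,K_2,d_n)$. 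Taking $\sigma=\delta/2$ this yields
\[
(\partial_t-\Delta)f_\sigma\le 2(1-\sigma)\langle\nabla f_\sigma,\nabla\log f\rangle-\tfrac{\delta}{2}\,\tfrac{f_\sigma}{f}(\partial_t-\Delta)f+C(f_\sigma+f^\sigma+1).
\]

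Next I would exploit that the quadratic pinching makes $(\partial_t-\Delta)f$ coercive: from \eqref{evoloff} and Lemma \ref{lemma4.3}, together with \eqref{Berger}, Kato's inequality (Lemma \ref{katoinequality}, which gives $|\nabla^\perp A|^2-c_n|\nabla^\perp H|^2\ge-C$) and the gradient estimate of Theorem \ref{thm_gradient} to control the lower order reaction and derivative terms, one has $(\partial_t-\Delta)f\ge\tfrac{2}{nc_n-1}f^2-Cf\ge\tfrac{1}{nc_n-1}f^2$ once $f$ is large. Hence $-\tfrac{\delta}{2}\tfrac{f_\sigma}{f}(\partial_t-\Delta)f\le-\tfrac{\delta}{2(nc_n-1)}\,f_\sigma f=-\tfrac{\delta}{2(nc_n-1)}|A^-|^2 f^\sigma$. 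On the region $\mathcal G:=\{f_\sigma>M\}\cap\{f\ge f_*\}$ the identity $|A^-|^2=f_\sigma f^{1-\sigma}$ forces $|A^-|^2\ge M f_*^{1-\sigma}$, as large as we wish by choosing $M,f_*$ large (depending on $n,m,K_1,K_2,d_n,\mathcal M_0$), so this negative term strictly dominates $E$ on $\mathcal G$ and therefore $(\partial_t-\Delta)f_\sigma<2(1-\sigma)\langle\nabla f_\sigma,\nabla\log f\rangle$ on $\mathcal G$. Off $\mathcal G$ the a priori bound gives $f_\sigma\le\max\{M,c_0 f_*^\sigma\}=:M_0$ (if $f<f_*$ then $f_\sigma\le c_0 f^\sigma\le c_0 f_*^\sigma$), and $f_\sigma>M_0$ forces $f>f_*$ because $f_\sigma\le c_0 f^\sigma$. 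Since $\mathcal M$ is compact and $f>0$, so that $f_\sigma$ is smooth, the maximum principle now yields $\sup_{\mathcal M\times[0,T)}f_\sigma\le\max\{M_0,\sup_{\mathcal M_0}f_\sigma\}=:C_1$, which is the bound claimed.

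The hard part will be the error bookkeeping of the two preceding paragraphs: one must verify that \emph{every} Riemannian term in \eqref{initialclaim}, after multiplication by $f^\sigma$, is at worst linear in $f_\sigma$ or controlled by a multiple of $f^\sigma$, so that on $\mathcal G$ — where $|A^-|^2$ is large — it is genuinely absorbed into $\tfrac{\delta}{2(nc_n-1)}|A^-|^2 f^\sigma$; this in turn rests entirely on the coercivity $(\partial_t-\Delta)f\gtrsim f^2$ furnished by the pinching (Lemma \ref{lemma4.3}) and on having the lower order reaction and derivative contributions to $(\partial_t-\Delta)f$ under control through Theorem \ref{thm_gradient}. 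One also has to confirm that the $|\nabla f|^2$ terms produced by differentiating $f^\sigma$ carry the correct negative sign, which is exactly where $\sigma<1$ enters. No Michael--Simon Sobolev inequality or Stampacchia iteration is needed: this is the precise sense in which, once \eqref{initialclaim} is available, the ambient geometry is Euclidean in the relevant limit and the conclusion follows purely from the maximum principle.
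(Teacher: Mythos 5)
Your proposal is correct, but it takes a genuinely different route from the paper. The paper proves this theorem by contradiction: it rescales a sequence of space-time points where $|A^-|^2/f$ approaches its supremal ratio $\e_0$ with $f\to\infty$, uses the gradient estimates of Theorem \ref{thm_gradient} together with Langer--Breuning compactness to extract a smooth limit flow in Euclidean space (under the rescaling the ambient curvature terms of \eqref{initialclaim} vanish), and then applies the strong maximum principle to the clean limiting inequality to force $\e_0=0$. You instead run a direct maximum principle argument on $f_\sigma=|A^-|^2f^{\sigma-1}$ in the original Riemannian manifold, absorbing the ambient-curvature errors of \eqref{initialclaim} into the coercive term $-\tfrac{\delta}{2}\tfrac{f_\sigma}{f}(\partial_t-\Delta)f$, which is quadratically large in $f$ thanks to the $\tfrac{1}{nc_n-1}f^2$ term retained in the first inequality of \eqref{eq4.13}; your bookkeeping of the transport and $|\nabla f|^2$ terms, the bound $E\le C(f_\sigma+f^\sigma+1)$ via Berger's inequality and the pinching, and the localisation to $\mathcal G$ all check out. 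This is essentially the strategy the paper reserves for Section 7 (constant curvature), upgraded to variable curvature by the absorption step; what it buys is the stronger quantitative conclusion $|A^-|^2\le C_1 f^{1-\sigma}$ and the avoidance of the compactness and convergence-of-ambient-spaces machinery, at the cost of the careful error accounting you flag as the hard part. One caveat: your constants (hence $H_0$ and $C_\e$) inherit a dependence on $\mathcal M_0$ through $\e_0,\e_1$ and $\sup_{\mathcal M_0}f_\sigma$, but the paper's own proof carries the same implicit dependence, so this is not a defect relative to the statement as the authors use it.
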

\begin{proof}
Since $\mathcal{M}$ is quadratically bounded, there exist constants $ C,D$ such that
	\begin{align*}
	|A^-|^2 \leq Cf +D.
	\end{align*}
Therefore, the above estimate holds for all $\e\geq \frac{c_n}{\delta}$. Indeed, from the pinching $|A|^2\leq c_n|H|^2-d_n$, we can make a little bit more space so that
	\begin{align*}
	|A^-|^2+|A^+|^2=|A|^2\le (c_n-\delta)|H|^2-C_\delta-d_n
	\end{align*}
and therefore,
	\begin{align*}
	\delta|H|^2\le c_n|H|^2-|A|^2-d_n.
	\end{align*}
But since $|A^-|^2\le|A|^2\le c_n|H|^2$, we have $\frac{\delta |A^-|^2}{c_n}\le\frac{\delta|A|^2}{c_n}\le\delta|H|^2$, so
	\begin{align*}
	\frac{\delta}{c_n} |A^-|^2 \le c_n |H|^2-d_n -|A|^2= f,
	\end{align*}
which means that $|A^-|^2\le\frac{c_n}{\delta}f\le\e f+C_\e$. Hence, let $\e_0$ denote the infimum of such $\e$ for which the estimate is true and suppose $\e_0>0$. We will prove the theorem by contradiction. Hence, let us assume that the conclusions of the theorem are not true that is there exists a family of mean curvature flow $\mathcal{M}_t^k$ with points $(p_k,t_k)$ such that
	\begin{align}\label{eqn_limitepsilon0}
	\lim_{k\rightarrow \infty} \frac{|A_k^-(p_k,t_k)|^2}{f_k(p_k,t_k)}= \e_0
	\end{align}
with $\e_0>0$ and $ f_k(p_k,t_k)\rightarrow \infty$.
We perform a parabolic rescaling of $ \mathcal{M}_t^k $ in such a way that $f_k$ at $(p_k,t_k)$ becomes $1$. If we consider the exponential map $\exp_{\bar{p}}\colon T_{\bar{p}}\mathcal{N} \cong \mathbb{R}^{n+m}\to \mathcal{N}^{n+m}$ and $\gamma$ a geodesic, then for a vector $v\in T_{\bar{p}}\mathcal{N}$, then
	\begin{align*}
	\exp_{\bar{p}}(v)=\gamma_{\bar{p},\frac{v}{|v|}} (|v|), \ \ \gamma '(0)=\frac{v}{|v|} \ \ \text{ and} \ \ \gamma(0)=\bar{p}=F_k(p_k,t_k).
	\end{align*}
That is, if $F_k$ is the parameterisation of the original flow $ \mathcal{M}_t^k $, we let $ \hat r_k = \frac{1}{f_k(p_k,t_k)}$, and we denote the rescaled flow by $ \overline{\mathcal{M}}_t^k $ and we define its parameterisation by
	\begin{align*}
	\overline F_k (p,\tau) = \exp^{-1}_{F_k(p_k,t_k)} \circ F_k (p,\hat{r}^2_k \tau+t_k).
	\end{align*}
In the Riemannian case, when we change the metric after dilation, we do not need to multiply the immersion by the same constant as we would do in the Euclidean space. When we rescale the background space, following the example of the dilation of a sphere, we see that
	\begin{align*}
	\bar{g}_{ij}=\frac{1}{\hat{r}^2_k}g_{ij} \ \ \text{ and} \ \ \overline{K}=\hat{r}^2_k K,
	\end{align*}
where $K$ is the sectional curvature of $\mathcal{N}$. In the same way,
	\begin{align*}
	|\overline{A}|^2=\hat{r}^2_k |A|^2 \ \ \text{ and} \ \ |\overline{H}|^2=\hat{r}^2_k |H|^2.
	\end{align*}
Since $d_n$ depends on $n$ and the sectional curvature $K$, the new $\bar{d}_n$ depends on n and $\overline{K}$. Hence,
	\begin{align*}
	\bar{d}_n=\hat{r}^2_k d_n.
	\end{align*}
For $\hat{r}_k\to 0$, the background Riemannian manifold will converge to its tangent plane in a pointed $C^{d,\gamma}$ H\"older topology \cite{Petersen2016}. Therefore, we can work on the manifold $\mathcal{N}$ as we would work in a Euclidean space. For simplicity, we choose for every flow a local co-ordinate system centred at $ p_k$. In these co-ordinates we can write $0$ instead of $ p_k$. The parabolic neighbourhoods $\mathcal P^k ( p_k, t_k, \hat r_k L, \hat r_k^2 \theta)$ in the original flow becomes $ \overline{\mathcal P}^k(0,0,L,\theta)$. By construction, each rescaled flow satisfies
	\begin{align} \label{eqn_H1}
	\overline F_k (0,0) = 0, \quad \overline f_k (0,0) = 1.
	\end{align}
Indeed,
	\begin{align*}
	\overline F_k(0,0)=\exp_{F_k(0,0)}^{-1}\circ F_k(0,\hat r_k^2 \cdot 0)=0
	\end{align*}
and
	\begin{align*}
	\overline f_k (p,\tau)&=-|\overline A_k (p,\tau)|^2+c_n|\overline{H}_k (p,\tau)|^2-\bar{d}_n\\
	&=\hat r_k^2\Big(-|A_k(p,\hat r_k^2 \tau+t_k)|^2+c_n|H_k(p,\hat r_k^2 \tau+t_k)|^2-d_n\Big)\\
	&=\hat r_k^2 f_k(p,\hat r_k^2 \tau+t_k)
	\end{align*}
and so
	\begin{align*}
	\overline f_k (0,0)=\hat r_k^2 f_k(0,0)=1,
	\end{align*}
since $\hat r_k (0,0)=\frac{1}{f_k (0,0)}=1$ from the change of coordinates. The gradient estimates give us uniform bounds (depending only on the pinching constant) on $ |A_k|$ and its derivatives up to any order on a neighbourhood of the form $\overline{\mathcal P }^k( 0 ,0,d,d)$ for a suitable $ d > 0$. From Theorem \eqref{thm_gradient}, we obtain gradient estimates on the second fundamental form in $ C^\infty $ on $ \overline F_k$. Hence we can apply Arzela-Ascoli (via the Langer-Breuning compactness theorem \cite{Breuning2015} and \cite{Langer1985}) and conclude there exists a subsequence converging in $ C^\infty $ to some limit flow which we denote by $ \widetilde{\mathcal{M}}_\tau^\infty$. We analyse the limit flow $ \widetilde{\mathcal{M}}_\tau^\infty$. Note we have for the Weingarten map
	\begin{align*}
		[\overline A_k^-]_i^j ( p , \tau) = \hat r_k [A_k^-]_i^j ( p , \hat r_k^2 \tau+t_k),
	\end{align*}
so that
	\begin{align*}
	\frac{|\overline A_k^-(p, \tau) |^2}{\overline{f}_k(p,\tau)}&= \frac{|A_k^- ( p, \hat r_k^2 \tau+t_k) |^2}{f_k(p, \hat r_k^2 \tau+t_k)}.
	\end{align*}
From \eqref{eqn_limitepsilon0} and \eqref{eqn_H1}, we see
	\begin{align*}
	\frac{| \widetilde A^-(0,0)|^2}{\widetilde f(0,0)}= \e_0, \quad \widetilde f(0,0) = 1.
	\end{align*}
We claim
	\begin{align*}
	\frac{|\widetilde A^- ( p , \tau) |^2}{\widetilde f (p, \tau)}=\lim_{k\rightarrow \infty}\frac{|\overline A_k^-(p,\tau)|^2}{\overline f_k (p,\tau)} \leq \e \quad \forall \e>\e_0.
	\end{align*}
 Since $\widetilde{f}(0,0)=1$, it follows that $|\widetilde{f}| \geq \frac{1}{2}$ in $ \widetilde{\mathcal{P}}^\infty(0,0,r,r)$ for some $r < d^\#$.
This is true since any point $( p, \tau) \in \widetilde{\mathcal{M}}^\infty_\tau$ is the limit of points $(p_{j_k},t_{j_k}) \in \overline{\mathcal{M}}^k$ and for every $ \e > \e_0 $ if we let $ \eta = \eta(\e,c_n)< d^\#$ then for large $k$, $\mathcal{M}^k$ is defined in
	\begin{align*}
	 \mathcal P^k\left(p_{j_k},t_{j_k}, \frac{1}{f_k(p_{j_k},t_{j_k})}\eta,\left(\frac{1}{f_k(p_{j_k},t_{j_k})}\right)^2 \eta\right)
	\end{align*}
which implies
	\begin{align*}
	\frac{|\overline A_k^- ( p_{j_k} , t_{j_k}) |^2}{\overline f_k (p_{j_k}, t_{j_k})} \leq \e \quad \forall \e>\e_0.
	\end{align*}
Hence the flow $\widetilde{\mathcal{M} }_t^\infty \subset \mathbb R^{n+m}$ has a space-time maximum $\e_0$ for $\frac{|\widetilde A^- ( p , \tau) |^2}{\widetilde f (p, \tau)}$ at $ (0,0)$. The evolution equation for $ \frac{|{A}^-|^2}{{f}}$ is given by 
	\begin{align*}
	\Big(\partial_t-\Delta\Big)\frac{| A^-|^2}{ f}&\le2\Big\langle\nabla\frac{| A^-|^2}{ f},\nabla\log  f\Big\rangle-\delta\frac{| A^-|^2}{ f^2}\Big(\partial_t-\Delta\Big) f+C'\frac{| A^-|^2}{ f}+C''\frac{| A^-|}{\sqrt{ f}}\\
	&+\frac{1}{ f}\Big(2\sum_{i,j}|\bar{R}_{ij}(\nu_1)|^2+4\sum_{i,j,p}\langle\bar{R}_{ij}(\nu_1),\mathring{h}_{ip}A^-_{jp}-\mathring{h}_{jp} A^-_{ip}\rangle \Big).
	\end{align*}
But in the limit our background space is Euclidean, therefore the background curvature tensor is identically zero. So the evolution equation becomes
	\begin{align*}
	\Big(\partial_t-\Delta\Big)\frac{|\widetilde A^-|^2}{\widetilde f}&\le2\Big\langle\nabla\frac{|\widetilde A^-|^2}{\widetilde f},\nabla\log \widetilde f\Big\rangle-\delta\frac{|\widetilde A^-|^2}{\widetilde f^2}\Big(\partial_t-\Delta\Big)\widetilde f.
	\end{align*}
Hence, since $\frac{|\widetilde A^-|^2}{\widetilde f}$ attains a maximum $\e_0$ at $(0,0)$ by the strong maximum principle then $ \frac{|\widetilde A^-|^2}{\widetilde f} \equiv \text{ constant}$.
             Therefore, there exists this constant $\mathcal{C}$ depending up to $d_n,K_1,K_2,L$ and $n$, such that
	\begin{align*}
	\mathcal{C}=\frac{|\widetilde A^-|^2}{\widetilde f}.
	\end{align*}
Putting this into the evolution equation we have
	\begin{align*}
	0\le-\delta\frac{\mathcal{C}}{\widetilde f}\Big(\partial_t-\Delta\Big)\widetilde f\le 0,
	\end{align*}
which means we get $\mathcal{C}=0$ and therefore, $|\widetilde A^-|=0$. This implies
	\begin{align*}
	\frac{|\widetilde A^-|^2}{\widetilde{f}}=0\implies \e_0=0,
	\end{align*}
which is a contradiction. Hence, we obtain
	\begin{align*}
	\lim_{k\rightarrow \infty} \frac{|\widetilde A_k^-(p_k,t_k)|}{\widetilde f_k(p_k,t_k)} = 0.
	\end{align*}
\end{proof}
\section{Cylindrical Estimates}
In this section, we present estimates that demonstrate an improvement in curvature as we approach a singularity. These estimates play a critical role in the analysis of high curvature regions in geometric flows. In particular, in the high codimension setting, we establish the quadratic pinching ratio $\frac{|A|^2}{|H|^2}$ approaches the ratio of the standard cylinder, which is $\frac{1}{n-1}$.
\begin{theorem}[\cite{HuSi09}]\label{thm_cylindrical}
Let $F: \mathcal{M}^n\times[0, T) \rightarrow \mathcal{N}^{n+m}$ be a smooth solution to mean curvature flow so that
$F_0(p)=F(p, 0)$ is compact and quadratically pinched with constant $c_n=\frac{1}{n-2}$.
Then $\forall \e>0, \exists H_1 >0$, such that if $f \geq H_1$, then
	\begin{align*}
	|A|^2- \frac{1}{n-1}|H|^2\leq \e f+C_{\e}
	\end{align*}
$\forall t \in[0, T)$ where $C_\e=C_{\e}(n, m)$.
\end{theorem}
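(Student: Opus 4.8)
The plan is to deduce the statement from the codimension estimate of Theorem \ref{Th1} together with a strong maximum principle argument on a blow-up limit, in the spirit of the proof of Theorem \ref{Th1}. Set $W:=|A|^2-\frac{1}{n-1}|H|^2$. Since $f=-d_n+c_n|H|^2-|A|^2\ge\e_1|H|^2$ for a constant $\e_1=\e_1(\mathcal M_0)>0$ preserved along the flow, and $|A|^2\le c_n|H|^2$, one has the crude bound $W\le\big(\e_1(n-1)(n-2)\big)^{-1}f$; hence on $\{f<H_1\}$ the inequality $W\le\e f+C_\e$ holds once $C_\e$ is chosen accordingly, and it remains only to prove $\limsup_{f\to\infty}W/f\le 0$ uniformly over the flow. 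I will argue by contradiction: supposing this fails, there is a sequence of quadratically pinched flows $\mathcal M^k_t$ and points $(p_k,t_k)$ with $f_k(p_k,t_k)\to\infty$ and $W_k(p_k,t_k)/f_k(p_k,t_k)\to\sigma_0$, where $\sigma_0>0$ denotes the supremum of $W/f$ attained along blow-ups.

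Rescale each $\mathcal M^k_t$ parabolically so that $f_k$ equals $1$ at the marked point. By the gradient estimate of Theorem \ref{thm_gradient}, the estimate \eqref{eqn_HigherOrderGradEstimateA} and their higher-order analogues, $|A_k|$ and all of its derivatives are uniformly bounded on fixed parabolic neighbourhoods, so, exactly as in the proof of Theorem \ref{Th1}, a subsequence converges in $C^\infty$ (via the Langer--Breuning compactness theorem) to a smooth limit flow $\widetilde{\mathcal M}^\infty_\tau$; the rescaled ambient metrics converge to the flat Euclidean metric, and $\widetilde f(0,0)=1$ with $\widetilde f\ge\e_1|\widetilde H|^2>0$ near the origin. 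Applying Theorem \ref{Th1} on the parabolic neighbourhoods of the rescaled flows forces $\widetilde A^-\equiv 0$, so $\widetilde{\mathcal M}^\infty_\tau$ is a smooth codimension-one mean curvature flow in $\mathbb R^{n+1}$, still quadratically pinched with $c_n=\frac{1}{n-2}$, and $\widetilde W/\widetilde f$ attains its space-time maximum $\sigma_0>0$ at $(0,0)$.

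On the limit I compute the evolution equation of $\widetilde W/\widetilde f$ from the codimension-one Euclidean specialisations $(\partial_t-\Delta)|A|^2=-2|\nabla A|^2+2|A|^4$ and $(\partial_t-\Delta)|H|^2=-2|\nabla H|^2+2|A|^2|H|^2$ of \eqref{eqn_|A|^2} and \eqref{eqn_|H|^2}. The reaction term of any quantity of the form $\alpha|H|^2+\beta|A|^2$ is then simply multiplication by $2|A|^2$, so the reaction terms of $\widetilde W$ and of $\widetilde f$ are $2|\widetilde A|^2\widetilde W$ and $2|\widetilde A|^2\widetilde f$ and cancel exactly in the quotient, leaving
	\begin{align*}
	\Big(\partial_t-\Delta\Big)\frac{\widetilde W}{\widetilde f}
	&=2\Big\langle\nabla\frac{\widetilde W}{\widetilde f},\nabla\log\widetilde f\Big\rangle\\
	&\quad-\frac{2}{\widetilde f}\Big(|\nabla\widetilde A|^2-\tfrac{1}{n-1}|\nabla\widetilde H|^2\Big)
	-\frac{2\widetilde W}{\widetilde f^2}\Big(|\nabla\widetilde A|^2-c_n|\nabla\widetilde H|^2\Big).
	\end{align*}
Since $n\ge 8$, the Kato-type inequality of Lemma \ref{katoinequality} (with the $w$-term absent in the Euclidean limit) gives $|\nabla\widetilde A|^2\ge\frac{3}{n+2}|\nabla\widetilde H|^2\ge\max\{\tfrac{1}{n-1},c_n\}|\nabla\widetilde H|^2$, and as $\sigma_0>0$ forces $\widetilde W=\sigma_0\widetilde f>0$ on a space-time neighbourhood of $(0,0)$, both of the last two terms are nonpositive there. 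The strong maximum principle applied to the interior maximum then yields $\widetilde W/\widetilde f\equiv\sigma_0$, first near $(0,0)$ and then on the whole limit flow. Consequently the two nonpositive gradient terms vanish identically, so $|\nabla\widetilde A|^2=\frac{1}{n-1}|\nabla\widetilde H|^2=c_n|\nabla\widetilde H|^2$; since $\frac{1}{n-1}\ne\frac{1}{n-2}$ this forces $\nabla\widetilde A\equiv0$, i.e.\ $\widetilde A$ is parallel. A hypersurface in $\mathbb R^{n+1}$ with parallel second fundamental form and $\widetilde H>0$ is locally a generalized cylinder $\mathbb S^k\times\mathbb R^{n-k}$ with $k\ge1$, with principal curvatures equal to some $\mu>0$ of multiplicity $k$ and to $0$ of multiplicity $n-k$, so $|\widetilde A|^2=k\mu^2$, $|\widetilde H|^2=k^2\mu^2$; the pinching forces $k>n-2$ while $\widetilde W=\sigma_0\widetilde f>0$ forces $k<n-1$, impossible for an integer $k$. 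Hence $\sigma_0=0$, which with the bound on $\{f<H_1\}$ gives $|A|^2-\frac{1}{n-1}|H|^2\le\e f+C_\e$.

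The crux is the passage to the limit: one must know the blow-up is genuinely codimension one, which is precisely Theorem \ref{Th1} applied uniformly on the parabolic neighbourhoods of the rescaled flows, itself resting on the full strength of the pinching $c_n=\frac{1}{n-2}$ with $n\ge8$ (through Theorems \ref{thm_pinching} and \ref{thm_gradient} and the higher-order estimates). It is this reduction that makes the exact reaction-term cancellation above available; in genuine higher codimension the reaction term of $|A|^2$ carries the normal-curvature contributions and no such cancellation occurs, which is exactly why the codimension estimate had to be proved first, with its more delicate $\delta$-splitting and the reaction estimates culminating in Lemma \ref{lemma4.4}. The secondary delicate point is the rigidity step; if one prefers to avoid invoking the classification of hypersurfaces with parallel second fundamental form, the same contradiction follows by re-inserting $\widetilde W/\widetilde f\equiv\sigma_0$ into the evolution inequality to conclude that the nonnegative quantity $(\partial_t-\Delta)\widetilde f$ vanishes identically, and then using the Kato inequality once more to force $\nabla\widetilde A\equiv0$.
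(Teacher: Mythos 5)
Your argument is correct and shares the paper's overall skeleton --- blow-up by contradiction, strong maximum principle on the Euclidean limit, and Lawson's classification of hypersurfaces with parallel second fundamental form, with the pinching $c_n=\tfrac{1}{n-2}$ and the positivity of $W=|A|^2-\tfrac{1}{n-1}|H|^2$ jointly excluding every cylinder $\mathbb S^k\times\mathbb R^{n-k}$ --- but the maximum-principle step is organised differently. You feed Theorem \ref{Th1} into the blow-up first, so that $\widetilde A^-\equiv 0$ and the limit is effectively a hypersurface flow, and only then apply the strong maximum principle, directly to $\widetilde W/\widetilde f$; the codimension-one Euclidean reaction terms cancel exactly in the quotient and only the two nonpositive Kato-type gradient terms survive. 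The paper instead converts the space-time maximum of $W/f$ into a space-time maximum $\tfrac{1}{n-1}+\e_0$ of $|A|^2/|H|^2$ and runs the maximum principle on that ratio in the still high-codimensional limit, controlling the reaction terms by the Andrews--Baker estimate $R_1-cR_2\le 0$; codimension one, $|A^-|^2=0$, is then an output of the rigidity case rather than an input. Your route is tighter at one point --- the paper's passage from a maximum of $W/f$ to a maximum of $|A|^2/|H|^2$ is not literally immediate, since the conversion factor $f/|H|^2$ is not constant --- at the cost of having to invoke Theorem \ref{Th1} together with the uniform divergence of $f_k$ on the fixed parabolic neighbourhoods (which the gradient estimates supply) to get $\widetilde A^-\equiv 0$. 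One small caveat: your closing alternative to the rigidity step does not work as stated, since the inequality you derived for $\widetilde W/\widetilde f$ carries no $-\delta\,\widetilde W\widetilde f^{-2}(\partial_t-\Delta)\widetilde f$ term, and on the limit $(\partial_t-\Delta)\widetilde f\ge 2|\widetilde A|^2\widetilde f>0$ cannot vanish; but that remark is dispensable, as the vanishing of the two gradient terms together with $\tfrac{1}{n-1}\neq\tfrac{1}{n-2}$ already forces $\nabla\widetilde A\equiv 0$.
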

\begin{proof}
Since $\mathcal{M}$ is quadratically bounded, there exist constants $ C,D$ such that
	\begin{align*}
	|A|^2- \frac{1}{n-1}|H|^2 \leq Cf +D.
	\end{align*}
Hence, let $\e_0$ denote the infimum of such $\e$ for which the estimate is true and suppose $\e_0>0$. We will prove the theorem by contradiction. Hence, let us assume that the conclusions of the theorem are not true that is there exists a family of mean curvature flow $\mathcal{M}_t^k$ with points $(p_k,t_k)$ such that
	\begin{align}\label{eqn_limitepsilon0versiontwo}
	\lim_{k\rightarrow \infty} \frac{\left (|A(p_k,t_k)|^2- \frac{1}{n-1}|H(p_k,t_k)|^2\right )}{f_k(p_k,t_k)}= \e_0
	\end{align}
with $\e_0>0$ and $ f_k(p_k,t_k)\rightarrow \infty$.	

We perform a parabolic rescaling of $ \mathcal{M}_t^k $ in such a way that $f_k$ at $(p_k,t_k)$ becomes $1$. If we consider the exponential map $\exp_{\bar{p}}\colon T_{\bar{p}}\mathcal{N} \cong \mathbb{R}^{n+m}\to \mathcal{N}^{n+m}$ and $\gamma$ a geodesic, then for a vector $v\in T_{\bar{p}}\mathcal{N}$, then
	\begin{align*}
	\exp_{\bar{p}}(v)=\gamma_{\bar{p},\frac{v}{|v|}} (|v|), \ \ \gamma '(0)=\frac{v}{|v|} \ \ \text{ and} \ \ \gamma(0)=\bar{p}=F_k(p_k,t_k).
	\end{align*}
That is, if $F_k$ is the parameterisation of the original flow $ \mathcal{M}_t^k $, we let $ \hat r_k = \frac{1}{f_k(p_k,t_k)}$, and we denote the rescaled flow by $ \overline{\mathcal{M}}_t^k $ and we define its parameterisation by
	\begin{align*}
	\overline F_k (p,\tau) = \exp^{-1}_{F_k(p_k,t_k)} \circ F_k (p,\hat{r}^2_k \tau+t_k).
	\end{align*}
In the Riemannian case, when we change the metric after dilation, we do not need to multiply the immersion by the same constant as we would do in the Euclidean space. When we rescale the background space, following the example of the dilation of a sphere, we see that
	\begin{align*}
	\bar{g}_{ij}=\frac{1}{\hat{r}^2_k}g_{ij} \ \ \text{ and} \ \ \overline{K}=\hat{r}^2_k K,
	\end{align*}
where $K$ is the sectional curvature of $\mathcal{N}$. In the same way,
	\begin{align*}
	|\overline{A}|^2=\hat{r}^2_k |A|^2 \ \ \text{ and} \ \ |\overline{H}|^2=\hat{r}^2_k |H|^2.
	\end{align*}
Since $d_n$ depends on $n$ and the sectional curvature $K$, the new $\bar{d}_n$ depends on n and $\overline{K}$. Hence,
	\begin{align*}
	\bar{d}_n=\hat{r}^2_k d_n.
	\end{align*}
For $\hat{r}_k\to 0$, the background Riemannian manifold will converge to its tangent plane in a pointed $C^{d,\gamma}$ H\"older topology \cite{Petersen2016}. Therefore, we can work on the manifold $\mathcal{N}$ as we would work in a Euclidean space. For simplicity, we choose for every flow a local co-ordinate system centred at $ p_k$. In these co-ordinates we can write $0$ instead of $ p_k$. The parabolic neighbourhoods $\mathcal P^k ( p_k, t_k, \hat r_k L, \hat r_k^2 \theta)$ in the original flow becomes $ \overline{\mathcal P}^k(0,0,L,\theta)$. By construction, each rescaled flow satisfies
	\begin{align} \label{eqn_H1version2}
	\overline F_k (0,0) = 0, \quad \overline f_k (0,0) = 1.
	\end{align}
Indeed,
	\begin{align*}
	\overline F_k(0,0)=\exp_{F_k(0,0)}^{-1}\circ F_k(0,\hat r_k^2 \cdot 0)=0
	\end{align*}
and
	\begin{align*}
	\overline f_k (p,\tau)&=-|\overline A_k (p,\tau)|^2+c_n|\overline{H}_k (p,\tau)|^2-\bar{d}_n\\
	&=\hat r_k^2\Big(-|A_k(p,\hat r_k^2 \tau+t_k)|^2+c_n|H_k(p,\hat r_k^2 \tau+t_k)|^2-d_n\Big)\\
	&=\hat r_k^2 f_k(p,\hat r_k^2 \tau+t_k)
	\end{align*}
and so
	\begin{align*}
	\overline f_k (0,0)=\hat r_k^2 f_k(0,0)=1,
	\end{align*}
since $\hat r_k (0,0)=\frac{1}{f_k (0,0)}=1$ from the change of coordinates. The gradient estimates give us uniform bounds (depending only on the pinching constant) on $ |A_k|$ and its derivatives up to any order on a neighbourhood of the form $\overline{\mathcal P }^k( 0 ,0,d,d)$ for a suitable $ d > 0$. From Theorem \eqref{thm_gradient}, we obtain gradient estimates on the second fundamental form in $ C^\infty $ on $ \overline F_k$. Hence we can apply Arzela-Ascoli (via the Langer-Breuning compactness theorem \cite{Breuning2015} and \cite{Langer1985}) and conclude there exists a subsequence converging in $ C^\infty $ to some limit flow which we denote by $ \widetilde{\mathcal{M}}_\tau^\infty$. We analyse the limit flow $ \widetilde{\mathcal{M}}_\tau^\infty$. Note we have for the Weingarten map
	\begin{align*}
		[\overline A_k^-]_i^j ( p , \tau) = \hat r_k [A_k^-]_i^j ( p , \hat r_k^2 \tau+t_k),
	\end{align*}
so that
so that
	\begin{align*}
	\frac{|\overline A_k(p, \tau) |^2-\frac{1}{n-1}|\overline H_k(p,\tau)|^2}{\overline{f}_k(p,\tau)}&= \frac{|A_k ( p, \hat r_k^2 \tau+t_k) |^2-\frac{1}{n-1}|H_k ( p, \hat r_k^2 \tau+t_k) |^2}{f_k(p, \hat r_k^2 \tau+t_k)}.
	\end{align*}
From \eqref{eqn_limitepsilon0} and \eqref{eqn_H1}, we see
	\begin{align*}
	\frac{| \widetilde A(0,0)|^2-\frac{1}{n-1}|\widetilde H(0,0)|^2}{\widetilde f(0,0)}= \e_0, \quad \widetilde f(0,0) = 1.
	\end{align*}
We claim
	\begin{align*}
	\frac{|\widetilde A( p , \tau) |^2-\frac{1}{n-1}|\widetilde H ( p , \tau) |^2}{\widetilde f (p, \tau)}=\lim_{k\rightarrow \infty}\frac{|\overline A_k(p,\tau)|^2-\frac{1}{n-1}|\overline H_k(p,\tau)|^2}{\overline f_k (p,\tau)} \leq \e \quad \forall \e>\e_0.
	\end{align*}
 Since $\widetilde{f}(0,0)=1$, it follows that $|\widetilde{f}| \geq \frac{1}{2}$ in $ \widetilde{\mathcal{P}}^\infty(0,0,r,r)$ for some $r < d^\#$.
This is true since any point $( p, \tau) \in \widetilde{\mathcal{M}}^\infty_\tau$ is the limit of points $(p_{j_k},t_{j_k}) \in \overline{\mathcal{M}}^k$ and for every $ \e > \e_0 $ if we let $ \eta = \eta(\e,c_n)< d^\#$ then for large $k$, $\mathcal{M}^k$ is defined in
	\begin{align*}
	 \mathcal P^k\left(p_{j_k},t_{j_k}, \frac{1}{f_k(p_{j_k},t_{j_k})}\eta,\left(\frac{1}{f_k(p_{j_k},t_{j_k})}\right)^2 \eta\right)
	\end{align*}
which implies
	\begin{align*}
	\frac{|\overline A_k ( p_{j_k} , t_{j_k}) |^2-\frac{1}{n-1}|\overline H_k( p_{j_k} , t_{j_k}) |^2}{\overline f_k (p_{j_k}, t_{j_k})} \leq \e \quad \forall \e>\e_0.
	\end{align*}
Hence the flow $\widetilde{\mathcal{M} }_t^\infty \subset \mathbb R^{n+m}$ has a space-time maximum $\e_0$ for $\frac{|\widetilde A( p , \tau) |^2-\frac{1}{n-1}|\widetilde H ( p , \tau) |^2}{\widetilde f (p, \tau)}$ at $ (0,0)$ which implies that the flow $\overline{\mathbb M }_t^\infty $ has a space-time maximum $\frac{1}{n-1}+\e_0$ for $\frac{|\overline A ( p , \tau) |^2}{| \overline H (p, \tau) |^2}$ at $ (0,0)$. Since the evolution equation for $ \frac{|A|^2}{|H|^2}$ is given by
	\begin{align*}
	\Big(\partial_t -\Delta\Big) \frac{|A|^2}{|H|^2}&= \frac{2}{|H|^2}\left\la \nabla |H|^2 , \nabla \left( \frac{|A|^2}{|H|^2}\right) \right\ra\\
	&-\frac{2}{|H|^2} \left( |\nabla A|^2-\frac{|A|^2}{|H|^2}|\nabla H|^2 \right) \\
	&+\frac{2}{|H|^2}\left( R_1-\frac{|A|^2}{|H|^2} R_2\right)
	\end{align*}
We have
	\begin{align*}
	|\nabla H|^2 \leq \frac{3}{n+2}|\nabla A|^2, \quad \frac{|A|^2}{|H|^2}\leq c_n
	\end{align*}
which gives
	\begin{align*}
	-\frac{2}{|H|^2} \left( |\nabla A|^2-\frac{|A|^2}{|H|^2}|\nabla H|^2 \right) \leq 0.
	\end{align*}
Furthermore, if $\frac{|A|^2}{|H|^2}=c < c_n$ then
	\begin{align*}
	 R_1-\frac{|A|^2}{|H|^2} R_2&= R_1-c R_2\\
	&\leq \frac{2}{n}\frac{1}{c-\nicefrac{1}{n}}| A_-|^2 \mathcal Q+\left(6-\frac{2}{n (c-\nicefrac{1}{n})} \right) |\circo A_1|^2 | \circo A_-|^2+\left(3-\frac{2}{n (c-\nicefrac{1}{n})} \right)|\circo A_-|^4\\
	&\leq 0.
	\end{align*}
Hence the strong maximum principle applies to the evolution equation of $\frac{|A|^2}{|H|^2}$ and shows $\frac{|A|^2}{|H|^2}$ is constant. The evolution equation then shows $ |\nabla A|^2= 0$, that is the second fundamental form is parallel and that $|A_-|^2 = |\circo A_-|^2=0$, that is the submanifold is codimension one. Finally this shows locally $ \mathbb M = \mathbb S^{n-k}\times \mathbb R^k$, \cite{Lawson1969}. As $\frac{|A|^2}{|H|^2}< c_n\leq \frac{1}{n-2}$ we can only have
	\begin{align*}
	\mathbb S^n, \mathbb S^{n-1}\times \mathbb R
	\end{align*}
which gives $\frac{|A|^2}{|H|^2}= \frac{1}{n}, \frac{1}{n-1}\neq \frac{1}{n-1}+\e_0, \e>0$ which gives a contradiction.
	
\end{proof}
\section{Singularity Models of Pinched Solutions of Mean Curvature Flow in Higher Codimension}
In this section, we derive a corollary from Theorem \ref{Th1}, which provides information about the blow up models at the first singular time. Specifically, we show that these models can be classified up to homothety.
\begin{corollary}[{\cite[Corollary 1.4]{Naff6} }]\label{resultnaff} Let $n \geq 5$ and $N>n$. Let $c_n=\frac{1}{n-2}$ if $n \geq 8$ and $c_n=\frac{3(n+1)}{2 n(n+2)}$ if $n=5,6$, or 7 . Consider a closed, n-dimensional solution to the mean curvature flow in $\mathbb{R}^N$ initially satisfying $|H|>0$ and $|A|^2<c_n|H|^2$. At the first singular time, the only possible blow-up limits are codimension one shrinking round spheres, shrinking round cylinders and translating bowl solitons.
\end{corollary}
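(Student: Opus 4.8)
The plan is to perform a blow-up analysis at the first singular time $T$ and to identify the resulting limit flow using the codimension estimate (Theorem \ref{Th1}) and the cylindrical estimate (Theorem \ref{thm_cylindrical}), after which the statement reduces to the classification of codimension-one mean-convex singularity models. Since $\mathcal M_0$ is compact and $|H|>0$, the flow reaches a first singular time $T<\infty$; fix spacetime points $(p_k,t_k)$ with $t_k\uparrow T$ realising a blow-up and parabolically rescale the $\mathcal M^k$ by $f_k(p_k,t_k)\to\infty$, exactly as in the proof of Theorem \ref{Th1} (here the background is already Euclidean, so no ambient-geometry corrections appear). The gradient estimate of Theorem \ref{thm_gradient}, the estimate \eqref{eqn_HigherOrderGradEstimateA}, and their higher-order analogues for $\nabla^m A$ give uniform $C^\infty$ bounds on the rescaled flows on fixed parabolic neighbourhoods, so by the Langer--Breuning compactness theorem \cite{Breuning2015,Langer1985} a subsequence converges in $C^\infty_{\mathrm{loc}}$ to a smooth limit flow $\widetilde{\mathcal M}_\tau\subset\mathbb R^N$ which is ancient (the rescaled time intervals exhaust $(-\infty,0]$ as $t_k\to T$) and which inherits the quadratic pinching together with $\widetilde f(0,0)=1$.

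Next I would extract the two structural properties of $\widetilde{\mathcal M}_\tau$. Applying Theorem \ref{Th1} to the original flows with $\varepsilon\to0$ and using $f_k\to\infty$ near $(p_k,t_k)$, the rescaled ratio $|\overline A_k^-|^2/\overline f_k\to0$, so $\widetilde A^-\equiv0$ and the limit is a codimension-one flow inside a hyperplane $\mathbb R^{n+1}\subset\mathbb R^N$. Applying Theorem \ref{thm_cylindrical} in the same way yields $|\widetilde A|^2-\tfrac1{n-1}|\widetilde H|^2\le0$ everywhere on $\widetilde{\mathcal M}_\tau$; since $\widetilde f(0,0)=1$ forces $|\widetilde H|>0$ at the origin and the pinching is preserved, the limit is a weakly convex, uniformly two-convex, codimension-one ancient solution of mean curvature flow. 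It is worth stressing that weak convexity of the limit genuinely uses the cylindrical estimate, since the hypothesis $c_n\le\frac1{n-2}$ is weaker than the convexity threshold $\frac1{n-1}$ and does not by itself force convexity.

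It then remains to classify such ancient hypersurface flows. Choosing the blow-up point by Hamilton's point-selection for Type II singularities one obtains, by Hamilton's differential Harnack inequality for mean-convex mean curvature flow (valid by weak convexity) together with its equality case, that $\widetilde{\mathcal M}_\tau$ is a translating soliton; otherwise Huisken's monotonicity formula identifies the blow-up with a self-shrinker. In the shrinker case the classification of weakly convex shrinkers gives $\mathbb S^{n-k}\times\mathbb R^k$, and $|\widetilde A|^2\le\frac1{n-1}|\widetilde H|^2$ forces $k\le1$, i.e.\ a shrinking round $\mathbb S^n$ or a shrinking round cylinder $\mathbb S^{n-1}\times\mathbb R$. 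In the translator case, if $|\widetilde A|^2=\frac1{n-1}|\widetilde H|^2$ held at any point then the strong maximum principle for $|A|^2/|H|^2$, exactly as at the end of the proof of Theorem \ref{thm_cylindrical}, would split off a factor $\mathbb S^{n-1}\times\mathbb R$, which is not a translator; hence the inequality is strict, which excludes the product solitons $\mathrm{Bowl}^{\,n-k}\times\mathbb R^k$ with $k\ge1$ (their cylindrical ends violate it), and uniqueness of the translating bowl under this pinching identifies $\widetilde{\mathcal M}_\tau$ with the rotationally symmetric bowl soliton; this is the codimension-one structure theory underlying \cite{HuSi09,HK2,Naff6}.

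The main obstacle is not the blow-up itself, which is essentially repeated from Theorem \ref{Th1}, but organising the shrinker/translator dichotomy and verifying the hypotheses of the hypersurface classification in the limit: that the $C^\infty$ convergence holds on parabolic neighbourhoods large enough to yield a genuine ancient solution, that the limit is uniformly two-convex so that Hamilton's Harnack estimate and the White--Haslhofer--Kleiner structure theory apply, and that the round cylinders $\mathbb S^{n-k}\times\mathbb R^k$ with $k\ge2$ and the product bowls are ruled out using the strict cylindrical pinching recovered in the limit. Once these points are in place the corollary follows by quoting the codimension-one results.
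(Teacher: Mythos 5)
Your proposal is correct and follows exactly the route the paper intends: the paper itself gives no proof of this corollary but simply quotes it as \cite[Corollary 1.4]{Naff6}, whose argument is precisely the blow-up analysis you sketch (compactness from the gradient estimates, Theorem \ref{Th1} forcing $\widetilde A^-\equiv 0$ so the limit is a hypersurface, Theorem \ref{thm_cylindrical} forcing $|\widetilde A|^2\le\tfrac{1}{n-1}|\widetilde H|^2$ and hence weak convexity and uniform two-convexity, followed by the codimension-one classification). The only caveat is that the final classification step --- in particular the noncollapsedness of the limit and the uniqueness of the bowl among uniformly two-convex ancient translators --- is still being quoted from the hypersurface structure theory rather than proven, but that is equally true of the paper's treatment, which defers the entire statement to the citation.
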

According to Theorem \ref{Th1} and Theorem \ref{thm_cylindrical}, for $F: \mathcal{M}^n\times[0, T) \rightarrow \mathcal{N}^{n+m}$ be a smooth solution to mean curvature flow so that $F_0(p)=F(p, 0)$ is compact and quadratically pinched with $c_n=\frac{3(n+1)}{2 n(n+2)}$ if $n=5,6$, or $7$, then $\forall \e>0, \exists H_0, H_1 >0$, such that if $f \geq \max\{H_0,H_1\}$, then
	\begin{align*}
	\left|A^-\right|^2 \leq \e f+C_{\e} \quad \text{and} \quad  |A|^2- \frac{1}{n-1}|H|^2\leq \e f+C_{\e},
	\end{align*}
$\forall t \in[0, T)$ where $C_\e=C_{\e}(n, m)$. At the first singular time, the only possible blow-up limits are codimension one shrinking round spheres, shrinking round cylinders, and translating bowl solitons. Therefore, we can classify these blowup limits as follows:
\begin{corollary}[{\cite[Corollary 4.7]{HuSi99a}}]
Let $n \geq 5$. Let $c_n=\frac{1}{n-2}$ if $n \geq 8$ and $c_n=\frac{3(n+1)}{2 n(n+2)}$ if $n=5,6$, or $7$. Suppose $F_t\colon\mathcal{M}^n \rightarrow \mathcal{N}^{n+m},m\ge 2$ is a smooth solution of the mean curvature flow, compact and with positive mean curvature on the maximal time interval $[0, T)$.
\begin{enumerate}
\item If the singularity for $t \rightarrow T$ is of type I, the only possible limiting flows under the rescaling procedure in $(4.4)$ in \cite{HS} are the homothetically shrinking solutions associated with $\mathbb{S}^2, \mathbb{R} \times \mathbb{S}^1$ and $\mathbb{R} \times \Gamma$, where $\Gamma$ is one of the selfsimilar immersed curves introduced by Mullins (see also Abresch--Langer \cite{Abresch1986}).
\item If the singularity is of type II, then from Theorem \ref{Th1}, the only possible blow-up limits at the first singular time are codimension one shrinking round spheres, shrinking round cylinders, and translating bowl solitons.
\end{enumerate}
\end{corollary}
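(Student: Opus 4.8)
The plan is to combine the codimension estimate of Theorem~\ref{Th1} and the cylindrical estimate of Theorem~\ref{thm_cylindrical} with a blow-up analysis at the singular time $T$, reducing the classification of limiting flows to the codimension one situation, and then to quote the known classifications of weakly convex self-shrinkers (type I) and of weakly convex eternal solutions (type II).

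First I would produce the rescaled flows. For a type I singularity, fix a point $x_{0}$ of the ambient manifold reached by the flow as $t\to T$, work in exponential coordinates at $x_{0}$, and dilate by $\lambda(t)=(2(T-t))^{-1/2}\to\infty$ with the logarithmic time $\tau=-\tfrac12\log(T-t)$; for a type II singularity, apply Hamilton's point-picking to choose points $(p_{k},t_{k})$ at which $|A|=\lambda_{k}\to\infty$ is nearly maximal on large parabolic neighbourhoods, and dilate by $\lambda_{k}$. In either case, exactly as in the proof of Theorem~\ref{Th1}, the dilation factor tends to infinity, so the rescaled ambient manifolds converge in the pointed $C^{\infty}_{loc}$ topology to Euclidean space $\mathbb{R}^{n+m}$; the gradient estimate of Theorem~\ref{thm_gradient} together with the higher-order estimates \eqref{eqn_HigherOrderGradEstimateA} (and their analogues for $|\nabla^{m}A|$) yield uniform $C^{\infty}$ bounds on the rescaled flows on compact space-time sets, and the Langer--Breuning compactness theorem produces a subsequence converging in $C^{\infty}_{loc}$ to a limiting mean curvature flow $\widetilde{\mathcal{M}}_{\tau}\subset\mathbb{R}^{n+m}$. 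By the curvature normalisation this limit is nonflat; in the type I case it is homothetically shrinking and in the type II case it is eternal.

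Next I would pass the two pinching inequalities to the limit. At the chosen points $|A|$ is comparable to the dilation factor, so the \emph{unrescaled} pinching function, which by the preservation of the quadratic pinching satisfies $f\ge\varepsilon_{1}|H|^{2}+\varepsilon_{0}$ on $\mathcal{M}_{t}$, blows up there; using the gradient estimate to bound the oscillation of $|H|$ over a parabolic ball of radius comparable to the dilation scale, one gets $f\ge H_{0}$ on a fixed parabolic neighbourhood of each $(p_{k},t_{k})$, so Theorems~\ref{Th1} and~\ref{thm_cylindrical} apply there. Since the additive constants $C_{\varepsilon}$ do not depend on the rescaling and so are multiplied by $\lambda_{k}^{-2}\to 0$ after dilation, the limit satisfies $|\widetilde{A}^{-}|^{2}\le\varepsilon\,\widetilde{f}$ and $|\widetilde{A}|^{2}-\tfrac{1}{n-1}|\widetilde{H}|^{2}\le\varepsilon\,\widetilde{f}$ for every $\varepsilon>0$. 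Hence $\widetilde{A}^{-}\equiv 0$, so $\widetilde{\mathcal{M}}_{\tau}$ lies in an $(n+1)$-dimensional affine subspace, i.e. it is codimension one, and $|\widetilde{A}|^{2}\le\tfrac{1}{n-1}|\widetilde{H}|^{2}$; as $\widetilde{H}\ge 0$ is a limit of $H\ge 0$ and $\widetilde{H}\not\equiv 0$ by the strong maximum principle applied to the evolution of $|H|$, the limit is weakly convex. It then remains to invoke the codimension one classifications: in the type I case $\widetilde{\mathcal{M}}_{\tau}$ is a nonflat weakly convex self-shrinker of bounded curvature, hence a generalised cylinder by Huisken's classification, and the cylindrical pinching $|\widetilde{A}|^{2}/|\widetilde{H}|^{2}=\tfrac{1}{n-j}\le\tfrac{1}{n-1}$ on $\mathbb{S}^{n-j}\times\mathbb{R}^{j}$ restricts the splitting, recovering the homothetically shrinking models in~(1); in the type II case one applies Corollary~\ref{resultnaff}, using that Hamilton's point-picking makes the limit attain the space-time maximum of $|\widetilde{A}|^{2}/|\widetilde{H}|^{2}$ and then the strong maximum principle on its evolution equation, as in the proof of Theorem~\ref{thm_cylindrical}, together with the Huisken--Sinestrari, White and Haslhofer--Kleiner classification of weakly convex eternal solutions, to conclude that up to rigid motion and dilation $\widetilde{\mathcal{M}}_{\tau}$ is a shrinking round sphere, a shrinking round cylinder $\mathbb{S}^{n-1}\times\mathbb{R}$, or a translating bowl soliton, which is~(2).

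The main obstacle is the limiting step above: Theorems~\ref{Th1} and~\ref{thm_cylindrical} are not scale invariant, since the hypothesis $f\ge H_{0}$ must be propagated to a whole parabolic neighbourhood and not merely to the base point, so one has to use the gradient estimate to guarantee that $|H|$, and hence $f$, stays large on a ball of the correct radius after rescaling, exactly the point already handled in the proof of Theorem~\ref{Th1}. A secondary difficulty, specific to the type II case, is checking that the blow-up limit genuinely attains the relevant space-time maximum so that the rigidity of the strong maximum principle applies; this is precisely what Hamilton's point-picking provides.
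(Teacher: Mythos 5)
Your proposal is correct and follows the same route the paper intends: the paper itself offers no written proof of this corollary beyond the two sentences preceding it (combine Theorem \ref{Th1}, Theorem \ref{thm_cylindrical} and Corollary \ref{resultnaff} with the Huisken--Sinestrari rescaling procedure), and your sketch supplies exactly the details that are left implicit --- the type I/II rescalings, the convergence of the ambient space to $\mathbb{R}^{n+m}$, the propagation of the two pinching estimates to the Euclidean limit to force codimension one and weak convexity, and the appeal to the codimension-one classifications. The one mismatch is in part (1), where your argument for $n\ge 5$ yields the generalised cylinders $\mathbb{S}^{n}$ and $\mathbb{S}^{n-1}\times\mathbb{R}$ rather than the surface models $\mathbb{S}^2$, $\mathbb{R}\times\mathbb{S}^1$ and $\mathbb{R}\times\Gamma$ listed in the statement, but that list appears to be carried over verbatim from the $n=2$ case of \cite{HuSi99a} and is an artefact of the statement rather than a defect of your proof.
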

\section{The case of Constant Curvature}
In this chapter, we prove Theorem \ref{Th1} in the case of constant curvature. In the case of constant negative curvature, the proof is more straightforward and more quantitative so we give a direct proof of the statement. 
\subsection{Evolution equations}
\label{sec:evolution_equations}
We start by stating the evolution equations for the length and the squared length of the second fundamental form and the mean curvature vector in the case of constant curvature. We denote $\bar{K}$ to be the sectional curvature and with respect to local orthonormal frames $\{e_i\}$ and $\{\nu_\alpha\}$ for the tangent and normal bundles,
	\begin{align*}
	(\partial_t - \Delta) A_{ij}&=\sum_{p,q,\beta} A_{ij}^{\beta}A_{pq}^{\beta} A_{pq} +\sum_{p,q,\beta}A_{iq}^{\beta}A_{qp}^{\beta} A_{pj}+\sum_{p,q,\beta}A_{jq}^{\beta}A_{qp}^{\beta} A_{pi} - 2 \sum_{p,q,\beta}A_{ip}^{\beta}A_{jq}^{\beta}A_{pq}\\
	&+ 2 \bar K H g_{ij} - n \bar K A_{ij},\\
	(\partial_t - \Delta) H&=\sum_{p,q,\beta} H^{\beta}A_{pq}^{\beta} A_{pq} + n\bar K H.
	\end{align*}
From these equations we can compute
	\begin{align*}
	(\partial_t -\Delta) |A|^2&= - 2 |\nabla A|^2 + 2 | \langle A, A\rangle |^2 + 2 |R^{\perp}|^2 + 4 \bar K |H|^2 - 2 n \bar K |A|^2, \\
	(\partial_t - \Delta) |H|^2&= - 2 |\nabla H|^2 + 2 |\langle A, H\rangle |^2 + 2 n \bar K |H|^2.
	\end{align*}
Here we show the preservation of pinching for high codimension submanifolds of hyperbolic space. To prove the codimension estimate we need good estimates for the reaction terms in this equation. These are proven following Andrews--Baker. 

We assume throughout $H \not = 0$. We first observe
\begin{align*}
\sum_{i,j,p,q}|\langle A_{ij}, A_{pq} \rangle |^2 &= |h|^4 + 2 \sum_{i,j}|h_{ij} A^-_{ij}|^2 + |A^-|^4\\
&= |\circo h|^4 + \frac{2}{n} |\circo h|^2 |H|^2 + \frac{1}{n^2} |H|^4 + 2\sum_{i,j} |h_{ij} A^-_{ij}|^2 + \sum_{i,j,p,q}|\langle A^-_{ij}, A^-_{pq} \rangle |^2
\end{align*}
and recall the identity 
\begin{align*}
|R^\perp|^2 = 2\sum_{i,j,p} |h_{ip}  A^-_{pj} - h_{jp} A^-_{pi}|^2 +\sum_{i,j,p} |A^-_{ip} \otimes A^-_{pj} - A^-_{jp} \otimes A^-_{pi}|^2
\end{align*}
in order to express
\begin{align*}
R_1 &= |\circo h|^4 + \frac{2}{n} |\circo h|^2 |H|^2 + \frac{1}{n^2}|H|^4 + 2\sum_{i,j} |h_{ij} A^-_{ij}|^2 + \sum_{i,j,p,q}|\langle A^-_{ij}, A^-_{pq} \rangle |^2\\
&+2\sum_{i,j,p} |h_{ip}  A^-_{pj} - h_{jp} A^-_{pi}|^2 + \sum_{i,j,p}|A^-_{ip} \otimes A^-_{pj} - A^-_{jp} \otimes A^-_{pi}|^2.
\end{align*}
Andrews--Baker establish the estimate
\begin{align*}
\sum_{i,j}|h_{ij} A^-_{ij}|^2 + \sum_{i,j,p}|h_{ip}  A^-_{pj} - h_{jp} A^-_{pi}|^2 &\leq 2 |\circo h|^2 |A^-|^2,
\end{align*}
and also observe that
\begin{align*}
\sum_{i,j,p,q}|\langle A^-_{ij}, A^-_{pq} \rangle |^2 +\sum_{i,j,p} |A^-_{ip} \otimes A^-_{pj} - A^-_{jp} \otimes A^-_{pi}|^2 &\leq \frac{3}{2} |A^-|^4
\end{align*}
by setting $B^\alpha = A^{-,\alpha}$ for $\alpha\geq 2$ in the following result \cite[Theorem 1]{Li1992}:
\begin{theorem}
\label{thm:algebraic_est}
Let $\{B^\alpha\}$ be a finite set of symmetric $(n\times n)$-matrices. Then we have 
\[\sum_{i,j,\alpha,\beta} (B^\alpha_{ij}B^\beta_{ij})^2 + \sum_{i,j,p,\alpha,\beta} |B^\alpha_{ip}B^\beta_{pj} - B^\alpha_{jp} B^\alpha_{pi}|^2 \leq \frac{3}{2} \Big(\sum_{\alpha} |B^\alpha|^2 \Big)^2.\]
\end{theorem}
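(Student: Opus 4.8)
The statement is the algebraic inequality of \cite{Li1992}, and the plan is to reconstruct its proof. Write $\la X,Y\ra=\tr(XY)$ for the Frobenius inner product on symmetric $n\times n$ matrices, $\abs{X}^2=\la X,X\ra$, and for a configuration $B=(B^1,\dots,B^m)$ set
\[
P_1(B)=\sum_{\alpha,\beta}\la B^\alpha,B^\beta\ra^2,\qquad P_2(B)=\sum_{\alpha,\beta}\abs{B^\alpha B^\beta-B^\beta B^\alpha}^2,\qquad S(B)=\sum_\alpha\abs{B^\alpha}^2,
\]
so that the claim reads $\Phi:=P_1+P_2\le\tfrac32 S^2$. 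By homogeneity it suffices to treat $S=1$, and since $\Phi$ is continuous and the sphere $\{S=1\}$ in $(\mathrm{Sym}^2\mathbb{R}^n)^{\oplus m}$ is compact, a maximiser exists.

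The one computational input I would isolate first is a sharp pairwise inequality, call it $(\star)$: for symmetric $B,C$,
\[
\la B,C\ra^2+\tfrac12\,\abs{BC-CB}^2\ \le\ \abs{B}^2\abs{C}^2 .
\]
This is elementary: diagonalising $B=\mathrm{diag}(b_1,\dots,b_n)$ gives $\la B,C\ra^2=\big(\sum_i b_iC_{ii}\big)^2\le\abs{B}^2\sum_iC_{ii}^2$ by Cauchy--Schwarz, while $\abs{BC-CB}^2=\sum_{i\ne j}(b_i-b_j)^2C_{ij}^2$ and $\tfrac12(b_i-b_j)^2\le b_i^2+b_j^2\le\abs{B}^2$; adding the two estimates proves $(\star)$. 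Substituting $(\star)$ into the off-diagonal part of $\Phi$ yields, for \emph{every} configuration, $\Phi\le 2S^2-P_1$, hence $\Phi\le\tfrac32 S^2$ whenever $P_1\ge\tfrac12 S^2$. Since $P_1=\abs{G}^2$ for the Gram matrix $G_{\alpha\beta}=\la B^\alpha,B^\beta\ra$ with $\tr G=S$, this already disposes of every configuration in which, after an $O(m)$ rotation of the bundle index, at most two of the $B^\alpha$ are nonzero; in that subcase $(\star)$ in fact gives $\tfrac32 S^2-\Phi\ge\tfrac12(\abs{B^1}^2-\abs{B^2}^2)^2\ge0$ directly, with equality realised by the Veronese-type configuration $n=2$.

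For the remaining case I would pass to a maximiser and exploit symmetry. Both $\Phi$ and $\{S=1\}$ are invariant under $O(n)$ acting by simultaneous conjugation and under $O(m)$ acting on the index $\alpha$, so at a maximiser one may assume the Gram matrix is diagonal, $\la B^\alpha,B^\beta\ra=\mu_\alpha\delta_{\alpha\beta}$ with $\mu_\alpha=\abs{B^\alpha}^2\ge0$ and $\sum_\alpha\mu_\alpha=1$. The Euler--Lagrange equations then read $4\mu_\gamma B^\gamma+4\sum_\beta[B^\beta,[B^\beta,B^\gamma]]=2\lambda B^\gamma$ for every $\gamma$; pairing with $B^\gamma$, using the identity $\la[B^\beta,[B^\beta,X]],X\ra=\abs{[B^\beta,X]}^2$ (for symmetric $B^\beta,X$) and Euler's relation $\lambda=2\Phi$ (forced by $\Phi$ being degree four and the constraint degree two), one obtains the pointwise relation
\[
\sum_\beta\abs{[B^\beta,B^\gamma]}^2\ =\ \mu_\gamma(\Phi-\mu_\gamma)\qquad\text{for each }\gamma .
\]
Combined with $(\star)$, which in the Gram-diagonal frame gives $\abs{[B^\beta,B^\gamma]}^2\le 2\mu_\beta\mu_\gamma$, this yields $\Phi\le 2-\mu_\gamma$ for every $\gamma$ with $\mu_\gamma>0$, hence $\Phi\le\tfrac32$ as soon as $\max_\gamma\mu_\gamma\ge\tfrac12$.

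It therefore remains to rule out a maximiser with $\max_\gamma\mu_\gamma<\tfrac12$, i.e.\ with three or more significant matrices (note $\max_\gamma\mu_\gamma<\tfrac12$ also forces $P_1<\tfrac12$, so this is exactly the case not yet covered). This rigidity step is where I expect the genuine difficulty to lie. My plan would be to use the equality discussion of $(\star)$: $\abs{[B^\beta,B^\gamma]}^2$ can approach $2\mu_\beta\mu_\gamma$ only when $B^\beta$ is, on the support of $B^\gamma$, a $2\times2$ block with equal and opposite eigenvalues. Feeding this back into the Euler--Lagrange relation above, and using the Jacobi identity $[B^\alpha,[B^\beta,B^\gamma]]+[B^\beta,[B^\gamma,B^\alpha]]+[B^\gamma,[B^\alpha,B^\beta]]=0$ to control the surviving cross terms, one should force the entire configuration to be supported on $\mathrm{Sym}^2$ of a common two-dimensional subspace, which is already handled above. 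In short, the lemma $(\star)$ and the bookkeeping of the off-diagonal sums are routine; the crux is converting the non-sharp bound $\Phi\le 2S^2-P_1$ into the sharp constant $\tfrac32$ in the presence of many matrices.
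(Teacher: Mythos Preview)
The paper does not prove this statement; it is quoted verbatim from \cite{Li1992} and used as a black box (see the sentence preceding Theorem~\ref{thm:algebraic_est} in Section~\ref{sec:evolution_equations}). So there is no ``paper's own proof'' to compare against.

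As a standalone argument, your proposal is incomplete, and you say so yourself. Everything up to the relation $\sum_\beta|[B^\beta,B^\gamma]|^2=\mu_\gamma(\Phi-\mu_\gamma)$ at a Gram-diagonal maximiser, and hence $\Phi\le 2-\max_\gamma\mu_\gamma$, is correct and clean. That genuinely settles the case $\max_\gamma\mu_\gamma\ge\tfrac12$, and your observation that $\Phi\le 2S^2-P_1$ settles the case $P_1\ge\tfrac12 S^2$. But these two cases do not exhaust all configurations: a maximiser with three or more nonzero $B^\alpha$, all of weight $<\tfrac12$, is not excluded by either bound, and the pairwise inequality $(\star)$ alone cannot close this gap (summing $|[B^\alpha,B^\beta]|^2\le 2\mu_\alpha\mu_\beta$ over $\alpha\neq\beta$ gives only $P_2\le 2(S^2-P_1)$, which is exactly your $\Phi\le 2S^2-P_1$ again).

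Your closing sketch --- use the equality case of $(\star)$ plus the Jacobi identity to force the configuration into a single $2$-plane --- is a reasonable heuristic for the rigidity, but it is not a proof: you have not shown that a maximiser with $\max_\gamma\mu_\gamma<\tfrac12$ must saturate $(\star)$ on enough pairs to trigger the $2$-plane conclusion, nor how the Jacobi identity concretely eliminates the cross terms. This is precisely the nontrivial step in \cite{Li1992}, and it requires a more careful analysis of the second variation (or of the maximiser structure) than what you have written. If you want to submit a self-contained argument, you need to either carry this rigidity step out in full or find an alternative route to the sharp constant; as it stands, the proposal establishes the inequality with constant $2$ but not $\tfrac32$.
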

Putting these estimates together we obtain the inequality
\begin{align*}
R_1 &\leq |\circo h|^4 + \frac{2}{n} |\circo h|^2 |H|^2 + \frac{1}{n^2}|H|^4 + 4 |\circo h|^2 |A^-|^2 + \frac{3}{2} |A^-|^4.
\end{align*}
We may also expand
\begin{align*}
R_2 &=  \sum_{i,j}|\langle A_{ij}, H\rangle |^2 = |h|^2 |H|^2 = |\circo h|^2 |H|^2 + \frac{1}{n} |H|^4, 
\end{align*}
hence 
\begin{align*}
2R_1 - 2c_n R_2 &\leq 2|\circo h|^4 + 8 |\circo h|^2 |A^-|^2 + 3 |A^-|^4- 2(c_n - 2/n) |\circo h|^2 |H|^2 - \frac{2}{n}(c_n - 1/n)|H|^4.
\end{align*}

Now we express
\[\mathcal Q = |\circo A|^2 - (c_n-1/n) |H|^2 - d_n \bar K\]
and rearrange to obtain
\[|H|^2 = \frac{1}{c_n - 1/n} (|\circo h|^2 + |A^-|^2 - \mathcal Q -d_n \bar K).\]
Substituting this back in gives
\begin{align*}
2R_1 - 2c_n R_2 &\leq 2|\circo h|^4 + 8 |\circo h|^2 |A^-|^2 + 3 |A^-|^4- 2(c_n - 1/n) |\circo h|^2 |H|^2 \\
&- \frac{2}{n} |A^-|^2 |H|^2 + \frac{2}{n}(\mathcal Q+ d_n \bar K) |H|^2\\
&=6 |\circo h|^2 |A^-|^2 + 3 |A^-|^4 - \frac{2}{n} |A^-|^2 |H|^2\\
& + \frac{2}{n}(\mathcal Q+ d_n \bar K) |H|^2 + 2(\mathcal Q+d_n \bar K) |\circo h|^2\\
&=\bigg(6 - \frac{2/n}{c_n - 1/n}\bigg) |\circo h|^2 |A^-|^2 + \bigg(3 - \frac{2/n}{c_n - 1/n} \bigg) |A^-|^4 \\
& + \frac{2}{n}(\mathcal Q+ d_n \bar K) |H|^2 + 2(\mathcal Q+d_n \bar K) |\circo h|^2 + \frac{2/n}{c_n-1/n} (\mathcal Q+d_n \bar K)|A^-|^2.
\end{align*}

The terms on the last line can be written as 
\begin{align*}
\frac{2}{n}(\mathcal Q+ d_n \bar K) |H|^2 &= \frac{2/n}{c_n - 1/n} \mathcal Q (|\circo h|^2 + |A^-|^2  - \mathcal Q) - \frac{4/n}{c_n - 1/n} d_n \bar K \mathcal Q\\
&+ \frac{2/n}{c_n-1/n} d_n \bar K (|\circo h|^2 + |A^-|^2) - \frac{2/n}{c_n - 1/n} d_n^2 \bar K^2,
\end{align*}
hence
\begin{align*}
\frac{2}{n}&(\mathcal Q+ d_n \bar K) |H|^2 + 2(\mathcal Q+d_n \bar K) |\circo h|^2 + \frac{2/n}{c_n-1/n} (\mathcal Q+d_n \bar K)|A^-|^2\\
& = \frac{2/n}{c_n-1/n} \mathcal Q(2 |A^-|^2  - \mathcal Q) - \frac{4/n}{c_n - 1/n} d_n \bar K \mathcal Q + \bigg( \frac{2/n}{c_n - 1/n} + 2\bigg) |\circo h|^2 \mathcal Q\\
& +  \bigg(\frac{2/n}{c_n - 1/n} + 2\bigg) d_n \bar K |\circo h|^2 + \frac{4/n}{c_n - 1/n} d_n \bar K |A^-|^2 - \frac{2/n}{c_n - 1/n} d_n^2 \bar K^2 
\end{align*}
and we have 
\begin{align*}
2R_1 - 2c_n R_2 &\leq \bigg(6 - \frac{2/n}{c_n - 1/n}\bigg) |\circo h|^2 |A^-|^2 + \bigg(3 - \frac{2/n}{c_n - 1/n} \bigg) |A^-|^4 \\
&+  \bigg(\frac{2/n}{c_n - 1/n} + 2\bigg) d_n \bar K |\circo h|^2 + \frac{4/n}{c_n - 1/n} d_n \bar K |A^-|^2 - \frac{2/n}{c_n - 1/n} d_n^2 \bar K^2 \\
& + \frac{2/n}{c_n-1/n} \mathcal Q(2 |A^-|^2  - \mathcal Q) - \frac{4/n}{c_n - 1/n} d_n \bar K \mathcal Q + \bigg( \frac{2/n}{c_n - 1/n} + 2\bigg) |\circo h|^2 \mathcal Q.
\end{align*}
Next we compute 
\begin{align*}
-2n\bar K |\circo A|^2 - 2n\bar K (c_n - 1/n) |H|^2 &= - 4n \bar K |\circo h|^2 - 4n \bar K |A^-|^2 + 2n d_n \bar K^2 + 2n\bar K \mathcal Q, 
\end{align*}
and so obtain the following estimate for the zeroth-order terms in the evolution of $\mathcal Q$: 
\begin{align}
\label{eqn_ReactionEstimage}
2R_1 - 2&c_n R_2 -2n\bar K |\circo A|^2 - 2n\bar K (c_n - 1/n) |H|^2 \notag\\
&\leq \bigg(6 - \frac{2/n}{c_n - 1/n}\bigg) |\circo h|^2 |A^-|^2 + \bigg(3 - \frac{2/n}{c_n - 1/n} \bigg) |A^-|^4 \notag \\
& + 2   \bigg(\frac{d_n/n}{c_n - 1/n} + d_n -2n \bigg) \bar K|\circo h|^2 + 4\bigg(\frac{d_n /n}{c_n - 1/n} -n\bigg) \bar K |A^-|^2 \notag \\
&+2\bigg(n -\frac{d_n/n}{c_n - 1/n} \bigg)d_n \bar K^2 + 2\bigg(1 + \frac{1/n}{c_n - 1/n}\bigg)|\circo h|^2 Q \notag \\
&+  \frac{2/n}{c_n-1/n} \mathcal Q(2 |A^-|^2 - \mathcal Q)+ 2\bigg(n - \frac{2d_n/n}{c_n-1/n}\bigg) \bar K \mathcal Q .
\end{align}
Suppose $\bar K <0$. In this case, if $d_n >0$ then the condition $\mathcal Q \leq 0$ implies $|H|^2 >0$. As above, for $c_n \leq \frac{4}{3n}$ we have
\begin{align*} 
2R_1 - 2&c_n R_2 -2n\bar K |\circo A|^2 - 2n\bar K (c_n - 1/n) |H|^2 \notag\\
&\leq  2   \bigg(\frac{d_n/n}{c_n - 1/n} + d_n -2n \bigg) \bar K|\circo h|^2 + 4\bigg(\frac{d_n /n}{c_n - 1/n} -n\bigg) \bar K |A^-|^2 \notag \\
&+2\bigg(n -\frac{d_n/n}{c_n - 1/n} \bigg)d_n \bar K^2 + 2\bigg(1 + \frac{1/n}{c_n - 1/n}\bigg)|\circo h|^2 Q \notag \\
&+  \frac{2/n}{c_n-1/n} \mathcal Q(2 |A^-|^2 - \mathcal Q)+ 2\bigg(n - \frac{2d_n/n}{c_n-1/n}\bigg) \bar K \mathcal Q.
\end{align*} 
The first term on the left is nonpositive for $d_n \geq 2n - 2/c_n$, and this is also sufficient to ensure 
\[\bigg(\frac{d_n /n}{c_n - 1/n} -n\bigg)\geq 2/c_n -n \geq n,\]
so we have 
\begin{align*} 
2R_1 - 2&c_n R_2 -2n\bar K |\circo A|^2 - 2n\bar K (c_n- 1/n) |H|^2 \notag\\
&\leq 4(2/c_n - n) \bar K |A^-|^2 \notag +2(n -2/c_n)d_n \bar K^2 + 2\bigg(1 + \frac{1/n}{c_n - 1/n}\bigg)|\circo h|^2 Q \notag \\
&+  \frac{2/n}{c_n-1/n} \mathcal Q(2 |A^-|^2 - \mathcal Q)+ 2\bigg(n - \frac{2d_n/n}{c_n-1/n}\bigg) \bar K \mathcal Q.
\end{align*}
All of the terms on the right are either nonpositive or carry a factor $\mathcal Q$, so we see that $\mathcal Q\leq 0$ is preserved for 
\[c_n \leq \min\left\{\frac{4}{3n}, \frac{3}{n+2}\right\}, \qquad d_n \geq 2n - 2/c_n.\]

Observe that for our allowed range of constants $c_n$ and $d_n$,
\[2\bigg(n - \frac{2d_n/n}{c_n-1/n}\bigg) \bar K \geq 0,\]
so when $\mathcal Q\leq 0$ we can further estimate 
\begin{align*} 
2R_1 - 2&c_n R_2 -2n\bar K |\circo A|^2 - 2n\bar K (c_n - 1/n) |H|^2 \notag\\
&\leq  -\frac{2/n}{c_n-1/n} \mathcal Q^2.
\end{align*}
Hence 
\[(\partial_t - \Delta )\mathcal Q \leq -\frac{2/n}{c_n-1/n} \mathcal Q^2,\]
which forces $\mathcal Q$ to blow up in finite time.
\subsection{The evolution of $h$}
From the equations for $A$ and $H$, we have that the projection $\langle A, H\rangle $ satisfies
	\begin{align*}
	(\partial_t - \Delta) A_{ij}^{\alpha} H^{\alpha}&= -2\sum_{p,\alpha}\nabla_p A_{ij}^{\alpha} \nabla_p H^{\alpha} + 2 \sum_{p,q,\alpha,\beta}H^{\alpha} A_{ij}^{\beta}A_{pq}^{\beta} A_{pq}^{\alpha} \\
	&+\sum_{p,q,\alpha,\beta}H^{\alpha}(A_{iq}^{\beta}A_{qp}^{\beta} A_{pj}^{\alpha}+A_{jq}^{\beta}A_{qp}^{\beta} A_{pi}^{\alpha} - 2 A_{ip}^{\beta}A_{jq}^{\beta}A_{pq}^{\alpha})\\
	&+ 2 \bar K |H|^2 g_{ij}.
	\end{align*}
The first of the reaction terms can be split into a hypersurface and a codimension component, as follows:
	\begin{align*}
	2 \sum_{p,q,\alpha,\beta}H^{\alpha} A_{ij}^{\beta}A_{pq}^{\beta} A_{pq}^{\alpha}&= 2 |H| |h|^2 h_{ij} + 2\sum_{p,q,\alpha\ge 2} |H| A_{ij}^{\alpha} A_{pq}^{\alpha} h_{pq}.
	\end{align*}
Similarly, the remaining reaction terms can be written as
	\begin{align*}
	\sum_{p,q,\alpha,\beta}H^{\alpha}(A_{iq}^{\beta}A_{qp}^{\beta} A_{pj}^{\alpha}&+A_{jq}^{\beta}A_{qp}^{\beta} A_{pi}^{\alpha} - 2 A_{ip}^{\beta}A_{jq}^{\beta}A_{pq}^{\alpha})= \sum_{p,q,\alpha\ge 2}|H| A_{iq}^{\alpha} A_{qp}^{\alpha} h_{pj}\\
	& +\sum_{p,q,\alpha\ge 2} |H| A_{jq}^{\alpha} A_{qp}^{\alpha} h_{pi} - 2\sum_{p,q,\alpha\ge 2}|H| A_{ip}^{\alpha} A_{jq}^{\alpha} h_{pq}.
	\end{align*}
Therefore,
	\begin{align*}
	(\partial_t - \Delta) A_{ij}^{\alpha} H^{\alpha}&= -2\sum_{p,\alpha}\nabla_p A_{ij}^{\alpha} \nabla_p H^{\alpha} +2|H| |h|^2 h_{ij} + 2\sum_{p,q,\alpha\ge 2} |H| h_{pq} ( A_{ij}^{\alpha} A_{pq}^{\alpha} - A_{ip}^{\alpha} A_{jq}^{\alpha})\\
	&+ \sum_{p,q,\alpha\ge 2}|H| A_{iq}^{\alpha} A_{qp}^{\alpha} h_{pj} +\sum_{p,q,\alpha\ge 2} |H| A_{jq}^{\alpha} A_{qp}^{\alpha} h_{pi} + 2 \bar K |H|^2 g_{ij}.
	\end{align*}
For a positive function $f$, we have
	\begin{align*}
	(\partial_t-\Delta) \sqrt{f}&=\frac{1}{4 f^{3/2} } |\nabla f|^2 + \frac{1}{2 \sqrt{f} } (\partial_t - \Delta) f,
	\end{align*}
hence the quantity $\sqrt{f} = |H|$ satisfies
	\begin{align*}
	(\partial_t - \Delta) |H|&=\frac{1}{4 |H|^3} |\nabla |H|^2|^2 + \frac{1}{2 |H|}(- 2 |\nabla H|^2 + 2 |\langle A, H\rangle |^2 + 2n\bar K |H|^2)\\
	&= \frac{1}{|H|^3} \langle H, \nabla_i H\rangle \langle H, \nabla_i H\rangle - \frac{|\nabla H|^2}{|H|} + \frac{|\langle A, H\rangle |^2 }{|H|} + n \bar K |H|.
	\end{align*}
Inserting the identities
\[\frac{|\langle A, H\rangle |^2 }{|H|} = |h|^2 |H|\]
and
	\begin{align*}
- \frac{|\nabla H|^2}{|H|}+ \frac{1}{|H|^3}\langle H, \nabla_i H\rangle \langle H, \nabla_i H\rangle&= - \frac{|\nabla |H||^2}{|H|} - |H| |\nabla \nu_1|^2 + \frac{1}{|H|} \langle \nu_1, \nabla_i H\rangle \langle \nu_1, \nabla_i H\rangle\\
&= - |H| |\nabla \nu_1|^2,
	\end{align*}
we obtain
	\begin{align}
	\label{eq:H_evol}
(\partial_t - \Delta) |H|&= |h|^2 |H| + n \bar K |H| - |H||\nabla \nu_1|^2.
	\end{align}
For a tensor $B_{ij}$ divided by a positive scalar function $f$, there holds
	\begin{align*}
	(\partial_t - \Delta) \frac{B_{ij}}{f} = \frac{1}{f} (\nabla_t - \Delta) B_{ij} - \frac{B_{ij}}{f^2} (\partial_t - \Delta) f + \frac{2}{f} \bigg\langle \nabla \frac{B_{ij}}{f}, \nabla f \bigg\rangle.
	\end{align*}
Therefore, dividing $\langle A_{ij}, H\rangle$ by $|H|$, we obtain
	\begin{align*}
	(\partial_t - \Delta) h_{ij}&= |h|^2 h_{ij} + 2\sum_{p,q,\alpha\ge 2} h_{pq} ( A_{ij}^{\alpha} A_{pq}^{\alpha} - A_{ip}^{\alpha} A_{jq}^{\alpha}) +\sum_{p,q,\alpha\ge 2}A_{iq}^{\alpha} A_{qp}^{\alpha} h_{pj} \\
	&+ \sum_{p,q,\alpha\ge 2}A_{jq}^{\alpha} A_{qp}^{\alpha} h_{pi}+ 2 \bar K |H| g_{ij} - n \bar K h_{ij} -2|H|^{-1} \langle \nabla A_{ij} ,\nabla H \rangle + h_{ij} |\nabla \nu_1|^2 \\
	&+2 |H|^{-1} \langle \nabla h_{ij}, \nabla |H| \rangle.
	\end{align*}
We simplify the gradient terms by decomposing
	\begin{align*}
	- 2\langle \nabla A_{ij} , \nabla H\rangle&= - 2\langle \nabla h_{ij} \nu_1 + h_{ij} \nabla \nu_1 + \nabla A_{ij}^- , \nabla |H| \nu_1 + 2|H| \nabla \nu_1 \rangle\\
	&= - 2\langle \nabla h_{ij} , \nabla |H|\rangle - 2|H| h_{ij} |\nabla \nu_1|^2 - 2\langle \nabla A_{ij}^- , \nabla |H| \nu_1 \rangle \\
	&- 2|H| \langle \nabla A_{ij}^- , \nabla \nu_1 \rangle,
	\end{align*}
and so obtain
	\begin{align*}
	(\partial_t - \Delta) h_{ij}&= |h|^2 h_{ij} + 2\sum_{p,q,\alpha\ge 2} h_{pq} ( A_{ij}^{\alpha} A_{pq}^{\alpha} - A_{ip}^{\alpha} A_{jq}^{\alpha}) +\sum_{p,q,\alpha\ge 2} A_{iq}^{\alpha} A_{qp}^{\alpha} h_{pj}\\
	& +\sum_{p,q,\alpha\ge 2} A_{jq}^{\alpha} A_{qp}^{\alpha} h_{pi} + 2 \bar K |H| g_{ij} - n \bar K h_{ij} - h_{ij} |\nabla \nu_1|^2 - 2|H|^{-1}\langle \nabla A_{ij}^- , \nabla |H| \nu_1 \rangle\\
	& - 2 \langle \nabla A_{ij}^- , \nabla \nu_1 \rangle.
	\end{align*}
Next, we compute
	\begin{align*}
	(\partial_t - \Delta) |h|^2&=2\sum_{i,j} h_{ij}(\nabla_t - \Delta) h_{ij} - 2 |\nabla h|^2 \\
	&= 2|h|^4 + 4\sum_{i,j} |h_{ij}A_{ij}^- |^2 - 4\sum_{i,j,p,q,\alpha\ge 2} h_{ij} h_{pq}A_{ip}^{\alpha} A_{jq}^{\alpha} + 4\sum_{i,j,p,q,\alpha\ge 2} h_{ij} h_{pj} A_{iq}^{\alpha} A_{qp}^{\alpha}\\
	& + 4 \bar K |H|^2- 2n \bar K |h|^2- 2|\nabla h|^2 - 2 |h|^2 |\nabla \nu_1|^2 - 4\sum_{i,j}|H|^{-1} h_{ij} \langle \nabla A_{ij}^- , \nabla |H| \nu_1 \rangle \\
	&- 4\sum_{i,j}h_{ij} \langle \nabla A_{ij}^- , \nabla \nu_1 \rangle,
	\end{align*}
and, following Naff, rewrite
	\begin{align*}
	4 \sum_{i,j,p,q,\alpha\ge 2}h_{ij} h_{pj} A_{iq}^{\alpha} A_{qp}^{\alpha} - 4\sum_{i,j,p,q,\alpha\ge 2} h_{ij} h_{pq}A_{ip}^{\alpha} A_{jq}^{\alpha}&= 2\sum_{i,j,p,q}\langle h_{ij} A_{iq}^- - h_{iq}A_{ij}^- , h_{pj} A_{pq}^- - h_{pq} A_{pj}^- \rangle\\
	&= 2\sum_{i,j,p} |h_{ip} A_{pj}^- - h_{jp}A_{pi}^- |^2.
	\end{align*}
Hence,
	\begin{align*}
	(\partial_t - \Delta) |h|^2&= 2|h|^4 + 4\sum_{i,j} |h_{ij}A_{ij}^- |^2+ 2\sum_{i,j,p} |h_{ip} A_{pj}^- - h_{jp}A_{pi}^- |^2 + 4 \bar K |H|^2 - 2n \bar K |h|^2\\
	&- 2|\nabla h|^2 - 2 |h|^2 |\nabla \nu_1|^2 - 4\sum_{i,j}|H|^{-1} h_{ij} \langle \nabla A_{ij}^- , \nabla |H| \nu_1 \rangle - 4\sum_{i,j}h_{ij} \langle \nabla A_{ij}^- , \nabla \nu_1 \rangle,
	\end{align*}
and since $|A^-|^2 = |A|^2 - |h|^2$,
	\begin{align*}
	(\partial_t- \Delta) |A^-|^2 &= 2 | \langle A, A\rangle |^2 -2|h|^4 - 4\sum_{i,j} |h_{ij}A_{ij}^- |^2+ 2 |R^{\perp}|^2 -2\sum_{i,j,p} |h_{ip} A_{pj}^- - h_{jp}A_{pi}^- |^2 \\
	&- 2n \bar K |A^-|^2- 2 |\nabla A|^2+ 2|\nabla h|^2 + 2 |h|^2 |\nabla \nu_1|^2 \\
	&+ 4\sum_{i,j}|H|^{-1} h_{ij} \langle \nabla A_{ij}^- , \nabla |H| \nu_1 \rangle+ 4\sum_{i,j}h_{ij} \langle \nabla A_{ij}^- , \nabla \nu_1 \rangle.
	\end{align*}
The reaction terms can be simplified by observing
\[2 | \langle A, A\rangle |^2 -2|h|^4 - 4\sum_{i,j} |h_{ij}A_{ij}^- |^2 = 2|\langle A^-, A^-\rangle |^2,\]
and (recalling the decomposition of $R^\perp$ carried out above)
\[2 |R^{\perp}|^2 -2\sum_{i,j,p} |h_{ip} A_{pj}^- - h_{jp}A_{pi}^- |^2= 2 \sum_{i,j,p}|h_{ip} A_{pj}^- - h_{jp} A_{pi}^- |^2 + 2\sum_{i,j,p}|A_{ip}^- \otimes A_{pj}^- - A_{jp}^- \otimes A_{pi}^- |^2,\]
hence
	\begin{align*}
	(\partial_t - \Delta) |A^-|^2&= 2|\langle A^-, A^-\rangle |^2 + 2\sum_{i,j,p} |h_{ip} A_{pj}^- - h_{jp} A_{pi}^- |^2 + 2\sum_{i,j,p}|A_{ip}^- \otimes A_{pj}^- - A_{jp}^- \otimes A_{pi}^- |^2\\
	& - 2n \bar K |A^-|^2- 2 |\nabla A|^2+ 2|\nabla h|^2 + 2 |h|^2 |\nabla \nu_1|^2 \\
	&+ 4\sum_{i,j}|H|^{-1} h_{ij} \langle \nabla A_{ij}^- , \nabla |H| \nu_1 \rangle+ 4\sum_{i,j}h_{ij} \langle \nabla A_{ij}^- , \nabla \nu_1 \rangle.
	\end{align*}
Since $\nabla A = \nabla h \nu_1 + h \nabla \nu_1 + \nabla A^-$, we compute
	\begin{align*}
	 2 |\nabla A|^2 = 2 |\nabla h|^2 + 2 |h|^2 |\nabla \nu_1|^2 + 2 |\nabla A^-|^2 + 4\sum_{i,j} h_{ij} \langle \nabla A_{ij}^- ,\nabla \nu_1\rangle + 4\sum_{i,j} \langle \nabla A_{ij}^- , \nabla h_{ij}\nu_1\rangle ,
	\end{align*}
and so obtain
	\begin{align*}
	(\partial_t - \Delta) |A^-|^2&= 2|\langle A^-, A^-\rangle |^2 + 2 \sum_{i,j,p}|h_{ip} A_{pj}^- - h_{jp} A_{pi}^- |^2 + 2\sum_{i,j,p}|A_{ip}^- \otimes A_{pj}^- - A_{jp}^- \otimes A_{pi}^- |^2\\
	& - 2n \bar K |A^-|^2- 2 |\nabla A^-|^2- 4\sum_{i,j} \langle \nabla A_{ij}^- , \nabla h_{ij}\nu_1\rangle+ 4\sum_{i,j}|H|^{-1} h_{ij} \langle \nabla A_{ij}^- , \nabla |H| \nu_1 \rangle.
	\end{align*}
Differentiating $\langle A_{ij}^- , \nu_1\rangle = 0$, we see the last two gradient terms may be expressed as
	\begin{align*}
	&- 4 \sum_{i,j}\langle \nabla A_{ij}^- ,\nabla h_{ij}\nu_1\rangle+ 4\sum_{i,j}|H|^{-1} h_{ij} \langle \nabla A_{ij}^- , \nabla |H| \nu_1 \rangle\\
	&= -\sum_{i,j,k} (4 \nabla_k h_{ij} - 4|H|^{-1} h_{ij} \nabla_k |H|)\langle \nabla_k A_{ij}^- , \nu_1 \rangle \\
	&= \sum_{i,j,k}(4 \nabla_k h_{ij} - 4|H|^{-1} h_{ij} \nabla_k |H|)\langle A_{ij}^- , \nabla_k \nu_1 \rangle ,
	\end{align*}
and consequently,
	\begin{align*}
	(\partial_t - \Delta) |A^-|^2&= 2\sum_{i,j,p,q}|\langle A_{ij}^- , A_{pq}^- \rangle |^2 + 2\sum_{i,j,p} |h_{ip} A_{pj}^- - h_{jp} A_{pi}^- |^2 + 2\sum_{i,j,p}|A_{ip}^- \otimes A_{pj}^- - A_{jp}^- \otimes A_{pi}^- |^2 \\
	&- 2n \bar K |A^-|^2- 2 |\nabla A^-|^2+\sum_{i,j,k} (4 \nabla_k h_{ij} - 4|H|^{-1} h_{ij} \nabla_k |H|)\langle A_{ij}^- , \nabla_k \nu_1 \rangle.
	\end{align*}
Since $f=c_n|H|^2-|A|^2-d_n$ and
	\begin{align*}
	\Big(\partial_t-\Delta\Big)f=2(|\nabla^\bot A|^2-c_n|\nabla^\bot H|^2)+2\Big(c_n\sum_{i,j}|\langle A_{ij},H\rangle|^2-\sum_{i,j,p,q}|\langle A_{ij},A_{pq}\rangle|^2-\sum_{i,j}|R_{ij}^\bot |^2\Big),
	\end{align*}
we have
	\begin{align*}
	&\Big(\partial_t-\Delta\Big)\frac{|A^-|^2}{f}=\frac{1}{f}\Big(\partial_t-\Delta\Big)|A^-|^2-|A^-|^2\frac{1}{f^2}\Big(\partial_t-\Delta\Big)f+2\Big\langle\nabla\frac{|A^-|^2}{f},\nabla \log f\Big\rangle\\
	&=\frac{1}{f}\Big(2\sum_{i,j,p,q}|\langle A_{ij}^- ,A_{pq}^- \rangle|^2+2\sum_{i,j,p}|h_{ip} A_{pj}^- -h_{jp}A_{ip}^- |^2+2\sum_{i,j,p} |A_{ip}^- \otimes A_{jp}^- -A_{jp}^- \otimes A_{ip}^- |^2-2n\bar{K}|A^-|^2\Big)\\
	&+\frac{1}{f}\Big(-2|\nabla^\bot A^-|^2+4\sum_{i,j,k}(\nabla_k h_{ij}-|H|^{-1}h_{ij} \nabla_k |H|)\langle A_{ij}^- ,\nabla^\bot_k \nu_1\rangle\Big)\\
	&-|A^-|^2\frac{1}{f^2}\Big(2(|\nabla^\bot A|^2-c_n|\nabla^\bot H|^2)\Big)\\
	&-|A^-|^2\frac{1}{f^2}\Big(2\Big(c_n\sum_{i,j}|\langle A_{ij},H\rangle|^2-\sum_{i,j,p,q}|\langle A_{ij},A_{pq}\rangle|^2-\sum_{i,j}|R_{ij}^\bot |^2\Big)\Big)\\
	&+2\Big\langle\nabla\frac{|A^-|^2}{f},\nabla\log f\Big\rangle.
	\end{align*}
According to Section $5$, we get
	\begin{align*}	
	\Big(\partial_t-\Delta\Big)\frac{|A^-|^2}{f}\le2\Big\langle\nabla\frac{|A^-|^2}{f},\nabla\log f\Big\rangle-2n\bar{K}\frac{|A^-|^2}{f}-\delta\frac{|A^-|^2}{f^2}\Big(\partial_t-\Delta\Big)f.
	\end{align*}
Note this shows
	\begin{align*}
	\Big(\partial_t-\Delta\Big)\frac{|A^-|^2}{f}\le2\Big\langle\nabla\frac{|A^-|^2}{f},\nabla\log f\Big\rangle-2n\bar{K}\frac{|A^-|^2}{f}.
	\end{align*}
If we have an equation of the form
	\begin{align*}	
		\partial_t u = \Delta u + \langle f ,\nabla u \rangle + Cu
	\end{align*}
by considering $ U = e^{-Ct}u$, we get
	\begin{align*}
		\partial_t U&= e^{-Ct}\partial_t u -Ce^{-Ct}u \\
		&= e^{-Ct} ( \Delta u + \langle f ,\nabla u \rangle + Cu) - C e^{-Ct}u.
	\end{align*}
Hence, we get
	\begin{align*}
	\partial_t U&= \Delta U + \langle f, \nabla U \rangle.
	\end{align*}
By the maximum principle we find $e^{-Ct}\max_{x\in \mathcal M}U(x,t) \leq \max_{x\in\mathcal M}U(x,0)$. Applying this to the above we get
	\begin{align*}
	\frac{|A^-|^2}{f} \leq e^{Ct} C(\mathcal M_0).
	\end{align*}
If we assume $ \bar K \leq 0$, then
	\begin{align*}
	\Big(\partial_t - \Delta\Big) |H|^2&= - 2 |\nabla H|^2 + 2 |\langle A, H\rangle |^2 + 2 n \bar K |H|^2\geq 2 |\langle A, H\rangle |^2 \geq \frac{2}{n}|H|^4.
	\end{align*}
The maximum principle shows
	\begin{align*}
	|H(x,t)|^2 \geq \frac{1}{\frac{1}{\max_{x\in \mathcal{M}_0} |H(x,0)|^2}- \frac{2}{n}t }.
	\end{align*}
Hence, $T_{\max} \leq\frac{n}{2\max_{x\in \mathcal{M}_0} |H(x,0)|^2}$ and thus, we can take
	\begin{align*}
	\frac{|A^-|^2}{f} \leq C(\mathcal M_0).
	\end{align*}
Recall $\Big(\partial_t-\Delta\Big)f$ is non negative at each point in space and time. Let $\sigma=\delta$. We compute
	\begin{align*}
	\Big(\partial_t-\Delta\Big)f^{1-\sigma}&=(1-\sigma)f^{-\sigma}\Big(\partial_t-\Delta\Big)f+\sigma(1-\sigma)f^{-1-\sigma}|\nabla f|^2\\
	&\ge (1-\sigma)f^{-\sigma}\Big(\partial_t-\Delta\Big)f.
	\end{align*}
Then,
	\begin{align*}
	\Big(\partial_t-\Delta\Big)\frac{|A^-|^2}{f^{1-\sigma}}&=\frac{1}{f^{1-\sigma}}\Big(\partial_t-\Delta\Big)|A^-|^2-|A^-|^2\frac{1}{f^{2-2\sigma}}\Big(\partial_t-\Delta\Big)f^{1-\sigma}+2\Big\langle\nabla\frac{|A^-|^2}{f^{1-\sigma}},\nabla\log f^{1-\sigma}\Big\rangle\\
	&\le\frac{1}{f^{1-\sigma}}\Big(\partial_t-\Delta\Big)|A^-|^2-|A^-|^2\frac{1}{f^{2-2\sigma}}(1-\sigma)f^{-\sigma}\Big(\partial_t-\Delta\Big)f\\
	&+2\Big\langle\nabla\frac{|A^-|^2}{f^{1-\sigma}},\nabla\log f^{1-\sigma}\Big\rangle\\
	&=f^\sigma\Big(\frac{1}{f}\Big(\partial_t-\Delta\Big)|A^-|^2-\frac{|A^-|^2}{f^2}\Big(\partial_t-\Delta\Big)f\Big)+\sigma\frac{|A^-|^2}{f^2}f^\sigma\Big(\partial_t-\Delta\Big)f\\
	&+2\Big\langle\nabla\frac{|A^-|^2}{f^{1-\sigma}},\nabla\log f^{1-\sigma}\Big\rangle.
	\end{align*}
Now,
	\begin{align*}
	\frac{1}{f}\Big(\partial_t-\Delta\Big)|A^-|^2-\frac{|A^-|^2}{f^2}\Big(\partial_t-\Delta\Big)f&=\Big(\partial_t-\Delta\Big)\frac{|A^-|^2}{f}-2\Big\langle\nabla\frac{|A^-|^2}{f},\nabla\log f\Big\rangle\\
	&\le-2n\bar{K}\frac{|A^-|^2}{f}-\delta\frac{|A^-|^2}{f^2}\Big(\partial_t-\Delta\Big)f.
	\end{align*}
Therefore,
	\begin{align*}
	\Big(\partial_t-\Delta\Big)\frac{|A^-|^2}{f^{1-\sigma}}&\le -2n\bar{K}\frac{|A^-|^2}{f^{1-\sigma}}-\delta\frac{|A^-|^2}{f^2}f^\sigma\Big(\partial_t-\Delta\Big)f+\sigma\frac{|A^-|^2}{f^2}f^\sigma\Big(\partial_t-\Delta\Big)f\\
	&+2\Big\langle\nabla\frac{|A^-|^2}{f^{1-\sigma}},\nabla\log f^{1-\sigma}\Big\rangle\\
	&=-2n\bar{K}\frac{|A^-|^2}{f^{1-\sigma}}+2\Big\langle\nabla\frac{|A^-|^2}{f^{1-\sigma}},\nabla\log f^{1-\sigma}\Big\rangle.
	\end{align*}
As before, considering $U=e^{-Ct}u$, we get
	\begin{align*}
	\partial_t U&= e^{-Ct}\partial_t u -Ce^{-Ct}u \\
	&= e^{-Ct} ( \Delta u + \langle f ,\nabla u \rangle + Cu) - C e^{-Ct}u.
	\end{align*}
Hence, we get
	\begin{align*}
	\partial_t U&= \Delta U + \langle f, \nabla U \rangle.
	\end{align*}
By the maximum principle we find $e^{-Ct}\max_{x\in \mathcal M}U(x,t) \leq \max_{x\in\mathcal M}U(x,0)$. Applying this to the above we get
	\begin{align*}
	\frac{|A^-|^2}{f^{1-\sigma}} \leq e^{Ct} C(\mathcal M_0).
	\end{align*}
If we assume $ \bar K \leq 0$, then
	\begin{align*}
	(\partial_t - \Delta) |H|^2&= - 2 |\nabla H|^2 + 2 |\langle A, H\rangle |^2 + 2 n \bar K |H|^2\geq 2 |\langle A, H\rangle |^2 \geq \frac{2}{n}|H|^4.
	\end{align*}
The maximum principle shows
	\begin{align*}
	|H(x,t)|^2 \geq \frac{1}{\frac{1}{\max_{x\in \mathcal{M}_0} |H(x,0)|^2}- \frac{2}{n}t }.
	\end{align*}
Hence, $T_{\max} \leq\frac{n}{2\max_{x\in \mathcal{M}_0} |H(x,0)|^2}$ and we can take
	\begin{align*}
	\frac{|A^-|^2}{f^{1-\sigma}} \leq C(\mathcal M_0),
	\end{align*}
which means
	\begin{align*}
	|A^-|^2\le Cf^{1-\sigma},
	\end{align*}
$\forall t\in[0,T)$, Since, $f=c_n|H|^2-|A|^2-d_n\le c_n|H|^2-d_n< c_n|H|^2$, for $d_n>d>0$, this implies
	\begin{align*}
	|A^-|^2< C|H|^{2-2\sigma},
	\end{align*}
which completes the proof.

\end{document}